\newcommand*{\math@version@bold}{bold}
\DeclareMathOperator\DD{
	\textrm{%
		\usefont{T2A}{cmr}{\ifx\math@version\math@version@bold bx\else m\fi}{n}%
		\CYRD
	}%
} 
\DeclareMathOperator\sgn{sgn}
\let\cref\Cref
\Crefname{equation}{}{}
\Crefname{subsection}{Subsection}{Subsection}
\newtheorem{theorem}{Theorem}[section]
\newtheorem{lemma}[theorem]{Lemma}
\newtheorem{conjecture}[theorem]{Conjecture}
\newtheorem{question}[theorem]{Question}
\newtheorem{corollary}[theorem]{Corollary}
\newtheorem{proposition}[theorem]{Proposition}
\theoremstyle{definition}
\newtheorem{defprop}[theorem]{Definition/Proposition}
\newtheorem{definition}[theorem]{Definition}
\newtheorem{example}[theorem]{Example}
\newtheorem{remark}[theorem]{Remark}
\newcommand{\qua}{\hskip 0.4em \ignorespaces}
\def\arxiv#1{\relax\ifhmode\unskip\qua\fi
\href{http://arxiv.org/abs/#1}%
{\tt arXiv:\penalty -100\unskip#1}}
\def\MR#1{\relax\ifhmode\unskip\qua\fi
\href{http://www.ams.org/mathscinet-getitem?mr=#1}{\tt MR#1}}
\def\xox#1{\csname xx#1\endcsname}
\renewcommand*{\backrefalt}[4]{%
\ifcase #1 %
No citations.%
\or
Cited on page~#2.%
\else
Cited on pages~#2.%
\fi
}
  \def\unskip{}%
  \def\\{}%
  \def\texttt#1{<#1>}%
\newcommand{\myqed}{\pushQED{\qed}\qedhere}
\title{Rasmussen invariants of Whitehead doubles and other satellites}
\newcommand{\myemail}[1]{\href{mailto:#1}{#1}}
\author{Lukas Lewark}
\address{ETH Z\"urich, R\"amistrasse 101, 8092 Z\"urich, Switzerland}
\email{\myemail{llewark@math.ethz.ch}}
\urladdr{\url{https://people.math.ethz.ch/~llewark/}}
\author{Claudius Zibrowius}
\address{Fakultät für Mathematik, Ruhr-Universität Bochum, Universitätsstraße 150, 44780 Bochum, Germany}
\email{\myemail{claudius.zibrowius@rub.de}}
\urladdr{\url{https://cbz20.raspberryip.com/}}
\newcommand{\vc}[1]{\vcenter{\hbox{#1}}}%
\newcommand{\mypic}[2]{%
  \newcommand{#2}{%
    \vc{%
      \includegraphics[page=#1]%
      {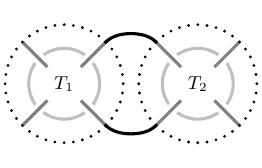}%
    }%
  }%
}%
\newcommand{\BNr}{\widetilde{\operatorname{BN}}}
\newcommand{\CBNr}{\widetilde{\operatorname{CBN}}}
\newcommand{\Khr}{\widetilde{\operatorname{Kh}}}
\newcommand{\QPI}{\operatorname{\mathbb{Q}P^1}}
\newcommand{\id}{\operatorname{id}}
\newcommand{\Cob}{\operatorname{Cob}}
\newcommand{\Mor}{\operatorname{Mor}}
\newcommand{\Mod}{\operatorname{Mod}}
\newcommand{\homology}{\operatorname{H_\ast}}
\newcommand{\HF}{\operatorname{HF}}
\newcommand{\HFhat}{\widehat{\operatorname{HF}}}
\newcommand{\CF}{\operatorname{CF}}
\newcommand{\CFtimes}{\operatorname{CF}^\times}
\newcommand{\CFplus}{\operatorname{CF}^+}
\newcommand{\BNAlgH}{\mathcal{B}}
\newcommand{\KhTl}[1]{[\![ #1 ]\!]} 
\newcommand{\Diag}{\mathcal{D}} 
\newcommand{\FourPuncturedSphereKh}{S^2_{4,\ast}}
\newcommand{\PuncturedPlane}{\mathbb{R}^2\smallsetminus\mathbb{Z}^2}
\newcommand{\field}{\mathbf{k}}
\newcommand{\F}{\mathbb{F}}
\newcommand{\Z}{\mathbb{Z}}
\newcommand{\Q}{\mathbb{Q}}
\newcommand{\QGrad}[1]{{\textcolor{violet}{#1}}}
\newcommand{\HomGrad}[1]{{\textcolor{black}{#1}}}
\newcommand{\GGzqh}[4]{\prescript{\QGrad{#3}}{}{#1}_\HomGrad{#4}}
\newcommand{\arc}[1]{\textbf{a}_{#1}}
\newcommand{\arcb}[1]{\textcolor{blue}{\textbf{a}_{#1}}}
\newcommand{\arcr}[1]{\textcolor{red}{\textbf{a}_{#1}}}
\newcommand{\aR}{\textcolor{red}{a}}
\newcommand{\bB}{\textcolor{blue}{b}}
\newcommand{\g}{\textcolor{red}{\gamma}}
\newcommand{\gb}{\textcolor{blue}{\gamma}}
\newcommand{\gi}{\textcolor{red}{\gamma_1}}
\newcommand{\gii}{\textcolor{red}{\gamma_2}}
\newcommand{\gp}{\textcolor{blue}{\gamma'}}
\newcommand{\gpi}{\textcolor{blue}{\gamma'_1}}
\newcommand{\gpii}{\textcolor{blue}{\gamma'_2}}
\newcommand{\lk}{\operatorname{lk}} 
\newcommand{\conn}[1]{\operatorname{x}(#1)}
\renewcommand{\theta}{\ensuremath{\vartheta}}
\newcommand{\Wh}{W^+}
\newcommand{\Whn}{W^-}
\newcommand{\Whmp}{W^\mp}
\newcommand{\Whpm}{W^\pm}
\def\co{\colon\thinspace\relax}
\newcommand{\thetap}{\theta'}
\begin{document}

\begin{abstract}
We prove formulae for the $\mathbb{F}_2$-Rasmussen invariant of satellite knots of patterns with wrapping number 2,
using the multicurve technology for Khovanov and Bar-Natan homology developed by Kotelskiy, Watson, and the second author.
A new concordance homomorphism, which is independent of the Rasmussen invariant, plays a central role in these formulae.
We also explore whether similar formulae hold for the Ozsváth-Szabó invariant $\tau$.

\end{abstract}

\maketitle


\section{Main results}\label{sec:introintro}

In this article, we study the values that Rasmussen invariants take on
satellite knots with wrapping number two, such as Whitehead doubles and 2-cables.
We begin by stating our main theorem for winding number zero satellites.
\begin{restatable}{theorem}{maintm} \label{thm:main}
	For every knot \(K\subset S^3\), there exists a unique integer \(\theta_2(K)\) such that
	\[
	s_2(P(K))
	=
	s_2(P_{-\theta_2(K)}(U))
	\]
	for all patterns \(P\) with wrapping number two and winding number zero.
\end{restatable}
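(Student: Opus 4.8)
The plan is to reduce the statement to an identification of the Bar-Natan invariant of a single tangle attached to $K$, using the multicurve technology for Khovanov and Bar-Natan homology of Kotelskiy, Watson and the second author. Let $V$ be the pattern solid torus. Since $P$ has wrapping number two, a suitable pair of parallel meridian disks cuts off a slab $R\subset V$ with $P\cap R$ a four-ended tangle carrying all of $P$ and $P\cap(V\smallsetminus R)$ two strands running antiparallel --- it is here that the winding number being zero is used. Gluing $V$ into a tubular neighbourhood of $K$ turns $\partial R$ into a Conway sphere for $P(K)$ and gives a decomposition $P(K)=T_P\cup C_K$ into four-ended tangles: the pattern tangle $T_P$ depends only on $P$, while the companion tangle $C_K$ --- two $0$-framed parallel copies of an arc knotted like $K$, inside a ball --- depends only on $K$. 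Inserting full twists into the two wrapping strands of $P$ slides across $\partial R$ onto the companion side, so that $P_n(U)=T_P\cup(C_U)_n$, where $C_U$ is the trivial tangle and $(C_U)_n$ is its $n$-fold twist. Thus it suffices to show that for each $K$ there is a unique integer $\theta_2(K)$ with
\[
s_2(T_P\cup C_K)\;=\;s_2\big(T_P\cup(C_U)_{-\theta_2(K)}\big)
\]
for every wrapping-number-two, winding-number-zero pattern $P$.

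By the gluing theorem for Bar-Natan tangle invariants, $\BNr(T_P\cup C_K)$ --- and with it $s_2$, which is read off from Bar-Natan homology over $\F_2$ --- is the Lagrangian intersection pairing of $\BNr(T_P)$ and $\BNr(C_K)$ on the four-punctured sphere. The heart of the argument, and the step I expect to be the main obstacle, is to pin down $\BNr(C_K)$: I claim that, in every respect that this pairing against a winding-number-zero pattern tangle can detect, $\BNr(C_K)$ agrees with $\BNr\big((C_U)_{-m}\big)$ for a unique integer $m$. The route I would take is: (1) over $\F_2$ the Bar-Natan multicurve of a four-ended tangle is built from rational curves with local systems --- no ``special'' components survive, in contrast to Khovanov homology; (2) because one of the two Conway closures of $C_K$ is a knot (this is where the antiparallelism, hence the winding number being zero, is essential), whose reduced Bar-Natan homology over $\F_2$ is minimal, the multicurve of $C_K$ has a single essential rational component, the remaining part being $H$-torsion that a winding-number-zero pattern tangle can never turn into something $s_2$ sees; and (3) that essential component --- a rational curve of fixed isotopy type, carrying a grading shift --- is governed by one integer, which translates by one under each twist, so it coincides with the corresponding invariant of $(C_U)_{-m}$ for a unique $m$. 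One then sets $\theta_2(K):=m$ (so $\theta_2(U)=0$); pairing the resulting equality of multicurves with $\BNr(T_P)$ gives the displayed identity, equivalently $s_2(P(K))=s_2(P_{-\theta_2(K)}(U))$, for all $P$. This is existence.

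Uniqueness is then immediate: a second integer $\theta$ with the same property would give $s_2(P_{-\theta}(U))=s_2(P_{-\theta_2(K)}(U))$ for all winding-number-zero patterns $P$, whereas there are patterns $P$ for which $n\mapsto s_2(P_n(U))$ takes infinitely many values, forcing $\theta=\theta_2(K)$; equivalently, $\theta_2(K)$ is pinned down as the integer invariant of $\BNr(C_K)$ above. (Functoriality of $\BNr$ under the tangle cobordisms induced by concordances of $K$, together with the additivity of this invariant under the connected-sum operation $C_{K_1}\cup C_{K_2}$, then upgrades $\theta_2$ to a concordance homomorphism as advertised in the abstract; this is not needed for the theorem as stated.) The genuinely delicate point remains steps (2)--(3): controlling $\BNr(C_K)$ tightly enough to isolate the single essential rational curve and its twist-compatible grading, and to rule out any further contribution to $s_2$ when the pattern has winding number zero --- this is where the classification of Bar-Natan multicurves over $\F_2$ and the precise geometry of the companion tangle carry the weight.
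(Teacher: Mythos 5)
Your high-level strategy matches the paper's exactly: decompose $P(K)=T_P\cup T_K$ along the Conway sphere, pass to the multicurve invariants of the two tangles, isolate the non-compact component $\BNr_a(T_K)$, and argue that for the purposes of computing $s_2$ this arc may be replaced by a rational curve of the same slope. However, the steps you flag as the ``heart'' and the ``main obstacle'' are precisely where the paper's proof does all of its work, and your sketch contains an error in what it does spell out. First, $\BNr_a(T_K)$ is in general \emph{not} a rational curve (it is a non-compact arc with, typically, many strands running through the twist region); the approximation statement only concerns its slope at the lower right tangle end, and the substitution of $\arc{2\theta_2(K)}$ is justified by the Four Curves Lemma (\cref{lem:4curves}) and by Lemma \ref{lem:no-trapped-curves}, which rules out an extra $\pm 2$ correction term appearing in \cref{prop:main:Ni}. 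That correction would otherwise arise precisely when the arcs $\BNr_a(-T_P)$ and $\BNr_a(T_K)$ share the same slope at that puncture, and ruling it out is where the winding-number-zero hypothesis on $T_P$ (i.e.\ $\conn{T_P}=\Ni$) actually enters. Second, the reason $c=2$ is singled out (and the theorem is not stated for arbitrary characteristic) is the final sentence of \cref{lem:slopes-for-cap-trivial}: in characteristic two the slope of $\BNr_a(T_K;\F_2)$ at the lower right puncture is always an \emph{even} integer, a nontrivial fact from the multicurve geography over $\F_2$. Your sketch does not mention this, and without it one only gets the weaker statement that $\theta_c$ exists as a half-integer (cf.\ \cref{cor:theta-from-slopes}).

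Your uniqueness argument is also flawed as written. You assert the existence of winding-number-zero patterns $P$ for which $n\mapsto s_2(P_n(U))$ takes infinitely many values; but \cref{prop:theta:winding0} shows that for any such pattern this function is either constant or takes exactly two values with a single jump. What uniqueness actually needs is merely a pattern for which the function is non-constant, which the positive Whitehead pattern $\Wh$ supplies ($s_2(\Wh_t(U))$ equals $2$ for $t<0$ and $0$ for $t\geq 0$, by \cref{ex:nu_of_twist_knots}); taking $P=\Wh_t$ for suitable $t$ then forces $\theta=\theta_2(K)$. Finally, a small conceptual slip: you attribute the cap-triviality of the companion tangle to the winding number being zero, but $T_K(\infty)=U$ holds for every knot $K$ by construction of the double tangle; what the winding number of $P$ controls is the connectivity of the pattern tangle $T_P$, not the companion tangle.
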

\begin{figure}[b]
	\vspace*{-4pt}
	\centering
	\begin{subfigure}{0.45\textwidth}
		\centering
		\includegraphics[scale=.9]{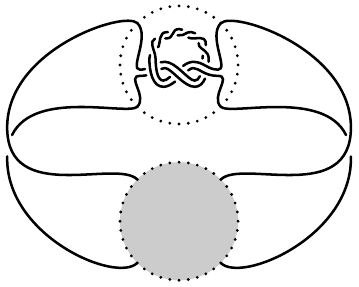}
		\caption{}\label{fig:frontispiece:1}
	\end{subfigure}
	\begin{subfigure}{0.45\textwidth}
		\centering
		\includegraphics[scale=.9]{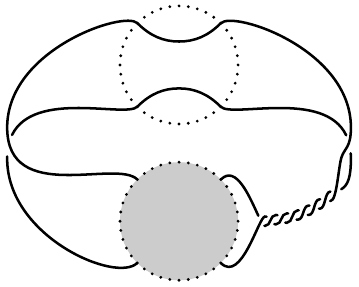}
		\caption{}\label{fig:frontispiece:2}
	\end{subfigure}
	\\
	\begin{subfigure}{0.17\textwidth}
		\centering
		\includegraphics[scale=.9]{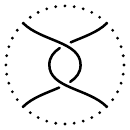}
		\caption{\(Q_{\nicefrac{-1}{2}}\)}\label{fig:Q_m1_2}
	\end{subfigure}
	\begin{subfigure}{0.17\textwidth}
		\centering
		\includegraphics[scale=.9]{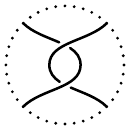}
		\caption{\(Q_{\nicefrac{1}{2}}\)}\label{fig:Q_1_2}
	\end{subfigure}
	\begin{subfigure}{0.17\textwidth}
		\centering
		\includegraphics[scale=.9]{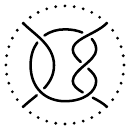}
		\caption{\(T_a\)}\label{fig:T_a}
	\end{subfigure}
	\begin{subfigure}{0.17\textwidth}
		\centering
		\includegraphics[scale=.9]{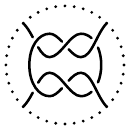}
		\caption{\(T_b\)}\label{fig:T_b}
	\end{subfigure}
	\begin{subfigure}{0.17\textwidth}
		\centering
		\includegraphics[scale=.9]{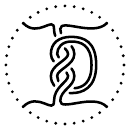}
		\caption{\(T_c\)}\label{fig:T_c}
	\end{subfigure}
	\caption{An application of \cref{thm:main}:
		Plugging a suitable pattern tangle \(T_P\) into the grey discs in (a) and (b)
		results in the two knots \(P(T_{2,3})\) and \(P_{-4}(U)\), respectively, where \(T_{2,3}\) denotes the right-handed trefoil.
		Some possibilities for \(T_P\) are shown in (c)--(g). 
		For a fixed \(P\),
		those two knots have the same Rasmussen invariant~\(s_2\) according to \cref{thm:main} since \(\theta_2(T_{2,3})=4\).}
	\label{fig:mainthm}
\end{figure}
\begin{theorem}\label{thm:main2}
	The knot invariant $\theta_2$ induces a homomorphism from the smooth knot concordance group to the integers, which is linearly independent of the one given by $s_2$.
\end{theorem}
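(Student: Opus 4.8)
The plan is to show that $\theta_2$ is a concordance invariant and is additive under connected sum. A concordance invariant of knots that is additive under connected sum automatically descends to a group homomorphism from the smooth concordance group $\mathcal{C}$ to $\Z$, and linear independence from $s_2$ then reduces to exhibiting a single knot $K$ with $s_2(K)=0\neq\theta_2(K)$.

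\emph{Concordance invariance.} Let $K_0$ and $K_1$ be concordant. Satellite operations with a fixed pattern are well defined up to concordance: a concordance annulus $C\subset S^3\times[0,1]$ from $K_0$ to $K_1$ has trivial normal bundle, with a framing restricting to the Seifert framings of $K_0$ and $K_1$, so pushing the pattern solid torus along $C$ gives a concordance from $P(K_0)$ to $P(K_1)$ for every pattern $P$. Since $s_2$ is a concordance invariant, $s_2(P(K_0))=s_2(P(K_1))$ for every pattern $P$ of wrapping number two and winding number zero; combined with \cref{thm:main} for $K_0$ and for $K_1$ this gives $s_2(P_{-\theta_2(K_0)}(U))=s_2(P_{-\theta_2(K_1)}(U))$ for all such $P$. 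Thus $\theta_2(K_1)$ also satisfies the defining relation of \cref{thm:main} for $K_0$, and uniqueness forces $\theta_2(K_0)=\theta_2(K_1)$.

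\emph{Additivity.} Fix knots $J,K$ and a pattern $P$ of wrapping number two and winding number zero. Let $Q_K$ be the winding-number-one pattern consisting of the core of a solid torus with a local connected sum with $K$, so that $Q_K(J)=J\#K$. Then the composite pattern $\hat P:=P(Q_K)$ (that is, $P$ read inside a Seifert-framed tubular neighbourhood of $Q_K$) satisfies $\hat P(J)=P(J\#K)$, and one checks that $\hat P$ again has winding number zero (winding numbers multiply under composition of satellites) and wrapping number two (wrapping numbers multiply under composition of geometrically essential patterns; alternatively it is at most two, is even because the winding number vanishes, and is positive since $P$ is not contained in a ball). With the standard framing conventions, twisting commutes with this composition, so $\hat P_n=P_n(Q_K)$ and hence $\hat P_n(U)=P_n(U\#K)=P_n(K)$. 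Applying \cref{thm:main} first to $J$ with the pattern $\hat P$ and then to $K$ with the (again wrapping-two, winding-zero) pattern $P_{-\theta_2(J)}$, we obtain
\[
\begin{aligned}
s_2\bigl(P(J\#K)\bigr)
&=s_2\bigl(\hat P(J)\bigr)
=s_2\bigl(\hat P_{-\theta_2(J)}(U)\bigr)
=s_2\bigl(P_{-\theta_2(J)}(K)\bigr)\\
&=s_2\bigl(P_{-\theta_2(J)-\theta_2(K)}(U)\bigr).
\end{aligned}
\]
As this holds for every pattern $P$ of wrapping number two and winding number zero, the uniqueness clause of \cref{thm:main} applied to $J\#K$ yields $\theta_2(J\#K)=\theta_2(J)+\theta_2(K)$. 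Together with concordance invariance this shows that $\theta_2$ descends to a homomorphism $\mathcal{C}\to\Z$.

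\emph{Linear independence and the main obstacle.} Since $\theta_2(T_{2,3})=4\neq0$ (see \cref{fig:mainthm}) while $s_2(T_{2,3})=\pm2$, any nontrivial linear dependence between $s_2$ and $\theta_2$ would force $\theta_2$ to be a fixed nonzero rational multiple of $s_2$ on all of $\mathcal{C}$; hence it is enough to exhibit one knot $K$ with $s_2(K)=0\neq\theta_2(K)$. Such knots are produced by evaluating $\theta_2$ via the satellite formula of \cref{thm:main} on suitable (twisted) Whitehead doubles or $2$-cables and comparing with $s_2$. I expect this last step --- exhibiting concrete knots on which $s_2$ and $\theta_2$ genuinely disagree --- to be the crux, since it relies on the multicurve computations rather than on the soft formal arguments above; the framing and wrapping-number bookkeeping in the additivity step, though it requires care, is routine.
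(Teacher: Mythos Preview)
Your argument that $\theta_2$ is a concordance homomorphism is correct and follows the same strategy as the paper's \cref{prop:1implies2}: build a new wrapping-two, winding-zero pattern by ``absorbing'' one of the two knots into the pattern, apply \cref{thm:main} twice, and invoke uniqueness. The paper phrases this in tangle language and specialises to $P=W^+$, using the jump-point characterisation of $\theta_\nu$ from \cref{prop:theta:winding0} rather than the uniqueness clause of \cref{thm:main} directly, but the content is the same. Your version, working with a general $P$ and invoking uniqueness in \cref{thm:main}, is arguably cleaner. The framing check $\hat P_n = P_n(Q_K)$ is correct precisely because $Q_K$ has winding number one, so a full twist of the outer torus induces one full twist on the Seifert framing of $Q_K$.

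The linear-independence part, however, is not complete: you correctly reduce to exhibiting a knot with $s_2=0\neq\theta_2$, but then stop. A single value $\theta_2(T_{2,3})=4$ with $s_2(T_{2,3})=2$ does not suffice, since it is consistent with $\theta_2\equiv 2s_2$. You need a second computation where the ratio differs. The paper supplies exactly this in \cref{table:t23t34}: $\theta_2(T_{3,4})=8$ while $s_2(T_{3,4})=6$, and since $\det\begin{pmatrix}2&4\\6&8\end{pmatrix}=-8\neq 0$ the two homomorphisms are independent. Equivalently, in your framework, $K=T_{2,3}^{\#3}\#(-T_{3,4})$ has $s_2(K)=0$ and $\theta_2(K)=4$. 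These values are obtained from the multicurve invariants of the companion tangles (see \cref{cor:theta-from-slopes} and \cref{fig:lifts}), so your intuition that this step requires genuine input beyond the formal arguments is right --- but the input is a single explicit computation, not a general method.
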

Let us explain our notation.
For \(p\) a prime, \(s_p\) denotes the Rasmussen invariant over the field~\(\mathbb{F}_p\) \cite{mvt,LipshitzSarkarRefinementS}, while \(s_0\) denotes the original Rasmussen invariant over \(\Q\) \cite{RasmussenSlice}.
By a \emph{pattern} $P$, we mean a knot in the standard solid torus.
We write \(P(K)\) for the satellite knot with pattern \(P\) and companion knot~\(K\).
For \(t\in\Z\), we denote by \(P_t\) the pattern obtained from~\(P\) by applying \(t\) right-handed full twists to the standard torus.
We denote the unknot by \(U\).

For many classical knot invariants, such as the signature or the Alexander polynomial, one can express the invariant of a satellite as a function of the pattern and the invariant of the companion.
The Rasmussen invariant \(s_2\), on the other hand, displays a more complicated behaviour: 
Indeed, the independence of \(s_2\) and \(\theta_2\) claimed in \cref{thm:main2} implies that the pair \((P,s_2(K))\) does not determine \(s_2(P(K))\).
However, \cref{thm:main} tells us that for all winding number zero, wrapping number two patterns~\(P\),
the value \(s_2(P(K))\) is instead determined by the pair \((P, \theta_2(K))\).
This is the raison d'être of the knot invariant $\theta_2$. 

\subsection*{The proof of the theorems in a nutshell}\label{subsec:multicurves}
Observe that one may cut the satellite knot \(P(K)\) along a sphere into two tangles, as can be seen in \cref{fig:mainthm}(a): one \emph{pattern tangle}~\(T_P\) associated with~\(P\) and one \emph{companion tangle}~\(T_K\) that is determined by~\(K\).
Now, we use Bar-Natan's extension of Khovanov homology to tangles \cite{BarNatanKhT}.
Bar-Natan associates with a tangle \(T\) a chain complex~\(\KhTl{T}\)
over a certain additive category \(\Cob_{/l}\), which depends on the number of endpoints of \(T\).
The complexes \(\KhTl{T}\) are well-defined up to homotopy equivalence and well-behaved under gluing.
Thus from \(\KhTl{T_P}\) and \(\KhTl{T_K}\), one may calculate the Khovanov homology of \(P(K)\),
and from that homology (precisely: the reduced Khovanov homology over \(\mathbb{F}_2[X]/(X^2 + X)\)) one may read off \(s_2(P(K))\).

Since \(P\) has wrapping number two, \(T_P\) and \(T_K\) are Conway tangles, i.e.~tangles with four endpoints.
Chain complexes over \(\Cob_{/l}\) for four endpoints are well-understood
by the work of Kotelskiy, Watson, and the second author \cite[Theorems~1.1 and~1.5]{KWZ}.
Indeed, such chain complexes can be encoded geometrically
as certain sets of immersed curves (so-called \emph{multicurves}) on the four-punctured sphere.
The multicurve of \(\KhTl{T}\) consists
of a finite number of immersed circles and exactly one immersed interval, which is called the \emph{non-compact component} and is denoted by $\BNr_a(T)$.
It turns out that the Rasmussen invariant \(s_2\) of \(P(K) = T_P \cup T_K\) is fully determined
by \(\BNr_a(T_P)\) and \(\BNr_a(T_K)\).
The crucial step, which relies on the multicurve technology, is to show that \(\BNr_a(T_K)\)
is, in a suitable sense, approximated by \(\BNr_a(Q_{2\theta_2(K)})\) for some \(\theta_2(K)\in\Z\),
where \(Q_{2\theta_2(K)}\) is the (rational) Conway tangle consisting of \(2\theta_2(K)\) left-handed half-twists. 
It follows that
\[
s_2(P(K)) = s_2(T_P \cup T_K) = s_2(T_P \cup Q_{2\theta_2(K)}) = s_2(P_{-\theta_2(K)}(U)),
\]
as claimed in \cref{thm:main}. A detailed proof will be given in \cref{sec:RasmussenCurves}.

\begin{wraptable}[5]{r}{0.2\textwidth}
	\centering
	\begin{tabular}{c|cc}
		knot        & \(s_2\) & \(\theta_2\) \\\hline
		\(T_{2,3}\)  & 2        & 4\rule{0pt}{2.3ex}\\
		\(T_{3,4}\)  & 6        & 8
	\end{tabular}
	\medskip
	\caption{}\label{table:t23t34}
\end{wraptable}

The proof of \cref{thm:main2} consists of two parts.
Firstly, by applying \cref{thm:main} with varying patterns, one can deduce that \(\theta_2\) is a concordance homomorphism, as we shall see 
in \cref{prop:1implies2}.
Secondly, the linear independence of $s_2$ and $\theta_2$ can be seen from their values
on the torus knots $T_{2,3}$ and $T_{3,4}$, see \cref{table:t23t34}.
This and further independence results will be proven in \cref{thm:indep} in \cref{sec:differentfields}.

\subsection*{Acknowledgements}
The second author would like to thank Matt Hedden for pointing him to \cite{Hedden,HeddenOrding}.
The authors are supported by the Emmy Noether Programme of the DFG, Project number 412851057.
The second author was also partially supported by the SFB 1085 Higher Invariants in Regensburg.
The authors thank the University of Regensburg for access to the compute cluster Athene to run their calculations.
\section{Complementary results and speculations}\label{sec:introext}

In this extended introduction, we put \cref{thm:main,thm:main2} into the context of other slice-torus invariants. 
We also explore formulae for Rasmussen invariants of satellite knots with winding number \(\pm2\) patterns and discuss some applications of our results.
The reader eager to peruse the proofs of \cref{thm:main,thm:main2} may proceed directly to \cref{subsec:structure_of_paper}, where a summary of the paper's structure will help to decide where to continue.

\subsection{Pattern-twisting behaviour of slice-torus invariants}\label{subsec:generalize1}
It is remarkable that while the methods that go into proving \cref{thm:main} are very particular to Khovanov homology, 
the proof that $\theta_2$ is a concordance homomorphism, as claimed in \cref{thm:main2}, does not require any properties of $s_2$ beyond the fact that it satisfies \cref{thm:main};
any such invariant gives rise to a potentially new concordance homomorphism. 
Thus, one is led to wonder:
\begin{center}\itshape
	Do variations of \cref{thm:main} hold for other knot invariants?
\end{center}

We will discuss this question later for various different knot invariants in \cref{subsec:geom:other}. 
For now, we restrict to \emph{slice-torus invariants} (\cref{def:slice-torus-invariant}). 
This family of knot invariants was first studied by Livingston \cite{LivingstonComputations} and contains
the Rasmussen invariants, normalised as \(\nicefrac{s_c}{2}\) for any characteristic \(c\), as well as the Ozsváth-Szabó invariant \(\tau\) coming from Heegaard Floer theory.  
In the following, let \(\nu\) denote a slice-torus invariant, \(K\) a knot, and \(P\) a pattern of wrapping number two and winding number zero. 
The following result describes how \(\nu(P(K))\) is affected by twisting $P$.

\begin{proposition}\label{prop:theta:winding0}
	For any fixed triple \((\nu,P,K)\), the function \(\Z\rightarrow\Z\) given by \(t\mapsto \nu(P_t(K))\) is either constant or there exists a unique jump point \(\theta\in\Z\) such that for all \(t\in\Z\)
	\begin{equation}\label{eq:theta:winding0}
	\nu(P_{t}(K))
	=
	\nu(P_{t-1}(K))
	-
	\begin{cases*}
	1 
	&
	if \(t=\theta\)
	\\
	0
	&
	otherwise.
	\end{cases*}
	\end{equation}
\end{proposition}

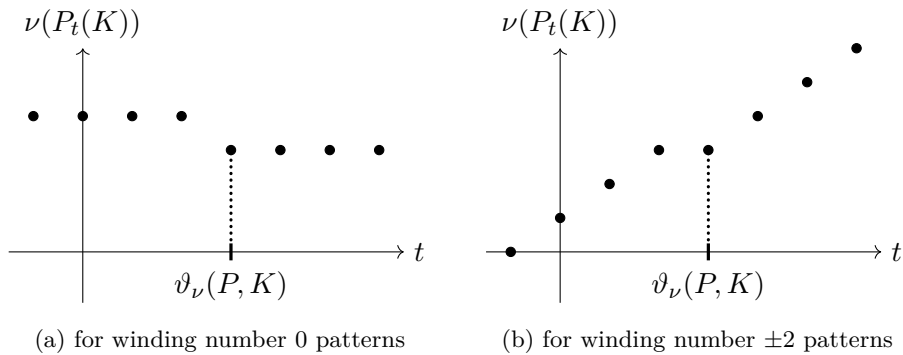
\begin{figure}[b]
	\begin{subfigure}{0.42\textwidth}\centering
		\(
		\begin{tikzpicture}[x=0.65cm, y=0.45cm]
		\draw[->] (-1.5,0) -- (6.5,0) node[right]{\(t\)};
		\draw[->] (0,-1.5) -- (0,6) node[above]{\(\nu(P_t(K))\)};
		\draw [fill] (-1,4) circle (1.75pt);
		\draw [fill] (0,4)  circle (1.75pt);
		\draw [fill] (1,4)  circle (1.75pt);
		\draw [fill] (2,4)  circle (1.75pt);
		\draw [fill] (3,3)  circle (1.75pt);
		\draw [fill] (4,3)  circle (1.75pt);
		\draw [fill] (5,3)  circle (1.75pt);
		\draw [fill] (6,3)  circle (1.75pt);
		\draw [very thick,line cap=round,dash pattern=on 0pt off 2\pgflinewidth] (3,2.9) -- (3,0.25);
		\draw [very thick] (3,0.25) -- (3,-0.25) node [below] {$\theta_\nu(P,K)$};
	\end{tikzpicture}
	\)
	\caption{for winding number 0 patterns}
	\label{fig:def:theta:winding0}
	\end{subfigure}
	\begin{subfigure}{0.42\textwidth}\centering
		\(
		\begin{tikzpicture}[x=0.65cm, y=0.45cm]
		\draw[->] (-1.5,0) -- (6.5,0) node[right]{\(t\)};
		\draw[->] (0,-1.5) -- (0,6) node[above]{\(\nu(P_t(K))\)};
		\draw [fill] (-1,0) circle (1.75pt);
		\draw [fill] (0,1)  circle (1.75pt);
		\draw [fill] (1,2)  circle (1.75pt);
		\draw [fill] (2,3)  circle (1.75pt);
		\draw [fill] (3,3)  circle (1.75pt);
		\draw [fill] (4,4)  circle (1.75pt);
		\draw [fill] (5,5)  circle (1.75pt);
		\draw [fill] (6,6)  circle (1.75pt);
		\draw [very thick,line cap=round,dash pattern=on 0pt off 2\pgflinewidth] (3,2.9) -- (3,0.25);
		\draw [very thick] (3,0.25) -- (3,-0.25) node [below] {$\theta_\nu(P,K)$};
	\end{tikzpicture}
	\)
	\caption{for winding number \(\pm2\) patterns}
	\label{fig:def:theta:winding2}
	\end{subfigure}
\caption{The behaviour of the functions \(t\mapsto \nu(P_t(K))\) for wrapping number 2 patterns in the case \(\theta_\nu(P,K)\neq\infty\)}
\label{fig:def:theta}
\end{figure}

This explains the role played in \cref{thm:main} by the value \(\theta_2(K)\), which we can now generalise as follows:

\begin{definition}\label{def:theta}
	For any fixed triple \((\nu,P,K)\), we define \(\theta_{\nu}(P,K)\in\Z\cup\{\infty\}\) 
	as the unique integer \(\theta\) satisfying \eqref{eq:theta:winding0} if such an integer exists and \(\infty\) otherwise; see \cref{fig:def:theta:winding0} for an illustration. 
	Moreover, we set 
	\[
	\theta_{\nu}(P)
	\coloneqq
	\theta_{\nu}(P,U)
	\quad
	\text{and}
	\quad
	\theta_{\nu}(K)
	\coloneqq
	\theta_{\nu}(\Wh,K),
	\]
	where \(\Wh\) is the positive Whitehead pattern, i.e.\ the pattern corresponding to the tangle in \cref{fig:Q_m1_2}.
	For \(c=0\) or a prime, we write
	\(
	\theta_c(K)
	\coloneqq 
	\theta_{\nicefrac{s_c}{2}}(K)
	\)
	and 
	\(
	\theta_c(P)
	\coloneqq
	\theta_{\nicefrac{s_c}{2}}(P)
	\). 
\end{definition}


Livingston and Naik \cite{doubled} proved \cref{prop:theta:winding0} for $P = W^+$
and defined an invariant $t_{\nu}(K)$, which is related to \(\theta_{\nu}(K)\) by a mere translation:  $t_{\nu}(K) = \theta_{\nu}(K) - 1$.
Our reason for introducing this shift is given by \cref{thm:main2}: 
For \(\nu=\nicefrac{s_2}{2}\), 
\(\theta_{\nu}(K)=\theta_2(K)\) induces a concordance homomorphism, whereas $t_{\nu}(K)$ does not. 

The invariant \(\theta_{\nicefrac{s_2}{2}}(K)\) does indeed agree with the integer \(\theta_2(K)\) from \cref{thm:main}. 
Before justifying this statement, we first make an observation and a computation.

\begin{proposition}\label{prop:theta_nu}
	For any slice-torus invariant \(\nu\) and knot \(K\), 
	\(
	\theta_{\nu}(K)
	\)
	is finite.
\end{proposition}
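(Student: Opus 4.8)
The plan is to show that the function $f \colon \mathbb{Z} \to \mathbb{Z}$, $t \mapsto \nu(\Wh_t(K))$, is eventually constant on the left and on the right, with a drop of exactly $1$ somewhere in between, forcing the jump point to be a genuine integer rather than $\infty$. By \cref{prop:theta:winding0} we already know $f$ is either constant (in which case $\theta_\nu(\Wh,K) = \infty$) or has a single jump by $-1$ at a unique integer $\theta$; so the whole content is to rule out the constant case, i.e.\ to exhibit two values of $t$ with $\nu(\Wh_t(K)) \neq \nu(\Wh_{t-1}(K))$, or equivalently to pin down $f$ for $t \to \pm\infty$.

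First I would recall that the twisted satellite $\Wh_t(K)$ can be obtained by $(-1/t)$-surgery-style twisting on the meridian of the pattern, or more concretely: $\Wh_t(K)$ is an unknotting-number-one-style modification away from $\Wh_{t-1}(K)$, so consecutive knots in the family are related by a single crossing change; slice-torus invariants change by at most $1$ under a crossing change (indeed $\nu$ is bounded by the smooth $4$-genus, and a crossing change cobounds a genus-one cobordism with a product, so $|\nu(\Wh_t(K)) - \nu(\Wh_{t-1}(K))| \le 1$), which re-derives the ``jump by at most $1$'' part but, more importantly, lets me control the extremes. For $t \ll 0$ the pattern $\Wh_t$ has a very negative framing and $\Wh_t(K)$ becomes (concordant to, or for the purposes of $\nu$ indistinguishable from) a positive twist knot / a knot with large positive $\nu$; for $t \gg 0$ it becomes a knot with $\nu = 0$. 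Concretely, for the Whitehead pattern one has that $\Wh_t(U)$ is the twist knot, whose slice-torus invariants are known explicitly: $\nu(\Wh_t(U)) = 0$ for $t \ge 0$ and $\nu(\Wh_t(U)) = 1$ (since it is the positive trefoil-type twist knot, or more generally $\nu$ of the relevant twist knots) for $t < 0$ — and crucially for general $K$ one can push the asymptotics through via a genus bound: a Seifert surface for $\Wh_t(K)$ of genus one gives $\nu(\Wh_t(K)) \le 1$ for all $t$, while for $t$ sufficiently negative the knot $\Wh_t(K)$ is a positive braid closure or bounds an obvious negative-definite cobordism realising $\nu = 1$.

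The key steps, in order, are: (1) invoke \cref{prop:theta:winding0} to reduce to showing $f$ is non-constant; (2) observe $|f(t) - f(t-1)| \le 1$ and, combined with \cref{prop:theta:winding0}, that $f$ is non-increasing and drops by at most $1$ in total; (3) compute or bound $f(t)$ for $t \to -\infty$, showing $\liminf f \ge 1$ (using that large negative twisting produces a knot with a genus-one cobordism from the unknot on which $\nu$ is forced to be positive — e.g.\ the pattern contains a clasp that becomes a positive clasp, realising $\nu \ge 1$ by the slice-torus axioms applied to the torus knot $T_{2,3}$ appearing as a connected summand or cobordant piece); (4) compute or bound $f(t)$ for $t \to +\infty$, showing $\limsup f \le 0$ (the positive twisting unclasps the Whitehead double, making $\Wh_t(K)$ bound a genuine disc-with-a-band that forces $\nu \le 0$, indeed $\nu = 0$ since $\Wh_t(K)$ is then algebraically slice / has a Seifert form admitting a metaboliser after enough twisting — or more simply, for $t \ge$ some bound it is the unknot or a slice knot); (5) conclude $f$ is non-constant, hence the jump point $\theta_\nu(K)$ is finite.

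The main obstacle is step (3)–(4): establishing the asymptotic values of $\nu(\Wh_t(K))$ \emph{uniformly in $K$}, since a priori the companion $K$ could shift where the jump happens or, worrisomely, the clasp's contribution could be cancelled by the companion's geometry. I expect the clean way around this is to use \cref{thm:main} itself in reverse for the two extreme patterns: there is a genus-one concordance-type bound showing $\nu(\Wh_t(K)) \le 1$ for all $t$ (a Seifert surface of genus one exists for every Whitehead double, twisted or not), and $\nu(\Wh_t(K)) \ge 0$ always (since $\Wh_t(K)$ is algebraically of the type bounding a surface in the $4$-ball, or since for $t\gg 0$ it is slice); so $f$ takes values in $\{0,1\}$, and it suffices to show it is not identically $0$ and not identically $1$. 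That $f$ is not identically $0$: for $t \ll 0$, $\Wh_t(K)$ admits a cobordism of the right sign to $T_{2,3}$, giving $\nu \ge \nu(T_{2,3}) - g = 1$. That $f$ is not identically $1$: for $t \gg 0$, $\Wh_t(K)$ is slice (a twisted positive-clasped Whitehead double with enough negative twisting of the companion framing becomes ribbon — this is the classical fact that $\Wh_0$ with appropriate conventions, or some $\Wh_t$, is unknotted), so $\nu = 0$. Assembling these with the monotonicity from step (2) delivers a unique integer jump point, proving finiteness.
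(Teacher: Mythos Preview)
The paper's own proof is a one-line citation to Livingston--Naik \cite[Theorem~2]{doubled}, so any direct argument is necessarily different in form. Your structural plan is right: by \cref{prop:theta:winding0} it suffices to show that \(t\mapsto\nu(\Wh_t(K))\) is not constant, and one can indeed pin down \(\nu(\Wh_t(K))\in\{0,1\}\) for all \(t\) (via the crossing-change inequality applied to the clasp crossing, which unknots \(\Wh_t(K)\); your stated reasons for the lower bound \(\nu\ge 0\) are not the right ones, but this step is easily repaired).

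The genuine gap is step~(4). Your claim that \(\Wh_t(K)\) is slice for \(t\gg 0\) is false: for \(t\neq 0\) the Alexander polynomial of \(\Wh_t(K)\) is nontrivial (it depends only on \(t\), not on \(K\)), so \(\Wh_t(K)\) is not even algebraically slice; and for \(t=0\) smooth sliceness of \(\Wh(K)\) is known to fail for many \(K\) (e.g.\ \(\tau(\Wh(T_{2,3}))=1\) by Hedden). So no value of \(t\) makes \(\Wh_t(K)\) slice in general, and this route to \(\nu=0\) does not work. Step~(3) is also too weak as stated: a genus-\(g\) cobordism to \(T_{2,3}\) only yields \(\nu\ge 1-g\), and there is no concordance in general.

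What actually closes both gaps, and what Livingston--Naik do, is Legendrian knot theory: doubling a maximal-\(tb\) Legendrian representative of \(K\) produces a Legendrian representative of \(\Wh_{TB(K)}(K)\) with \(tb=1\), and the slice-Bennequin inequality for slice-torus invariants then forces \(\nu(\Wh_{TB(K)}(K))\ge 1\); applying the same reasoning to the mirror handles the other asymptote. This is exactly the content of \cref{thm:tbln}, and it is the missing ingredient in your sketch.
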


\begin{proof}
	This is a reformulation of the first part of \cite[Theorem~2]{doubled}, where our invariant \(\theta_\nu(K) = \theta_\nu(\Wh,K)\) corresponds to \(t_{\nu}(K) + 1\) in their notation.
\end{proof}

\begin{example}\label{ex:nu_of_twist_knots}
	Observe that \(\Wh_0(U)=U\) and \(\Wh_{-1}(U)=T_{2,3}\). 
	So for all slice-torus invariants~$\nu$, \(\nu(\Wh_0(U))=0\) and \(\nu(\Wh_{-1}(U))=1\). 
	By \cref{prop:theta:winding0}, this implies
	\[
	\nu(\Wh_{t}(U))
	=
	\begin{cases*}
	1
	&
	if \(t<0\)
	\\
	0
	&
	otherwise
	\end{cases*}
	\]
	and hence \(\theta_{\nu}(U)=\theta_{\nu}(\Wh,U)=\theta_{\nu}(\Wh)=0\). 
	Similarly, one can determine \(\theta_{\nu}(\Whn)=1\), where \(\Whn\) is the negative Whitehead pattern (corresponding to the tangle in \cref{fig:Q_1_2}).
\end{example}

Now, suppose \(\nu\) is a slice-torus invariant and \(K\) a knot such that
\[
\nu(\Wh_t(K))
=
\nu(\Wh_{t-\theta(K)}(U))
\]
for some integer \(\theta(K)\) independent of \(t\).  
Then it follows that \(\theta(K)=\theta_\nu(K)\).
In particular, the integer \(\theta_2(K)\) from \cref{thm:main} agrees with \(\theta_{\nicefrac{s_2}{2}}(K)\). 
The preceding discussion also allows us to make the question from the beginning of this subsection more precise.

\begin{question}\label{que:generalize}
	Let $\nu$ be a slice-torus invariant.
	Does $\nu(P(K)) = \nu(P_{-\theta_\nu(K)}(U))$ hold
	for all knots $K$ and all patterns $P$ with wrapping number two and winding number zero?
\end{question}

The following is a variation of \cref{thm:main}, which allows us to compute \(s_2(P_t(K))\) for any \(t\in\Z\) from the three invariants \(s_2(P(U))\), \(\theta_2(K)\), and \(\theta_2(P)\). 

\begin{proposition}\label{thm:twisting}
	Suppose \(\nu\) is a slice-torus invariant for which the answer to \cref{que:generalize} is `yes'. 
	Then for all knots \(K\), integers \(t\), and patterns \(P\) of wrapping number~2 and winding number~0,
	the following holds:
	\[
	\nu(P_t(K)) = \nu(P(U)) +
	\left\{
	\begin{array}{ll}
	1  & \text{if } \theta_\nu(P) \neq\infty \text{ and } t - \theta_\nu(K) < \theta_\nu(P) \leq 0, \\
	-1 & \text{if } \theta_\nu(P) \neq\infty \text{ and } t - \theta_\nu(K) \geq \theta_\nu(P) > 0, \\
	0  & \text{otherwise.}
	\end{array}
	\right.
	\]
\end{proposition}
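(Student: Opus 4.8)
The plan is to deduce \cref{thm:twisting} from \cref{que:generalize} (assumed `yes' for $\nu$) together with \cref{prop:theta:winding0} and the structural properties of $\theta_\nu$. The key observation is that twisting the pattern $P$ can be absorbed into the pattern itself: $P_t = (P_{t})$ is again a wrapping number two, winding number zero pattern, so the hypothesis gives $\nu(P_t(K)) = \nu((P_t)_{-\theta_\nu(K)}(U)) = \nu(P_{t-\theta_\nu(K)}(U))$. Thus the whole computation reduces to understanding the single-variable function $s \mapsto \nu(P_s(U))$, and then substituting $s = t - \theta_\nu(K)$.

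First I would record the reduction above carefully, noting that $P_t$ has the same winding and wrapping numbers as $P$, so \cref{que:generalize} applies verbatim. Next I would analyse $g(s) \coloneqq \nu(P_s(U)) - \nu(P(U))$. By \cref{prop:theta:winding0} applied to the triple $(\nu, P, U)$, the function $s \mapsto \nu(P_s(U))$ is either constant (the case $\theta_\nu(P) = \theta_\nu(P,U) = \infty$) or has a single jump by $-1$ at $s = \theta_\nu(P)$. In the non-constant case, telescoping \eqref{eq:theta:winding0} from $s = 0$ gives
\[
\nu(P_s(U)) - \nu(P_0(U)) =
\begin{cases*}
-1 & if \(\theta_\nu(P) \leq s\) and \(\theta_\nu(P) \leq 0\),\\
1 & if \(s < \theta_\nu(P)\) and \(\theta_\nu(P) > 0\),\\
0 & otherwise,
\end{cases*}
\]
and since $P_0 = P$, this is $g(s)$. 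Substituting $s = t - \theta_\nu(K)$ and observing that the condition ``$\theta_\nu(P) \le s \le$ or $> 0$'' becomes exactly the case split in the statement (with $\theta_\nu(P) \leq 0$ matching the first line and $\theta_\nu(P) > 0$ the second), yields the claimed formula; the constant case $\theta_\nu(P)=\infty$ falls into the ``otherwise'' branch, giving $\nu(P_t(K)) = \nu(P(U))$.

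The one point that needs a little care — and which I expect to be the main (minor) obstacle — is pinning down the exact inequalities in the case split and checking they are mutually exclusive and exhaustive, in particular the boundary behaviour at $t - \theta_\nu(K) = \theta_\nu(P)$ and the sign conventions inherited from \eqref{eq:theta:winding0}. The telescoping argument shows $\nu(P_s(U))$ drops by exactly one as $s$ crosses $\theta_\nu(P)$ from below, so for $\theta_\nu(P) \leq 0$ the difference $\nu(P_s(U)) - \nu(P(U))$ equals $0$ when $s < \theta_\nu(P)$ (but $\theta_\nu(P) \le 0$ and $s$ ranges over all integers, so in fact $s < \theta_\nu(P) \leq 0$ is possible) and $-1$ when $s \geq \theta_\nu(P)$; reindexing to express this as a correction $+1$ relative to the value at large $s$, or as written, requires matching the reference point $P(U) = P_0(U)$ to whichever side of the jump $s=0$ lies on. A short case analysis on the sign of $\theta_\nu(P)$ — using that $P_0 = P$ so the reference value $\nu(P(U))$ is the pre-jump value iff $\theta_\nu(P) > 0$ and the post-jump value iff $\theta_\nu(P) \leq 0$ — disentangles this and reproduces the two nontrivial lines of the displayed formula. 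Everything else is bookkeeping.
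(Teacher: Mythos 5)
Your overall strategy matches the paper's exactly: apply the hypothesis to the pattern $P_t$ to get $\nu(P_t(K)) = \nu(P_{t-\theta_\nu(K)}(U))$, then analyse the function $s \mapsto \nu(P_s(U))$ via \cref{prop:theta:winding0} and substitute $s = t - \theta_\nu(K)$. That reduction is correct and is precisely what the paper does.

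However, the explicit case split you write down is wrong, and so is the amended version in your final paragraph. You claim that for $\theta_\nu(P) \leq 0$ the difference $g(s) \coloneqq \nu(P_s(U)) - \nu(P(U))$ equals $0$ for $s < \theta_\nu(P)$ and $-1$ for $s \geq \theta_\nu(P)$; but since $\theta_\nu(P) \leq 0$, taking $s = 0 \geq \theta_\nu(P)$ would give $g(0) = -1$, whereas $g(0) = \nu(P_0(U)) - \nu(P(U)) = 0$ because $P_0 = P$. (The same contradiction is present in the displayed case split in your middle paragraph.) By \cref{eq:theta:winding0} the function $s \mapsto \nu(P_s(U))$ takes a high constant value for $s < \theta_\nu(P)$ and a low constant value, smaller by $1$, for $s \geq \theta_\nu(P)$; since $s = 0$ lands on the \emph{low} plateau when $\theta_\nu(P) \leq 0$ and on the \emph{high} plateau when $\theta_\nu(P) > 0$, the correct split is $g(s) = +1$ when $s < \theta_\nu(P) \leq 0$, $g(s) = -1$ when $s \geq \theta_\nu(P) > 0$, and $g(s) = 0$ otherwise. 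Your closing observation — that $\nu(P(U))$ is the pre-jump value iff $\theta_\nu(P) > 0$ — is exactly the ingredient needed, but you must actually carry it through rather than deferring to a ``short case analysis'' that, as written, has the signs reversed and would produce a formula inconsistent with the statement.
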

\begin{proof}
	By hypothesis, 
	\(\nu(P_t(K)) = \nu(P_{t-\theta_\nu(K)}(U))\). 
	The identity now follows from \cref{prop:theta:winding0}, observing that \(\nu(P_{t-\theta_\nu(K)}(U))\) is equal to \(\nu(P(U))\) unless the jump point 
	\(\theta_\nu(P)\) is finite and lies between \(t - \theta_\nu(K)\) and 0.
\end{proof}

In \cref{sec:RasmussenCurves,sec:differentfields}, we will see two different methods of computing the invariants \(\theta_c(K)\) and \(\theta_c(P)\). 
Firstly, they can be computed directly from definition, using the computability of the Rasmussen invariants; see \cref{prop:thetaalgo}. 
Secondly, they can also be easily read off the multicurves associated with pattern and companion tangles.
By applying \cref{prop:eta-slopes} to the curves in \cref{fig:lifts}, we compute the following values for the patterns \(P^T\) associated with the tangles \(T\) from \cref{fig:mainthm}:
\[
\theta_2(\Wh)=\theta_2(P^{T_a})=\theta_2(P^{T_c})=0,
\quad
\theta_2(\Whn)=1,
\quad
\text{and}
\quad
\theta_2(P^{T_b})=\infty.
\]
Similarly, using \cref{cor:theta-from-slopes}, we can easily determine the following values:
\[
\theta_2(3_1)=4,
\quad
\theta_2(4_1)=0,
\quad
\text{and}
\quad
\theta_2(8_{19})=8.
\]
Here \(3_{1} = T_{2,3}\) and \(8_{19} = T_{3,4}\) denote the positive torus knots. 
We have computed the invariants \(\theta_c(K)\) for a range of small knots \(K\). 
The results are available online \cite{thetatable}.
For more details about computations, see \cref{sec:RasmussenCurves,sec:differentfields}. 

Thanks to \cref{thm:main}, once we know \(\theta_2(K)\) and \(\theta_2(P)\), we also know \(\theta_2(P,K)\):

\begin{proposition}\label{thm:winding0:thetas}
	Suppose \(\nu\) is a slice-torus invariant for which the answer to \cref{que:generalize} is `yes'. 
	Then for all knots \(K\) and patterns \(P\) of wrapping number two and winding number zero,
	\[
	\theta_\nu(P,K)=\theta_\nu(P)+\theta_\nu(K).
	\]
\end{proposition}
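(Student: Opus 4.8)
The plan is to deduce this from \cref{prop:theta:winding0} together with the twisting formula of \cref{thm:twisting} — or rather, to extract it directly from the underlying structure of the jump functions. By hypothesis, the answer to \cref{que:generalize} is `yes', so for the triple $(\nu, P, K)$ we have $\nu(P_t(K)) = \nu(P_{t-\theta_\nu(K)}(U))$ for all $t\in\Z$. Also, by definition $\theta_\nu(P) = \theta_\nu(P,U)$ is the jump point of the function $t\mapsto \nu(P_t(U))$ (which is finite, or else $\theta_\nu(P)=\infty$).

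First I would treat the case $\theta_\nu(P) = \infty$. Here the function $t\mapsto\nu(P_t(U))$ is constant by \cref{prop:theta:winding0}, hence so is $t\mapsto\nu(P_{t-\theta_\nu(K)}(U)) = \nu(P_t(K))$; thus the function $t\mapsto\nu(P_t(K))$ has no jump, so $\theta_\nu(P,K)=\infty = \theta_\nu(P)+\theta_\nu(K)$, using the convention $\infty + n = \infty$ and that $\theta_\nu(K)$ is finite by \cref{prop:theta_nu}. In the main case $\theta_\nu(P)\in\Z$, the function $g(s) \coloneqq \nu(P_s(U))$ jumps by $-1$ exactly at $s = \theta_\nu(P)$ and is otherwise constant. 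Then $\nu(P_t(K)) = g(t - \theta_\nu(K))$, so as a function of $t$ this jumps by $-1$ exactly when $t - \theta_\nu(K) = \theta_\nu(P)$, i.e.\ at $t = \theta_\nu(P) + \theta_\nu(K)$, and is otherwise constant. By the uniqueness of the jump point in \cref{prop:theta:winding0}/\cref{def:theta}, this forces $\theta_\nu(P,K) = \theta_\nu(P) + \theta_\nu(K)$.

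The only genuinely delicate point is bookkeeping the $\infty$ conventions and making sure that $t\mapsto\nu(P_t(K))$ really is of the "single jump by $-1$ or constant" shape so that \cref{def:theta} applies to it — but this is immediate from \cref{prop:theta:winding0} applied to the triple $(\nu,P,K)$ itself, which guarantees the dichotomy a priori, so the argument above merely identifies which alternative occurs and where the jump sits. I expect no real obstacle; the proof is a two-line substitution once the shape of the jump function is in hand.
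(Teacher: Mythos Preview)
Your proposal is correct and follows essentially the same approach as the paper's own proof: both use the hypothesis $\nu(P_t(K)) = \nu(P_{t-\theta_\nu(K)}(U))$ to identify the function $t\mapsto\nu(P_t(K))$ as a shift of $t\mapsto\nu(P_t(U))$, then read off that the two are either both constant (yielding $\infty=\infty+\theta_\nu(K)$) or have jump points differing by exactly $\theta_\nu(K)$. Your write-up is slightly more explicit about invoking \cref{prop:theta_nu} for the finiteness of $\theta_\nu(K)$ and about the a priori dichotomy from \cref{prop:theta:winding0}, but the substance is identical.
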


\begin{proof}
	By hypothesis, the functions 
	\(t\mapsto \nu(P(K))\) 
	and 
	\(t\mapsto \nu(P(U))\) 
	are either both constant (in which the asserted identity becomes \(\infty=\infty+\text{integer}\)) or the difference \(\theta_\nu(P,U)-\theta_\nu(P,K)\) between their jump points is indeed precisely \(-\theta_\nu(K)\). 
\end{proof}

Livingston-Naik showed that for all knots \(K\) and \(t\in \Z\), the values \(\nu(\Wh_t(K))\) and \(\nu(\Whn_t(K))\) cannot both be non-zero \cite[Theorem~1]{doubled}.  
We can push this result a little further:

\begin{corollary}
	Suppose \(\nu\) is a slice-torus invariant for which the answer to \cref{que:generalize} is `yes'. 
	Then for each knot \(K\), there is a unique integer \(t\) with 
	$${\nu(\Wh_t(K)) = \nu(\Whn_t(K)) = 0}.$$
	Indeed, we have
	\[
	(\nu(\Wh_t(K)), \nu(\Whn_t(K))) = 
	\begin{cases}
	(0, -1) & t > \theta_\nu(K), \\
	(0, 0)  & t = \theta_\nu(K), \\
	(1, 0)  & t < \theta_\nu(K).
	\end{cases}
	\]
\end{corollary}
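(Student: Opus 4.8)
The plan is to combine the two computations already available: Livingston–Naik's result that $\nu(\Wh_t(K))$ and $\nu(\Whn_t(K))$ are never both non-zero, together with \cref{prop:theta:winding0} applied to the two patterns $\Wh$ and $\Whn$. First I would invoke \cref{prop:theta:winding0} to describe each function separately. Since the answer to \cref{que:generalize} is `yes' for $\nu$, we have $\theta_\nu(\Wh,K)=\theta_\nu(\Wh)+\theta_\nu(K)=\theta_\nu(K)$ by \cref{thm:winding0:thetas} and \cref{ex:nu_of_twist_knots}, and similarly $\theta_\nu(\Whn,K)=\theta_\nu(\Whn)+\theta_\nu(K)=\theta_\nu(K)+1$. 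In particular both jump points are finite, so neither function is constant.

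Next I would pin down the actual values, not just the jump points. Because $\nu$ satisfies the hypothesis, $\nu(\Wh_t(K))=\nu(\Wh_{t-\theta_\nu(K)}(U))$ and $\nu(\Whn_t(K))=\nu(\Whn_{t-\theta_\nu(K)}(U))$, and \cref{ex:nu_of_twist_knots} tells us that $\nu(\Wh_s(U))$ equals $1$ for $s<0$ and $0$ for $s\geq 0$, while $\nu(\Whn_s(U))$ equals $0$ for $s<1$ and $-1$ for $s\geq 1$ (the latter being the analogue computed in the same example, using that $\theta_\nu(\Whn)=1$ and that $\Whn_1(U)$ is the negative trefoil). Substituting $s=t-\theta_\nu(K)$ immediately yields the three-case formula: for $t<\theta_\nu(K)$ we get $(1,0)$; for $t=\theta_\nu(K)$ we get $(0,0)$; for $t>\theta_\nu(K)$ we get $(0,-1)$. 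Uniqueness of the integer $t$ with $\nu(\Wh_t(K))=\nu(\Whn_t(K))=0$ is then read off directly, namely $t=\theta_\nu(K)$.

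The main thing to be careful about is the sign conventions and the exact location of the jumps for the negative Whitehead pattern, since $\Whn$ is the mirror-type companion of $\Wh$ and its twist knots are negative trefoils; I would double-check that $\theta_\nu(\Whn)=1$ (as asserted in \cref{ex:nu_of_twist_knots}) really does shift the jump point of $t\mapsto\nu(\Whn_t(U))$ to $s=1$ and that the post-jump value is $-1$ rather than $+1$. This is the only genuinely non-formal step; everything else is bookkeeping with \cref{prop:theta:winding0}, \cref{thm:winding0:thetas}, and \cref{ex:nu_of_twist_knots}. No appeal to \cite[Theorem~1]{doubled} is strictly needed once the explicit values are in hand — the corollary is simply the assertion that the jump of $\Whn_t(K)$ happens exactly one step after the jump of $\Wh_t(K)$ — but it provides a useful consistency check that $(1,-1)$ never occurs.
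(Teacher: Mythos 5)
Your proof is correct and takes essentially the same route as the paper's: the paper simply cites \cref{thm:twisting} applied to $P=\Whpm$ together with \cref{ex:nu_of_twist_knots}, whereas you unwind that proposition by hand, plugging the explicit values of $\nu(\Whpm_s(U))$ into the hypothesis $\nu(P_t(K))=\nu(P_{t-\theta_\nu(K)}(U))$. Your first paragraph (computing $\theta_\nu(\Whpm,K)$ via \cref{thm:winding0:thetas}) is a correct but dispensable detour, since the second paragraph pins down the actual values independently.
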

\begin{proof}
	By \cref{ex:nu_of_twist_knots}, \(\theta_\nu(\Wh)=0\) and \(\theta_\nu(\Whn)=1\) for all slice-torus invariants \(\nu\). 
	Also note that \(\Whpm(U)\) is the unknot, so \(\nu(\Whpm(U))=0\). 
	Now apply \cref{thm:twisting} to \(P=\Whpm\).
\end{proof}

\subsection{\texorpdfstring{Comparison with the behaviour of \(\bm{\tau}\)}{Comparison with the behaviour of τ}}\label{sec:intro:tau:winding0}

This paper was inspired by results of Hedden, who computed the Ozsváth-Szabó invariant \(\tau\) for twisted Whitehead doubles~\cite{Hedden}. 
From the perspective of the previous subsection, Hedden's results say that for any knot~\(K\), 
\begin{equation}\label{eq:hedden}
\theta_{\tau}(K) = 2\tau(K).
\end{equation}
Interestingly, twisted Whitehead doubles were the first known examples of knots \(K\) with \(\tau(K) \neq \tfrac{s_0}2(K)\). 
In fact, it was Hedden and Ording \cite{HeddenOrding} who found the first such example by showing that
\begin{equation}\label{eq:heddenording}
\tau(\Wh_2(T_{2,3})) = 0\ \neq\ 1= \tfrac{s_0}2(\Wh_2(T_{2,3})).
\end{equation}
This inequality can now be seen as a consequence of the fact that \(\theta_\tau(T_{2,3}) = 2\tau(T_{2,3}) = 2\), whereas \(\theta_0(T_{2,3}) = 3\); see \eqref{eq:theta_for_trefoil} in \cref{sec:differentfields}. 
In view of identity~\eqref{eq:hedden}, even if \cref{que:generalize} has a positive answer for \(\nu=\tau\), the concordance homomorphism $\vartheta_\tau$ thus obtained is not new. 
However, identity~\eqref{eq:hedden} allows us to reformulate \cref{que:generalize} as follows:

\begin{question}\label{q:tau}
Does the Ozsváth-Szabó invariant \(\tau\) satisfy
\[
\tau(P(K)) = \tau(P_{-2\tau(K)}(U))
\]
for all knots \(K\subset S^3\) and patterns \(P\) with wrapping number 2 and winding number 0?
\end{question}

In \cref{subsec:geom:tau}, we discuss a result of Levine~\cite{MR2971610} which gives a positive answer to \cref{q:tau} for certain patterns $P$.

A common way to show that two slice-torus invariants \(\nu\), \(\nu'\) are not equal is to evaluate them
on a well-chosen twisted Whitehead double \(W\): see e.g.~\cref{eq:heddenording} above for \(s_0 \neq \tau\)
or \cite{postcard} for \(s_0 \neq s_3\). However, in this way one can only ever find knots \(W\) with
$|\nu(W) - \nu'(W)| = 1$; see \cref{ex:nu_of_twist_knots}. 
Does there exist a knot $K$ with three-genus 1 such that $|\nu(K) - \nu'(K)| = 2$?
We prove the following in \cref{subsec:geom:tau}.

\begin{restatable}{proposition}{genusone} \label{prop:genus-1-knots}
	There exists a knot \(K\) with three-genus \(1\), Alexander polynomial~\(1\), \(\tau(K) = 1\) and \(s_2(K) = -2\).
	More generally, for any prescribed triple \((g, a, b)\) of integers with ${|a| \leq g}$, $|b| \leq g$,
	there exists a knot \(L\) with three-genus $g$, Alexander polynomial~\(1\), \(\tau(L) = a\) and \(s_2(L) = 2b\).
\end{restatable}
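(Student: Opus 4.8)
The plan is to build the knot $K$ (and more generally $L$) explicitly as a twisted satellite and pin down its invariants one at a time. The constraint ``genus $g$ and Alexander polynomial $1$'' strongly suggests working with patterns of winding number zero inside a genus-$g$ Seifert surface: taking $P$ a winding number zero pattern whose pattern tangle sits in a genus-one piece of surface, the satellite $P(K')$ has a Seifert surface of genus $g$ built from $g$ parallel copies (or a connected sum of $g$ such summands), and winding number zero guarantees that the Seifert form is metabolic in a way that kills the Alexander polynomial, making it $1$. The positive and negative Whitehead patterns $\Wh,\Whn$ are the prototypical examples, and iterated or connected-summed Whitehead doubles are the natural candidates. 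So I would set, as a first approximation, $K = \Wh_{t}(J)$ for a suitable companion $J$ and twisting parameter $t$, and for the general statement take connected sums of such building blocks to reach genus $g$.

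Next I would compute $\tau$ and $s_2$ of this candidate using the machinery already in the excerpt. For $\tau$, Hedden's formula \eqref{eq:hedden}, $\theta_\tau(J)=2\tau(J)$, together with \cref{prop:theta:winding0} (and \cref{ex:nu_of_twist_knots}, giving $\theta_\tau(\Wh)=0$), determines $\tau(\Wh_t(J))$ exactly: it is $1$ when $t<2\tau(J)$ and $0$ when $t\ge 2\tau(J)$. For $s_2$, \cref{thm:main} gives $s_2(\Wh_t(J))=s_2(\Wh_{t-\theta_2(J)}(U))$, and then \cref{ex:nu_of_twist_knots} applied to $\nu=\nicefrac{s_2}{2}$ computes the right-hand side: $s_2=2$ for $t-\theta_2(J)<0$ and $s_2=0$ otherwise. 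The point is that $\theta_2(J)$ and $2\tau(J)$ can be made to differ — indeed $\theta_0(T_{2,3})=3$ while $\theta_2(T_{2,3})=4$ and $2\tau(T_{2,3})=2$ — so by choosing $J$ with $\theta_2(J)\ne 2\tau(J)$ and $t$ in the gap between them, one forces $\tau(K)$ and $\nicefrac{s_2}{2}(K)$ to take different prescribed values, e.g.\ $\tau=1$ while $s_2=-2$ (using the negative Whitehead pattern $\Whn$, or a mirror, to flip signs and reach $-2$ rather than $+2$). For the general triple $(g,a,b)$ I would take a connected sum $K=\#_{i}\,\Whpm_{t_i}(J_i)$ of $g$ winding-number-zero Whitehead-type satellites, using additivity of $\tau$, of $s_2$, and of the genus under connected sum (genus is additive for these satellites since each summand is fibered-like of genus one with a genus-one Seifert surface), choosing signs and parameters so that exactly $a$ of the $\tau$-contributions are $\pm 1$ in the right pattern and $b$ of the $s_2/2$-contributions are $\pm 1$, independently; the Alexander polynomial stays $1$ because each winding-number-zero Whitehead-double summand has Alexander polynomial $1$ and the polynomial is multiplicative under connected sum.

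The step I expect to be the genuine obstacle is decoupling $\tau$ and $s_2$ cleanly enough to realize \emph{every} pair $(a,b)$ with $|a|\le g$, $|b|\le g$ rather than only pairs lying on or near the diagonal. Each elementary building block $\Whpm_{t_i}(J_i)$ contributes a pair of invariants $(\tau\text{-contribution},\,s_2/2\text{-contribution})$ that lies in a small finite set determined by the position of $t_i$ relative to $2\tau(J_i)$ and to $\theta_2(J_i)$; I need to verify that by suitable choices of $J_i$ (with a controllable difference $\theta_2(J_i)-2\tau(J_i)$, for which torus knots and their iterated cables give a flexible supply, cf.\ \cref{thm:indep}) and of $t_i$ and of the sign $\pm$, the available contribution-pairs include $(+1,0)$, $(-1,0)$, $(0,+1)$, $(0,-1)$, and $(0,0)$ — and ideally also $(+1,+1)$ and $(-1,-1)$ so that fewer summands are needed, though that is not required for the stated bound. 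Granting that, the additivity over the $g$ summands lets me hit any target in the box $|a|\le g$, $|b|\le g$ while keeping the total genus exactly $g$ and the Alexander polynomial $1$. A secondary technical point to check carefully is the genus lower bound: $g(K)\ge |\tau(K)|$ and $g(K)\ge|s_2(K)|/2$ are automatic, but I also want $g(K)$ to be \emph{exactly} $g$, which requires exhibiting the genus-$g$ Seifert surface from the satellite construction and arguing it is of minimal genus (e.g.\ via the Seifert-genus additivity of satellites of winding number zero, or directly from a fiberedness/Bennequin-type bound), so that the construction does not accidentally overshoot.
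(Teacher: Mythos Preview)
Your plan has a genuine gap at precisely the point you flag as ``the genuine obstacle'', and it cannot be repaired within the Whitehead-double framework.

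Observe that for \emph{any} companion $J$ and twist $t$, the pair $\bigl(\tau(\Wh_t(J)),\tfrac{1}{2}s_2(\Wh_t(J))\bigr)$ lies in $\{0,1\}^2$, and for $\Whn$ or mirrors it lies in $\{0,-1\}^2$. In either case the two coordinates never have opposite signs. Hence no single Whitehead double (positive, negative, or mirrored) can realise $(\tau,\tfrac{s_2}{2})=(1,-1)$, which is exactly the first claim of the proposition. Your parenthetical suggestion to use $\Whn$ ``to flip signs and reach $-2$'' fails because $\Whn$ simultaneously forces $\tau\in\{0,-1\}$.

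The same obstruction kills the general statement. You assert that the five contribution-pairs $(\pm1,0),(0,\pm1),(0,0)$ suffice, but they do not: each such summand satisfies $|\tau_i|+|\tfrac{1}{2}s_{2,i}|\le 1$, so with $g$ summands one has $|a|+|b|\le g$. The target $(a,b)=(g,-g)$ (or any point with $|a|+|b|>g$) is therefore unreachable. You genuinely need a building block with $(\tau,\tfrac{s_2}{2})=(1,-1)$.

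The paper obtains this missing block by going beyond Whitehead doubles to Levine's patterns $D^{J,u}$, which are plumbings of \emph{two} knotted bands rather than one. For these, the formulas \eqref{eq:levine} and \eqref{eq:levinetheta} show that $\tau$ and $\tfrac{s_2}{2}$ are governed by independent inequalities: $\tau(D^{J,0}_0(J))=1$ iff $0<2\tau(J)$, whereas $\tfrac{1}{2}s_2(D^{J,0}_0(J))=-1$ iff $0>\theta_2(J)$. Taking $J=T_{3,4}^{\#2}\#T_{-2,3}^{\#5}$, one has $\tau(J)=1>0$ and $\theta_2(J)=-4<0$, so $K=D^{J,0}(J)$ has $\tau(K)=1$ and $s_2(K)=-2$. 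This knot has genus one and Alexander polynomial~$1$ because it bounds a Hopf plumbing of two untwisted bands with $J$ tied into each, which has the same Seifert form as the corresponding unknotted plumbing. With this extra building block (and its mirror), together with the Whitehead doubles you already identified, connected sums of $g$ genus-one summands reach every $(a,b)$ in the box.
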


\subsection{Geometric applications}
Let us showcase two corollaries to \cref{thm:main}.
The first answers an open question of Hedden and Pinzón-Caicedo,
to ``find any knot for which the Whitehead double of both \(K\) and \(-K\) are non-zero in concordance''~\cite{hp}.

\begin{corollary}\label{thm:hpc}
There exists a knot \(K\) such that
\(\Wh(K)\) and \(\Wh(-K)\) generate a summand of the smooth concordance group \(\mathcal{C}\) that is isomorphic to \(\Z^2\).
\end{corollary}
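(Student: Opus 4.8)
The plan is to combine the newly constructed homomorphism $\theta_2$ with the Rasmussen invariant $s_2$ (and possibly $\tau$) to obtain two linearly independent concordance homomorphisms detecting $\Wh(K)$ and $\Wh(-K)$ simultaneously. First I would recall from \cref{ex:nu_of_twist_knots} that $\theta_\nu(\Wh)=0$ for every slice-torus invariant $\nu$, and that $\Wh(U)$ is the unknot; combined with \cref{thm:main} this means $s_2(\Wh(K))$ depends only on $\theta_2(K)$, and likewise for $\tau$ via Hedden's formula $\theta_\tau(K)=2\tau(K)$. The key observation is that $\theta_2$ is \emph{odd} under concordance inversion in the sense that $\theta_2(-K)=-\theta_2(K)$, since $\theta_2$ is a concordance homomorphism by \cref{thm:main2}; the same holds for $\tau$. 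So we want a knot $K$ on which the pair of homomorphisms $(\theta_2,\tau)$ — or $(\theta_2,s_2)$ — is such that the four values $\theta_2(\Wh(\pm K))$, $\tau(\Wh(\pm K))$ are governed by $\pm\theta_2(K)$ in a way that forces $\Wh(K),\Wh(-K)$ to be linearly independent in $\mathcal C$.

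Concretely, I would search among small knots using the computed table \cite{thetatable} for a knot $K$ with $\theta_2(K)\neq 0$ and $\theta_2(K)\neq 2\tau(K)$ — equivalently $\theta_2(K)\neq\theta_\tau(K)$ — so that the two concordance homomorphisms $\theta_2$ and $\theta_\tau=2\tau$ take "independent" values on $K$. For instance the torus knot $T_{2,3}$ has $\theta_2=4$ but $\theta_\tau=2\tau(T_{2,3})=2$; more generally one wants a $K$ realizing a vector $(\theta_2(K),2\tau(K))$ not proportional to any fixed vector. Then I would define, for an arbitrary knot $J$, the invariant $\Phi(J):=\big(\theta_2(\Wh(?)),\dots\big)$—more precisely, I would consider the two concordance homomorphisms $\mathcal C\to\Z$ given by $J\mapsto \theta_2(J)$ and $J\mapsto \tau(J)$ restricted to the subgroup generated by $\Wh(K)$ and $\Wh(-K)$, and show the resulting $2\times 2$ matrix of values is nonsingular. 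Since $\theta_2(\Wh(K))$ is itself a concordance homomorphism in the companion by \cref{thm:main2}, one in fact has a homomorphism $\mathcal C\to\Z$, $J\mapsto \theta_2(\Wh(J))$; evaluating this and $\tau(\Wh(J))$ on $K$ and $-K$ should give a nonsingular matrix precisely when $\theta_2(K)$ and $\tau(K)$ are "independent", and nonsingularity of the matrix of homomorphism values on $\Wh(K),\Wh(-K)$ shows these two knots span a $\Z^2$ on which these homomorphisms split, hence a $\Z^2$ \emph{summand} of $\mathcal C$.

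The main obstacle I anticipate is the splitting/summand claim rather than mere linear independence: producing an explicit surjection $\mathcal C\to\Z^2$ whose restriction to $\langle\Wh(K),\Wh(-K)\rangle$ is an isomorphism. Linear independence follows immediately from a nonsingular matrix of two concordance homomorphisms, but to get a \emph{direct summand} one needs the matrix of homomorphism values to be not merely nonsingular over $\Q$ but unimodular (determinant $\pm 1$) over $\Z$, or else one must replace the two chosen homomorphisms by suitable integer linear combinations so that their values on $\Wh(K),\Wh(-K)$ form an identity matrix. So the real work is: (i) pick $K$ from the table with $\theta_2(K)$ and $\tau(K)$ suitably coprime-ish; (ii) compute the $2\times 2$ integer matrix of $(\theta_2\circ\Wh,\ \tau\circ\Wh)$ (or $(\theta_2\circ\Wh,\ s_2\circ\Wh)$) on $\{K,-K\}$ using \cref{thm:main}, \cref{ex:nu_of_twist_knots}, and Hedden's $\theta_\tau=2\tau$; (iii) post-compose with an element of $\mathrm{GL}_2(\Z)$ to make it the identity, exhibiting the desired splitting. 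I expect step (ii)–(iii) to be a short explicit computation once $K$ is chosen; the one genuinely delicate point is checking the determinant is $\pm 1$, which may require choosing $K$ with some care or taking a connected sum of two such knots to adjust the matrix.
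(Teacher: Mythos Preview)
Your proposal has the right overall shape---find two concordance homomorphisms and a knot $K$ such that the resulting $2\times 2$ matrix on $\Wh(K),\Wh(-K)$ is unimodular---but there are genuine gaps in the execution.

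First, you repeatedly suggest using $\theta_2$ as one of the two homomorphisms to evaluate on $\Wh(K)$ and $\Wh(-K)$. But we have no formula for $\theta_2(\Wh(K))$; this is precisely the open \cref{sec:intro:further_questions} question in the paper. Relatedly, your claim that $J\mapsto \theta_2(\Wh(J))$ is a homomorphism is false: \cref{thm:main2} says $K\mapsto\theta_2(K)$ is a homomorphism, not that precomposing with the (non-additive!) Whitehead doubling map preserves this. The correct pair of homomorphisms is $(\tau,\nicefrac{s_2}{2})$, evaluated directly on the doubles. The point of $\theta_2$ is not to be one of the detecting homomorphisms; rather, it controls the value of $s_2$ on the double via \cref{thm:main}: one has $\nicefrac{s_2}{2}(\Wh(K))=1$ if $\theta_2(K)>0$ and $0$ otherwise, while Hedden gives $\tau(\Wh(K))=1$ if $\tau(K)>0$ and $0$ otherwise.

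Second, your candidate $K=T_{2,3}$ fails: it has $\tau=1>0$ and $\theta_2=4>0$, so both $\tau$ and $\nicefrac{s_2}{2}$ send $\Wh(K)$ to $1$ and $\Wh(-K)$ to $0$, a singular matrix. What you actually need is a knot with $\tau(K)$ and $\theta_2(K)$ of \emph{opposite signs}. Since $\theta_2$ is a homomorphism independent of $\tau$ (by \cref{thm:main2} and \cref{table:t23t34}), one can engineer this with a connected sum: the paper takes $K=T_{3,4}^{\#2}\#T_{-2,3}^{\#5}$, giving $\tau(K)=1>0$ and $\theta_2(K)=-4<0$, and then the matrix of $(\tau,\nicefrac{s_2}{2})$ on $(\Wh(K),\Wh(-K))$ is the identity, so no post-composition with $\mathrm{GL}_2(\Z)$ is needed.
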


\begin{wraptable}[6]{r}{0.25\textwidth}
	\vspace{1ex}
	\centering
	\begin{tabular}{r|cc}
		knot       & \(\tau\) & \(s_2\) \\\hline
		\(\Wh(K)\)  & 1        & 0 \rule{0pt}{2.3ex}\\
		\(\Wh(-K)\) & 0        & 2
	\end{tabular}
	\medskip
	\caption{}\label{table:wkminusk}
\end{wraptable}
\noindent\emph{Proof.}
Let \(K = T_{3,4}^{\# 2}\# T_{-2,3}^{\#5}\).
Using the additivity of $\tau$ and $\theta_2$ with respect to the connected sum operation~$\#$
and \cref{table:t23t34},
one computes \(\tau(K) = 1 \) and \(\theta_2(K) = -4\), and thus finds the values shown
in \cref{table:wkminusk}.
So the split epimorphism \((\tau,\nicefrac{s_2}2)\colon \mathcal{C}\to\Z^2\) admits the right-sided inverse
\((\Wh(K), \Wh(-K))\), proving the existence of the desired summand.
\qed\bigskip

The second corollary concerns a conjecture of Hedden \cite{hp}.
Let \(P\) be a pattern with wrapping number two and winding number zero.
The conjecture then says that if \(K\mapsto P(K)\) induces a homomorphism \(\mathcal{C}\to\mathcal{C}\),
then it is the zero homomorphism, i.e.\ $P(K)$ is slice for all $K$.
Some evidence for this conjecture is provided by the fact that 
\(\nu(P(K)) = 0\) holds for all knots \(K\) and slice-torus invariants \(\nu\).
Indeed, this follows because by assumption on \(P\),
\(\nu(P(K^{\# n})) = \nu(P(K)^{\#n})\), which in turn is equal to \(n\cdot \nu(P(K))\) for all \(n\in\Z\); but the set \(\{\nu(P(K))\mid K\text{ a knot}\}\) contains at most two elements by \cref{prop:theta:winding0}.
Using \cref{thm:main}, we find another restriction for such \(P\).
%

\begin{corollary}
        Let \(P\) be a pattern with wrapping number two and winding number zero, such that
        the function \(K \mapsto P(K)\) induces an endomorphism on the smooth concordance group.
        Then \(s_2(P_t(K)) = 0\) holds for all knots \(K\) and all integers \(t\).
        In particular, \(\theta_2(P)=\infty\).
\end{corollary}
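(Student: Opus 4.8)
The plan is to reduce the whole statement to understanding a single one‑variable function. Set $g\colon\Z\to\Z$, $g(t)\coloneqq s_2(P_t(U))$; I will show $g\equiv 0$, from which both assertions of the corollary follow at once.

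First I would assemble two inputs. From the discussion immediately preceding the corollary we already know that $\nu(P(K))=0$ for every slice‑torus invariant $\nu$ and every knot $K$; taking $\nu=\nicefrac{s_2}{2}$ gives $s_2(P(K))=0$ for all $K$. Second, since $P_t$ again has wrapping number two and winding number zero, \cref{thm:main} applies verbatim with $P_t$ in place of $P$, and using $(P_t)_{s}=P_{t+s}$ it yields
\[
s_2(P_t(K))=s_2\bigl((P_t)_{-\theta_2(K)}(U)\bigr)=g\bigl(t-\theta_2(K)\bigr)
\qquad\text{for all }t\in\Z\text{ and all knots }K.
\]
Evaluating this at $t=0$ and at the companions $K=T_{2,3}^{\#n}$, and using that $\theta_2$ is additive under connected sum (\cref{thm:main2}) with $\theta_2(T_{2,3})=4$ (see \cref{table:t23t34}), I get $g(-4n)=s_2\bigl(P(T_{2,3}^{\#n})\bigr)=0$ for every $n\in\Z$; that is, $g$ vanishes on $4\Z$.

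Finally I would invoke \cref{prop:theta:winding0} with $\nu=\nicefrac{s_2}{2}$, pattern $P$ and companion $U$: the function $g$ is either constant, or it takes exactly two values differing by $2$, one on a left half‑line $\{t<\theta\}$ and the other on the complementary right half‑line, for some $\theta\in\Z$. The second alternative is incompatible with $g$ vanishing on $4\Z$, which meets both half‑lines; hence $g$ is constant, and since $g(0)=s_2(P(U))=0$ we conclude $g\equiv 0$. Substituting back into the displayed identity gives $s_2(P_t(K))=0$ for all $t$ and $K$; taking $K=U$ shows that $t\mapsto\nicefrac{s_2}{2}(P_t(U))$ is constant, so $\theta_2(P)=\infty$ by \cref{def:theta}. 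I do not expect a genuine obstacle here: everything is a direct substitution of \cref{thm:main} into the already‑established vanishing $s_2(P(\cdot))\equiv 0$, and the only slightly substantive point is the last dichotomy, which goes through precisely because the known value $\theta_2(T_{2,3})=4$ together with additivity of $\theta_2$ forces the zero set of $g$ to be cofinal in both directions of $\Z$.
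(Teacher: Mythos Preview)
Your proof is correct and uses the same essential ingredients as the paper: \cref{thm:main} to reduce to the one-variable function $g(t)=s_2(P_t(U))$, the value $\theta_2(T_{2,3}^{\#n})=4n$ to pin down $g$ on $4\Z$, and the dichotomy of \cref{prop:theta:winding0} to force $g\equiv 0$. The paper's proof differs only stylistically: it argues by contradiction (assuming $s_2(P_t(K))>0$, deriving $s_2(P(T_{2,3}))\neq 0$, and obtaining unboundedness of $s_2(P(T_{2,3}^{\#u}))$ in conflict with the genus-one cobordism), whereas you proceed directly by invoking the already-established vanishing $s_2(P(K))=0$ from the paragraph preceding the corollary—arguably a cleaner packaging of the same argument.
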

\begin{proof}
Let \(P\) be a pattern as in the hypothesis, and assume \(s_2(P_t(K)) > 0\) for some knot \(K\) and some \(t\in\Z\).
Let \(u\) be a positive integer such that \(-4u \leq t-\theta_2(K)\). Then
\[
0 < s_2(P_t(K)) = s_2(P_{t-\theta_2(K)}(U)) \leq s_2(P_{-4u}(U)) =
s_2(P(T_{2,3}^{\# u})) = u \cdot s_2(P(T_{2,3})).
\]
Here, the first two equalities follow from \cref{thm:main} and the last equality from the assumption on~\(P\). 
Consequently,  \(s_2(P(T_{2,3})) \neq 0\). 
Therefore, the value \(s_2(P(T_{2,3}^{\# u}))\) is unbounded as \(u\to\infty\). 
This contradicts the fact that the knots \(P(T_{2,3}^{\# u})\) and \(P(T_{2,3}^{\# u'})\)
are related by a smooth cobordism of genus one for any \(u,u'\in\Z\).
Hence we have led the assumption \(s_2(P_t(K)) > 0\) ad absurdum. One proceeds similarly for \(s_2(P_t(K)) < 0\).
So it follows that \(s_2(P_t(K)) = 0\) for all knots \(K\) and integers \(t\).
\end{proof}


\subsection{\texorpdfstring{Does \cref{thm:main} hold over arbitrary coefficients?}{Does Theorem~\ref{thm:main} hold over arbitrary coefficients?}}
As discussed in \cref{subsec:multicurves}, our proof of \cref{thm:main} heavily relies on the immersed multicurves technology.
At the moment, this technology is the furthest developed for ground fields of characteristic~2.
However, multicurves exist for fields of all characteristics, and we expect our proof of \cref{thm:main} to extend without major changes to all ground fields, thus giving a positive answer to \cref{que:generalize} for $\nu = s_c$. 

\begin{conjecture}\label{conj:not2}
	For all knots \(K\subset S^3\), all \(c\) equal to 0 or a prime, and all patterns \(P\) with wrapping number two and winding number zero, we have
	\[
	s_c(P(K))
	=
	s_c(P_{-\theta_c(K)}(U)),
	\]
	where \(\theta_c(K)\) is the integer from \cref{def:theta}. 
\end{conjecture}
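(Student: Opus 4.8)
The plan is to run the argument behind \cref{thm:main} — carried out in detail in \cref{sec:RasmussenCurves} for $c = 2$ — over an arbitrary coefficient field, i.e.\ over $\F_c$ for $c$ prime and over $\Q$ for $c = 0$ (which suffices, since $s_0$ is defined over $\Q$). As recalled in \cref{subsec:multicurves}, that argument rests on three ingredients: (i) the classification of chain complexes over Bar-Natan's cobordism category $\Cob_{/l}$ for a Conway tangle in terms of immersed multicurves on the four-punctured sphere, together with a gluing theorem computing the homology of a pairing as Lagrangian intersection Floer homology of the curves; (ii) a formula expressing $s_c(T_P\cup T_K)$ purely in terms of the non-compact components $\BNr_a(T_P)$ and $\BNr_a(T_K)$; and (iii) a rational-approximation statement saying that, as far as this pairing is concerned, $\BNr_a(T_K)$ behaves like the straight arc $\BNr_a(Q_{2\theta_c(K)})$ of the rational tangle with $2\theta_c(K)$ half-twists. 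Granting all three over $\F_c$, the conjecture follows from the chain of equalities $s_c(P(K)) = s_c(T_P\cup T_K) = s_c(T_P\cup Q_{2\theta_c(K)}) = s_c(P_{-\theta_c(K)}(U))$, and the integer one extracts in (iii) is forced to coincide with $\theta_{\nicefrac{s_c}{2}}(K)$ exactly as in the discussion following \cref{que:generalize}.

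The first step is to extend the structural results of \cite{KWZ} from $\F_2$ to $\F_c$. Since $\Cob_{/l}$ is defined over any commutative ring, one redoes the whole package: a Gaussian-elimination-style algorithm bringing every complex into a minimal form, a decomposition of minimal complexes into indecomposables, an identification of the indecomposables with (possibly decorated) immersed curves, and the corresponding gluing and pairing theorem. The one genuinely new feature to anticipate is that outside characteristic $2$ the indecomposable complexes may carry nontrivial local systems. For the present application it would be enough to show that no such local system appears on the non-compact component $\BNr_a$, or, failing that, that it cannot affect the value of $s_c$. I expect this characteristic-independent rebuilding of \cite{KWZ} — and in particular the control of local systems — to be the main obstacle.

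For ingredient (ii), recall that $s_c(L)$ is read off from the reduced homology of $L$ over the appropriate quadratic deformation $\F_c[X]/(X^2+\epsilon)$: the Lee-type deformation $\epsilon = -1$ when $c\neq 2$, and Bar-Natan's $\epsilon = X$ (that is, $X^2+X$) when $c = 2$. With the gluing theorem of (i) re-established over $\F_c$, this reduced homology is computed from the geometric intersections of $\BNr_a(T_P)$ and $\BNr_a(T_K)$, and the two generators carrying the $s_c$-information pick out the relevant puncture of the four-punctured sphere. For ingredient (iii) one then studies $\BNr_a(T_K)$ near that puncture: one shows it has a well-defined slope and that, after the number of twists prescribed by that slope — this number being $\theta_c(K)$, an invariant of $T_K$ alone — it is isotopic, rel the data relevant to the pairing, to the straight arc of $Q_{2\theta_c(K)}$. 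This is precisely the Conway-tangle rationality-up-to-approximation phenomenon established in characteristic $2$ in \cref{sec:RasmussenCurves}; the argument there is geometric and, modulo ingredient (i), should transplant to $\F_c$ with only bookkeeping changes.

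It is worth stressing that there is no shortcut via universal coefficients: the invariants $s_c$ genuinely differ across characteristics, so the cases $c\neq 2$ do not reduce to $c = 2$, and each characteristic demands its own multicurve analysis. To summarise, the work splits into (i) a from-scratch extension of \cite{KWZ} to all fields — the hard part, hinging on local systems — followed by (ii) and (iii), which amount to rerunning \cref{sec:RasmussenCurves} in the new characteristic.
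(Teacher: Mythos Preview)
This statement is \cref{conj:not2}, a \emph{conjecture} that the paper explicitly leaves open; there is no proof in the paper to compare against. The paper says only that it ``expects'' the proof of \cref{thm:main} to extend to all characteristics. Your proposal is therefore not a proof but a strategy outline, and it misidentifies where the actual difficulty lies.

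You claim the main obstacle is rebuilding the multicurve package of \cite{KWZ} over $\F_c$ and controlling local systems on $\BNr_a$. Neither is an issue. The classification of complexes by multicurves, the pairing theorem (\cref{thm:pairing:BNr}), the Four Curves Lemma (\cref{lem:4curves}), and \cref{thm:s2s} are already established in \cite{KWZ} over arbitrary fields; the paper uses them freely for all $c$ (note that \cref{prop:main:Ni} is stated and proved for all~$c$). And non-compact curves never carry nontrivial local systems: as remarked in the sketch proof of \cref{thm:Kh:Twisting:arcs}, ``all minus signs can be pushed off the ends''. So the theory you propose to rebuild already exists, and the local-system phenomenon you fear does not occur on $\BNr_a$.

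The genuine obstruction is elsewhere. To deduce \cref{cor:main:Ni} from \cref{prop:main:Ni}, one needs the slope $\sigma'$ of $\BNr_a(T_K;\F_c)$ at the lower right puncture to be an \emph{even} integer, so that one may set $n=\sigma'/2$ and then invoke \cref{lem:no-trapped-curves} to kill the correction term. For $c=2$ this evenness is the final clause of \cref{lem:slopes-for-cap-trivial}, which rests on \cite[Theorem~4.1]{KWZ_linear} (the arc does not wrap around the special puncture). The paper is explicit about this immediately after \cref{lem:no-trapped-curves}: ``It is easy to generalise \cref{lem:no-trapped-curves} to arbitrary field of coefficients. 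The deep result\ldots is the fact that for $c=2$, the slope $\sigma'$ is even. Computations suggest these slopes to be always even.'' Establishing that evenness for $c\neq 2$ is the missing step; your proposal does not address it. (A smaller inaccuracy: the paper works uniformly with Bar-Natan's deformation $X^2-X$ for all $c$, not with Lee's $X^2-1$ for $c\neq 2$; see the remark after \cref{defprop:s}.)
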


\subsection{\texorpdfstring{Winding number ±2 patterns and \({\bm\theta_\nu}\)-rationality}{Winding number ±2 patterns and ϑ\_{}ν-rationality}}
\label{subsec:slice-torus2}
The techniques for the proof of \cref{thm:main} can be adapted to patterns with wrapping number 2 and winding number \(\pm2\). 
As before, this allows us to compute the Rasmussen invariant of any satellite knot (with such a pattern) as a function of invariants extracted independently from the pattern and the companion. 
Interestingly, the technical difficulty that tied us to working over characteristic \(c=2\) in \cref{thm:main} disappears: We can now work over arbitrary characteristics \(c\).

However, compared to the setting considered previously, there is one fundamental difference, which is not particular to the Rasmussen invariants, but which appears in the behaviour the Ozsváth-Szabó invariant \(\tau\), too. 
In fact, this structural difference can be understood for all slice-torus invariants.
So we defer the statement of our winding number ±2 pattern results for the Rasmussen invariants to \cref{sec:intro:winding2} and first consider the following general setting: 
As in \cref{subsec:generalize1}, let \(\nu\) denote a slice-torus invariant and \(K\) a knot, but now \(P\) denotes a pattern of wrapping number two and winding number~\(\pm2\). 
In analogy with \cref{prop:theta:winding0}, we have the following result, which is illustrated in \cref{fig:def:theta:winding2}. 

\begin{proposition}\label{prop:theta:winding2}
	For any fixed triple \((\nu,P,K)\), the function \(\Z\rightarrow\Z\) given by \(t\mapsto \nu(P_t(K))\) is either affine of slope \(1\) or it has a single ``stationary point'', in the sense that there exists a unique integer \(\theta\in\Z\) such that for all \(t\in\Z\)
	\begin{equation}\label{eq:theta:winding2}
	\nu(P_{t}(K))
	=
	\nu(P_{t-1}(K))
	+
	\begin{cases*}
	0
	&
	if \(t=\theta\)
	\\
	1
	&
	otherwise.
	\end{cases*}
	\end{equation}
\end{proposition}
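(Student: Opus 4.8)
The plan is to reduce the statement to a slice-genus estimate for cobordisms between successive twisted satellites, exactly as one expects Livingston--Naik's argument (\cite{doubled}) to run in the winding number \(\pm2\) case. The two qualitative ingredients I would isolate are: (i) a \emph{monotonicity/Lipschitz} bound showing that \(\nu(P_t(K)) - \nu(P_{t-1}(K)) \in \{0,1\}\) for every \(t\); and (ii) a \emph{stabilisation} statement showing that this difference equals \(1\) for all \(t\) sufficiently negative and for all \(t\) sufficiently positive. Granting (i) and (ii), the function \(t \mapsto \nu(P_t(K)) - t\) is non-increasing, takes steps in \(\{0,-1\}\), and is eventually constant in both directions; hence it drops by at most one in total, which immediately yields the dichotomy in the statement (either the difference is always \(1\), i.e.\ the function is affine of slope \(1\), or there is exactly one step \(t=\theta\) where the difference is \(0\)).

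For step (i), I would compare the diagrams of \(P_t(K)\) and \(P_{t-1}(K)\): they differ by a single full twist on the two strands of the pattern running through the twist region, so there is a genus-zero cobordism between them built from a single saddle in each direction, or alternatively \(P_{t-1}(K)\) is obtained from \(P_t(K)\) by changing one crossing. A slice-torus invariant \(\nu\) satisfies \(|\nu(K_+) - \nu(K_-)| \le 1\) under a crossing change and more precisely \(\nu(K_-) \le \nu(K_+) \le \nu(K_-)+1\) when \(K_+\) has the positive crossing; since adding a right-handed full twist to the two coherently-or-incoherently oriented strands introduces positive crossings, one gets \(\nu(P_{t-1}(K)) \le \nu(P_t(K)) \le \nu(P_{t-1}(K)) + 1\), which is (i). (One must be slightly careful about strand orientations in the winding number \(\pm2\) region, but the two strands there are coherently oriented precisely because the winding number is \(\pm2\) rather than \(0\), which is exactly what makes the slope here \(1\) instead of \(0\) as in \cref{prop:theta:winding0}.)

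For step (ii), the point is that for \(t\) very negative \(P_t(K)\) is (the closure of) a pattern with a large number of negative full twists, and for \(t\) very positive with a large number of positive full twists; in both regimes the relevant band of twists behaves like a piece of a torus knot \(T_{2,n}\), and the slice-torus normalisation forces \(\nu\) to grow linearly with slope exactly \(1\) per full twist. Concretely, I would exhibit, for \(t \gg 0\), a genus-one cobordism from \(P_t(K)\) to \(P_{t+1}(K)\) realising the increase, together with the slice-torus lower bound (via the \(T_{2,n}\) values \(\nu(T_{2,2k+1}) = k\)) showing the increase cannot be \(0\) there; symmetrically for \(t \ll 0\). The cleanest route is probably to quote the analysis already carried out for \(\nu = \tau\) by Hedden \cite{Hedden} and for general \(\nu\) by Livingston--Naik \cite{doubled} in the winding number \(0\) case, and adapt it: the winding number \(\pm2\) case is formally the same once one tracks that the orientations make the twist-region crossings coherent.

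The main obstacle I expect is step (ii): establishing that the difference is \emph{eventually} \(1\) in both directions, rather than merely bounded, requires an actual genus/Seifert-form computation for the satellite with many twists (or an appeal to the multicurve picture, as the paper does elsewhere), and one has to rule out the \emph{a priori} possibility of infinitely many ``stationary points''. The uniqueness of \(\theta\), by contrast, is then automatic: once the total drop of \(t \mapsto \nu(P_t(K)) - t\) is shown to be at most one, there can be at most one \(t\) at which the step is \(0\), and (ii) guarantees that if the function is not affine of slope \(1\) then such a \(t\) exists.
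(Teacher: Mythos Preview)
Your step (i) is fine and matches the paper. The problem is with the overall logic: steps (i) and (ii) together do \emph{not} imply that there is at most one stationary point. From (i), the function \(t\mapsto \nu(P_t(K))-t\) is non-increasing with steps in \(\{0,-1\}\); from (ii), it is eventually constant in both directions. But that only tells you the total drop is \emph{finite}, not that it is at most one. Your sentence ``hence it drops by at most one in total'' is a non-sequitur, and the closing paragraph about ruling out ``infinitely many stationary points'' shows you are aware something is missing but have misidentified what.

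The paper's argument avoids this entirely by proving a \emph{global} two-point estimate rather than your local one-step estimate. For any \(s\le t\), the knot \(P_t(K)\) is obtained from \(P_s(K)\) by inserting \(2(t-s)\) positive crossings between two parallely oriented strands; there are genus-one cobordisms from \(P_t(K)\) to each of \(P_s(K)\# T_{2,2(t-s)\pm1}\), and since \(\nu(T_{2,2k+1})=k\) this yields
\[
\nu(P_t(K))-\nu(P_s(K))\in\{t-s,\ t-s-1\}.
\]
This single inequality, applied to arbitrary \(s<t\), immediately bounds the number of stationary points in any interval (hence globally) by one, and renders your step (ii) unnecessary. The moral: instead of comparing consecutive twists and then trying to control accumulation, compare \emph{any} two twists directly via the connect-sum-with-torus-knot trick.
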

This result was shown for cables by van Cott \cite[Theorem~2]{zbMATH05705465}, and then generalised by Roberts.
It is a special case of \cite[Theorem~2]{zbMATH05964425}, but for the sake
of completeness, we will provide a quick proof in \cref{subsec:geom:slicetorus}.
In analogy to \cref{def:theta}, we have:

\begin{definition}\label{def:theta:winding2}
	For any fixed triple \((\nu,P,K)\), we define \(\theta_{\nu}(P,K)\in\Z\cup\{\infty\}\) 
	as the unique integer \(\theta\) satisfying \eqref{eq:theta:winding2} if such an integer exists and \(\infty\) otherwise; see \cref{fig:def:theta:winding2} for an illustration. 
	Moreover, we set 
	\[
	\theta_{\nu}(P)
	\coloneqq
	\theta_{\nu}(P,U)
	\quad
	\text{and}
	\quad
	\thetap_\nu(K)
	\coloneqq
	\theta_{\nu}(C_{2,1},K),
	\]
	where \(C_{2,1}\) is the $(2,1)$-cable pattern, i.e.\ the pattern corresponding to the rational tangle \(Q_{-1}\) in \cref{fig:Qm1}.
	For \(c=0\) or prime, we use the subscript \(c\) in place of the subscript \(\nu=\nicefrac{s_c}{2}\).
\end{definition}

The key difference between winding number 0 and winding number \(\pm2\) patterns is the following:
Unlike $\theta_\nu(K)$, the invariant $\thetap_\nu(K)$ need not be finite for all knots \(K\); in other words, there is no analogue of \cref{prop:theta_nu}. 
This motivates the following terminology.

\begin{definition}\label{def:theta-rational}
	If \(\thetap_\nu(K)\) is finite, we call \(K\) \emph{\(\theta_\nu\)-rational}, or \emph{\(\theta_c\)-rational} if $\nu = \nicefrac{s_c}{2}$ for \(c=0\) or prime. 
\end{definition}


\begin{remark}\label{rmk:slice}
Since for all patterns $P$, $P(K)$ and $P(J)$ are concordant if $K$ and $J$ are,
it follows that $\theta_{\nu}(P, K)$ is a concordance invariant, for winding number $0$ and $\pm 2$.
So, $\theta'_{\nu}(K)$ and \(\theta_\nu\)-rationality are also concordance invariants.
In particular, slice knots are \(\theta_\nu\)-rational with $\theta'_\nu = \theta_\nu = 0$, since
$\nu(C_{2,\pm 1}(U))=0$ implies that $\theta'_\nu(U) = 0$, and 
$\theta_\nu(U) = 0$ was computed in \cref{ex:nu_of_twist_knots}.
\end{remark}

\begin{example}\label{ex:rationality}
	For \(K=T_{2,3}\) the right-handed trefoil knot, one may compute that  $s_2(C_{2,2t+1}(K)) = 4 + 2t$ holds for all integers $t$.
  So the trefoil knot is not \(\theta_2\)-rational.
  On the other hand, the figure eight knot is $\theta_{\nu}$-rational with $\theta'_\nu = 0$ for all $\nu$, as a consequence of the following proposition.
\end{example}

\begin{proposition}\label{obs:theta0:b}
	Any knot \(K\) with order two in the smooth concordance group, such as an amphicheiral knot, is \(\theta_\nu\)-rational and satisfies \(\thetap_\nu(K)=0\).
\end{proposition}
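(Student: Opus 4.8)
The plan is to exploit the behaviour of slice-torus invariants under twisting together with the hypothesis that $2[K] = 0$ in $\mathcal{C}$, i.e.\ that $K \# K$ is slice (equivalently, $K$ is concordant to its reverse mirror $-K$). Recall from \cref{rmk:slice} that $\theta'_\nu$ is a concordance invariant, and that $\nu(C_{2,1}(U)) = 0$ forces $\theta'_\nu(U) = \theta_\nu(C_{2,1}) = 0$; so by \cref{prop:theta:winding2} applied to $P = C_{2,1}$ and companion $K$, the function $t \mapsto \nu(C_{2,2t+1}(K))$ is affine of slope $1$ precisely when $K$ is \emph{not} $\theta_\nu$-rational, and otherwise has a single stationary point at $t = \theta'_\nu(K)$. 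Thus it suffices to show that this function is \emph{not} eventually of slope $1$ in one direction without also having a stationary point — more precisely, to pin down where the stationary point lies using symmetry.

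First I would use the symmetry coming from $[K] = -[K]$. The key observation is that the $(2,1)$-cable pattern $C_{2,1} = Q_{-1}$ interacts with mirroring in a controlled way: mirroring the companion $K$ to $-K$ and simultaneously reversing the twisting turns $C_{2,2t+1}(K)$ into (the mirror of) $C_{2,-2t-1}(-K)$, up to the standard behaviour of slice-torus invariants under mirroring, namely $\nu(\overline{J}) = -\nu(J)$. Concretely, since $C_{2,1}(U)$ is the unknot and twisting by $t$ on the cable side corresponds to a framing change, one gets an identity of the form $\nu(C_{2,2t+1}(K)) = -\nu(C_{2,-2t+c}(-K))$ for an explicit small constant $c$ determined by the writhe/framing conventions (I expect $c = 1$, so that the relevant reflection is $t \mapsto -t$ after absorbing the shift). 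Combined with $\nu(C_{2,2t+1}(-K)) = \nu(C_{2,2t+1}(K))$ — which holds because $-K$ and $K$ are concordant and $\theta'_\nu$ is a concordance invariant — this yields a functional equation $f(t) = -f(-t)$ (possibly after a shift) for $f(t) := \nu(C_{2,2t+1}(K))$.

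Now I combine this with \cref{prop:theta:winding2}. An odd function cannot be affine of slope $1$ unless it is identically... well, it cannot be affine of slope $1$ at all (slope~$1$ is even in the derivative but the linear term $t$ is odd only if the constant term vanishes, yet a slope-$1$ affine function $t + b$ satisfies $f(t) + f(-t) = 2b \neq 0$ in general, and the odd-function constraint forces $b$-consistency that is incompatible with the integer-jump structure). More carefully: by \cref{prop:theta:winding2} the discrete "derivative" $f(t) - f(t-1)$ equals $1$ for all $t$ except possibly one value $\theta = \theta'_\nu(K)$, where it is $0$. The functional equation $f(t) = -f(-t+\epsilon)$ (with $\epsilon$ the conventions-constant) forces the set of exceptional values to be symmetric about $\epsilon/2$; since there is at most one exceptional value, it must be the center of symmetry, giving $\theta'_\nu(K) = \epsilon/2$, a fixed integer independent of $K$. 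Evaluating at the slice knot $K = U$ (or invoking \cref{rmk:slice}, where $\theta'_\nu(U) = 0$ was recorded) pins this constant to $0$, hence $\theta'_\nu(K) = 0$; in particular it is finite, so $K$ is $\theta_\nu$-rational.

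The main obstacle I anticipate is entirely bookkeeping rather than conceptual: getting the framing/writhe conventions exactly right so that the mirror symmetry of the $(2,1)$-cable pattern produces the functional equation with the correct shift constant $\epsilon$, and thereby confirming that the forced value of $\theta'_\nu(K)$ is $0$ and not some other integer. One must be careful that "$-K$" here means the concordance inverse (reverse mirror) and track how reversing orientation affects the $2$-cable, and one should double-check the edge case distinguishing "affine of slope~$1$" (no stationary point, $\theta'_\nu = \infty$) from "has a stationary point": the odd-symmetry argument rules out the slope-$1$ case because such a function would have to be $f(t) = t + b$ with no exceptional point, but then $f(t) + f(\epsilon - t) = \epsilon + 2b$ must vanish identically \emph{and} the symmetry of exceptional sets is vacuous, which is consistent only if we separately exhibit one stationary point — this we do by the explicit computation $\nu(C_{2,1}(K)) = \nu(C_{2,1}(U)) = 0$ does not immediately help, so instead one notes that an affine slope-$1$ function is \emph{not} odd about any point, contradicting the functional equation, forcing the stationary-point alternative. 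Once the conventions are fixed this last step is immediate.
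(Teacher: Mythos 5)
Your approach is the same as the paper's in spirit — mirror symmetry of the $(2,1)$-cable combined with concordance invariance of $\nu$ and the discrete monotonicity statement from \cref{prop:theta:winding2} — but the paper executes it much more economically and your version, as written, contains errors in the constants and in one of the stated reasons.

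The paper needs only the two values $f(0)=\nu(C_{2,1}(K))$ and $f(-1)=\nu(C_{2,-1}(K))$. From $-C_{2,1}(K)=C_{2,-1}(-K)$ and the concordance $-K\sim K$ it extracts $-f(0)=f(-1)$; \cref{prop:theta:winding2} gives $f(0)-f(-1)\in\{0,1\}$; the option $1$ gives $2f(0)=1$, impossible over $\Z$, so $f(0)=f(-1)=0$ and there is a stationary point at $t=0$. There is no need to derive a full functional equation or to rule out the slope-$1$ affine case separately — the conclusion $f(0)-f(-1)=0$ directly exhibits a stationary point.

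Two concrete issues with your version. First, you guess the shift constant as $c=1$, so that the reflection of the $t$-axis is $t\mapsto -t$, but the correct identity is $-C_{2,2t+1}(K)=C_{2,-2t-1}(-K)$, giving $f(t)=-f(-t-1)$ and the reflection $t\mapsto -t-1$; this matters, because the fixed point of the symmetry of the difference $g(t)=f(t)-f(t-1)$ is $(\epsilon+1)/2$, not $\epsilon/2$ as you claim, and only with $\epsilon=-1$ does this give the integer $0$. (You recover $0$ anyway by ``pinning'' via $K=U$, so the error self-corrects, but the formula you state for the center of symmetry is wrong.) Second, your final reason for excluding the slope-$1$ case — ``an affine slope-$1$ function is not odd about any point'' — is false: $f(t)=t+b$ is odd about $t=-b$. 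The correct obstruction is integrality: the functional equation $f(t)=-f(-t-1)$ forces $2b=1$, which has no integer solution, so an integer-valued affine slope-$1$ function cannot satisfy it. Once you fix the constant and replace the oddness claim with the parity/integrality argument, your proof is sound, but it is worth noticing that comparing only $f(0)$ and $f(-1)$ collapses the whole argument into two lines.
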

\begin{proof}
	The knot $-C_{2,1}(K)$ is isotopic to $C_{2,-1}(-K)$, which is concordant to $C_{2,-1}(K)$.
	So we have $-\nu(C_{2,1}(K)) = \nu(C_{2,-1}(K))$. But since $\nu(C_{2,1}(K)) - \nu(C_{2,-1}(K)) \in \{1,0\}$
	by \cref{prop:theta:winding2},
	it follows that $\nu(C_{2,1}(K)) = \nu(C_{2,-1}(K)) = 0$, and thus \(\thetap_\nu(K)=0\).
\end{proof}

\begin{question}\label{que:is-thetap-nu-redundant}
        Does \(\thetap_{\nu}(K)=\theta_{\nu}(K)=0\) hold for all slice-torus invariants \(\nu\) and {\(\theta_\nu\)\nobreakdash-rational} knots \(K\)? 
\end{question}

For \(\nu=\tau\), \cref{que:is-thetap-nu-redundant} is already known to admit a positive answer. 
The answer, which relates \(\theta_\nu\)-rationality to Hom's concordance invariant \(\varepsilon(K)\in\{0,\pm1\}\), is an immediate consequence of \cite[Theorem~1]{HomCables}:

\begin{proposition}
	For any knot \(K\), the following conditions are equivalent: \(\thetap_{\tau}(K)=0\); \(K\) is \(\theta_\tau\)-rational; \(\varepsilon(K)=0\). 
	Moreover, each of them implies \(\theta_{\tau}(K)=2\tau(K)=0\).\qed
\end{proposition}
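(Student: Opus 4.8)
For any knot $K$, the three conditions $\thetap_{\tau}(K)=0$, $K$ is $\theta_\tau$-rational, and $\varepsilon(K)=0$ are equivalent; and each implies $\theta_\tau(K) = 2\tau(K) = 0$.

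The plan is to deduce everything from Hom's computation of $\tau$ of $(p,1)$-cables in terms of $\varepsilon$, specialised to $p=2$. First I would recall that \cite[Theorem~1]{HomCables} expresses $\tau(C_{2,1}(K))$ as a function of $\tau(K)$ and $\varepsilon(K)\in\{0,\pm1\}$: when $\varepsilon(K)=0$ one gets $\tau(C_{2,1}(K))=2\tau(K)$, whereas when $\varepsilon(K)=\pm1$ one gets $\tau(C_{2,1}(K))=2\tau(K)+\tfrac{1}{2}(1\mp1)$ (or the analogous $(2,1)$-specialisation — I would quote the precise formula from Hom), and similarly for $C_{2,-1}(K)$ with the sign of the twisting reversed. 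Plugging $p=2$ into Hom's formula, the point is that $\tau(C_{2,1}(K))=2\tau(K)$ and $\tau(C_{2,-1}(K))=2\tau(K)$ both hold precisely when $\varepsilon(K)=0$, while if $\varepsilon(K)=\pm1$ the two cabled values differ from $2\tau(K)$ in an asymmetric way that forces $\thetap_\tau(K)$ to be a specific nonzero integer (hence finite but nonzero) — or, depending on the exact form of the formula, one of $\nu(C_{2,\pm1}(K))$ to be nonzero in a way compatible with finiteness of $\thetap_\tau$. I would need to read off from Hom's formula exactly which case yields finiteness.

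With that input the logical structure is short. The implication $\thetap_\tau(K)=0\Rightarrow K$ is $\theta_\tau$-rational is immediate from \cref{def:theta-rational}, since finiteness of $\thetap_\nu(K)$ is the definition of $\theta_\nu$-rationality, and $0$ is finite. For $K$ $\theta_\tau$-rational $\Rightarrow\varepsilon(K)=0$: if $\varepsilon(K)=\pm1$, Hom's formula shows $t\mapsto\tau(C_{2,2t+1}(K))$ is affine of slope $1$ (this is the non-$\theta_\tau$-rational branch, as in the trefoil computation of \cref{ex:rationality}), contradicting rationality; so $\varepsilon(K)=0$. Finally $\varepsilon(K)=0\Rightarrow\thetap_\tau(K)=0$: from Hom's formula $\tau(C_{2,1}(K))=\tau(C_{2,-1}(K))=2\tau(K)$, and then by \cref{prop:theta:winding2} the function $t\mapsto\tau(C_{2,2t+1}(K))$ has its stationary point exactly where the two consecutive values $t=0$ and $t=-1$ coincide — wait, that only pins down $\thetap_\tau(K)$ if $2\tau(K)=0$. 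So I also need to extract $\tau(K)=0$ from $\varepsilon(K)=0$; but this is exactly the classical consequence that $\varepsilon(K)=0$ does \emph{not} imply $\tau(K)=0$ in general, so this naive route is wrong — instead I should argue as in the proof of \cref{obs:theta0:b}: $-C_{2,1}(K)=C_{2,-1}(-K)$, and when $\varepsilon(K)=0$ one has $\varepsilon(-K)=0$ too, and Hom's formula gives $\tau(C_{2,-1}(-K))=2\tau(-K)=-2\tau(K)$; but also $\tau(-C_{2,1}(K))=-\tau(C_{2,1}(K))=-2\tau(K)$, consistent — I still need a separate input forcing $\tau(K)=0$. The cleanest fix: use that $\varepsilon(K)=0$ combined with \cref{prop:theta:winding2} applied to $C_{2,1}$ gives $\nu(C_{2,1}(K))-\nu(C_{2,-1}(K))\in\{0,1\}$, together with $\nu(C_{2,1}(K))=\nu(C_{2,-1}(K))=2\tau(K)$ and the sign argument $-\nu(C_{2,1}(K))=\nu(C_{2,-1}(-K))=2\tau(-K)=-2\tau(K)$... this is circular. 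The honest statement is that $\tau(K)=0$ when $\varepsilon(K)=0$ is itself part of what Hom's theorem (or the relation $\varepsilon=0\Leftrightarrow \Upsilon$ trivial-to-first-order, or simply: $\varepsilon(K)=0 \Rightarrow \nu^+(K)=0$) must supply — so I would cite the fact, due to Hom, that $\varepsilon(K)=0$ implies $\tau(K)=0$ is \emph{false} in general (e.g.\ certain connected sums), meaning the "Moreover" clause cannot follow from $\varepsilon(K)=0$ alone.

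Reassessing: the "Moreover" clause says each of the three equivalent conditions implies $\theta_\tau(K)=2\tau(K)=0$. Since $\theta_\tau(K)=2\tau(K)$ is the content of \eqref{eq:hedden}, the real assertion is $\tau(K)=0$. Given the counterexamples just mentioned, I suspect the intended reading is that $\thetap_\tau(K)=0$ (not merely finite) forces $\tau(K)=0$: indeed by \cref{prop:theta:winding2} the stationary point of $t\mapsto\tau(C_{2,2t+1}(K))$ being at $t=0$ means $\tau(C_{2,1}(K))=\tau(C_{2,-1}(K))$, and Hom's $(2,1)$-formula — read carefully, including the $\varepsilon=0$ normalisation — gives in that case $\tau(C_{2,\pm1}(K))=2\tau(K)\pm\text{(something)}$ whose equality forces $\tau(K)=0$. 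So my plan is: (i) quote Hom's formula for $\tau(C_{2,\pm1}(K))$; (ii) derive the three-way equivalence by the case analysis on $\varepsilon(K)\in\{0,\pm1\}$ above; (iii) in the $\varepsilon(K)=0$ case, combine the formula with \cref{prop:theta:winding2} to pin the stationary point at $t=0$ and simultaneously extract $\tau(K)=0$, then invoke \eqref{eq:hedden} for $\theta_\tau(K)=2\tau(K)=0$. The main obstacle I anticipate is transcribing Hom's cabling formula in the precise normalisation used here (her conventions on cable orientation and the $(p,pn\pm1)$ vs.\ $(2,\pm1)$ indexing) so that the asymmetry between $\varepsilon=+1$ and $\varepsilon=-1$ lands on the correct side, and confirming that the $\varepsilon=0$ case genuinely yields $\tau(K)=0$ rather than merely $\thetap_\tau(K)=0$ — if it does not, the correct reading is that $\thetap_\tau(K)=0$ (the first condition, stated with the specific value $0$) is the hypothesis supplying $\tau(K)=0$, and the equivalence with $\varepsilon(K)=0$ is then genuinely Hom's theorem plus the observation that $\varepsilon(K)=0$ together with the slice-torus inequalities forces $\tau(K)=0$ via Hom's stronger structural results on $CFK^\infty$.
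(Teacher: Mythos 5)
Your overall strategy---derive everything from Hom's cabling theorem~\cite[Theorem~1]{HomCables}---is exactly the one the paper has in mind (the proposition carries a \qed and is declared ``an immediate consequence'' of that theorem), and you correctly reduce to the two substantive implications. However, there is one concrete error that derails the whole second half of your argument: you assert, flatly, that ``$\varepsilon(K)=0$ implies $\tau(K)=0$ is \emph{false} in general (e.g.\ certain connected sums).'' This is wrong. That $\varepsilon(K)=0$ forces $\tau(K)=0$ is one of the basic properties of Hom's $\varepsilon$ invariant (indeed, $\varepsilon(K)=\operatorname{sgn}\tau(K)$ whenever $\tau(K)\neq0$); what is genuinely false is the converse, that $\tau(K)=0$ forces $\varepsilon(K)=0$, and the connected-sum examples you are half-remembering are examples of the latter failure, not the former. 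Because you believe the implication fails, you go looking for circuitous workarounds (the self-described ``circular'' routes), you never close the cycle, and you end up hedging the ``Moreover'' clause into a conditional.

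Once you accept $\varepsilon(K)=0\Rightarrow\tau(K)=0$, the rest falls out cleanly. From Hom's formula: if $\varepsilon(K)=\pm1$ then $\tau(C_{2,2t+1}(K))=2\tau(K)+t$ resp.\ $2\tau(K)+t+1$ for \emph{all} $t\in\Z$, an affine function of slope $1$ with no stationary point, so $\thetap_\tau(K)=\infty$; if $\varepsilon(K)=0$ then $\tau(C_{2,2t+1}(K))=\tau(T_{2,2t+1})$, whose unique stationary point sits at $t=0$, so $\thetap_\tau(K)=0$. This gives the cycle $\thetap_\tau(K)=0\Rightarrow\theta_\tau\text{-rational}\Rightarrow\varepsilon(K)=0\Rightarrow\thetap_\tau(K)=0$, and the ``Moreover'' clause follows from $\varepsilon(K)=0\Rightarrow\tau(K)=0$ together with Hedden's identity~\eqref{eq:hedden}~$\theta_\tau(K)=2\tau(K)$. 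You had all the pieces on the table; the one misremembered fact about $\varepsilon$ is what stopped you from assembling them.
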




Computational evidence so far seems to suggest that \(\theta_c\)-rationality is independent of the characteristic $c$ and in fact agrees with \(\theta_\tau\)-rationality \cite{thetatable}. 

\begin{question}
	Is there a \(\theta_c\)-rational knot \(K\) with \(\varepsilon(K)\neq0\)? 
	Or conversely, a non-\(\theta_c\)-rational knot \(K\) with \(\varepsilon(K)=0\)? 
\end{question}

The condition \(\varepsilon(K)=0\) admits a geometric description in terms of Hanselman, Rasmussen, and Watson's multicurve invariant \(\HFhat(S^3\smallsetminus\mathring{\nu}(K))\) of the knot exterior \(S^3\smallsetminus\mathring{\nu}(K)\) \cite{HRW}. 
It is equivalent to the condition that a particular component of the multicurve invariant \(\HFhat(S^3\smallsetminus\mathring{\nu}(K))\) agrees with  \(\HFhat(S^3\smallsetminus\mathring{\nu}(U))\) \cite[discussion after Example~48]{HRWcompanion}, which is a huge restriction. 
Therefore, the case \(\varepsilon(K)\neq0\) should be understood as the generic case. 

For the same reason, ``most'' knots should not be \(\theta_c\)-rational: 
In \cref{def:theta-rational} and \cref{prop:theta-rational}, we will characterise \(\theta_c\)-rational knots \(K\) in terms of their associated tangle invariants \(\BNr_a(T_K;\F_c)\).
Specifically, a knot is \(\theta_c\)-rational if and only if the component \(\BNr_a(T_K;\F_c)\) of the multicurve invariant \(\BNr(T_K;\F_c)\) is equal to \(\BNr_a(Q_n;\F_c)\) for some rational tangle \(Q_n\), \(n\in2\Z\). %
So there might actually be a deeper reason for the similarity between \(\theta_c\)- and \(\theta_\tau\)-rationality.
\subsection{\texorpdfstring{Winding number ±2 patterns, Rasmussen invariants, and \(\bm{\tau}\)}{Winding number ±2 patterns, Rasmussen invariants, and τ}}
\label{sec:intro:winding2}

As discussed in the previous subsection, the behaviour of Rasmussen invariants of cables with winding number \(\pm2\) exhibits a phenomenon that is absent from the setting of patterns with winding number 0: 
We need to distinguish between \(\theta_c\)-rational and non-\(\theta_c\)-rational knots. 
This dichotomy is also reflected in the behaviour of the Rasmussen invariants of satellites with general patterns of winding number \(\pm2\). 

First, we consider \(\theta_c\)-rational companions. For these, the formula looks almost the same as for winding number 0 patterns: All the information about the companion is captured by the integer \(\thetap_c(K)\). And, in fact, \(\theta_c(K) = \thetap_c(K)\).

\begin{restatable}{theorem}{thetarationals} \label{thm:theta-rationals}
	Let \(K\) be a \(\theta_c\)-rational knot. Then \(\theta_c(K) = \thetap_c(K)\), and for any pattern \(P\) with wrapping number \(2\) and winding number $\pm 2$,
	\[
	s_c(P(K))
	=
	s_c(P_{-\theta_c(K)}(U))+6\theta_c(K).
	\]
\end{restatable}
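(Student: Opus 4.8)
The plan is to leverage the multicurve description of $\theta_c$-rationality together with the gluing formula for $s_c$ in terms of the non-compact components $\BNr_a$ of the tangle invariants. The key structural input is the promised characterisation (stated informally before \cref{sec:intro:winding2}, and to be proven as \cref{prop:theta-rational}): a knot $K$ is $\theta_c$-rational if and only if $\BNr_a(T_K;\F_c) = \BNr_a(Q_n;\F_c)$ for some $n\in 2\Z$, and in that case one reads off $\thetap_c(K)$ from $n$ (so $n = -2\thetap_c(K)$ up to the normalisation conventions). Thus for a $\theta_c$-rational companion $K$, as far as the Rasmussen invariant of any wrapping-number-two satellite is concerned, $T_K$ behaves exactly like the rational tangle $Q_{-2\thetap_c(K)}$, which is the same as the companion tangle of the $\thetap_c(K)$-twisted unknot $P_{-\thetap_c(K)}(U)$.

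First I would establish $\theta_c(K) = \thetap_c(K)$. By \cref{def:theta}, $\theta_c(K) = \theta_{\nicefrac{s_c}{2}}(\Wh, K)$ is the jump point of $t\mapsto s_c(\Wh_t(K))/2$; by \cref{def:theta:winding2}, $\thetap_c(K) = \theta_{\nicefrac{s_c}{2}}(C_{2,1},K)$ is the stationary point of $t\mapsto s_c(C_{2,\pm 1 + 2t}(K))/2$. Both of these are computed by gluing $\BNr_a(T_K;\F_c)$ against a fixed family of pattern curves and counting intersection data; once $\BNr_a(T_K;\F_c)=\BNr_a(Q_{-2m};\F_c)$ with $m = \thetap_c(K)$, the same computation applied to the unknot companion (whose tangle is $Q_0$) shows that both jump points are shifted by exactly $m$ relative to the unknot case. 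Since $\theta_c(U) = 0$ by \cref{ex:nu_of_twist_knots}, this yields $\theta_c(K) = m = \thetap_c(K)$. (Equivalently, one can argue: $\Wh_t(K)$ and $\Wh_t(Q_{-2m}\text{-companion}) = \Wh_{t-m}(U)$ have equal $s_c$ by the gluing formula, so by the discussion preceding \cref{que:generalize}, $\theta_c(K) = m$.)

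Next I would prove the satellite formula. For a pattern $P$ of wrapping number $2$ and winding number $\pm 2$, cut $P(K)$ into the pattern tangle $T_P$ and companion tangle $T_K$. Since $\BNr_a(T_K;\F_c) = \BNr_a(Q_{-2\theta_c(K)};\F_c)$ and $s_c(T_P\cup T_K)$ depends only on $\BNr_a(T_P;\F_c)$ and $\BNr_a(T_K;\F_c)$, we get $s_c(P(K)) = s_c(T_P \cup Q_{-2\theta_c(K)})$. The tangle $Q_{-2\theta_c(K)}$ glued into the satellite position is the companion tangle of $P_{-\theta_c(K)}(U)$ \emph{up to} the framing correction that twisting a winding-number-$\pm 2$ pattern incurs: each full twist of a winding-number-$w$ pattern changes the writhe of a diagram by $w^2$, so passing between the "$2\theta_c(K)$ half-twists absorbed into the companion tangle" picture and the "$\theta_c(K)$ full twists applied to $P$" picture introduces a shift. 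For $w = \pm 2$ this shift is $4\theta_c(K)$ in the writhe-normalisation of $s$, and tracking it carefully through the definition of $s_c$ (the $q$-grading shift by $w^2\theta_c(K)$ together with the factor coming from the $s$-convention) produces the additive term. Matching signs and conventions against the known normalisation $s_c(T_{2,3}) = 2$ (and the data in \cref{table:t23t34}, \cref{ex:rationality}) pins down the coefficient as exactly $+6\theta_c(K)$.

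The main obstacle I anticipate is precisely this last bookkeeping step: correctly accounting for the writhe/$q$-grading shift that distinguishes "twisting the pattern by $t$ full twists" from "inserting $2t$ half-twists into the rational companion tangle" when the winding number is $\pm 2$ rather than $0$. In the winding-number-$0$ case of \cref{thm:main} this correction vanishes, which is why no analogous term appears there; here one must carefully fix orientation and framing conventions for the four-punctured sphere, the meaning of $Q_n$, and the normalisation of $s_c$, and then verify the constant $6$ on an explicit example (e.g.\ $C_{2,\pm1}(U)$ or the $(2,1)$-cable of the unknot, whose $s_c$ is known). Everything else — the reduction to $\BNr_a$, the gluing invariance, and the identification $\theta_c(K) = \thetap_c(K)$ — follows formally from results assumed available earlier in the paper.
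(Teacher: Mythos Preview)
Your overall architecture is correct and matches the paper: the multicurve characterisation of \(\theta_c\)-rationality (the paper's \cref{prop:theta-rational}) identifies \(\BNr_a(T_K;\F_c)\) with \(\BNr_a(Q_{2n};\F_c)\) up to a grading shift, with \(n=\theta_c(K)=\thetap_c(K)\), and the satellite formula then follows from the gluing theorem once that grading shift is determined. (A minor point: the slope is \(+2\theta_c(K)\), not \(-2\theta_c(K)\).)

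The genuine gap is in your derivation of the additive term \(+6\theta_c(K)\). Your writhe heuristic is not correct as stated: the knots \(T_P\cup Q_{2n}\) and \(P_{-n}(U)\) are literally the same knot (this is \cref{lem:basic:satellites}\ref{lem:basic:satellites:pattern_of_unknot}), so no ``framing correction between two pictures'' arises there. The shift instead comes from the \emph{orientation dependence} of the absolute bigrading on \(\BNr_a\): for a winding-number-\(\pm2\) pattern the two strands of the companion tangle are oriented parallel, whereas cap-triviality pins down the grading with the antiparallel orientation. Your fallback of pinning down the constant by an explicit example also does not work, because the constant must be determined for every value of \(\theta_c(K)\), yet it is conjectured (\cref{conj:theta-rational-implies-theta=0}) that every \(\theta_c\)-rational knot has \(\theta_c(K)=0\); checking \(C_{2,\pm1}(U)\) only confirms the case \(n=0\), where the term vanishes.

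The paper's computation (carried out in \cref{prop:theta-rationals}) proceeds as follows. Since \(\lk(T_K)=0\), the bigrading on \(\BNr_a(T_K)\) is independent of orientation. Since both \(T_K(\infty)\) and \(Q_{2n}(\infty)\) are the unknot, pairing with \(\BNr(\Ni)\) forces \(\BNr_a(T_K)=\BNr_a(\tilde{Q}_{2n})\) with \emph{no} grading shift, where \(\tilde{Q}_{2n}\) carries the antiparallel orientation. Finally, the orientation-reversal formula \(\BNr(\tilde{Q}_{2n})=q^{-6\lk(Q_{2n})}\BNr(Q_{2n})=q^{6n}\BNr(Q_{2n})\) from \cite[Proposition~4.8]{KWZ} yields \(\BNr_a(T_K)=q^{6n}\BNr_a(Q_{2n})\) in the parallel orientation, whence \(s_c(T_P\cup T_K)=s_c(T_P\cup Q_{2n})+6n\). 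This is the step your proposal is missing.
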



This gives a positive answer to half of \cref{que:is-thetap-nu-redundant} for \(\nu=\nicefrac{s_c}{2}\).
We conjecture that the other half holds true as well:

\begin{conjecture}\label{conj:theta-rational-implies-theta=0}
	For any \(\theta_c\)-rational knot \(K\), \(\theta_c(K)= 0\). 
\end{conjecture}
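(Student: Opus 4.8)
We outline a possible approach; at present it can be carried out only partially, which is why the statement is phrased as a conjecture.

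By \cref{thm:theta-rationals} the conjecture is equivalent to $\thetap_c(K)=0$ for every $\theta_c$-rational $K$, but I would work on the multicurve side. Granting the characterisation to be established in \cref{prop:theta-rational} — that $K$ is $\theta_c$-rational precisely when the non-compact component $\BNr_a(T_K;\F_c)$ is a rational curve, $\BNr_a(T_K;\F_c)=\BNr_a(Q_{2m};\F_c)$ for some $m\in\Z$ — one recovers $m$ exactly as in the nutshell proof of \cref{thm:main}, now with a genuine equality of curves in place of an approximation: pairing $\BNr_a(T_K;\F_c)=\BNr_a(Q_{2m};\F_c)$ with an arbitrary winding number $0$ pattern tangle $T_P$ gives $s_c(P(K))=s_c(T_P\cup Q_{2m})=s_c(P_{-m}(U))$, so $m=\theta_c(K)$ by comparison with \cref{thm:main}. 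Hence \cref{conj:theta-rational-implies-theta=0} is equivalent to the clean geometric statement: \emph{whenever $\BNr_a(T_K;\F_c)$ is a rational curve, it is the trivial one $\BNr_a(Q_0;\F_c)=\BNr_a(T_U;\F_c)$.}

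To prove the latter I would look for a structural obstruction to $\BNr_a(T_K;\F_c)$ being a \emph{nontrivial} rational curve. The most direct candidate uses a minimal genus Seifert surface for $K$: removing a disc yields a surface cobordism from $K$ to $U$ in $S^3\times I$ of genus equal to the three-genus of $K$, and taking its $2$-cable along the companion strand produces a cobordism of Conway tangles from $T_K$ to $T_U$, hence a morphism $\KhTl{T_K}\to\KhTl{T_U}$ in $\Cob_{/l}$. A rational curve $\BNr_a(Q_{2m};\F_c)$ is rigid — it carries the trivial local system and its homotopy class is pinned by the single integer $2m$ — so the plan is to show that the induced interaction of this morphism with the distinguished non-compact summands, analysed through the closed cobordisms obtained by capping off with rational pattern tangles together with the behaviour of $s_c$ under these (additivity and the adjunction/slice-genus bound), forces $m=0$. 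Two auxiliary inputs could help: conjugation symmetry of Bar-Natan homology, which sends $\BNr_a(T_{mK};\F_c)$ to the image of $\BNr_a(T_K;\F_c)$ under the orientation-reversing symmetry of the four-punctured sphere and hence interchanges $\BNr_a(Q_{2m})$ with $\BNr_a(Q_{-2m})$, so that a crossing-change inequality relating $\BNr_a(T_K;\F_c)$ and $\BNr_a(T_{mK};\F_c)$ constrains $m$; and a proof that every $\theta_c$-rational knot has $\varepsilon(K)=0$ (the computations in \cite{thetatable} suggest even that $\theta_c$- and $\theta_\tau$-rationality coincide), which would let one import $\theta_\tau(K)=2\tau(K)=0$ from \cite{HomCables}, provided one can match the rational components of the Khovanov and knot Floer multicurve invariants.

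The crux, common to all of these, is that the combinatorics of the four-punctured sphere places no restriction on which rational curve occurs as $\BNr_a(T_K;\F_c)$, so one must feed in genuinely four-dimensional information and, crucially, extract an \emph{equality} of curves from it rather than a merely bounded difference. This is precisely what makes the $\tau$-analogue manageable: there the statement is equivalent, via \cite{HomCables}, to the external fact $\varepsilon(K)\in\{0,\pm1\}$ together with Hom's explicit computation of the cabling complex in the $\varepsilon=0$ case, whereas on the Khovanov side neither an adjunction-type inequality for $s_c$ sharp enough to exclude nontrivial rational $\BNr_a(T_K;\F_c)$, nor a classification of the non-compact curves realised by companion tangles, is presently available. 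A secondary and orthogonal difficulty is that for odd $c$ the entire framework — including \cref{prop:theta-rational} and the reduction above — rests on the extension of the multicurve technology of \cite{KWZ} to characteristic $c$, which has not yet been worked out.
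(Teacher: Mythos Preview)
The statement is a \emph{conjecture} in the paper, and the paper offers no proof of it. You correctly recognise this and frame your proposal as an outline of possible approaches rather than a completed argument. In that sense there is nothing to compare your proposal against: the paper's only contribution toward \cref{conj:theta-rational-implies-theta=0} is the special case of knots of order two in the concordance group (the corollary following the conjecture), obtained by combining \cref{obs:theta0:b} with \cref{thm:theta-rationals}.

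Two small technical remarks on your reduction. First, your appeal to \cref{thm:main} to conclude $m=\theta_c(K)$ is unnecessary (and, for $c\neq 2$, unavailable, since \cref{thm:main} is only proved over $\F_2$): \cref{prop:theta-rational} already states directly that the slope equals $2\theta_c(K)$, so the identification $m=\theta_c(K)$ comes for free once you grant that proposition. Second, your closing worry that for odd $c$ ``the entire framework --- including \cref{prop:theta-rational}'' is unavailable is slightly off: \cref{prop:theta-rational} and the surrounding lemmas are in fact established for all characteristics in the paper (they rely only on \cref{thm:Kh:Twisting:arcs}, which holds over arbitrary fields). The characteristic-$2$ restriction enters only in \cref{lem:no-trapped-curves}, which is used for \cref{thm:main} but not for the reduction you describe.

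Your proposed strategies --- exploiting cobordisms from $T_K$ to $T_U$, mirror symmetry, or a hypothetical link to $\varepsilon(K)=0$ --- are reasonable directions, and your diagnosis of the core obstruction (that one needs an equality rather than an inequality, and that the four-punctured-sphere combinatorics alone cannot supply it) matches the paper's implicit view. But none of this constitutes a proof, as you yourself acknowledge.
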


Note that if this conjecture holds, then the formula in \cref{thm:theta-rationals} becomes identical to the formula in \cref{thm:main}.

\begin{remark}
	As for \(\tau\) and \(\varepsilon\), the converse of \cref{conj:theta-rational-implies-theta=0} is false: 
	There are many knots \(K\) with \(\theta_c(K)=0\) that are not \(\theta_c\)-rational, e.g.\ \(K=T_{2,3}\#T_{2,3}\#-\!T_{3,4}\) for $c = 2$.
\end{remark}

\cref{obs:theta0:b} and \cref{thm:theta-rationals} imply a special case of
\cref{conj:theta-rational-implies-theta=0}:
\begin{corollary}
Let $K$ be a knot with order two in the concordance group, e.g.~an amphicheiral knot.
Then $K$ is $\theta_c$-rational and $\theta_c(K) = \thetap_c(K) = 0$.\qed
\end{corollary}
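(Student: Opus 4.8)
The plan is to combine the two ingredients explicitly named in the statement, namely Proposition~\ref{obs:theta0:b} and Theorem~\ref{thm:theta-rationals}, which together handle everything. First I would recall from Proposition~\ref{obs:theta0:b} that if $K$ has order two in the smooth concordance group --- in particular if $K$ is amphicheiral, since then $-K$ is concordant to the mirror of $K$ with reversed orientation and hence to $K$ itself, so $K\#K$ is slice --- then $K$ is $\theta_\nu$-rational and $\thetap_\nu(K)=0$ for every slice-torus invariant~$\nu$. Specialising to $\nu=\nicefrac{s_c}{2}$ (equivalently, using the subscript $c$ as in Definition~\ref{def:theta:winding2}) gives that $K$ is $\theta_c$-rational with $\thetap_c(K)=0$.

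Next I would feed this into Theorem~\ref{thm:theta-rationals}: since $K$ is $\theta_c$-rational, that theorem asserts $\theta_c(K)=\thetap_c(K)$, and we have just seen the right-hand side is~$0$. Hence $\theta_c(K)=\thetap_c(K)=0$, which is exactly the claimed conclusion. (If one wishes, one can additionally observe that the satellite formula in Theorem~\ref{thm:theta-rationals} then collapses: $s_c(P(K))=s_c(P_0(U))+0=s_c(P(U))$ for every wrapping-number-2, winding-number-$\pm2$ pattern $P$, matching the winding-number-$0$ formula of Theorem~\ref{thm:main} with $\theta_c(K)=0$ --- though this is not needed for the statement.)

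There is essentially no obstacle here: the corollary is a two-line deduction from results already established earlier in the paper, and the only point requiring a word of care is the parenthetical remark that amphicheiral knots indeed have order (dividing) two in the concordance group, which is the standard fact that $K\#(-K)$ bounds a slice disc together with the identification $-K\cong K$ in concordance for an amphicheiral $K$. Everything else is substitution. One could equally phrase the argument to cover the slightly broader hypothesis "$2[K]=0$ in $\mathcal C$" verbatim, since that is all Proposition~\ref{obs:theta0:b} uses, and then note amphicheirality as the motivating special case.
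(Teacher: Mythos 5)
Your argument is correct and is exactly the paper's intended proof: the text immediately preceding the corollary states that it follows by combining Proposition~\ref{obs:theta0:b} (which gives $\theta_c$-rationality and $\thetap_c(K)=0$ for order-two $K$) with Theorem~\ref{thm:theta-rationals} (which gives $\theta_c(K)=\thetap_c(K)$ for $\theta_c$-rational $K$). The only minor point is that your parenthetical justification of amphicheirality giving order two implicitly uses negative amphicheirality; the paper simply asserts this in Proposition~\ref{obs:theta0:b} without elaboration, so this matches.
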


\begin{figure}[t]
	\begin{subfigure}{0.16\textwidth}
		\centering
		\includegraphics{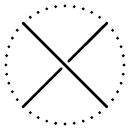}
		\caption{\(Q_{-1}\)}\label{fig:Qm1}
	\end{subfigure}
	\begin{subfigure}{0.16\textwidth}
		\centering
		\includegraphics{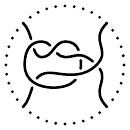}
		\caption{\(T'_a\)}\label{fig:Tp_a}
	\end{subfigure}
	\begin{subfigure}{0.16\textwidth}
		\centering
		\includegraphics{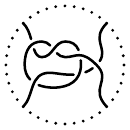}
		\caption{\(T'_b\)}\label{fig:Tp_b}
	\end{subfigure}
	\begin{subfigure}{0.16\textwidth}
		\centering
		\includegraphics{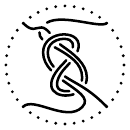}
		\caption{\(T'_c\)}\label{fig:Tp_c}
	\end{subfigure}
	\begin{subfigure}{0.16\textwidth}
	\centering
	\includegraphics{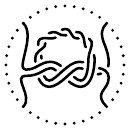}
	\caption{\(T'_d\)}\label{fig:Tp_d}
\end{subfigure}
	\begin{subfigure}{0.16\textwidth}
	\centering
	\includegraphics{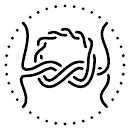}
	\caption{\(T'_e\)}\label{fig:Tp_e}
\end{subfigure}
	\caption{Some tangles for patterns with winding number \(\pm2\)}\label{fig:pattern_tangles:winding2}
\end{figure}



Next, let us turn to non-\(\theta_c\)-rational knots. 
Again, the information about the companion knot is contained in a single integer. 
However, in this case, this integer is simply the Rasmussen invariant \(s_c(C_{2,1}(K))\) of the \((2,1)\)-cable of the companion. 

\begin{restatable}{theorem}{thetanonrationals}\label{thm:theta-non-rationals:non-infty}
	Let \(K\) be a knot that is not \(\theta_c\)-rational. Then for any pattern \(P\) with wrapping number \(2\) and winding number \(\pm 2\) with \(\theta_c(P)\neq\infty\),
	\[
	s_c(P(K))
	=
	s_c(P(U))+s_c(C_{2,1}(K))
	-
	\begin{cases*}
	2
	&
	if \(\theta_c(P)>0\)
	\\
	0
	&
	if \(\theta_c(P)\leq 0\)
	\end{cases*}
	\]
\end{restatable}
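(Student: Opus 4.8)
The plan is to run the curve-theoretic argument behind \cref{thm:main} and \cref{thm:theta-rationals} on the winding number $\pm 2$ side, now for a companion whose non-compact component is genuinely non-rational. As developed in \cref{sec:RasmussenCurves}, the Rasmussen invariant of a satellite is computed by pairing the non-compact components of its pattern and companion tangles; writing $f(\gamma_1,\gamma_2)$ for this pairing and suppressing the ground field $\F_c$ from the notation, we have $s_c(P(K))=f(\BNr_a(T_P),\BNr_a(T_K))$, $s_c(P(U))=f(\BNr_a(T_P),\BNr_a(T_U))$, and $s_c(C_{2,1}(K))=f(\BNr_a(Q_{-1}),\BNr_a(T_K))$, the last because the $(2,1)$-cable pattern has pattern tangle $Q_{-1}$. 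The contrast with \cref{thm:main} is the following: for winding number $0$ patterns the component $\BNr_a(T_K)$ may be approximated by the rational arc $\BNr_a(Q_{2\theta_c(K)})$ closely enough that the pairing is unaffected, because winding number $0$ pattern arcs cannot detect the difference; winding number $\pm 2$ pattern arcs \emph{can}, so $\BNr_a(T_K)$ cannot be traded for a rational tangle when $K$ is not $\theta_c$-rational — but, as we will argue, it can still be traded for a one-parameter model curve, the parameter being precisely $s_c(C_{2,1}(K))$.

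First I would assemble the two structural inputs. Since $P$ has winding number $\pm 2$, the arc $\BNr_a(T_P)$ meets the relevant puncture of the four-punctured sphere with the same slope as the cabling patterns $C_{2,m}$ (equivalently, the rational tangles $Q_m$); and the hypothesis $\theta_c(P)\neq\infty$ says, via \cref{prop:theta:winding2}, that $t\mapsto s_c(P_t(U))$ has a stationary point at $t=\theta_c(P)$, which records on which side of a fixed reference arc $\BNr_a(T_P)$ lies. On the companion side, $K$ not being $\theta_c$-rational means, by \cref{prop:theta-rational}, that $\BNr_a(T_K)$ is not isotopic to any $\BNr_a(Q_n)$, $n\in 2\Z$; I would then use the constraints on the possible shapes of non-compact components to put $\BNr_a(T_K)$ into a normal form — the straight arc underlying $\BNr_a(T_U)$ decorated with a single standard ``kink'', carrying one integer parameter $k$. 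Pairing the normal form against $\BNr_a(Q_{-1})$ gives $s_c(C_{2,1}(K))$ by construction, and since $t\mapsto s_c(C_{2,2t+1}(K))$ is affine of slope $2$ for a non-$\theta_c$-rational knot (cf.\ \cref{ex:rationality}), $s_c(C_{2,1}(K))$ is an affine, hence injective, function of $k$. Consequently $\BNr_a(T_K)$ — and therefore $s_c(P(K))$ for every winding number $\pm 2$ pattern — is determined by the single integer $s_c(C_{2,1}(K))$.

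Second I would carry out the pairing computation: evaluate $f(\BNr_a(T_P),\BNr_a(T_K))$ with $\BNr_a(T_K)$ in normal form. Away from the kink, $\BNr_a(T_K)$ agrees with $\BNr_a(T_U)$, so the intersections of $\BNr_a(T_P)$ with $\BNr_a(T_K)$ outside a neighbourhood of the kink contribute $f(\BNr_a(T_P),\BNr_a(T_U))=s_c(P(U))$; the kink adds a bounded correction, and because $\BNr_a(T_P)$ has cabling slope this correction depends only on whether $\BNr_a(T_P)$ passes the kink on one side or the other, i.e.\ on the alternative $\theta_c(P)>0$ versus $\theta_c(P)\leq 0$ from the first step. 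Performing the minimal-intersection count with $q$-gradings, calibrated so that the same count against $\BNr_a(Q_{-1})$ gives $s_c(C_{2,1}(K))$ and against $\BNr_a(T_U)$ gives $s_c(C_{2,1}(U))=s_c(U)=0$, then yields
\[
s_c(P(K))=s_c(P(U))+s_c(C_{2,1}(K))-\begin{cases*}2 & if $\theta_c(P)>0$\\0 & if $\theta_c(P)\leq 0$.\end{cases*}
\]

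The main obstacle, I expect, is the normal form of the first step together with the grading bookkeeping of the second: one must show that the non-compact component of a non-$\theta_c$-rational companion tangle is rigid enough that every winding number $\pm 2$ pattern sees only the integer $s_c(C_{2,1}(K))$ and one bit of sign data, and then extract the precise constant $-2$ — rather than some other even integer — from the $q$-gradings in the pairing. A useful sanity check, and, once the normal form is in hand, a way to pin down the finitely many constants, is to verify the identity directly on explicit non-$\theta_c$-rational companions such as the trefoil (non-$\theta_2$-rational by \cref{ex:rationality}), computing both sides from the curves and confirming consistency with the twisting behaviour recorded in \cref{prop:theta:winding2}.
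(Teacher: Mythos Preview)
There is a genuine gap. The ``normal form'' you posit for $\BNr_a(T_K)$ --- the straight arc underlying $\BNr_a(T_U)$ decorated with a single standard kink carrying one integer parameter --- does not exist: the non-compact component of a companion tangle can be arbitrarily complicated (already $\BNr_a(T_{8_{19}})$ has an intermediate segment of slope $16$, see \cref{fig:lifts}, and there is no bound on the number of segments in general). What \emph{is} true, and all that the paper uses, is that for non-$\theta_c$-rational $K$ the slope of $\BNr_a(T_K)$ at the lower left puncture $p$ equals $\infty$ (\cref{lem:slopes-for-cap-trivial} combined with \cref{prop:theta-rational}); this constrains only the asymptotic direction of the curve at $p$, not its global shape. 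Your ``pairing decomposition'' also misreads $s(\g,\gp)$: that quantity is the quantum grading of a \emph{single} intersection point (the first one in the winding region around $p$), not a sum over intersection points, so there is no sense in which part of the pairing ``sees $\BNr_a(T_U)$'' and another part sees the kink.

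The paper's mechanism is the Four Curves Lemma (\cref{lem:4curves}): for four non-compact curves sharing a puncture, the cross-sums $s(\gi,\gpi)+s(\gii,\gpii)$ and $s(\gi,\gpii)+s(\gii,\gpi)$ differ by $0$ or $\pm 2$, the sign being determined solely by the cyclic order of the four curve-ends at that puncture. Applying it (\cref{prop:theta-non-rationals} with $n=0$) to $\gi=\BNr_a(-T_P)$, $\gpi=\BNr_a(T_K)$, $\gii=\arc{1}$, $\gpii=\arc{0}$ directly yields $s_c(P(K))=s_c(P(U))+s_c(C_{2,1}(K))+(\text{correction})$. Because $\gpi$ has slope $\infty$ at $p$, one of the two $i_p$-terms vanishes automatically, and the remaining one depends only on the cyclic position of $\g$ relative to $\arc{0}$ and the $\infty$-direction --- which by \cref{prop:eta-slopes} is exactly the dichotomy $\theta_c(P)>0$ versus $\theta_c(P)\leq 0$. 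Only the slope of $\BNr_a(T_K)$ at $p$ enters; no global normal form is needed, and this is why the formula holds for \emph{every} non-$\theta_c$-rational companion, not just those whose curve happens to be simple.
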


There is a version of \cref{thm:theta-non-rationals:non-infty} (namely \cref{thm:theta-non-rationals:infty}) for patterns \(P\) with \(\theta_c(P)=\infty\). 
In this case, the formula for \(s_c(P(K))\) looks identical to the one above, except that the condition for whether the last summand is equal to 0 or $-2$ is more subtle, taking into account also a certain order on the multicurve invariants associated with the pattern \(P\) and the companion \(K\).
This subtlety is illustrated in \cref{ex:theta-non-rationals:infty:after_thm}. 

In analogy to \cref{thm:winding0:thetas} we have the following results:

\begin{proposition}\label{prop:winding2:theta-splits}
	Let \(K\) be a knot and \(P\) a pattern with wrapping number 2 and winding number \(\pm2\).
	If \(K\) is \(\theta_c\)-rational then
	\[
	\theta_c(P,K)
	=
	\theta_c(P)
	+
	\theta_c(K).
	\]
	In particular, for such knots \(K\), \(\theta_c(P,K)=\infty\) if and only if \(\theta_c(P)=\infty\).
	If \(K\) is not \(\theta_c\)-rational and  \(\theta_c(P)\neq\infty\), then \(\theta_c(P,K)=\infty\).
\end{proposition}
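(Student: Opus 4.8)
The plan is to reduce the statement to the two satellite formulae \cref{thm:theta-rationals} and \cref{thm:theta-non-rationals:non-infty}, applied not to \(P\) itself but to each twisted pattern \(P_t\). Write \(\nu = \nicefrac{s_c}{2}\) and, for a fixed pattern \(Q\) and companion \(J\), let \(f_{Q,J}\co\Z\to\Z\) be the function \(t\mapsto\nu(Q_t(J))\). By \cref{prop:theta:winding2}, \(\theta_\nu(Q,J)\) is exactly the stationary point of \(f_{Q,J}\), or \(\infty\) when \(f_{Q,J}\) is affine of slope \(1\); and by \eqref{eq:theta:winding2}, when \(\theta_\nu(Q,U)=\theta_c(Q)\) is finite the function \(f_{Q,U}\) has increment \(1\) at every \(t\) except increment \(0\) at \(t=\theta_c(Q)\). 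Since \((P_t)_s = P_{s+t}\), the stationary point of \(s\mapsto\nu((P_t)_s(U))=\nu(P_{s+t}(U))\) is \(\theta_c(P)-t\); that is, \(\theta_c(P_t)=\theta_c(P)-t\), which is finite precisely when \(\theta_c(P)\) is.

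\emph{Case 1: \(K\) is \(\theta_c\)-rational.} Applying \cref{thm:theta-rationals} to the pattern \(P_t\) gives \(s_c(P_t(K)) = s_c(P_{t-\theta_c(K)}(U)) + 6\theta_c(K)\) for every \(t\), hence \(f_{P,K}(t) = f_{P,U}(t-\theta_c(K)) + 3\theta_c(K)\). By \cref{prop:theta_nu}, \(\theta_c(K)\) is a finite integer, so \(f_{P,K}\) is obtained from \(f_{P,U}\) by a horizontal translation by \(\theta_c(K)\) together with a vertical shift. Such an operation preserves the dichotomy ``affine of slope \(1\)'' versus ``has a stationary point'' of \cref{prop:theta:winding2} and, in the second case, moves the stationary point by \(\theta_c(K)\). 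As the stationary point of \(f_{P,U}\) is \(\theta_c(P,U) = \theta_c(P)\), we conclude \(\theta_c(P,K) = \theta_c(P) + \theta_c(K)\) (with the convention \(\infty + n = \infty\)); since \(\theta_c(K)\) is finite, this also yields the ``in particular'' claim.

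\emph{Case 2: \(K\) is not \(\theta_c\)-rational and \(\theta_c(P)\neq\infty\).} Then \(\theta_c(P_t) = \theta_c(P) - t\) is finite for all \(t\), so \cref{thm:theta-non-rationals:non-infty} applies to \(P_t\) and, using \(\theta_c(P_t)>0 \Leftrightarrow t<\theta_c(P)\), gives
\[
s_c(P_t(K)) = s_c(P_t(U)) + s_c(C_{2,1}(K)) - 2\cdot\big[\,t < \theta_c(P)\,\big],
\]
where \([\,\cdot\,]\) is \(1\) if the condition holds and \(0\) otherwise. Taking the increment from \(t-1\) to \(t\): by \eqref{eq:theta:winding2} the term \(s_c(P_t(U))\) contributes \(2\) for every \(t\neq\theta_c(P)\) and \(0\) for \(t=\theta_c(P)\), while the last term is constant in \(t\) except that it raises the right-hand side by \(2\) precisely at \(t=\theta_c(P)\). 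These two anomalies cancel, so \(s_c(P_t(K)) - s_c(P_{t-1}(K)) = 2\) for all \(t\); hence \(f_{P,K}\) is affine of slope \(1\) and \(\theta_c(P,K) = \infty\).

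The required computations are routine once the two satellite formulae are in hand. The one point needing care is the cancellation in Case 2: one must verify that the unique twist parameter \(t = \theta_c(P)\) at which \(t\mapsto s_c(P_t(U))\) has its exceptional increment coincides with the value at which the \(\{-2,0\}\)-valued correction term changes, so that no stationary point of \(f_{P,K}\) survives. This is exactly where the hypothesis \(\theta_c(P)\neq\infty\) enters; the complementary case \(\theta_c(P)=\infty\) is not asserted here and is dealt with separately via \cref{thm:theta-non-rationals:infty}.
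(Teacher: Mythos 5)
Your proof is correct and takes essentially the same route as the paper's: applying \cref{thm:theta-rationals} and \cref{thm:theta-non-rationals:non-infty} to the twisted patterns \(P_t\) and then comparing stationary points, with the identity \(\theta_c(P_t)=\theta_c(P)-t\) doing the bookkeeping. Your version is a bit more explicit about the increment calculation in Case~2, but this is exactly the same observation the paper records by noting that \(t\mapsto s_c(P_t(U)) - \{2\text{ or }0\}\) is affine.
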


The invariant \(\theta_c(P)\) is just as computable for winding number ±2 patterns as for winding number 0 patterns. 
Again, there are two methods: either directly from definition or indirectly from the multicurve invariants. 
For example, using \cref{prop:eta-slopes}, we can read off the following values for the patterns \(P^T\) associated with the tangles \(T\) in \cref{fig:pattern_tangles:winding2} from the multicurves in \cref{fig:lifts}:
\[
\theta_2(P^{T'_a})=\theta_2(P^{T'_d})=\theta_2(P^{T'_e})=\infty,
\quad 
\theta_2(P^{T'_b})=1,
\quad
\text{and}
\quad 
\theta_2(P^{T'_c})=0.
\]
Also, \(\theta_c\)-rationality can be determined either directly using \cref{cor:theta-rational-test:practical} or indirectly using the multicurves, see \cref{def:theta-rational}. 
For instance, the trefoil knot it not \(\theta_2\)-rational, but the figure-eight knot is,
as we have already seen in \cref{ex:rationality}. Check our online table \cite{thetatable} for further examples.

\begin{proof}[Proof of \cref{prop:winding2:theta-splits}]
	Applying \cref{thm:theta-rationals} to the pattern \(P_t\) for \(t\in\Z\), we see that \(s_c(P_t(K))\) and \(s_c(P_{t-\theta_c(K)}(U))\) have the same stationary points if they exist. 
	This implies the first identity. 
	For the second part, we apply \cref{thm:theta-non-rationals:non-infty} to the pattern \(P_t\) for \(t\in\Z\) and observe that 
	\[
	t\mapsto 
	s_c(P_t(U))
	-
	\begin{cases*}
	2
	&
	if \(\theta_c(P_t)>0\)
	\\
	0
	&
	if \(\theta_c(P_t)\leq 0\)
	\end{cases*}
	\]
	is an affine function, since \(\theta_c(P_t)=\theta_c(P)-t\). 
\end{proof}

\begin{remark}
	If \(\theta_c(P)=\infty=\theta_c(K)\), \(\theta_c(P,K)\) may be finite or infinite. 
	Examples can be easily constructed using the aforementioned \cref{thm:theta-non-rationals:infty}. 
\end{remark}

Based on the similarity between \(\tau\) and the Rasmussen invariants, it is natural to compare \cref{thm:theta-rationals,thm:theta-non-rationals:non-infty} to the behaviour of \(\tau\). 
The following is a special case of a result of Hom \cite[Theorem~5]{HomHFKandConcordance}.

\begin{theorem}\label{thm:tau-satellites-epsilon=0}
For any  knot \(K\) with \(\varepsilon(K)=0\) and any pattern \(P\) (of arbitrary winding or wrapping number),
\[
\tau(P(K))
=
\tau(P(U)).
\]
\end{theorem}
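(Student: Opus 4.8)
\medskip
\noindent\emph{Proof proposal.}
The plan is to combine Hom's algebraic characterisation of the condition $\varepsilon(K)=0$ with the bordered-Floer description of satellite operations. Recall \cite{HomHFKandConcordance} that $\varepsilon(K)=0$ is equivalent to the statement that the knot Floer complex $CFK^\infty(K)$ is locally (stably) equivalent to $CFK^\infty(U)$; together with the Krull--Schmidt property of finitely generated filtered complexes this yields, after a filtered change of basis, a splitting $CFK^\infty(K)\cong CFK^\infty(U)\oplus A$ with $A$ an acyclic filtered $\mathbb{F}[U,U^{-1}]$-complex, and in particular $\tau(K)=0=\tau(U)$. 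In the language of \cite{HRW} this is precisely the fact recalled before \cref{thm:tau-satellites-epsilon=0}: the distinguished (essential) component of the multicurve $\widehat{\operatorname{HF}}(S^3\smallsetminus\mathring\nu(K))$ coincides with that of $\widehat{\operatorname{HF}}(S^3\smallsetminus\mathring\nu(U))$ \cite[discussion after Example~48]{HRWcompanion}, while by the structure theory of \cite{HRW} all remaining components are null-homotopic in the punctured boundary torus.

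Next I would invoke the pairing theorem for bordered Floer homology. Writing $P(K)$ as the union of the pattern solid torus $(V,P)$ with the knot exterior $X_K=S^3\smallsetminus\mathring\nu(K)$ along their common torus boundary, the doubly-pointed bordered invariant $\widehat{\operatorname{CFA}}(V,P)$ and the type-$D$ structure $\widehat{\operatorname{CFD}}(X_K)$ together recover the $\mathbb{Z}$-filtered complex $\widehat{\operatorname{CFK}}(P(K))$ as the box tensor product $\widehat{\operatorname{CFA}}(V,P)\boxtimes\widehat{\operatorname{CFD}}(X_K)$, and $\widehat{\operatorname{CFD}}(X_K)$ is built algorithmically from $CFK^\infty(K)$. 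The crucial point is that the splitting $CFK^\infty(K)\cong CFK^\infty(U)\oplus A$ descends, after choosing vertically and horizontally simplified bases compatible with the direct sum, to a splitting $\widehat{\operatorname{CFD}}(X_K)\simeq\widehat{\operatorname{CFD}}(X_U)\oplus D_A$ of type-$D$ structures with $D_A$ acyclic; here one uses that the distinguished generators entering the unstable chain lie in the $CFK^\infty(U)$-summand and that $\tau(K)=0$, so the unstable chains for $X_K$ and $X_U$ agree. Since $\boxtimes$ is additive in the type-$D$ factor and kills acyclic type-$D$ structures, this gives a filtered chain homotopy equivalence $\widehat{\operatorname{CFK}}(P(K))\simeq\widehat{\operatorname{CFK}}(P(U))\oplus(\text{acyclic})$. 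As $\tau$ depends only on the $\mathbb{Z}$-filtered chain homotopy type of $\widehat{\operatorname{CFK}}$ and is unaffected by acyclic summands, we conclude $\tau(P(K))=\tau(P(U))$.

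I expect the main obstacle to be the middle step: checking that local triviality of $CFK^\infty(K)$ genuinely propagates through the Lipshitz--Ozsv\'ath--Thurston construction, i.e.\ that the ``extra'' part of $\widehat{\operatorname{CFD}}(X_K)$ splits off as an honest acyclic summand and, crucially, remains acyclic after box-tensoring with an arbitrary pattern module $\widehat{\operatorname{CFA}}(V,P)$. Points requiring care are the choice of framing entering $\widehat{\operatorname{CFD}}(X_K)$ and the shape of its unstable chain, and the fact that one must track the full $\mathbb{Z}$-filtered complex rather than merely $\widehat{\operatorname{HFK}}$ in order to read off $\tau$; phrasing the bookkeeping through the \cite{HRW} multicurve invariant (with the reading of $\tau$ off the distinguished component) is the cleanest way to manage this. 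Once the middle step is in place, no property of $P$ beyond its being a pattern is used, so the conclusion holds for arbitrary winding and wrapping number.
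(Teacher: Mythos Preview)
Your proposal is correct and follows essentially the same strategy as the paper's sketch: both use the bordered Floer pairing theorem together with the direct-sum splitting $\widehat{\operatorname{CFD}}(X_K)\simeq\widehat{\operatorname{CFD}}(X_U)\oplus D_A$ coming from $\varepsilon(K)=0$, and then exploit additivity of the box tensor product. The only packaging difference is in how $\tau$ is read off at the end. The paper works with $HFK^-$ via \cite[Theorem~11.19]{LOT} and observes that the resulting summand $HFK^-(P(U))\subset HFK^-(P(K))$ already carries the free tower, citing \cite[equation~11.16]{LOT}; this sidesteps any discussion of the complementary summand. You instead work with the $\mathbb{Z}$-filtered complex $\widehat{\operatorname{CFK}}$ and argue that the complementary piece $\widehat{\operatorname{CFA}}(V,P)\boxtimes D_A$ is acyclic (as an unfiltered complex computing $\widehat{HF}(S^3)$), hence does not affect $\tau$. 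That acyclicity is indeed forced by rank considerations, since both $\widehat{\operatorname{CFK}}(P(K))$ and $\widehat{\operatorname{CFK}}(P(U))$ have one-dimensional total homology, so your worry about this step is unfounded---though the paper's route via $HFK^-$ is arguably cleaner precisely because it never needs to make this observation.
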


\begin{proof}[Sketch proof]
	Since Hom's theorem is more general, we describe the basic idea of the proof of \cref{thm:tau-satellites-epsilon=0}:
	For any knot \(K\), \(\varepsilon(K)=0\) is equivalent to the condition that the type~D structure \(\operatorname{\widehat{CFD}}(S^3\smallsetminus\mathring{\nu}(K))\) in bordered Heegaard Floer homology contains a direct summand identical to \(\operatorname{\widehat{CFD}}(S^3\smallsetminus\mathring{\nu}(U))\). 
	Since the box-tensor product commutes with the direct sum, \cite[Theorem~11.19]{LOT} implies that the homology group \(HFK^-(P(K))\) contains a direct summand isomorphic to \(HFK^-(P(U))\), and this summand contains the invariant~\(\tau\) \cite[equation~11.16]{LOT}. 
\end{proof}

This result is much stronger than \cref{thm:theta-rationals}. 
As we have just seen, the proof relies on bordered Heegaard Floer homology due to Lipshitz, Ozsváth, and Thurston \cite{LOT}. 
The multicurve theory from \cite{KWZ} is just a first step towards better understanding similar cut-and-paste technology in Khovanov homology, namely the cobordism complexes due to Bar-Natan \cite{BarNatanKhT}. 
Unfortunately, his theory (like Khovanov homology) is less geometric and therefore harder to apply to this type of problem. 
Still, by studying Bar-Natan's invariants of companion tangles on more strands, one might hope to show analogous behaviour for Rasmussen invariants:

\begin{conjecture}
	For any \(\theta_c\)-rational knot \(K\) and any pattern \(P\) (of arbitrary winding or wrapping number),
	\[
	s_c(P(K))
	=
	s_c(P(U)).
	\]
\end{conjecture}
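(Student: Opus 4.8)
The natural plan is to mirror Hom's proof of \cref{thm:tau-satellites-epsilon=0}, with bordered Heegaard Floer homology replaced by Bar-Natan's cobordism-theoretic refinement of Khovanov homology. Writing \(w\) for the wrapping number of \(P\), I would cut \(P(K)\) along a sphere meeting it in \(2w\) points, as in \cref{fig:mainthm}(a), obtaining a pattern tangle \(T_P\) and a companion tangle \(T_K\), each with \(2w\) endpoints, so that \(\Khr(P(K);\F_c)\) — and hence \(s_c(P(K))\) — is computed from \(\KhTl{T_P}\) and \(\KhTl{T_K}\) via Bar-Natan's gluing operation over \(\Cob_{/l}\). This reduction is available for every \(w\), since Bar-Natan's invariant and its behaviour under gluing are defined for arbitrary tangles; all the difficulty is concentrated in understanding \(\KhTl{T_K}\) when \(K\) is \(\theta_c\)-rational. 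The structural statement I would aim for is
\[
\text{$K$ is $\theta_c$-rational}\implies \KhTl{T_K}\simeq\KhTl{T_U}\oplus C,
\]
where \(T_U\) is the companion tangle of the unknot for the same \(w\) and \(C\) is a complex all of whose indecomposable summands correspond to closed curves (no non-compact part). Granting this, gluing in \(\KhTl{T_P}\) would realise \(\Khr(P(U);\F_c)\) as a direct summand of \(\Khr(P(K);\F_c)\), with complement built from \(C\); since the Rasmussen invariant of a tangle gluing depends only on the non-compact components of the two tangle invariants — the principle already behind \cref{thm:main} — the \(C\)-summand would be invisible to \(s_c\), and \(s_c(P(K))=s_c(P(U))\) would follow.

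For \(w\le 2\) the structural statement is nearly within reach of the tools of this paper. By \cref{prop:theta-rational}, \(\theta_c\)-rationality of \(K\) means \(\BNr_a(T_K;\F_c)=\BNr_a(Q_n;\F_c)\) for some \(n\in 2\Z\); and since a multicurve having \(\BNr_a(Q_n)\) among its components corresponds to a complex with \(\KhTl{Q_n}\) as a direct summand, this already gives \(\KhTl{T_K}\simeq\KhTl{Q_n}\oplus C\) with \(C\) closed. To conclude I would still need \(n=0\), which is exactly \cref{conj:theta-rational-implies-theta=0}; thus the conjecture at hand contains that conjecture as a sub-problem for \(w\le 2\) (together with \cref{thm:theta-rationals}, and \cref{conj:not2} in the winding-number-zero, odd-characteristic case).

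The main obstacle is the range \(w\ge 3\): promoting the summand decomposition of \(\KhTl{T_K}\) from two strands to \(2w\) strands. The companion tangle on \(2w\) strands is obtained from the one on two strands — equivalently, from the Bar-Natan complex of the long knot \(K\) — by ``cabling up'', i.e.\ by adding parallel strands running alongside \(K\); and cabling is not additive, so \(\KhTl{T_K}\) on \(2w\) strands is not built from the two-strand version by gluing on a fixed tangle. The route I would try is to work with a simplified (delooped, Gaussian-eliminated) normal form for \(\KhTl{K;\F_c}\) and argue that, for \(\theta_c\)-rational \(K\), this normal form is ``stable'' enough that its \(w\)-cable visibly splits off the complex of the \(w\)-cabled unknot, in analogy with the way Hom's \(\varepsilon=0\) hypothesis produces an explicit unknot summand in \(\operatorname{\widehat{CFD}}(S^3\smallsetminus\mathring{\nu}(K))\) that survives box-tensoring with an arbitrary pattern. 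Making this precise requires developing enough of the theory of Bar-Natan complexes on \(\ge 6\) endpoints to control the non-compact data that governs \(s_c\) — precisely the regime where the four-ended multicurve technology of \cite{KWZ} is not yet available — and this is where I expect the real work to lie.
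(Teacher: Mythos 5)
The statement you have been given is a conjecture of the paper, not a theorem; there is no proof to compare against, only the remark immediately preceding it that one should ``study Bar-Natan's invariants of companion tangles on more strands''. Your sketch reads that remark correctly: mirroring Hom's argument for \cref{thm:tau-satellites-epsilon=0}, cutting $P(K)$ into a pattern tangle and a $2w$-ended companion tangle $T_K$, and aiming for a splitting $\KhTl{T_K}\simeq\KhTl{T_U}\oplus C$ with $C$ invisible to $s_c$ — this is exactly the approach the authors have in mind, and you are right that the central obstruction is the absence of any analogue of \cite[Theorem~1.5]{KWZ} for Bar-Natan complexes over $\Cob_{/l}$ of tangles with more than four ends, which stalls the argument at $w\geq 3$.

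However, your reduction of the small-$w$ cases has a gap you do not flag. You cover $w=2$ by \cref{prop:theta-rational} plus \cref{conj:theta-rational-implies-theta=0} (and \cref{conj:not2} for winding number zero over $\F_c$, $c\neq 2$); this is fine, and the absolute grading shift is indeed forced to vanish once $\theta_c(K)=0$ because $T_K$ has linking number zero. But wrapping number $w=1$ is not a Conway-tangle situation: cutting along a meridional disc shows that a wrapping-number-one pattern is a connected-sum pattern, so $P(K)=K\# J$ and $P(U)=J$, and the conjecture there is precisely the claim that \emph{every $\theta_c$-rational knot satisfies $s_c(K)=0$}. That assertion concerns $\CBNr(K;\F_c)$ on \emph{one} strand, not the four-ended $\BNr_a(T_K;\F_c)$, and it does not follow from \cref{prop:theta-rational}, \cref{thm:theta-rationals}, \cref{conj:theta-rational-implies-theta=0}, or anything else in the paper. (It is the Khovanov analogue of the fact that $\varepsilon(K)=0\Rightarrow\tau(K)=0$, which on the Floer side is itself a nontrivial structural theorem.) Your two-ended version of the structural splitting — a free $\F_c[H]$-summand in quantum degree zero inside $\CBNr(K;\F_c)$ — would dispose of this, but you offer no route to it from $\theta_c$-rationality. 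So the accurate description of your proposal is: the strategy the authors envisage, conditional on two further open conjectures of the paper, with an unflagged additional claim at $w=1$, and genuinely open at $w\geq 3$.
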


Proving a result for \(\tau\) that is analogous to \cref{thm:theta-non-rationals:non-infty} seems less straightforward than the proof of \cref{thm:tau-satellites-epsilon=0}, but might well be within the limits of current technology. 
Combining \cref{thm:theta-non-rationals:non-infty} with Hom's cabling formula for \(\tau\) \cite{HomCables}, we make the following conjecture:

\begin{conjecture}
	Let \(K\) be a knot with \(\varepsilon(K)=\pm1\).	
	Then for any pattern \(P\) with wrapping number 2 and winding number $\pm 2$ with \(\theta_\tau(P)\neq\infty\),
	\[
	\tau(P(K))
	=
	\tau(P(U))
	+2\tau(K)
	+
	\begin{cases*}
	-1
	&
	if \(\varepsilon(K)=1\) and \(\theta_\tau(P)>0\)
	\\
	1
	&
	if \(\varepsilon(K)=-1\) and \(\theta_\tau(P)\leq 0\)
	\\
	0
	&
	otherwise.
	\end{cases*}
	\]
\end{conjecture}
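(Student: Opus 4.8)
The plan is to derive this conjecture by combining Theorem~\ref{thm:theta-non-rationals:non-infty} with Hom's cabling formula for $\tau$, exploiting the parallel roles played by $\varepsilon(K)$ in the Heegaard Floer setting and $\theta_c$-rationality in the Khovanov setting. First I would recall Hom's cabling formula \cite{HomCables}: for a knot $K$ with $\varepsilon(K)=\pm1$, the invariant $\tau$ of the $(2,1)$-cable is given by $\tau(C_{2,1}(K)) = 2\tau(K) + \tau(C_{2,1}(U)) - \delta$, where the correction term $\delta$ depends precisely on the sign of $\varepsilon(K)$ — this is exactly the $\tau$-analogue of the quantity $s_c(C_{2,1}(K))$ that appears in Theorem~\ref{thm:theta-non-rationals:non-infty}. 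Substituting this expression for $\tau(C_{2,1}(K))$ into the hypothetical $\tau$-version of Theorem~\ref{thm:theta-non-rationals:non-infty}, and using $\tau(C_{2,1}(U)) = \tfrac{s_0}{2}(C_{2,1}(U))$ together with the known value of $\theta_\tau(C_{2,1})$, one collects all the sign-dependent terms into the single three-way case distinction on the pair $(\varepsilon(K), \theta_\tau(P))$ displayed in the conjecture.

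The key steps, in order, would be: (1) establish that $\varepsilon(K)=\pm1$ implies $K$ is not $\theta_\tau$-rational (this is the content of the proposition in \cref{subsec:slice-torus2} relating $\theta_\tau$-rationality to $\varepsilon=0$, so the hypothesis puts us squarely in the non-$\theta_\tau$-rational regime where an analogue of Theorem~\ref{thm:theta-non-rationals:non-infty} should apply); (2) formulate and prove the $\tau$-analogue of Theorem~\ref{thm:theta-non-rationals:non-infty}, namely that for $K$ not $\theta_\tau$-rational and $P$ with $\theta_\tau(P)\neq\infty$,
\[
\tau(P(K)) = \tau(P(U)) + \tau(C_{2,1}(K)) -
\begin{cases*}
1 & if $\theta_\tau(P)>0$\\
0 & if $\theta_\tau(P)\leq 0$
\end{cases*}
\]
(the coefficient is $1$ rather than $2$ because of the normalisation $\tau = s_0/2$); (3) plug in Hom's cabling formula and simplify, tracking carefully how the sign of $\varepsilon(K)$ flips the value of $\delta$ between the two cases; and (4) verify the resulting case distinction matches the one in the statement, in particular checking the boundary behaviour at $\theta_\tau(P)=0$ where the two correction schemes must agree.

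The main obstacle will be step~(2): proving a genuine $\tau$-analogue of Theorem~\ref{thm:theta-non-rationals:non-infty}. The Khovanov-homology proof of that theorem uses the multicurve technology of \cite{KWZ} in an essential way, and there is no comparably developed cut-and-paste calculus for the relevant Heegaard Floer invariants of four-strand companion tangles — the bordered theory of \cite{LOT} handles the solid-torus (two-strand) case that powers Theorem~\ref{thm:tau-satellites-epsilon=0}, but wrapping number two forces a genuinely four-ended bordered setup that has not been worked out. Consequently this step cannot be completed with current technology, which is why the statement is phrased as a conjecture rather than a proposition; the most one can do rigorously is verify the formula on families of examples where both sides are independently computable (twisted cables of thin knots via Hom's formula, say) and check consistency with \cref{thm:theta-non-rationals:non-infty} under the expected correspondence $s_c \leftrightarrow 2\tau$. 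I would include such a consistency check and a few worked examples in lieu of a proof, and flag step~(2) as the precise missing ingredient.
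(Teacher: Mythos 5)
Your proposal correctly reconstructs exactly the heuristic the paper itself uses: the statement is offered as a conjecture, motivated by combining \cref{thm:theta-non-rationals:non-infty} (with $s_c$ replaced by $2\tau$) with Hom's $(2,1)$-cabling formula, and the paper gives no proof precisely because the $\tau$-analogue of \cref{thm:theta-non-rationals:non-infty} is not available. Your step~(1), the algebraic bookkeeping in steps~(3)--(4), and your identification of step~(2) as the genuine obstruction all match the paper's framing, so there is nothing to correct.
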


\subsection{\texorpdfstring{Properties of the knot invariants \(\bm{\theta_{\nu}}\)}{Properties of the knot invariants ϑ\_{}ν}}\label{subsec:tnu}\label{subsec:propthetacnu}
One may see easily that \(\theta_\nu\) induces a knot concordance invariant (see \cref{rmk:slice}).
However, it is not known whether \(\theta_\nu\) induces a knot concordance homomorphism $\mathcal{C}\to\Z$ in general,
i.e.\ whether 
\(\theta_\nu(K \# J) = \theta_\nu(K) + \theta_\nu(J)\) holds for all knots $K$ and $J$.
We do have the following inequalities due to Park.%
\begin{theorem}[{\cite[Theorem~1.2]{MR3577888}}]
For all slice-torus invariants \(\nu\) and knots \(K\) and \(J\),
\[
\theta_\nu(K) + \theta_\nu(J) - 1 \leq
\theta_\nu(K \# J) \leq 
\theta_\nu(K) - \theta_\nu(-J) + 1.
\]
\end{theorem}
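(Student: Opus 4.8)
The plan is to reduce both inequalities to a single geometric input, which is really the crux of the matter: \emph{for all knots $K,J$ and all $a,b\in\Z$ there is a connected, orientable genus-$1$ cobordism in $S^3\times[0,1]$ from $\Wh_{a+b}(K\#J)$ to $\Wh_a(K)\#\Wh_b(J)$}. Everything else is formal bookkeeping with the standard axioms of a slice-torus invariant $\nu$ (additivity under $\#$ and the bound $|\nu(L)-\nu(L')|\le g$ whenever $L$ and $L'$ cobound a connected oriented genus-$g$ surface in $S^3\times[0,1]$), together with \cref{prop:theta:winding0}, \cref{prop:theta_nu}, and the fact — contained in \cite{doubled} (see the proof of \cref{prop:theta_nu}) and recalled for $K=U$ in \cref{ex:nu_of_twist_knots} — that $\nu(\Wh_t(K))$ equals $1$ for $t<\theta_\nu(K)$ and $0$ for $t\ge\theta_\nu(K)$.

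Granting the cobordism statement, the lower bound runs as follows. By \cref{prop:theta_nu} the integers $a\coloneqq\theta_\nu(K)-1$ and $b\coloneqq\theta_\nu(J)-1$ are finite, and $\nu(\Wh_a(K))=\nu(\Wh_b(J))=1$, so additivity gives $\nu(\Wh_a(K)\#\Wh_b(J))=2$. The genus-$1$ cobordism then forces $\nu(\Wh_{a+b}(K\#J))\ge 1$, hence $=1$ (as $\Wh_{a+b}(K\#J)$ has a genus-$1$ Seifert surface); since $\nu(\Wh_t(K\#J))=0$ for $t\ge\theta_\nu(K\#J)$, this yields $a+b<\theta_\nu(K\#J)$, i.e.\ $\theta_\nu(K\#J)\ge\theta_\nu(K)+\theta_\nu(J)-1$. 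The upper bound now follows formally: apply the lower bound to the pair $(K\#J,\,-J)$ to get $\theta_\nu\big((K\#J)\#(-J)\big)\ge\theta_\nu(K\#J)+\theta_\nu(-J)-1$; but $(K\#J)\#(-J)=K\#\big(J\#(-J)\big)$ is concordant to $K$ because $J\#(-J)$ is slice, so the left-hand side equals $\theta_\nu(K)$ since $\theta_\nu$ is a concordance invariant (\cref{rmk:slice}), and rearranging gives $\theta_\nu(K\#J)\le\theta_\nu(K)-\theta_\nu(-J)+1$.

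It remains to construct the cobordism, which I would do diagrammatically. Realise $\Wh_a(K)\#\Wh_b(J)$ by attaching the whole Whitehead-double ``blob'' of $J$ to $\Wh_a(K)$ along a connect-sum band chosen to avoid both twist regions. Near this band one can perform a cut-and-paste move that merges the $K$- and $J$-parts of the Whitehead pattern into the Whitehead pattern of $K\#J$: this simultaneously deletes one of the two (positive) clasps and brings both twist regions onto the companion $K\#J$, where they add up to $a+b$ full twists. Equivalently, the move compresses the genus-$2$ Seifert surface of $\Wh_a(K)\#\Wh_b(J)$ once, landing on a genus-$1$ Seifert surface bounded by $\Wh_{a+b}(K\#J)$; carried out in $S^3\times[0,1]$ the two saddles assemble into a connected genus-$1$ cobordism (Euler characteristic $-2$, two boundary knots).

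The main obstacle is precisely this last construction: one must check with care — most safely by drawing the diagrams and tracking orientations, the sign of the clasp, and the location of the twist regions — that the knot obtained is genuinely $\Wh_{a+b}(K\#J)$ with exactly $a+b$ twists, rather than, say, the positive Whitehead double of a different knot or with a framing shifted by a universal constant. A wrong constant here would change the additive defect in the final inequality, so getting the twisting bookkeeping exactly right (and checking that the resulting cobordism surface is connected, so that the genus bound applies in the form used) is the one genuinely non-formal point. Once this lemma is pinned down, the two inequalities follow immediately as above.
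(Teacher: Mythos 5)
The paper simply cites Park~\cite{MR3577888} for this statement and does not reproduce a proof, so there is no internal argument to compare against; assessed on its own, your proof is correct and, I believe, essentially recovers Park's. The genus-$1$ cobordism from $\Wh_{a+b}(K\#J)$ to $\Wh_a(K)\#\Wh_b(J)$ is indeed the crux, and it is true: two oriented band moves (one joining the two ``free'' strands of the two Whitehead patterns near the connect-sum arc, one resolving one of the two positive clasps) yield a connected oriented cobordism of Euler characteristic $-2$ with two boundary circles, hence genus $1$, and the two twist regions do combine to $a+b$ full twists on the companion $K\#J$. The formal bookkeeping is all valid: choosing $a=\theta_\nu(K)-1$ and $b=\theta_\nu(J)-1$ (finite by \cref{prop:theta_nu}), invoking \cref{lemma:slicetorusprop}(a) together with additivity of $\nu$ and the fact that $\nu(\Wh_t(\,\cdot\,))$ jumps from $1$ to $0$ at $t=\theta_\nu(\,\cdot\,)$, gives the lower bound, and obtaining the upper bound by applying the lower bound to the pair $(K\#J,-J)$ via concordance invariance of $\theta_\nu$ (\cref{rmk:slice}) is a particularly clean way to get both inequalities from a single geometric input. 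You are also right that the only genuinely non-formal point is the framing and clasp bookkeeping in the cobordism; a fully rigorous write-up should spell that out with diagrams, but the argument is sound.
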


We now discuss the effect of crossing changes on $\theta_\nu$.
Suppose $K_-$ is a knot and $\lambda$ an unknot in the complement of $K_-$ such that $\lk(K_-, \lambda)=0$.
Performing a $(+1)$-framed Dehn surgery on $\lambda$ transforms $K_-$ into another knot $K_+$ in~$S^3$.
We say that $K_+$ (or~$K_-$) arises from $K_-$ (or $K_+$) by a \emph{negative (or positive) generalised crossing change}
\cite{zbMATH01863417}.
If $\lambda$ bounds a disc $D$ that intersects $K_-$ transversely in $2n$ points, then $K_+$ arises from $K_-$ by tying a left-handed full-twist into the $2n$ strands of $K_-$ close to $D$.
We say that the generalised crossing change is \emph{$2n$-stranded}.
Note that a 2-stranded generalised crossing change is simply a crossing change in the usual sense.

Let us say that a knot invariant $y$ satisfies the \emph{($\mathit{2n}$-stranded) generalised/classical crossing change inequality}
if $y(K_-) \leq y(K_+)$ holds whenever the knot $K_+$ arises from $K_-$ by a negative generalised/classical crossing change (on $2n$ strands).
For example, Levine-Tristram signatures satisfy the generalised crossing change inequality;
slice-torus invariants satisfy the classical crossing change inequality;
$\tau$ (as a consequence of Ozsváth-Szabó \cite[Theorem 1.1]{osz10}), and $s_c$ with $c\neq 2$ (by Manolescu-Marengon-Sarkar-Willis \cite[Theorem~1.11]{MMSW}) satisfy the generalised crossing change inequality,
but it is unknown whether all slice-torus invariants do.

\begin{proposition}\label{prop:crossingchange}
Let $n \geq 1$.
If $\nu$ is a slice-torus invariant satisfying the $4n$-stranded generalised crossing change inequality, then
\[
\theta_{\nu}(K_-) \leq \theta_{\nu}(K_+) \leq \theta_{\nu}(K_-) + 4n^2
\]
holds if $K_+$ arises from $K_-$ by a $2n$-stranded negative generalised crossing change.
In particular, if $\nu$ satisfies the generalised crossing change inequality, then so does $\theta_{\nu}$.
\end{proposition}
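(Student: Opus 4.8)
The plan is to relate a $2n$-stranded generalised crossing change on $K$ to a suitable twisting operation on the satellite, and then feed this into \cref{prop:theta:winding0}. Concretely, fix the positive Whitehead pattern $\Wh$, so that $\theta_\nu(K)=\theta_\nu(\Wh,K)$. I would start from the observation that for each $t\in\Z$, the satellite $\Wh_t(K_-)$ and $\Wh_t(K_+)$ differ by a $2n$-stranded generalised crossing change lying entirely inside the companion solid torus: the twisting disc $D$ for $K$, pushed into the satellite, meets $\Wh_t(K)$ in $2n$ parallel copies of each of the (two) strands of $\Wh$ that pass through $D$ — that is, in $4n$ points — and the $(+1)$-framed surgery on $\lambda$ then introduces one left-handed full twist on those $4n$ strands. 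Hence $\Wh_t(K_+)$ arises from $\Wh_t(K_-)$ by a single $4n$-stranded negative generalised crossing change. Since $\nu$ satisfies the $4n$-stranded generalised crossing change inequality by hypothesis, $\nu(\Wh_t(K_-))\le\nu(\Wh_t(K_+))$ for every $t$.

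Next I would record the normalisation inputs. By \cref{prop:theta_nu} the functions $f_\pm(t)\coloneqq\nu(\Wh_t(K_\pm))$ are of the single-jump form described in \cref{prop:theta:winding0}, with jump points $\theta_\nu(K_\pm)$; and by standard stabilisation both functions are eventually constant, taking the value $\nu(K_\pm)$ for $t\gg0$ and $\nu(K_\pm)+1$ for $t\ll0$ (this is the computation behind \cref{ex:nu_of_twist_knots}, applied to $K$ rather than $U$; more precisely, $\nu(\Wh_t(K))=\nu(K)$ for $t\ge\theta_\nu(K)$ and $\nu(K)+1$ for $t<\theta_\nu(K)$, which one gets from \cref{prop:theta:winding0} together with the two boundary values $\nu(\Wh_t(K))\to\nu(K)$ and $\nu(\Wh_t(K))-\nu(K)\to 1$). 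From $f_-(t)\le f_+(t)$ for all $t$, comparing the two step functions forces the jump of $f_-$ to occur no later than the jump of $f_+$ — i.e. $\theta_\nu(K_-)\le\theta_\nu(K_+)$ — and also $\nu(K_-)\le\nu(K_+)\le\nu(K_-)+1$, the latter being just the $4n$-stranded inequality combined with the fact that a $2n$-stranded negative crossing change is a genus-$\binom{2n}{2}=n(2n-1)$ cobordism (so $\nu$ can drop by at most $n(2n-1)$ in the other direction; only the lower bound $\nu(K_+)\ge\nu(K_-)$ is needed here, but the two-sidedness of $\nu$ under genus-one cobordisms pins $\nu(K_+)-\nu(K_-)\in\{0,1\}$ once one also uses that $K_+$ is obtained from $K_-$ by a single full positive twist giving a genus-one cobordism the other way — I will double-check which cobordism bound is cleanest).

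For the upper bound $\theta_\nu(K_+)\le\theta_\nu(K_-)+4n^2$ I would argue symmetrically but in the opposite direction: $K_-$ is obtained from $K_+$ by a $2n$-stranded \emph{positive} generalised crossing change, which can be realised by a genus-$n(2n-1)$ cobordism; after applying the pattern $\Wh_t$ this becomes a cobordism between $\Wh_t(K_+)$ and $\Wh_t(K_-)$ of genus $n(2n-1)$ as well (the pattern tangle contributes no extra genus). The slice--torus property then gives $|\nu(\Wh_t(K_+))-\nu(\Wh_t(K_-))|\le n(2n-1)$ for all $t$; evaluating at $t$ just below $\theta_\nu(K_+)$, where $f_+$ has already jumped but $f_-$ may not have, bounds how far $\theta_\nu(K_+)$ can exceed $\theta_\nu(K_-)$: the two step functions $f_\pm$ differ by at most $n(2n-1)$ pointwise, each takes exactly the two values $\{\nu(K_\pm),\nu(K_\pm)+1\}$, and on the interval $[\theta_\nu(K_-),\theta_\nu(K_+))$ one must have $f_+\ge f_-$ — wait, this alone only re-proves the lower bound. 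The correct route to $+4n^2$ is instead to iterate the crossing-change inequality: a single $4n$-stranded twist changes $\theta_\nu$ by at least $0$, and one shows a $4n$-stranded twist can be \emph{undone} by at most $4n^2$ crossing changes in the classical sense (each $4n$-stranded full twist is a product of $\binom{4n}{2}=2n(4n-1)$ classical crossings; pairing them appropriately, undoing the twist costs $4n^2$ \emph{negative} classical crossing changes applied to $K_+$'s satellite), and then applying \cref{prop:crossingchange} in the already-proved $1$-stranded case ($n=1$ is the classical crossing change) $4n^2$ times, each contributing an increase of at most $1$ to $\theta_\nu$.

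\textbf{Main obstacle.} The delicate point, and the one I expect to consume most of the write-up, is bookkeeping the relationship between a $2n$-stranded generalised crossing change on $K$ and crossing changes on $\Wh_t(K)$: verifying that it becomes exactly a $4n$-stranded one (strand-count $2n\times\text{wrapping number }2$), that the resulting link in the solid torus is still $\Wh_t(K_\pm)$ with the same framing $t$, and that the genus/crossing-number accounting yields precisely the constant $4n^2$ rather than $2n(4n-1)$ or $n(2n-1)$. I would settle this by drawing the twist region explicitly and counting, taking care that a left-handed full twist on $4n$ strands decomposes so that undoing it uses $4n^2$ negative crossing changes — half of the $\binom{4n}{2}\cdot 2$ crossings, organised as $4n^2$ "opposite-sign pairs". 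Once the $n=1$ case of the proposition is in hand (where it is literally the classical crossing change inequality for $\theta_\nu$, following from \cref{prop:theta:winding0} and the slice--torus crossing-change inequality as above), the general case follows by this iteration, and the final sentence — that $\theta_\nu$ inherits the (unqualified) generalised crossing change inequality — is immediate from the lower bound $\theta_\nu(K_-)\le\theta_\nu(K_+)$.
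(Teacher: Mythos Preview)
Your lower-bound argument is close to correct but contains an error that you should fix: the asymptotic values of $f_\pm(t)=\nu(\Wh_t(K_\pm))$ are \emph{not} $\nu(K_\pm)$ and $\nu(K_\pm)+1$. The knots $\Wh_t(K_\pm)$ have Seifert genus~$1$, and in fact $f_\pm(t)\in\{0,1\}$ for all $t$, with $f_\pm(t)=0$ for $t\gg 0$ and $f_\pm(t)=1$ for $t\ll 0$ (this is what \cref{prop:theta:winding0} together with \cref{prop:theta_nu} says). This matters: the inequality $f_-(t)\le f_+(t)$ forces $\theta_\nu(K_-)\le\theta_\nu(K_+)$ only because both step functions take the \emph{same} two values; with your stated values (which could differ for $K_-$ and $K_+$) the pointwise inequality would impose no constraint on the jump locations. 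Once this is corrected, your lower-bound argument is essentially the paper's, specialised to the null-homologous case.

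Your upper-bound argument, however, has a genuine gap. The ``iteration'' you propose is incoherent: you speak of undoing a $4n$-stranded twist on the \emph{satellite} by $4n^2$ classical crossing changes and then applying the $n=1$ case of the proposition $4n^2$ times---but crossing changes on $\Wh_t(K)$ are not crossing changes on $K$, so the $n=1$ case of the proposition (which bounds $\theta_\nu$ under \emph{companion} crossing changes) does not apply. Nor can the $2n$-stranded null-homologous twist on $K$ itself be undone by negative classical crossing changes on $K$. The paper's route is different and rests on an idea you are missing: the observation (\cref{prop:satellitetwist}) that an $h$-\emph{homologous} twist on the companion becomes a null-homologous twist on the satellite together with a framing shift $P\mapsto P_{-h^2}$. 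One then realises the reverse move $K_+\to K_-$ as a left-handed $2n$-homologous twist (so $h=2n$) followed by $2n(n-1)$ \emph{negative} classical crossing changes (see \cref{fig:theothertwist}). The framing shift contributes exactly $h^2=4n^2$ to the bound on $\theta_\nu$ via the argument of \cref{prop:generaltwists}, and the trailing negative crossing changes only help by the already-proved lower bound. The constant $4n^2$ is thus $h^2$, not any crossing count; your attempts to get $4n^2$ from $\binom{4n}{2}$ or $n(2n-1)$ cannot succeed.
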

The case $n = 1$ of \cref{prop:crossingchange} is implicit in \cite{MR3910442}.
We will prove \cref{prop:crossingchange} and variations of it in \cref{subsec:fulltwists}.
As a consequence of \cref{prop:crossingchange}, if $\nu$ is a slice-torus invariant satisfying the generalised crossing change inequality,
then $\theta_\nu(K) \geq 0$ holds for all knots that can be transformed into a slice knot by positive generalised crossing changes,
and $\theta_\nu(K) \leq 0$ holds for all knots that can be transformed into a slice knot by negative generalised crossing changes.
Combined, this already implies $\theta_\nu(K) = 0$ for many knots (cf.~also \cite{zbMATH06696649}), e.g.~for the $(2,1)$-cable of the figure-eight knot.

We suspect that the results of \cite{MMSW} extend to characteristic 2, i.e.~that $s_2$ satisfies the generalised crossing change inequality as well. 
This would in particular imply the following.%
\begin{conjecture}
	If $K_+$ arises from $K_-$ by a negative generalised crossing change on $2n$ strands, then we have
	$\theta_2(K_-) \leq \theta_2(K_+) \leq \theta_2(K_-) + 4n^2$.
\end{conjecture}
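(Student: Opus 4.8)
\emph{Proof strategy.} The plan is to deduce the conjecture directly from \cref{prop:crossingchange} applied to the slice-torus invariant \(\nu=\nicefrac{s_2}{2}\), for which \(\theta_\nu=\theta_2\) by \cref{def:theta}. That proposition yields the asserted bounds \(\theta_2(K_-)\leq\theta_2(K_+)\leq\theta_2(K_-)+4n^2\) as soon as one knows that \(s_2\) satisfies the \(4n\)-stranded generalised crossing change inequality. Thus the entire content of the conjecture beyond \cref{subsec:fulltwists} is the single statement that \(s_2(K_-)\leq s_2(K_+)\) whenever \(K_+\) is obtained from \(K_-\) by a negative generalised crossing change on \(4n\) strands; it would in fact suffice to prove the unrestricted generalised crossing change inequality for \(s_2\), which is the missing case \(c=2\) of Manolescu--Marengon--Sarkar--Willis \cite[Theorem~1.11]{MMSW}.

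To establish that inequality I would follow the strategy of \cite{MMSW}. Recall the geometry reviewed in \cref{subsec:fulltwists}: a negative generalised crossing change on \(m\) strands is realised by a \((+1)\)-framed surgery on an unknot \(\lambda\) in the complement of \(K_-\) with \(\lk(K_-,\lambda)=0\) bounding a disk that meets \(K_-\) transversely in \(m\) points, and a standard blow-up and handle-slide argument turns this surgery into a smooth annular cobordism between \(K_-\) and \(K_+\) lying in \(S^3\times I\) after attaching a \(\overline{\mathbb{CP}^2}\) summand (equivalently, a cobordism in a connected sum of copies of \(S^1\times S^2\), the formulation used in \cite{MMSW}). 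Applying the \(s\)-invariant genus bound for surfaces in such \(4\)-manifolds then gives the inequality. Carrying this out over \(\F_2\) amounts to redeveloping the ingredients of \cite{MMSW} for the Bar-Natan deformation \(\F_2[X]/(X^2+X)\): the \(s_2\)-invariant of knots and links in connected sums of \(S^1\times S^2\), the functorial cobordism maps on the associated Bar-Natan complexes, and the estimates on their filtered degrees and on their non-triviality for cobordisms of the appropriate Euler characteristic and connectivity. Much of the underlying apparatus is already available — the characteristic-\(2\) foundations of Beliakova--Wehrli and the multicurve package of \cite{KWZ} used throughout this paper — which makes the program plausible.

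The main obstacle is precisely the reason the statement appears here only as a conjecture: the cobordism-map estimates of \cite{MMSW} are proved over \(\mathbb{Q}\) and over \(\F_c\) for odd primes \(c\), and transplanting them to characteristic \(2\) is delicate. One cannot simply substitute the Lee algebra \(\F_2[X]/(X^2+1)\cong\F_2[X]/((X+1)^2)\), which is not semisimple; the correct replacement is Bar-Natan's \(\F_2[X]/(X^2+X)\cong\F_2\times\F_2\), and one must re-verify in that setting that the cobordism maps have the expected behaviour on the distinguished generators and on filtration levels. Checking these estimates — rather than supplying any new geometric idea — is where the real work lies. Once they are in place, nothing further is needed: both inequalities of the conjecture, including the quadratic gap \(4n^2\), follow formally from \cref{prop:crossingchange}.
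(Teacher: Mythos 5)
The statement you were asked to prove is stated in the paper as a \emph{conjecture}, and the paper offers no proof: the sentence immediately preceding it says explicitly that the conjecture would follow from \cref{prop:crossingchange} once $s_2$ is known to satisfy the generalised crossing change inequality, i.e.\ the missing characteristic-$2$ case of \cite[Theorem~1.11]{MMSW}. Your proposal identifies precisely this reduction, names the same missing ingredient, and correctly diagnoses why transplanting the cobordism-map estimates of \cite{MMSW} to $\F_2$ (replacing the non-semisimple $\F_2[X]/(X^2+1)$ by the Bar--Natan algebra $\F_2[X]/(X^2+X)$) is the delicate step; so your reasoning reproduces the paper's own stance, and, like the paper, you have honestly left the key inequality unproven rather than claiming a proof.
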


Finally, let us mention an inequality proven by Livingston-Naik.
\begin{theorem}[{\cite[Theorem~2]{doubled}}]\label{thm:tbln}
For all slice-torus invariants $\nu$ and all knots $K$,
\[
TB(K) + 1 \leq \theta_\nu(K) \leq -TB(-K),
\]
where $TB(K)$ denotes the maximal Thurston-Bennequin number of $K$.
\end{theorem}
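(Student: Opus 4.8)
\emph{Proof proposal.}
The plan is to prove the two inequalities separately: the lower bound will be reduced to a slice--Bennequin estimate for \(\nu\) evaluated on a Legendrian Whitehead double, and the upper bound will then be extracted from the lower bound together with the subadditivity of \(\theta_\nu\) proved by Park.

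First recall from \cref{prop:theta_nu,prop:theta:winding0} that the function \(t\mapsto\nu(\Wh_t(K))\) is constant equal to some integer \(c\) for \(t<\theta_\nu(K)\) and constant equal to \(c-1\) for \(t\geq\theta_\nu(K)\); since \(\Wh_t(K)\) bounds a genus-one Seifert surface we have \(|\nu(\Wh_t(K))|\leq 1\), so \(c\in\{0,1\}\). Hence the lower bound \(\theta_\nu(K)\geq TB(K)+1\) is equivalent to the single inequality \(\nu(\Wh_{TB(K)}(K))\geq 1\) (which then also forces \(c=1\)). To prove this, choose a Legendrian representative \(L\) of \(K\) with \(tb(L)=TB(K)\) — such \(L\) exists, since the set of Thurston--Bennequin numbers of Legendrian representatives of \(K\) is a nonempty subset of \(\Z\) bounded above — and form the Legendrian satellite \(\Lambda\) of \(L\) along a Legendrian front \(\lambda\subset J^1(S^1)\) realising the (untwisted) positive Whitehead pattern \(\Wh\). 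Because \(\lambda\) has winding number zero, \(\Lambda\) is a Legendrian representative of \(\Wh_{tb(L)}(K)=\Wh_{TB(K)}(K)\). A front-projection computation — using that the two longitudinal strands of the pattern are oppositely oriented, so that the rotation-number contributions of \(L\) cancel and only the clasp contributes — gives \(r(\Lambda)=0\) and \(tb(\Lambda)=1\), independently of \(L\); the case \(K=U\), where \(\Lambda\) is a Legendrian right-handed trefoil realising \(\overline{tb}(T_{2,3})=1\), is a useful consistency check. The slice--Bennequin inequality for slice-torus invariants, \(tb(\Lambda)+r(\Lambda)\leq 2\nu(\Wh_{TB(K)}(K))-1\), then yields \(\nu(\Wh_{TB(K)}(K))\geq 1\), as required.

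For the upper bound, apply the lower bound just established to the mirror \(-K\), obtaining \(\theta_\nu(-K)\geq TB(-K)+1\). The knot \(K\#(-K)\) is slice, so \(\theta_\nu(K\#(-K))=\theta_\nu(U)=0\) by the concordance invariance of \(\theta_\nu\) (\cref{rmk:slice}) and \cref{ex:nu_of_twist_knots}. Park's inequality \cite[Theorem~1.2]{MR3577888} then gives
\[
\theta_\nu(K)+\theta_\nu(-K)-1 \;\leq\; \theta_\nu(K\#(-K)) \;=\; 0 ,
\]
whence \(\theta_\nu(K)\leq 1-\theta_\nu(-K)\leq 1-(TB(-K)+1)=-TB(-K)\).

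The main obstacle is the Legendrian front computation underlying the lower bound: one must pin down \(r(\Lambda)=0\) and \(tb(\Lambda)=1\) for the Legendrian positive Whitehead pattern taken along the maximal contact framing, which requires handling the cusp- and writhe-bookkeeping and the orientation conventions for \(\Wh\) with care (the \(K=U\) calibration is only a sanity check). One should also record the precise slice--Bennequin statement used, namely that \(tb(L)+r(L)\leq 2\nu(K)-1\) for every Legendrian representative \(L\) of every knot \(K\) and every slice-torus invariant \(\nu\) — a standard consequence of the genus/cobordism bound for \(\nu\) together with \(\nu(T_{p,q})=\tfrac12(p-1)(q-1)\); since here \(r(\Lambda)=0\), only the orientation-independent part is actually needed. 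A more symmetric route to the upper bound, via a positivity argument applied directly to \(\Whn_{-t}(-K)\), appears to fail: the naive Legendrian negative-clasped double of \(-K\) gives a slice--Bennequin bound that is too weak (only \(\nu\geq -2\) at the relevant twisting), so routing the upper bound through Park's subadditivity inequality seems to be the efficient way to close the argument.
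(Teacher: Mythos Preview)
The paper does not supply a proof of this statement; it is quoted from Livingston--Naik \cite{doubled}. Your argument is correct, and your lower-bound step (Legendrian positive Whitehead double along a maximal-$tb$ representative, then slice--Bennequin for $\nu$) is precisely the Livingston--Naik approach.

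Your upper-bound route through Park's subadditivity is a genuine departure from the original, since Park's paper postdates \cite{doubled}. Livingston--Naik instead proceed as follows: apply the lower bound to $-K$ to obtain $\nu(\Wh_{TB(-K)}(-K))=1$; mirror to get $\nu(\Whn_{-TB(-K)}(K))=-1$, and hence by monotonicity in $t$ that $\nu(\Whn_t(K))=-1$ for all $t\geq -TB(-K)$; then invoke their Theorem~1---the fact, also cited in the present paper just above Corollary~1.8, that $\nu(\Wh_t(K))$ and $\nu(\Whn_t(K))$ are never simultaneously nonzero---to conclude $\nu(\Wh_t(K))=0$ for $t\geq -TB(-K)$, i.e.\ $\theta_\nu(K)\leq -TB(-K)$. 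So your remark that the ``symmetric route \dots\ appears to fail'' is not quite right: the symmetric Legendrian input is used, but the argument is closed with the mutual-exclusion Theorem~1 rather than a direct slice--Bennequin bound on a negative-clasp double. Both methods are valid; yours is shorter once Park's inequality is available, while theirs is self-contained and avoids the (mild) anachronism.
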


If $\theta_\nu(-K) = -\theta_\nu(K)$ holds, as is the case for $\nu = s_2$, then one may clearly improve
the upper bound of \cref{thm:tbln} by $1$:
\begin{proposition}\label{prop:TB}
	For all knots \(K\), we have
	$TB(K) + 1 \leq \theta_2(K) \leq -TB(-K) - 1$.\myqed
\end{proposition}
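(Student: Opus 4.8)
The plan is to derive both inequalities from \cref{thm:tbln}, applied to the slice-torus invariant $\nu = \nicefrac{s_2}{2}$, for which $\theta_\nu = \theta_2$ by \cref{def:theta}. First I would invoke \cref{thm:tbln} directly for the knot $K$: its lower bound reads $TB(K) + 1 \leq \theta_{\nicefrac{s_2}{2}}(K) = \theta_2(K)$, which is already the left-hand inequality asserted in the proposition.

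For the right-hand inequality, the idea is to apply the \emph{lower} bound of \cref{thm:tbln} not to $K$ but to its concordance inverse $-K$, obtaining $TB(-K) + 1 \leq \theta_2(-K)$. By \cref{thm:main2}, $\theta_2$ is a concordance homomorphism $\mathcal{C} \to \Z$, and since the inverse of the class of $K$ in $\mathcal{C}$ is represented by $-K$, this gives $\theta_2(-K) = -\theta_2(K)$; substituting and rearranging yields $\theta_2(K) \leq -TB(-K) - 1$. Combining the two inequalities proves the proposition.

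There is essentially no technical obstacle here: the only ingredient beyond \cref{thm:tbln} is the oddness $\theta_2(-K) = -\theta_2(K)$ under orientation reversal of the ambient $S^3$, and this is immediate from \cref{thm:main2}. After that the argument is a one-line manipulation, so that the only real content is having identified $\theta_2$ with the slice-torus quantity $\theta_{\nicefrac{s_2}{2}}$ — which is exactly what \cref{def:theta} arranges.
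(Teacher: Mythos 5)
Your proof is correct and is exactly the argument the paper has in mind: the sentence immediately preceding \cref{prop:TB} explains that the improvement over \cref{thm:tbln} comes from $\theta_\nu(-K)=-\theta_\nu(K)$ (which holds for $\nu=\nicefrac{s_2}{2}$ since $\theta_2$ is a concordance homomorphism by \cref{thm:main2}), and then applies the Livingston--Naik lower bound to $-K$, precisely as you do. No gap; this matches the paper's own reasoning.
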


\subsection{Further open questions}\label{sec:intro:further_questions}

Aside from being a concordance homomorphism, what other properties does \(\theta_2\) have in common with \(s_2\)?
For example, is \(\theta_2\) also an even integer, like \(s_2\) is?
First off, note that the divisibility of \(s_2\) by 2 merely comes from a somewhat arbitrary choice of normalisation, and one could equally well work with \(\nicefrac{s_2}2\). For \(\theta_2\), on the other hand, in light of \cref{thm:main}, our chosen normalisation appears to be the natural one (up to sign). From our discussion so far, there is no apparent reason that \(\{\theta_2(K) \mid K\text{ a knot}\} \subset n\Z\) should hold for any \(n\geq 2\). Yet, we make the following conjecture.
\begin{conjecture}\label{conj:divby4}
For all knots \(K\), \(\theta_2(K)\) is divisible by \(4\).
\end{conjecture}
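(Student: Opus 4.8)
The plan is to reduce \cref{conj:divby4} to a divisibility statement about multicurves. Since \(\theta_2\) is a concordance homomorphism by \cref{thm:main2}, its image is a subgroup \(d\Z\subseteq\Z\), and \(\theta_2(3_1)=4\) forces \(d\in\{1,2,4\}\); the conjecture is exactly the assertion \(d=4\), i.e.\ that \(4\mid\theta_2(K)\) for every knot \(K\). By \cref{thm:main}, \(\theta_2(K)\) depends only on the non-compact component \(\BNr_a(T_K;\F_2)\) of the Bar-Natan multicurve of the companion tangle \(T_K\), and---by the slope computations underlying \cref{prop:eta-slopes} and \cref{cor:theta-from-slopes}---it is an explicit affine function of the ``slope datum'' of this immersed arc on the four-punctured sphere \(\FourPuncturedSphereKh\). (For \(\theta_2\)-rational knots this datum is literally the slope \(n\in2\Z\) of the rational arc \(\BNr_a(Q_n;\F_2)\) with which \(\BNr_a(T_K;\F_2)\) coincides, and \(\theta_2(K)=n/2\); in general one reads an integer off the behaviour of the two ends of the arc at their punctures.) So the task becomes: show that for \(T_K\) the companion tangle of an honest \emph{knot}, the slope datum of \(\BNr_a(T_K;\F_2)\) lies in \(8\Z\).

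The first thing I would try to pin down is the mechanism producing the extra factor of \(2\). It must be a genuinely characteristic-2 phenomenon: since \(\theta_0(3_1)=3\) and \(\theta_\tau(3_1)=2\), there is no analogue over \(\Q\) or for \(\tau\), so the constraint cannot come from any symmetry of \(T_K\) as a topological object (such as the involution exchanging the two parallel strands of the \(2\)-cable), which would be characteristic-independent. The natural candidate is the additional structure of \(\F_2\)-Bar-Natan homology coming from Shumakovitch's acyclic differential, equivalently the splitting \(\operatorname{Kh}(\,\cdot\,;\F_2)\cong\Khr(\,\cdot\,;\F_2)\otimes\F_2[X]/(X^2)\). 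I expect this to refine to the multicurve level and force \(\BNr_a(T_K;\F_2)\) to be invariant under a particular order-two mapping class \(\iota\) of \(\FourPuncturedSphereKh\). One would then compute that \(\iota\) acts on slope data by an affine involution whose fixed locus, among the slopes realised by companion tangles of knots, is precisely \(8\Z\); together with the reduction above, this yields \(4\mid\theta_2(K)\).

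A complementary---though doubly conditional---route runs through crossing changes. Granting the generalised crossing-change inequality for \(s_2\) (conjectured in this paper), \cref{prop:crossingchange} gives \(\theta_2(K_-)\le\theta_2(K_+)\le\theta_2(K_-)+4\) for a classical negative crossing change; as every knot is obtained from the unknot (where \(\theta_2=0\)) by finitely many crossing changes, it would then suffice to sharpen this to \(\theta_2(K_+)-\theta_2(K_-)\in\{0,4\}\). I would attempt this by realising the crossing change, after passing to the companion tangle, as a \(4\)-stranded full twist acting on \(T_K\) and identifying its effect on \(\BNr_a(T_K;\F_2)\) with (the square of) a Dehn twist, which shifts the slope datum by a multiple of \(8\).

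The hard part is that nothing currently on the table excludes \(d=2\) (let alone \(d=1\)): the homomorphism property, the Livingston--Naik and Park inequalities, and the crossing-change bounds of \cref{prop:crossingchange} are all consistent with \(\theta_2\) taking the value \(2\) somewhere. One genuinely needs a new constraint on the curves \(\BNr_a(T_K;\F_2)\), and the crux is to identify exactly which \(\F_2\)-specific operator on Bar-Natan homology detects it, to show it survives passage to the companion tangle, and to turn the slogan ``Shumakovitch's symmetry halves the admissible slopes'' into a theorem---this last step is where I expect the real difficulty to lie.
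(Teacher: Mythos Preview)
The statement you are addressing is \emph{Conjecture}~\ref{conj:divby4}, and the paper does not prove it; there is no proof to compare against. Immediately after stating the conjecture, the paper remarks only that divisibility of \(\theta_2(K)\) by \(2\) would follow from a more general multicurve geography conjecture over \(\F_2\) (citing \cite[Conjecture~3.10]{KWZ_strong_inversions}); divisibility by \(4\) is left entirely open.

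Your proposal is, as you yourself acknowledge, not a proof but a research plan with several explicit gaps. The first route---an \(\F_2\)-specific symmetry \(\iota\) on \(\FourPuncturedSphereKh\) induced by Shumakovitch's differential that would constrain the slope datum of \(\BNr_a(T_K;\F_2)\) to lie in \(8\Z\)---is speculative: you have not identified any concrete mapping class, nor any mechanism by which the splitting \(\operatorname{Kh}(\,\cdot\,;\F_2)\cong\Khr(\,\cdot\,;\F_2)\otimes\F_2[X]/(X^2)\) would descend to a geometric symmetry of the multicurve. The paper's own partial evidence (divisibility by \(2\)) goes via a \emph{geography} conjecture about which curves can arise as \(\BNr_a\), not via a symmetry; your proposed mechanism is different and would need to be built from scratch. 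The second route is doubly conditional, as you note: it assumes both the generalised crossing-change inequality for \(s_2\) (unknown, as the paper states in \cref{subsec:propthetacnu}) and a sharpening of \cref{prop:crossingchange} to \(\theta_2(K_+)-\theta_2(K_-)\in\{0,4\}\), for which no argument is offered.

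In short: the paper treats this as an open conjecture, your proposal does not close it, and you are candid about that. The genuine gap is exactly the one you name at the end---no known constraint on \(\BNr_a(T_K;\F_2)\) rules out \(d=2\).
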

Divisibility of \(\theta_2(K)\) by 2 would follow from a more general conjecture about multicurve geography over~$\mathbb{F}_2$,
see~\cite[Conjecture~3.10 and the preceding discussion]{KWZ_strong_inversions}.
In contrast, \(\theta_c(K)\) need not be divisible by 4 (or any integer $n\geq 2$) for $c\neq 2$, see \cref{sec:differentfields}.

A lower bound for the smooth slice-genus \(g_4(K)\) is provided by $|s_2(K)|/2$.
This inspires the following question.
\begin{question}\label{q:boundg4}
Is \(|\theta_2(K)|/4\) a lower bound for the smooth slice-genus \(g_4(K)\)?
\end{question}
On the one hand, we have not found a proof for this bound, and we have no deeper reason to assume that \cref{q:boundg4} should hold, except that $s_2$ and $\theta_2$ might be assumed to behave similarly. On the other hand, we have not found a counterexample.

It would be of interest to calculate the values of $s_2$ on iterated satellites, as it is possible for $\tau$.
This brings us to the following question.
\begin{question}
Is \(\theta_2(\Wh(K))\) determined by \(\theta_2(K)\) and \(s_2(K)\)?
\end{question}

\subsection{Structure of the paper}\label{subsec:structure_of_paper}
In \cref{sec:Conventions}, we fix our conventions and notation regarding tangles, satellite knots, and Rasmussen invariants.
\cref{sec:geom} contains those geometric proofs that do not rely on the multicurve technologies,
namely the proof that $\theta_2$ is a concordance homomorphism as claimed in \cref{thm:main2},
as well as the proofs of
\cref{prop:theta:winding0,prop:genus-1-knots,prop:theta:winding2,prop:crossingchange}.
In \cref{sec:review}, we review the construction and some properties of the multicurve invariants for Bar-Natan homology.
We use those in \cref{sec:RasmussenCurves} to prove \cref{thm:main} (the winding number 0 case)
and \cref{thm:theta-rationals,thm:theta-non-rationals:non-infty,thm:theta-non-rationals:infty} (the winding number $\pm 2$ case).
Finally, \cref{sec:differentfields} discusses the results of our computer calculations and the conjectures they invite. 

\section{Basic facts about tangles, satellites and Rasmussen invariants}\label{sec:Conventions}

\subsection{Conway tangles and rational tangles}
\begin{definition}
	An \textit{oriented Conway tangle} is an embedding of two intervals 
	into a 3-dimensional ball \(B^3\) 
	such that the preimage of \(\partial B^3\) consists of precisely the four endpoints of the intervals. 
	We consider such tangles up to isotopy fixing the boundary pointwise. 
	If we also identify tangles that are obtained by precomposition with orientation reversing diffeomorphisms of the intervals, 
	we obtain the notion of \textit{unoriented} Conway tangles. 
	
	We will usually think of Conway tangles in terms of their diagrams: 
	An \textit{oriented diagram of a Conway tangle} is an immersion of two intervals 
	into a closed 2-dimensional disc $D^2$ 
	such that the preimage of \(D^2\) consists of precisely the four endpoints of the intervals and such that all self-intersections are transverse double points, together with over-under information \(\Crossing\) at each such double point. 
	We consider such diagrams up to 
        isotopies fixing the boundary pointwise. 
	An \textit{unoriented} version of this notion is obtained in the same way as above. 
	
	Identifying \(D^2\) with the unit disc in \(\mathbb{C}\), we call a tangle diagram \textit{standard} if the four tangle ends lie at the four equidistant points \(\pm e^{\pm i\pi/4}\). Identifying this unit disc with \(D^2\times\{0\}\subset\mathbb{C}\times\mathbb{R}=\mathbb{R}^3\), we map these four points into the unit ball \(B^3\) and call a Conway tangle \textit{standard} if the four tangle ends are equal to those four points. 
	Standard Conway tangles and diagrams up to the usual Reidemeister moves are in one-to-one correspondence, via suitable perturbations of the standard projection \(\mathbb{R}^3\rightarrow\mathbb{R}^2\times \{0\}\) onto the first two factors. 
	
	In the following, we will use the notions of Conway tangles and their diagrams interchangeably and assume that they are standard.
	By default, all tangles and their diagrams are unoriented. 
\end{definition}

Since we only study Rasmussen invariants of knots (as opposed to links), our definition of Conway tangles is more restrictive than the usual one, which also allows closed tangle components.

\begin{figure}[b]
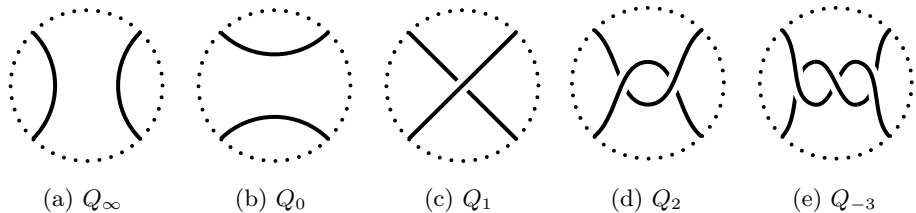

	\begin{subfigure}{0.16\textwidth}
		\centering
		\(\ratixo\)
		\caption{\(Q_{\infty}\)}\label{fig:rational_tangle:i:o}
	\end{subfigure}
	\begin{subfigure}{0.16\textwidth}
		\centering
		\(\ratoxi\)
		\caption{\(Q_{0}\)}\label{fig:rational_tangle:o:i}
	\end{subfigure}
	\begin{subfigure}{0.16\textwidth}
		\centering
		\(\ratixi\)
		\caption{\(Q_{1}\)}\label{fig:rational_tangle:oi:i}
	\end{subfigure}
	\begin{subfigure}{0.16\textwidth}
		\centering
		\(\ratiixi\)
		\caption{\(Q_{2}\)}\label{fig:rational_tangle:ii:i}
	\end{subfigure}
	\begin{subfigure}{0.16\textwidth}
		\centering
		\(\ratmiiixi\)
		\caption{\(Q_{-3}\)}\label{fig:rational_tangle:miii:i}
	\end{subfigure}
	\caption{Diagrams of some rational tangles \(Q_{\nicefrac{p}{q}}\)}\label{fig:rational_tangles}
\end{figure}

\begin{definition}\label{def:lk}
	The \textit{linking number} \(\lk(T)\) of an oriented Conway tangle \(T\) is defined as half the signed count of positive (\(\CrossingR\)) and negative crossings (\(\CrossingL\)) between the two components of the tangle \(T\) \cite[Definition~4.7]{KWZ}. 
	The linking number of an unoriented Conway tangle is well-defined up to sign. 
	In particular, the property \(\lk(T)=0\) is independent of orientations. 
\end{definition}

\begin{definition}
	The \textit{connectivity} \(\conn{T}\) of a Conway tangle \(T\) is an element of \(\{\Ni,\ConnectivityX,\No\}\) and defined as follows:
	\[
	\conn{T}=
	\begin{cases*}
	\Ni & if \(T\) connects the tangle ends as in the tangle \(\Ni\) \\
	\ConnectivityX & if \(T\) connects the tangle ends as in the tangle \(\CrossingX\)\\
	\No & if \(T\) connects the tangle ends as in the tangle \(\No\)
	\end{cases*}
	\]
\end{definition}

\begin{definition}
	A (standard) Conway tangle \(T\) is \textit{rational} if there exists an isotopy, not necessarily fixing the boundary, to the trivial tangle \(Q_\infty\) from \cref{fig:rational_tangle:i:o}.  
	A theorem of Conway \cite{Conway} puts such tangles in one-to-one correspondence with elements of \(\QPI\).  
	For each slope \(\nicefrac{p}{q}\in\QPI\), we denote the corresponding rational tangle by \(Q_{\nicefrac{p}{q}}\).
\end{definition} 

\cref{fig:rational_tangles} gives some examples of rational tangles and serves to fix our conventions.

\begin{definition}
	Given two Conway tangles \(T_1\) and \(T_2\), their \textit{union} \(T_1\cup T_2\) is the link defined by the diagram in \cref{fig:tangleunion} and their \textit{sum} \(T_1+T_2\) is the Conway tangle by the diagram in \cref{fig:tanglesum}. 
	Then, for any Conway tangle \(T\), we define the \textit{\(\nicefrac{p}{q}\)-rational filling} of \(T\) by
	\[
	T(\nicefrac{p}{q}) 
	\coloneqq
	Q_{-\nicefrac{p}{q}}\cup T.
	\]
	We call a Conway tangle \(T\) \textit{cap-trivial} if \(T(\infty)\) is the unknot. 
	We write \(\muty(T)\) for the tangle obtained by rotating \(T\) by \(\pi\) around the \(y\)-axis. 
	Moreover, we write \(-T\) for the \textit{mirror} of a Conway tangle \(T\), i.e.\ the tangle obtained by switching over- and under-strands at all crossings in a diagram of \(T\).
\end{definition}

\begin{figure}
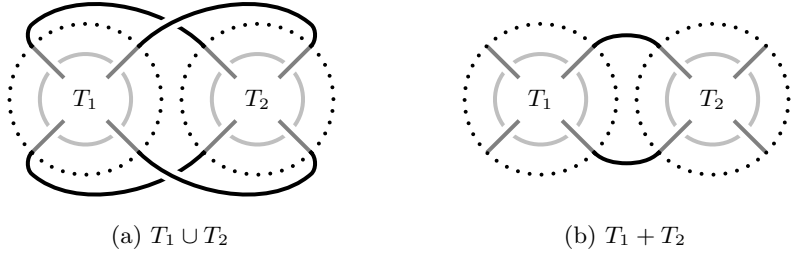

	\begin{subfigure}{0.4\textwidth}
		\centering
		\(\union\)
		\caption{\(T_1\cup T_2\)}\label{fig:tangleunion}
	\end{subfigure}
	\begin{subfigure}{0.4\textwidth}
		\centering
		\(\tanglesum\)
		\caption{\(T_1+ T_2\)}\label{fig:tanglesum}
	\end{subfigure}
	\caption{(a) The union and (b) the sum of two Conway tangles \(T_1\) and \(T_2\)}\label{fig:tangleoperations}
\end{figure}

We now collect a few identities that will be useful later. 

\begin{lemma}\label{lem:basics:tangle_identities}
	Let \(T_1\), \(T_2\), and \(T\) be Conway tangles, \(n\in\mathbb{Z}\), and \(\nicefrac{p}{q}\in\QPI\). Then:
	\begin{enumerate}[(a)]
		\item \label{lem:basics:tangle_identities:symmetry}
		\(
		T_1\cup T_2
		=
		T_2\cup T_1
		\).
		\item \label{lem:basics:tangle_identities:sum_and_union}
		\(
		(T_1+T)\cup T_2
		=
		T_1\cup (T_2+\muty(T))
		\); in particular,
		\(
		(T_1+Q_{\nicefrac{p}{q}})\cup T_2
		=
		T_1\cup (T_2+Q_{\nicefrac{p}{q}})
		\).
		\item \label{cor:twistingAndFilling}
		\(T(\nicefrac{p}{q}+n)=(T+Q_{-n})(\nicefrac{p}{q})\).
	\end{enumerate}
\end{lemma}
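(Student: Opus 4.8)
The plan is to verify each of the three identities by a direct diagrammatic argument, working with the standard diagrams introduced above and the pictures in \cref{fig:tangleoperations,fig:rational_tangles}. None of these should require any deep input — they are all planar isotopies of tangle diagrams — so the work is entirely in setting up the right pictures and checking that the moves are legitimate (i.e.\ that they do not move the four endpoints in a way that changes the tangle).

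For part~\eqref{lem:basics:tangle_identities:symmetry}, I would argue that the union \(T_1\cup T_2\) is obtained by gluing \(T_1\) and \(T_2\) along their boundaries in the pattern of \cref{fig:tangleunion}, and that this gluing pattern is symmetric under the involution of \(S^2\) (or the plane) that exchanges the two gluing discs. Concretely, rotating the diagram of \cref{fig:tangleunion} by \(\pi\) about a suitable axis interchanges the slots for \(T_1\) and \(T_2\) while carrying the closure arcs to themselves; since the resulting link is isotopic to the original, \(T_1\cup T_2 = T_2\cup T_1\).

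For part~\eqref{lem:basics:tangle_identities:sum_and_union}, the idea is the standard ``sliding a summand around the closure'' move. In the diagram for \((T_1+T)\cup T_2\), the tangle \(T\) sits in a sub-ball adjacent to \(T_1\); I would isotope this sub-ball, carrying \(T\) with it, along one of the closure arcs of the union until it sits adjacent to \(T_2\) instead. Passing the sub-ball ``around'' in this way flips it over, which is exactly why \(T\) is replaced by \(\muty(T)\); one must check that the axis of this flip is the \(y\)-axis for the standard diagrams, which is a matter of tracking the positions of the endpoints. The special case then follows from the fact that \(\muty(Q_{\nicefrac pq}) = Q_{\nicefrac pq}\), i.e.\ that every rational tangle is invariant (as an unoriented tangle) under the \(\pi\)-rotation about the \(y\)-axis — this is a well-known property of rational tangles, provable e.g.\ by induction on the continued-fraction length, or it can simply be cited. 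Finally, part~\eqref{cor:twistingAndFilling} unwinds the definitions: by definition \(T(\nicefrac pq+n) = Q_{-(\nicefrac pq+n)}\cup T\), and since \(Q_{-\nicefrac pq - n}\) is \(Q_{-\nicefrac pq}\) with \(n\) twists added at the relevant pair of ends, one uses \eqref{lem:basics:tangle_identities:sum_and_union} (with \(T=Q_{-n}\)) together with \(Q_{-(\nicefrac pq+n)}=Q_{-\nicefrac pq}+Q_{-n}\) to move the \(Q_{-n}\) across the union onto \(T\), giving \(Q_{-\nicefrac pq}\cup(T+Q_{-n}) = (T+Q_{-n})(\nicefrac pq)\).

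The main obstacle, such as it is, is purely bookkeeping: making sure the flip in part~\eqref{lem:basics:tangle_identities:sum_and_union} is about the correct axis for the chosen standard projection, and that the twisting conventions (left- vs.\ right-handed, and which pair of endpoints is twisted) are consistent between the definition of \(T(\nicefrac pq)\), the addition \(T_1+T_2\), and the labelling \(Q_{\nicefrac pq}\) of rational tangles. Once the conventions are pinned down, each identity is a one-line diagram manipulation, best presented with a small figure for part~\eqref{lem:basics:tangle_identities:sum_and_union}.
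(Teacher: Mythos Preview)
Your proposal is correct and follows essentially the same approach as the paper: all three identities are elementary diagram manipulations, and your treatment of part~\eqref{cor:twistingAndFilling} is identical to the paper's. The only presentational difference is that the paper encodes the union via the formula \(T_1\cup T_2=(\muty(T_1)+T_2)(0)\) and then argues algebraically using \(\muty(T_1+T_2)=\muty(T_2)+\muty(T_1)\) and associativity of \(+\), whereas you describe the same isotopies directly in pictures; the underlying moves are the same.
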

\begin{proof}
	The first identity follows from
	\[
	T_1\cup T_2
	=
	\big(\muty(T_1)+T_2\big)(0)
	=
	\big(\muty(T_2)+T_1\big)(0)
	=
	T_2\cup T_1.
	\]
	The second follows from 
	\begin{align*}
	(T_1+T)\cup T_2
	&=
	\big(\muty(T_1+T)+ T_2\big)(0)
	=
	\big(\muty(T)+\muty(T_1) + T_2\big)(0)
	\\
	&
	=
	\big(\muty(T_1) + T_2+\muty(T)\big)(0)
	=
	T_1\cup (T_2+\muty(T))
	\end{align*}
	and the observation that \(\muty(Q_{\nicefrac{p}{q}})=Q_{\nicefrac{p}{q}}\) for all \(\nicefrac{p}{q}\in\QPI\).
	The last identity follows from
	\begin{align*}
	T(\nicefrac{p}{q}+n)
	& 
	= 
	Q_{-\nicefrac{p}{q}-n} \cup T
	=
	(Q_{-\nicefrac{p}{q}}+Q_{-n})\cup T
	\stackrel{(b)}{=}
	Q_{-\nicefrac{p}{q}}\cup (T+Q_{-n})
	\\
	&
	=
	(T+Q_{-n})(\nicefrac{p}{q}).
	\qedhere
	\end{align*}
\end{proof}

\subsection{Satellite knots and Conway tangles}

\begin{definition}\label{def:satellite}
	A \textit{pattern} is an oriented knot in the solid torus \(S^1\times D^2\) with a fixed orientation, i.e.\ an embedding \(\iota_P\co S^1\hookrightarrow S^1\times D^2\) considered up to isotopy. 
	The \textit{wrapping number of a pattern \(P\)} is equal to the minimal number of intersection points between any representative of the equivalence class of the pattern and a meridional disc \(\{\ast\}\times D^2\), where \(\ast \in S^1\). 
	Likewise, we define the \textit{winding number of a pattern \(P\)} as the algebraic intersection number between \(P\) and such a disc. 
	
	Given an oriented knot \(K\subset S^3\), we can consider an embedding \(\iota_K\co S^1\times D^2\hookrightarrow S^3\) which identifies the solid torus with a closed tubular neighbourhood of \(K\) such that for any point \(\ast\in \partial D^2\), \(S^1\times\{\ast\}\) is mapped to a zero-framed longitude $\lambda_K$ of \(K\) and for any point \(\ast\in S^1\), \(\{\ast\}\times \partial D^2\) is mapped to a meridian \(\mu_K\) of \(K\).
	(Both of these identifications preserve orientations.)
	We then define the \textit{satellite knot \(P(K)\) of the pattern \(P\) and companion \(K\)} as the knot given by the composition \(\iota_K\circ\iota_P\). 
\end{definition}

\begin{definition}\label{def:patterntangle}
	Given an oriented Conway tangle \(T\) with connectivity \(\conn{T}\neq\No\), we can define the \textit{pattern \(P^T\) of the tangle \(T\)} as follows: 
	Consider the embedding 
	\[
	\varphi\co 
	B^3
	\lhook\joinrel\longrightarrow	
	[-2,2]\times D^2
	\longrightarrow
	S^1\times D^2
	\]
	where 
	the first map is the embedding of the unit 3-ball \(B^3\) into \([-2,2]\times D^2\) as subspaces of \(\mathbb{R}^3\) and 
	the second map is the identity on the second factor and the map $t\mapsto e^{\pi i t/2}$  on the first factor. 
	The subspace \(S^1\times\{\pm\sin(\pi/4)\}\times\{0\}\) defines a 2-component link whose restriction to \(B^3\) is the trivial tangle \(Q_0\). 
	The pattern \(P^T\) is now obtained by replacing this tangle by \(T\). The condition \(\conn{T}\neq\No\) ensures that the pattern consists of a single component.  
	
	Conversely, given a pattern \(P\) with wrapping number 2, we can find a tangle \(T_P\) with connectivity \(\conn{T_P}\neq\No\) such that \(P=P^{T_P}\). 
	We call any such tangle a \textit{pattern tangle of \(P\)}. 
	Furthermore, given an integer \(t\in\mathbb{Z}\), we define the \textit{\(t\)-twisted pattern \(P_t\) of \(P\)} as the pattern \(P^{T_P+Q_{-2t}}\), where \(T_P\) is some pattern tangle of \(P\). 
	Finally, suppose $P(K)$ is a satellite knot with companion \(K\) and pattern \(P\) with wrapping number~2 and \(t\in\mathbb{Z}\). 
	We then call \(P_t(K)\) the \textit{\(t\)-twisted satellite knot of the pattern \(P\) and companion \(K\)}.
\end{definition}

Note that pattern tangles \(T_P\) are not necessarily unique. 
For instance, the tangle $T_a$ and its Conway mutant, obtained from $T_a$ by rotation in the plane by $\pi$, both induce the same pattern although they are different tangles.
However, the twisted pattern \(P_t\) does not depend on the choice of pattern tangle, since the extra twists simply amount to a reparametrisation of the solid torus.
In particular \(P_0=P\).

\begin{definition}\label{def:double_tangle}
	Let \(K\) be an oriented knot. 
	The \textit{double tangle \(T_K\) of \(K\)} is the unoriented Conway tangle that is defined as follows: 
	Let \(L\) be the oriented link obtained as union of \(K\) with one of its zero-framed longitudes, regarded as a small push-off of \(K\). 
	Then \(L\) bounds an embedded unoriented annulus in \(S^3\), namely the image of a suitably chosen homotopy between \(K\) and the longitude.
	Now choose a small open three-ball which intersects this annulus in a trivially embedded band. 
	Then \(T_K\) is defined as the intersection of \(L\) with the complement of this three-ball. 
	We frame \(T_K\) such that \(T_K(\infty)\) is the meridional filling that produces the unknot and \(T_K(0)=L\) as oriented links if \(T_K\) is oriented such that the top and bottom left tangle ends point outwards. 
	In particular, the connectivity of \(T_K\) is \(\No\) and the linking number \(\lk(T_K)\) vanishes. 
	
	In the context of a satellite knot \(P(K)\) with wrapping number 2 pattern \(P\), we also sometimes call \(T_K\) the \textit{pattern tangle}; this terminology is justified by \cref{lem:basic:satellites}\eqref{lem:basic:satellites:tangle_decomposition}.  
	We define the \textit{pattern \(P^K\) of a knot \(K\)} as the pattern \(P^{T_K+Q_{-1}}\).
\end{definition}	

\begin{remark}
	The key property of \(T_K\) that we use in this paper is that it is cap-trivial. 
	Recall from \cite[Proposition~9]{WatsonSymmetryGroup} that up to adding twists, i.e.\ replacing \(T\) by \(T+Q_n\) for some \(n\in\Z\), cap-trivial tangles are in one-to-one correspondence with strongly invertible knots. 
	This correspondence is given by taking two-fold branched covers. 	
	In the present case, the two-fold cover of \(B^3\) branched along the tangle \(T_K\) is the exterior of \(K\# K\) for any knot \(K\).
	So the double tangle of a knot \(K\) may alternatively be defined as the quotient tangle of \(K\# K\) under the obvious strong inversion that interchanges the two summands. 
	With the conventions as in \cite{KWZ_strong_inversions}, this even recovers the same framing. 
\end{remark}

\begin{figure}
	\begin{subfigure}{0.31\textwidth}
		\centering
		\includegraphics{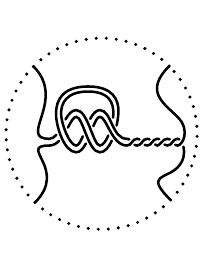}
		\caption{\(T_{3_1}\)}\label{fig:T_3_1:plain}
	\end{subfigure}
	\begin{subfigure}{0.31\textwidth}
		\centering
		\includegraphics{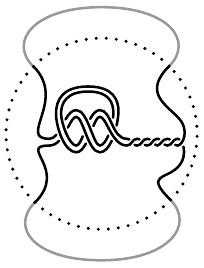}
		\caption{\(T_{3_1}(0)\)}\label{fig:T_3_1:closure_0_1}
	\end{subfigure}
	\begin{subfigure}{0.31\textwidth}
		\centering
		\includegraphics{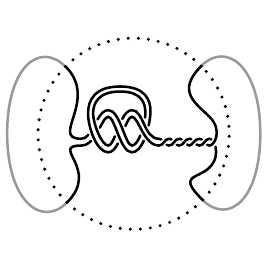}
		\caption{\(T_{3_1}(\infty)=U\)}\label{fig:T_3_1:closure_1_0}
	\end{subfigure}
	\caption{Illustration of \cref{def:double_tangle} for \(K=3_1= T_{2,3}\) the right-handed trefoil knot}\label{fig:T_3_1}
\end{figure}

For example, the double tangle \(T_U\) of the unknot \(U\) is \(Q_0=\No\). 
The double tangle \(T_{3_1}\) of the right-handed trefoil knot is shown in \cref{fig:T_3_1}, along with its two closures to the unknot and a two-component link with linking number zero.
The number of these twists is determined by the condition \(\lk(T_{3_1})=0\).
The orientation of \(K\) is only required in the above definition to distinguish between \(T_K\) and \(\muty(T_K)\). In fact:

\begin{proposition}\label{prop:doubletangles:orientation}
	For any oriented knot \(K\), \(T_{K^r}=\muty(T_K)\), where \(K^r\) denotes the knot \(K\) with the opposite orientation. 
	Moreover, two oriented knots \(K\) and \(J\) are isotopic if and only if the unoriented tangles $T_K$ and $T_J$ are equivalent. 
	In particular, \(K\) is a reversible knot if and only if \(T_K=\muty(T_K)\).
	Finally, mutation around the \(x\)-axis always preserves \(T_K\).
\end{proposition}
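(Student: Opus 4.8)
The plan is to unwind \cref{def:double_tangle} carefully, and where convenient to pass through the two-fold branched covering dictionary recalled in the remark following it. First I would record that the data entering the construction of $T_K$ depends only on the oriented isotopy type of $K$: the zero-framed push-off $\lambda_K$ is canonical up to isotopy, the embedded annulus $A$ cobounded by $K\cup\lambda_K$ is the essentially unique \emph{unknotted} such annulus (as $\lambda_K$ is a small push-off), and any two trivially embedded bands in $A$ are carried one to the other by an ambient isotopy of the configuration (slide one band along $A$). Hence any oriented isotopy $K\simeq J$ carries the whole configuration for $K$ to that for $J$, so $T_K\simeq T_J$; in particular $T_K$ is well-defined, and this already gives the ``only if'' direction of the isotopy statement.

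For the identity $T_{K^r}=\muty(T_K)$, observe that $K^r\cup\lambda_{K^r}$ is exactly $L=K\cup\lambda_K$ with both component orientations reversed, cobounding the same annulus and admitting the same trivial bands; thus $T_{K^r}$ and $T_K$ are the \emph{same} underlying unoriented tangle in a ball, and only the normalisation of \cref{def:double_tangle} --- stated in terms of an orientation that has now been reversed --- sees the difference. I would then check that re-establishing that normalisation after reversing all strand orientations is achieved precisely by the rotation $\muty$: it reverses the orientation of each strand, it preserves the framing constraint $\lk=0$, and it preserves the meridional filling condition $T(\infty)=U$. This orientation bookkeeping --- verifying that the correcting symmetry is $\muty$ and not, say, the mirror or the in-plane $\pi$-rotation --- is the step I expect to be the main obstacle; the rest is comparatively routine.

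For the ``if'' direction, suppose $T_K\simeq T_J$ as unoriented Conway tangles. Orienting each by the normalisation convention, the isotopy is automatically compatible with these orientations because it fixes the boundary pointwise, so $T_K(0)=T_J(0)$ as oriented links, i.e.\ $K\cup\lambda_K\cong J\cup\lambda_J$. Every component of $K\cup\lambda_K$ is an oriented parallel copy of $K$, and likewise for $J$, so $K\cong J$ as oriented knots. (One may alternatively argue through the dictionary of the remark: $T_K\simeq T_J$ lifts to an orientation- and peripheral-structure-preserving homeomorphism $S^3\smallsetminus\mathring{\nu}(K\# K)\cong S^3\smallsetminus\mathring{\nu}(J\# J)$, whence $K\# K\cong J\# J$ by the knot-complement theorem, and then $K\cong J$ by uniqueness of prime decomposition, the prime summands of $K\# K$ being exactly those of $K$ with doubled multiplicities.) Combining the two directions, $K$ is reversible $\iff K\simeq K^r\iff T_K\simeq T_{K^r}=\muty(T_K)$, which is the reversibility statement.

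Finally, for invariance under mutation about the $x$-axis, I would exhibit a representative of the defining configuration invariant under the $\pi$-rotation about the $x$-axis. The two strands of $T_K$ are a knotted arc $\alpha$ (a sub-arc of $K$) and its small parallel push-off $\alpha'$ (the corresponding sub-arc of $\lambda_K$); I would choose the trivial band in $A$, and the identification of the complementary ball with the standard ball, so that this rotation interchanges $\alpha$ and $\alpha'$. Since $\alpha'$ is an arbitrarily small parallel push-off of $\alpha$, such an interchange is realised by an ambient isotopy of the pair fixing the boundary, so the $x$-axis mutation preserves $T_K$. (Equivalently, via the branched-cover dictionary it induces an automorphism of the strongly invertible knot $(K\# K,\iota)$ that one checks to be trivial.) As in the second step, the only delicate point is matching this symmetry with the normalisation conventions, rather than any substantive topology.
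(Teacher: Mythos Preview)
For the first three statements your approach is essentially the paper's, with welcome extra detail; your alternative route for the ``if'' direction via the branched-cover dictionary, the knot complement theorem and unique prime factorisation is correct but considerably heavier than the paper's one-line argument of orienting, taking the $0$-closure, and discarding one component.

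For the final statement on $x$-axis mutation there is a genuine gap. You propose exhibiting a $\rho_x$-invariant representative of $T_K$, i.e.\ arranging that $\rho_x$ literally interchanges $\alpha$ and $\alpha'$; but you do not justify that such a representative exists, and your follow-up clause (``since $\alpha'$ is an arbitrarily small parallel push-off of $\alpha$, such an interchange is realised by an ambient isotopy of the pair fixing the boundary'') is precisely the assertion to be proved rather than a proof of it. Indeed the naive choice fails: if one places $\alpha$ in the half-space $y>0$ and sets $\alpha'=\rho_x(\alpha)$, the resulting tangle has $\infty$-closure $K\# K$ rather than the unknot, so it is \emph{not} a representative of $T_K$ for nontrivial $K$. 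More generally, a $\rho_x$-invariant band would have its core fixed pointwise by $\rho_x$ and hence lying on the $x$-axis, forcing $K$ to be trivial. The paper takes a different route: it uses that $T_K$ is a \emph{band tangle}, whose two strands cobound an embedded band with core a long knot of type $K$ and framing zero. Since $\rho_x$ is an orientation-preserving diffeomorphism, $\rho_x(T_K)$ is again a band tangle with the same core type and the same total framing; two such tangles can differ only in how half-twists are distributed between the two ends, and a flype moves twists from one end to the other. That is the content of the paper's sentence ``twists between the two tangle strands near the tangle ends on the left can be pushed to the tangle ends on the right''.
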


\begin{proof}
	The first statement follows from the definition of the double tangle. 
	So does the only-if-direction of the second statement. 
	For the if-direction of the second statement, orient the tangles \(T_K\) and \(T_J\) such that the top and bottom left tangle ends point outwards.
	Taking the 0-closures of the tangles and removing one component gives the knots \(K\) and \(J\), respectively. 
	So \(T_K=T_J\) as unoriented Conway tangles implies \(K=J\) as oriented knots. 
	The third statement follows from the previous two. 
	The final statement follows from the observation that twists between the two tangle strands near the tangle ends on the left can be pushed to the tangle ends on the right (this move is commonly called a flype).
\end{proof}

Again, we collect a few properties and relations for later:

\begin{lemma}\label{lem:basic:satellites}
	Let \(P\) be a pattern with wrapping number 2, \(T_P\) a pattern tangle of \(P\), \(K_1\), \(K_2\), and \(K\) oriented knots, and \(t\in\mathbb{Z}\). Then: 
	\begin{enumerate}[(a)]
		\item \label{lem:basic:satellites:tangle_decomposition}
		\(
		P(K)=T_P\cup T_K.
		\)
		More generally, 
		\(
		P_t(K)
		=
		(T_P+Q_{-2t})\cup T_K.
		\)
		\item \label{lem:basic:satellites:pattern_of_unknot}
		\(P_t(U)=T_P(2t)\). 
		\item \label{lem:basic:satellites:connected_sum}
		\(
		P_t^{K^r_1}(K_2)
		=
		T_{K_1\#K_2}(2t+1)
		=
		P_t^{K^r_2}(K_1).
		\)
	\end{enumerate}
\end{lemma}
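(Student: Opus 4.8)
The plan is to prove the three parts in order; part~(a) carries the geometric content, and parts~(b) and~(c) then follow formally from it together with the tangle calculus of \cref{lem:basics:tangle_identities}. For part~(a), I would unwind the definitions of the pattern $P^{T_P}$ (\cref{def:patterntangle}) and of the double tangle $T_K$ (\cref{def:double_tangle}): each is a cut-and-paste construction along a $2$-sphere in $S^3$, and the key observation is that these spheres can be chosen to coincide. Concretely, $P(K)=\iota_K(\iota_P(S^1))$ is obtained from $\iota_K$ applied to the $2$-cable link $S^1\times\{\pm\sin(\pi/4)\}\times\{0\}$, which is $K\cup\lambda_K$, by replacing the trivial tangle $Q_0$ lying inside the standard ball $\varphi(B^3)\subset S^1\times D^2$ with $T_P$. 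The ball $\iota_K(\varphi(B^3))$ meets the annulus cobounded by $K$ and $\lambda_K$ in a trivially embedded band, so it is a legitimate choice of ball in \cref{def:double_tangle}; with this choice, $T_K$ is precisely the part of $K\cup\lambda_K$ --- equivalently of $P(K)$, since the two agree outside $\iota_K(\varphi(B^3))$ --- lying outside $\iota_K(\varphi(B^3))$, while $T_P$ is the part lying inside. Gluing these two tangles along the boundary sphere is by definition the union operation, so $P(K)=T_P\cup T_K$; in addition I would check that the boundary parametrisations match and that the framing of $T_K$ (the one fixed by $T_K(\infty)=U$ and $\lk(T_K)=0$) is compatible with this gluing. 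The general statement follows by applying this to the pattern $P_t$, whose pattern tangle is $T_P+Q_{-2t}$ by \cref{def:patterntangle}. Part~(b) is then immediate from part~(a), the identity $T_U=Q_0$ (noted after \cref{def:double_tangle}), and \cref{lem:basics:tangle_identities}:
\[
P_t(U)=(T_P+Q_{-2t})\cup T_U=(T_P+Q_{-2t})\cup Q_0=(T_P+Q_{-2t})(0)=T_P(2t),
\]
using $Z(0)=Q_0\cup Z$ together with the symmetry of $\cup$, and the twisting-and-filling identity of \cref{lem:basics:tangle_identities} with $\nicefrac{p}{q}=0$ and $n=2t$.

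For part~(c), I would again start from part~(a). Since $P^{K_1^r}$ has pattern tangle $T_{K_1^r}+Q_{-1}=\muty(T_{K_1})+Q_{-1}$ by \cref{def:double_tangle} and \cref{prop:doubletangles:orientation}, the pattern $P_t^{K_1^r}$ has pattern tangle $\muty(T_{K_1})+Q_{-(2t+1)}$, so $P_t^{K_1^r}(K_2)=\bigl(\muty(T_{K_1})+Q_{-(2t+1)}\bigr)\cup T_{K_2}$. Moving $Q_{-(2t+1)}$ across the union by \cref{lem:basics:tangle_identities}, then expanding the union via $A\cup B=(\muty(A)+B)(0)$ (as in the proof of \cref{lem:basics:tangle_identities}), and finally using the twisting-and-filling identity, one rewrites this as
\[
\bigl(\muty(T_{K_1})+Q_{-(2t+1)}\bigr)\cup T_{K_2}=(T_{K_1}+T_{K_2}+Q_{-(2t+1)})(0)=(T_{K_1}+T_{K_2})(2t+1).
\]
It then remains to prove the geometric identity that $T_{K_1}+T_{K_2}$ agrees with $T_{K_1\#K_2}$ as unoriented Conway tangles, rel boundary and with matching framing (or at least up to the mutation $\muty$, which is harmless here since it does not change any rational filling). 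This is the double-tangle incarnation of the fact that the $2$-cable link of a connected sum is the connected sum of the $2$-cable links: each $T_{K_i}$ is $K_i\cup\lambda_{K_i}$ with a trivially embedded band removed, and gluing the two amounts to inserting one of these links into the removed band of the other, which connect-sums the $K$-strands to $K_1\#K_2$ and the longitudes to a $0$-framed longitude of $K_1\#K_2$; the framing is pinned down by checking that the $\infty$-filling is the unknot and the linking number vanishes. (Alternatively, one can argue via the branched-double-cover picture of \cref{def:double_tangle}, in which $T_{K_i}$ is the quotient tangle of $K_i\#K_i$ and the tangle sum glues the covers into the exterior of $K_1\#K_1\#K_2\#K_2=(K_1\#K_2)\#(K_1\#K_2)$, identifying the tangle up to twists that are then fixed by $\lk=0$.) Granting this, $P_t^{K_1^r}(K_2)=T_{K_1\#K_2}(2t+1)$, and since $K_1\#K_2=K_2\#K_1$ the entire computation is symmetric in $K_1$ and $K_2$, which also yields $T_{K_1\#K_2}(2t+1)=P_t^{K_2^r}(K_1)$.

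The step I expect to be the main obstacle is the geometric identity between $T_{K_1}+T_{K_2}$ and $T_{K_1\#K_2}$ in part~(c): one must perform the cut-and-paste carefully enough to track the orientations (the origin of the mirror $K^r$ appearing in the statement) and, above all, to verify that no stray integer twists $Q_n$ separate the two sides --- a check one settles using that both sides are cap-trivial, have vanishing linking number, and have unknot $\infty$-filling. Parts~(a) and~(b) should be essentially formal once the relevant definitions and \cref{lem:basics:tangle_identities} are available.
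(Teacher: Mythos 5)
Your proof is correct and follows essentially the same route as the paper: part~(a) by unwinding the cut-and-paste definitions of $P^{T_P}$ and $T_K$ to identify $P(K)$ with $T_P\cup T_K$, part~(b) via $T_U=Q_0$ and \cref{lem:basics:tangle_identities}, and part~(c) via \cref{prop:doubletangles:orientation}, \cref{lem:basics:tangle_identities}\ref{lem:basics:tangle_identities:sum_and_union}, and the identity $T_{K_1}+T_{K_2}=T_{K_1\#K_2}$. If anything you are more scrupulous than the published proof, which applies $T_{K_2}+T_{K_1}=T_{K_2\#K_1}$ without comment, whereas you correctly flag this as the step needing a separate geometric argument and sketch two such arguments (direct cut-and-paste of 2-cables and the branched double cover description of \cref{def:double_tangle}).
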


\begin{figure}[t]
	\centering
		\labellist
		\pinlabel $K$ at 81 76
		\pinlabel $\textcolor{black!50!green}{\mu_K}$ at 89 36
		\pinlabel $\textcolor{black!50!green}{\lambda_K}$ at 130 48
		\endlabellist
		\includegraphics[scale=1.2]{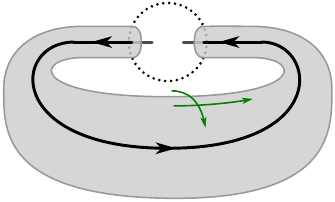}
	\quad
		\labellist%
		\pinlabel $T_K$ at 81 76
		\pinlabel \rotatebox{-10}{$T_P$} at 51 27
		\pinlabel $\textcolor{black!50!green}{\mu_K}$ at 89 36
		\pinlabel $\textcolor{black!50!green}{\lambda_K}$ at 130 48
		\pinlabel \rotatebox{16}{$\underbrace{\hspace{2.5cm}}_{t\text{ full twists}}$} at 115 16
		\endlabellist%
		\includegraphics[scale=1.2]{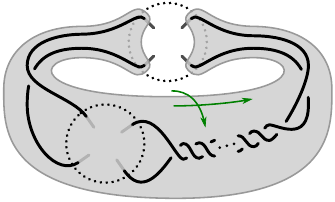}
	\caption{Conventions for double tangles $T_K$ and twisted satellites $P_t(K)$ with $t>0$.
		On the left, a closed tubular neighbourhood of \(K\) is highlighted in grey, which, on the right, is replaced by the solid torus containing the pattern \(P\).}\label{fig:conventions}
\end{figure}

\begin{proof}
	In the construction of the satellite knot \(P(K)\) in \cref{def:satellite}, we identify a tubular neighbourhood of the companion knot \(K\) with the solid torus $S^1\times D^2$. 
	Using this identification, we can regard \(S^1\times\{\pm\sin(\pi/4)\}\times\{0\}\), the subspace of the solid torus considered in \cref{def:patterntangle}, as a link in \(S^3\). 
	By comparison with the definition of the double tangle, \cref{def:double_tangle}, we see that this link in fact coincides with \(T_K(0)=Q_0\cup T_K\).
	The pattern \(P\) is obtained from \(S^1\times\{\pm\sin(\pi/4)\}\times\{0\}\) by replacing the trivial tangle \(Q_0\) by the pattern tangle \(T_P\). 
	Hence \(P(K)\) is obtained from \(T_K(0)=Q_0\cup T_K\) by replacing \(Q_0\) by the pattern tangle \(T_P\), which proves the first identity in~\eqref{lem:basic:satellites:tangle_decomposition}.
	The second identity follows from the first, since \(T_P+Q_{-2t}\) is, by definition, a pattern tangle for \(P_t\).
	See \cref{fig:conventions} for an illustration of our conventions. 
	
	The identity in \eqref{lem:basic:satellites:pattern_of_unknot} follows from~\eqref{lem:basic:satellites:tangle_decomposition} together with the fact that \(T_U=\No\).
	The first equality in \eqref{lem:basic:satellites:connected_sum} follows from
	\begin{align*}
	P_t^{K^r_1}(K_2)
	&=
	(T_{K^r_1}+Q_{-2t-1})\cup T_{K_2}
	=
	Q_{-2t-1}\cup(T_{K_2}+T_{K_1})
	=
	Q_{-2t-1}\cup T_{K_2\#K_1}
	\\
	&=
	T_{K_1\#K_2}(2t+1),
	\end{align*}
	where the second step uses \cref{prop:doubletangles:orientation} and \cref{lem:basics:tangle_identities}\ref{lem:basics:tangle_identities:sum_and_union} and the last step uses \(K_1\# K_2=K_2\# K_1\). Using this last symmetry, the second equality now follows from the first. 
\end{proof}

\begin{definition}
	For \(t\in\mathbb{Z}\), define the pattern
	\[
	C_{2,2t+1}\coloneqq P^{Q_{-1-2t}}=P_t^{Q_{-1}}.
	\]
	If \(K\) is a knot, \(C_{2,2t+1}(K)\) is called the \textit{\((2,2t+1)\)-cable knot of \(K\)}. 
	Setting \(K=U\), 
	\(
	C_{2,2t+1}(U)
	=
	Q_0(2t+1)
	\)
	is the \textit{$(2,2t+1)$-torus knot}, which we also denote by \(T_{2,2t+1}\). 
\end{definition}

	Note that if \(t\geq0\), \(T_{2,2t+1}\) has the following diagram with \(2t+1\) positive crossings:
\[
T_{2,2t+1}
=
\hspace{25pt}\vc{$\underbrace{\hspace{-25pt}\includegraphics{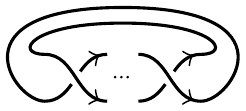}\hspace{-25pt}}_{2t+1}$}\hspace{25pt}
\]

\begin{lemma}\label{lem:cables-as-patterns}
	For any knot \(K\), \(C_{2,2t+1}(K) = T_K(2t+1) = P^{K^r}_{t}(U)\).
\end{lemma}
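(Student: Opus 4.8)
The plan is to recognise each of the three expressions in the claimed chain as the rational filling $T_K(2t+1)$, using only the definitions and the bookkeeping results already in place. For the first equality, note that $C_{2,2t+1} = P^{Q_{-1-2t}}$ by definition, so $Q_{-1-2t}$ is a pattern tangle of $C_{2,2t+1}$. Applying \cref{lem:basic:satellites}\ref{lem:basic:satellites:tangle_decomposition} with this pattern tangle and companion $K$, followed by the definition $T(\nicefrac{p}{q})=Q_{-\nicefrac{p}{q}}\cup T$ of the rational filling, gives
\[
C_{2,2t+1}(K) = Q_{-1-2t}\cup T_K = Q_{-(2t+1)}\cup T_K = T_K(2t+1).
\]

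For the second equality, I would specialise \cref{lem:basic:satellites}\ref{lem:basic:satellites:connected_sum} to the unknot companion: taking $K_1=K$ and $K_2=U$ in the identity $P_t^{K_1^r}(K_2)=T_{K_1\#K_2}(2t+1)$, and using $K\#U=K$, yields
\[
P_t^{K^r}(U) = T_{K\#U}(2t+1) = T_K(2t+1).
\]
Combining the two displays proves the lemma.

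Since all the required machinery is already available, I do not expect a substantial obstacle; the proof is essentially an exercise in keeping the twisting conventions consistent (so that $-1-2t$, $-(2t+1)$ and ``$2t+1$ half-twists'' all denote the same tangle) together with the remark $T_{K\#U}=T_K$. As an alternative to \cref{lem:basic:satellites}\ref{lem:basic:satellites:connected_sum}, the second equality can also be obtained directly: the pattern tangle $T_{K^r}+Q_{-1-2t}$ of $P_t^{K^r}$ has $T_{K^r}=\muty(T_K)$ by \cref{prop:doubletangles:orientation}, and using $T_U=Q_0$ together with \cref{lem:basic:satellites}\ref{lem:basic:satellites:tangle_decomposition} and the identities of \cref{lem:basics:tangle_identities} one reduces $P_t^{K^r}(U)$ to $\muty(T_K)(2t+1)$; one then checks $\muty(T_K)(n)=T_K(n)$ for all $n\in\Z$, which follows from the relation $T_1\cup T_2=(\muty(T_1)+T_2)(0)$ and \cref{lem:basics:tangle_identities}\ref{cor:twistingAndFilling}. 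I would present the shorter, first argument in the paper.
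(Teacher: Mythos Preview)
Your proof is correct and essentially follows the paper's approach; the only difference is that the paper applies \cref{lem:basic:satellites}\ref{lem:basic:satellites:connected_sum} once with $K_1=U$ and $K_2=K$, which yields all three expressions simultaneously since $P^{U}=P^{T_U+Q_{-1}}=P^{Q_{-1}}$ and hence $P_t^{U^r}=C_{2,2t+1}$. Your separate treatment of the first equality via \cref{lem:basic:satellites}\ref{lem:basic:satellites:tangle_decomposition} is perfectly fine and arguably more transparent.
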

\begin{proof}
	This follows from \cref{lem:basic:satellites}\ref{lem:basic:satellites:connected_sum} with $K_1=U$ and $K_2=K$. 
\end{proof}

\begin{remark}\label{def:Whitehead-Double}
	Based on our conventions, 
	the positive Whitehead double \(\Wh(K)\) of a knot \(K\) equals \(T_K(\nicefrac{1}{2})\),
        and the negative Whitehead double \(\Whn(K)\) equals \(T_K(\nicefrac{-1}{2})\). 
\end{remark}

\subsection{Rasmussen invariants}\label{sec:BNr:knots}

Rasmussen invariants are integer invariants that one can extract from Bar-Natan homology. 
The Bar-Natan homology \(\BNr(K)\) of a knot \(K\) takes the form of a bigraded \(\mathbb{Z}[H]\)-module, where \(H\) is a free variable with homological grading \(h(H)=0\) and quantum grading \(\QGrad{q}(H)=-2\). 
\(\BNr(K)\) is the homology of a chain complex \(\CBNr(K)\) of free \(\mathbb{Z}[H]\)-modules whose chain homotopy type is a knot invariant. 
The restriction \(\CBNr(K)\vert_{H=0}\) recovers Khovanov's reduced complex computing \(\Khr(K)\). 
If \(\field\) is a field, we define 
\[
\CBNr(K;\field)
\coloneqq
\CBNr(K)\otimes\field
\]
and \(\BNr(K;\field)\) as its homology.  
Usually, one takes \(\field=\F_0\coloneqq\Q\) or \(\F_c\coloneqq\Z/c\Z\), where \(c\) is a prime.
Similar to other knot homology theories, such as knot Floer homology, we have the following structure theorem:

\begin{theorem}\label{thm:CBNr:structure}
	For any knot \(K\) and any field \(\field\), \(\CBNr(K;\field)\) is bigraded chain homotopic to 
	\begin{equation}\label{eq:CBNr:decomposition}
	\left[
		\QGrad{q}^sh^0\field[H]
	\right]
	\oplus
	\bigoplus_i
	\left[
		\begin{tikzcd}
			\QGrad{q}^{a_i}h^{b_i}\field[H]
			\arrow{r}{H^{c_i}}
			&
			\QGrad{q}^{a_i+2c_i}h^{b_i+1}\field[H]
		\end{tikzcd}
	\right]
	\end{equation}
	for some uniquely determined \(s,a_i\in 2\Z, b_i, c_i\in\Z\) with $c_i > 0$. \myqed
\end{theorem}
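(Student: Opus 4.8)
The plan is to regard $\CBNr(K;\field)$ as a bounded complex of finitely generated free bigraded modules over $\field[H]$, where $H$ has bidegree $(\QGrad{q},h)=(-2,0)$, and to apply the classification of such complexes up to bigraded chain homotopy equivalence. The ring $\field[H]$ is a \emph{graded principal ideal domain}: every homogeneous ideal is $(H^{n})$ for some $n\geq 0$ or is zero, every homogeneous non-unit is $\lambda H^{c}$ with $\lambda\in\field^{\times}$ and $c\geq 1$, and the graded elementary divisor theorem holds over it exactly as over $\field[x]$.

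For existence, I would run the standard splitting argument by induction on $\sum_{i}\rank C^{i}$. Pick a differential $d^{i}\colon C^{i}\to C^{i+1}$; by the graded elementary divisor theorem there are homogeneous bases of $C^{i}$ and $C^{i+1}$ with respect to which $d^{i}$ is ``diagonal'', sending certain basis vectors $e_{j}$ to $H^{c_{j}}f_{j}$ (with $c_{j}\geq 0$) and the remaining ones to $0$. Because $d^{i}\circ d^{i-1}=0$ and $d^{i+1}\circ d^{i}=0$ and $\field[H]$ is a domain, $d^{i-1}$ has no $e_{j}$-component and each $f_{j}$ is a cycle, so each two-term subcomplex $\langle e_{j}\rangle\xrightarrow{H^{c_{j}}}\langle f_{j}\rangle$ is a direct summand of $\CBNr(K;\field)$. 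If $c_{j}=0$ this summand is contractible and is discarded (Gaussian elimination); if $c_{j}\geq 1$ it has the shape of an arrow summand in \eqref{eq:CBNr:decomposition} once $f_{j}$ is rescaled by $\lambda$. Splitting all of them off makes $d^{i}$ zero and strictly decreases the total rank; iterating over all differentials leaves a complex with zero differential, i.e.\ a direct sum of shifted copies of $\field[H]$ and of arrow complexes. Tracking quantum gradings and using that the reduced Bar-Natan homology of a knot is supported in even quantum gradings, the $\QGrad{q}$-degrees of the standalone summands are even and $a_{i}\in2\Z$. At this stage one has \eqref{eq:CBNr:decomposition} except that several standalone summands $\QGrad{q}^{s_{j}}h^{b_{j}}\field[H]$ might appear.

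To see that there is exactly one standalone summand, in homological grading $0$, I would localise: $\CBNr(K;\field)\otimes_{\field[H]}\field[H^{\pm1}]$ computes the (reduced) Lee--Bar-Natan homology of $K$, which is free of rank one over $\field[H^{\pm1}]$, generated by a canonical cycle assembled from an oriented resolution and hence lying in homological grading $0$. For uniqueness I would pass to homology:
\[
\BNr(K;\field)\;\cong\;\QGrad{q}^{s}h^{0}\field[H]\ \oplus\ \bigoplus_{i}\QGrad{q}^{a_{i}+2c_{i}}h^{b_{i}+1}\,\field[H]/(H^{c_{i}}),
\]
a finitely generated bigraded $\field[H]$-module. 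By the uniqueness part of the structure theorem for such modules, $\BNr(K;\field)$ determines the bigraded position of its free part, hence $s$, together with the multiset of bigraded torsion summands $\QGrad{q}^{\ast}h^{\ast}\field[H]/(H^{c})$; from each of these one recovers the exponent $c_{i}$ that annihilates it and then $a_{i}$ and $b_{i}$. Since $\BNr(K;\field)$ depends only on the bigraded chain homotopy type of $\CBNr(K;\field)$, which is a knot invariant, the tuple $\bigl(s,\{(a_{i},b_{i},c_{i})\}\bigr)$ is well defined.

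The genuinely non-formal input is the localisation statement in the previous paragraph: that $\CBNr(K;\field)$ becomes rank one after inverting $H$ and that the generator sits in homological grading $0$. For $\field=\Q$ this is Lee's theorem, and for an arbitrary field it is the analogous fact for Bar-Natan's deformation; I would cite these rather than reprove them. Everything else is formal homological algebra over the graded PID $\field[H]$, the only mild subtlety being that one must carry the bigrading throughout rather than forgetting it.
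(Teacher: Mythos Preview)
The paper does not actually prove this theorem: the statement ends with a \verb|\qed| and is immediately followed by the sentence ``This structure theorem has appeared in various guises in \cite{Lee,Khovanov_Frob_Ext,Turner,mvt} and \cite[Section~3.3.2]{KWZ}.'' So there is nothing to compare your argument against beyond those citations.

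Your sketch is correct and is precisely the standard argument one finds in that literature: reduce the bounded complex of free modules over the graded PID $\field[H]$ to Smith normal form one differential at a time, split off the resulting two-term pieces, and invoke the rank-one computation of the localised (Lee/Bar-Natan) theory to see that exactly one free summand survives, in homological degree zero. Your identification of the one genuinely non-formal input---that $\CBNr(K;\field)\otimes_{\field[H]}\field[H^{\pm1}]$ has rank one, generated in $h$-degree $0$---is exactly right, and citing it is what the paper does too. The uniqueness argument via the structure theorem for bigraded $\field[H]$-modules applied to $\BNr(K;\field)$ is also standard and correct.
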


This structure theorem has appeared in various guises in \cite{Lee,Khovanov_Frob_Ext,Turner,mvt} and 
\cite[Section~3.3.2]{KWZ}. 
As an upshot, there exists some integer \(s\in\Z\) such that 
\begin{equation}\label{eq:BNr:decomposition}
\BNr(K;\field)
\cong
\QGrad{q}^sh^0\field[H]
\oplus
(
H\text{-torsion}
).
\end{equation}

\begin{defprop}\label{defprop:s}
	Given \(c\) a prime number or 0, we define the \textit{Rasmussen invariant \(s_c(K)\) of \(K\)} as the integer \(s\) in a decomposition of \(\BNr(K;\field)\) as in \eqref{eq:BNr:decomposition}, where \(\field\) is any field of characteristic \(c\).
\end{defprop}

\begin{proof}
	Clearly, the integer \(s\) is independent of the particular decomposition \eqref{eq:BNr:decomposition}. 
	To see that the Rasmussen invariant is the same for fields of the same characteristic, observe that we may write 
	\[\CBNr(K;\field)=\CBNr(K;\F_c)\otimes\field\]
	if \(\field\) is a field of characteristic \(c\). 
	Therefore, any decomposition of the form \eqref{eq:CBNr:decomposition} for \(\F_c\) gives rise to a similar decomposition for \(\field\) with the same value of \(s\).
\end{proof}

\begin{remark}
	Originally, the Rasmussen invariants were defined in terms of spectral sequences starting at Khovanov homology.  
	The definition of the Rasmussen invariants that we have given above can be shown to be equivalent to those original definitions \cite{Khovanov_Frob_Ext}, see also \cite[Section~3.4]{KWZ}. 
	The advantage of working with Bar-Natan homology is two-fold. 
	First, \(\field[H]\)-modules are conceptually easier to work with than spectral sequences. 
	Second, and more importantly, this approach works over fields of arbitrary characteristic, because it is based on Khovanov homology over \(\Q[X]/(p(X))\) with \(p(X) = X^2 - X\). 
        It appears to us that the reasons to work with \(p(X) = X^2 - 1\) are mainly historic: 
	This is the polynomial Rasmussen \cite{RasmussenSlice} used when he constructed \(s_0\) using Lee homology \cite{Lee}. 
	This approach only generalises to fields of characteristic \(c\neq2\), but cannot be used to define \(s_2\). 
	This is because the construction of the Rasmussen invariant requires \(p(X)\) to have two distinct roots over an algebraic closure of~\(\field\).
	With the choice \(p(X) = X^2 - 1\), that condition is satisfied if and only if \(c\neq 2\).
	The approach via Bar-Natan homology gives a unified definition for all characteristics. 
\end{remark}

\begin{example}\label{ex:Rasmussen_of_two_n_torus_knots}
	For all characteristics $c$ and odd integers $n$, 	
	\[
	s_c(T_{2,n})= n+
	\begin{cases}
	-1 & n>0\\
	+1 & n<0.
	\end{cases}
	\]
\end{example}

\section{Geometric proofs of twisting behaviours}
\label{sec:geom}

\subsection{Pattern-twisting behaviour of slice-torus invariants}


\label{subsec:geom:slicetorus}

In this subsection, we continue the discussion begun in 
\cref{subsec:generalize1,{subsec:slice-torus2}} of the behaviour of $\nu(P_t(K))$
for slice-torus invariants $\nu$ and patterns $P$ with wrapping number two.
\begin{definition}[\cite{LivingstonComputations,lew2}] \label{def:slice-torus-invariant}
	A knot invariant \(\nu(K)\in\Z\) is called a \emph{slice-torus invariant} if the following three conditions are satisfied:
	\begin{enumerate}[(a)]
		\item the map from the smooth knot concordance group \(\mathcal{C}\) to \(\Z\) defined by \([K]\mapsto \nu(K)\) is a well-defined homomorphism;
		\item for all knots \(K\), \(\nu(K)\) is less than or equal to the smooth four-genus \(g_4(K)\); and
		\item this bound is sharp for positive torus knots,
		i.e.\ \(\nu(T_{p,q}) = g_4(T_{p,q}) = (p-1)(q-1)/2\)  for all positive coprime integers \(p, q\).
	\end{enumerate}
\end{definition}

We will require some well-known properties of slice-torus invariants, which we summarise in the following lemma.
\begin{lemma}\label{lemma:slicetorusprop} Let $\nu$ be a slice-torus invariant.
	\begin{enumerate}[(a)]
                \item If two knots $K$ and $J$ are related by a smooth cobordism of genus $g$, then
                      \[|\nu(K) - \nu(J)| \leq g.\]
                \item 
                      If a knot $K_+$ arises from a knot $K_-$ by a negative crossing change, then
                      \[\nu(K_-) \leq \nu(K_+) \leq \nu(K_-) + 1.\]
                \item If a knot $J$ arises from a knot $K$ by inserting $2n > 0$ positive crossings into two parallely oriented strands, then \[\nu(J) - \nu(K) \in \{n,n-1\}.\]
                \item If a knot $K$ is the plumbing of two unknotted bands with $x$ and $y$ left-handed full twists, respectively,
                      where $x,y\in\Z\setminus\{0\}$, then $\nu(K) = (\sgn x + \sgn y)/2$.                      
	\end{enumerate}
\end{lemma}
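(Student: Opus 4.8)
The plan is to derive all four parts from the defining properties of a slice-torus invariant $\nu$ — that it is a concordance homomorphism, that $\nu\le g_4$, and that $\nu(T_{p,q})=(p-1)(q-1)/2$ for positive coprime $p,q$ — together with the elementary consequences $\nu(-A)=-\nu(A)$ and $|\nu(A)|\le g_4(A)$ for every knot $A$. Part~(a) is then immediate: a genus-$g$ cobordism in $S^3\times[0,1]$ between $K$ and $J$ gives, by the standard tube-and-cap construction, a genus-$g$ surface in $B^4$ bounded by $K\#(-J)$, so that $|\nu(K)-\nu(J)|=|\nu(K\#(-J))|\le g_4(K\#(-J))\le g$. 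This inequality will be the workhorse for the remaining parts.

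For part~(b) I would realise the crossing change as the oriented resolution of the relevant crossing followed by its inverse — a pair of oriented saddle moves — giving a connected genus-$1$ cobordism between $K_-$ and $K_+$, so that $|\nu(K_+)-\nu(K_-)|\le 1$ by~(a). The one-sided inequality $\nu(K_-)\le\nu(K_+)$ is the genuinely directed statement; it is the standard crossing-change inequality for slice-torus invariants \cite{LivingstonComputations,lew2}, which I would quote (or re-prove by realising the negative crossing change so as to compare $\nu(K_+)$ with $\nu$ of a ``more positive'' knot built from $K_-$). I expect this to be the main obstacle of the lemma, precisely because the argument in~(a) is symmetric in its two knots and so cannot detect the direction of a crossing change.

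For part~(c), write $J_m$ for the result of inserting $m$ positive full twists into the two parallel strands of $K$, so $J_0=K$ and $J_n=J$; then $J_{m-1}$ is obtained from $J_m$ by a crossing change (changing one of the $2m$ positive crossings to a negative one, after which a Reidemeister~II move deletes two crossings), and~(b) gives $\nu(J_{m-1})\le\nu(J_m)\le\nu(J_{m-1})+1$. Summing over $m$ yields $0\le\nu(J)-\nu(K)\le n$. That the difference is in fact $n$ or $n-1$ is a theorem of van Cott \cite{zbMATH05705465} — equivalently, it follows by comparing $J$ with a connected sum of $K$ and a $(2,2n{\pm}1)$-torus knot along a suitable cobordism and invoking $\nu(T_{2,2n-1})=n-1$ — and here I would simply cite it.

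For part~(d), the plumbing is a genus-$1$ Seifert surface for $K$, so $g_3(K)\le 1$ and hence $\nu(K)\in\{-1,0,1\}$. Write $D(p,q)$ for the knot bounding the plumbing of unknotted bands with $p$ and $q$ left-handed full twists. Compressing a trivial band shows $D(0,q)=D(p,0)=U$, and a short computation identifies $D(1,1)$ as a positive trefoil, so that $\nu(D(1,1))=1$. Removing one left-handed full twist from a band is a crossing change (plus a Reidemeister~II move) of the type covered by~(b), so decreasing a positive parameter towards $0$ cannot increase $\nu$ and increasing a negative parameter towards $0$ cannot decrease $\nu$. For $p,q>0$ this gives $1=\nu(D(1,1))\le\nu(D(p,q))\le 1$; for $p>0>q$ it gives $0=\nu(D(0,q))\le\nu(D(p,q))\le\nu(D(p,0))=0$; and the all-negative case is the mirror of the all-positive one, so $\nu(K)=(\sgn p+\sgn q)/2$ in every case.
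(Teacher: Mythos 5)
Your arguments for (a)--(c) match the paper's proof sketch: (a) via the slice-surface argument for $K\#(-J)$; (b) by deferring to the standard references (the paper cites \cite{LivingstonComputations} and \cite{RasmussenSlice}); and (c) via the genus-1 cobordisms between $J$ and $K\# T_{2,2n\pm 1}$ together with the values $\nu(T_{2,2n+1})=n$ and $\nu(T_{2,2n-1})=n-1$. The preliminary iteration of (b) that you perform for (c), yielding $0 \le \nu(J) - \nu(K) \le n$, is correct but superfluous, since the connected-sum comparison alone already produces the sharper conclusion $\nu(J)-\nu(K)\in\{n-1,n\}$.

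Part (d) is where your proof takes a genuinely different route. The paper observes that the knots $D(x,y)$ are alternating and computes $\nu$ directly from the sharp slice-Bennequin equality for alternating knots, citing \cite[Theorem~5]{lew2}. You instead combine the genus bound $|\nu(K)|\le g_4(K)\le g_3(K)\le 1$ with monotonicity in each twist parameter (deduced from (b)) and the anchor values $\nu(D(0,q))=\nu(D(p,0))=\nu(U)=0$ and $\nu(D(1,1))=\nu(T_{2,3})=1$. Both approaches work. Yours is the more elementary: it relies only on the slice-torus axioms and a single explicit knot identification, rather than the alternating-knot machinery. The trade-off is that this identification---$D(1,1)$ being the \emph{positive} rather than the negative trefoil---is sensitive to the sign convention for ``left-handed full twist'' (with the opposite convention the sign of the whole conclusion would flip), a subtlety the slice-Bennequin approach side-steps by computing signatures directly.
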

\begin{proof}[Sketch of proof]
(a) follows quickly from the definitions, since $\nu(K) - \nu(J) = \nu(K \# -J)$ and a smooth cobordism of genus $g$ between $K$ and $J$ gives rise to a smooth slice surface of genus $g$ of $K\# -J$.
For (b), see \cite[Corollary~3]{LivingstonComputations} or \cite[Corollary~4.3]{RasmussenSlice}. It is also a special case of (c).
To show (c), one uses the fact that there are smooth cobordisms of genus 1 between $J$ and $K\# T(2, 2n+1)$, and between  $J$ and $K\# T(2, 2n-1)$.
The values of $\nu$ for the knots in (d) can, for example, be computed with the sharper slice-Bennequin inequality, since the knots in question are alternating; see e.g.~\cite[Theorem~5]{lew2}, noting the different normalisation convention for slice-torus invariants.
\end{proof}

Let us now prove \cref{prop:theta:winding0} and \cref{prop:theta:winding2}.

\begin{proof}[Proof of \cref{prop:theta:winding0}]
	\label{proof:prop:theta:winding0}
	It suffices to show the following two claims:
	\begin{enumerate}[(i)]
		\item For all \(s,t\in\Z\), \(\vert\nu(P_t(K))-\nu(P_s(K))\vert\leq 1\). 
		\item For all \(t\in\Z\), \(0\leq\nu(P_t(K))-\nu(P_{t+1}(K))\leq 1\).
	\end{enumerate}
        By \cref{lemma:slicetorusprop}(a), it suffices for (i) to show that there is a genus~1 cobordism between \(P_t(K)\) and \(P_s(K)\) for any \(s,t\in\Z\). 
	This is easy to construct: 
	Let \(L\) be the two-component link obtained as the oriented resolution of \(P_t(K)\) at some crossing in the twisting region. 
	The link \(L\) may be viewed as the result of attaching a single band to \(P_t(K)\) near the resolved crossing and thus gives rise to a cobordism \(c_t\) from \(P_t(K)\) to \(L\). 
	Clearly, \(L\) does not depend on the number of twists \(t\), so we similarly get a cobordism \(c_s\) from \(P_s(K)\) to \(L\). 
	Composing \(c_t\) with the inverse of \(c_s\) gives the desired cobordism of genus~1 from \(P_t(K)\) to \(P_s(K)\).
	
	The inequalities in (ii) follow from \cref{lemma:slicetorusprop}(b) and the fact that increasing the number of full twists by~1 is equivalent to changing a positive crossing in the twisting region to a negative one. 
\end{proof}
\begin{proof}[Proof of \cref{prop:theta:winding2}]
        Let $s,t\in \Z$ with $s \leq t$. Observe that $P_t(K)$ arises from $P_s(K)$ by inserting $2(t-s)$ positive crossings        into two oriented strands with parallel orientation. So, by \cref{lemma:slicetorusprop}(c), we have
        \[
                \nu(P_t(K)) - \nu(P_s(K)) \in \{t - s, t - s - 1\}.
        \]
        From this, the proposition follows.
\end{proof}

Finally, let us prove the following statement, which implies the first half of \cref{thm:main2}.%
\begin{proposition}\label{prop:1implies2}
	Suppose \(\nu\) is a slice-torus invariant for which the answer to \cref{que:generalize} is `yes'. 
        Then $\theta_y$ induces a homomorphism from the smooth knot concordance group $\mathcal{C}$ to $\Z$.
\end{proposition}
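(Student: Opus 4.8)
Since \(\theta_\nu\) is already a concordance invariant, and so descends to a well-defined map \(\mathcal{C}\to\Z\) (\cref{rmk:slice}), the only thing left to prove is additivity under connected sum, \(\theta_\nu(K\#J)=\theta_\nu(K)+\theta_\nu(J)\) for all knots \(K,J\). Two facts will drive the argument: first, \(\theta_\nu(M)=\theta_\nu(\Wh,M)\) by definition; second, because the answer to \cref{que:generalize} is `yes' for \(\nu\), \cref{thm:winding0:thetas} gives \(\theta_\nu(P,M)=\theta_\nu(P,U)+\theta_\nu(M)\) for \emph{every} pattern \(P\) of wrapping number two and winding number zero and every knot \(M\). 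The plan is, for fixed \(J\), to construct one such pattern \(P_J\) satisfying
\begin{equation}\label{plan:PJ}
(P_J)_t(K)=\Wh_t(K\#J)\qquad\text{for all knots }K\text{ and all }t\in\Z.
\end{equation}
Granting \eqref{plan:PJ}, the functions \(t\mapsto\nu((P_J)_t(K))\) and \(t\mapsto\nu(\Wh_t(K\#J))\) coincide and therefore have the same jump point, so \(\theta_\nu(K\#J)=\theta_\nu(\Wh,K\#J)=\theta_\nu(P_J,K)\). Since \eqref{plan:PJ} with \(K=U\) reads \((P_J)_t(U)=\Wh_t(J)\), we also get \(\theta_\nu(P_J,U)=\theta_\nu(\Wh,J)=\theta_\nu(J)\). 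Feeding these into \cref{thm:winding0:thetas},
\[
\theta_\nu(K\#J)=\theta_\nu(P_J,K)=\theta_\nu(P_J,U)+\theta_\nu(K)=\theta_\nu(J)+\theta_\nu(K),
\]
which is the assertion.

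So everything comes down to producing \(P_J\) and checking \eqref{plan:PJ}. The right choice is \(P_J\coloneqq P^{Q_{\nicefrac{-1}{2}}+T_{J^r}}\), the pattern of the Conway tangle obtained by attaching the clasp \(Q_{\nicefrac{-1}{2}}\) to the double tangle \(T_{J^r}=\muty(T_J)\); morally, \(P_J\) performs ``Whitehead-double after connect-summing with \(J\)''. This is a pattern of wrapping number two and winding number zero, and it reduces to \(\Wh\) when \(J=U\) because \(T_U=Q_0\). To verify \eqref{plan:PJ} I would argue on the tangle level. From \(\Wh(M)=T_M(\nicefrac{1}{2})\) (\cref{def:Whitehead-Double}) and the definition of twisted patterns (\cref{def:patterntangle}) one gets \(\Wh_t(M)=T_M(\nicefrac{1}{2}+2t)\); combined with the additivity \(T_{K\#J}=T_K+T_J\) of double tangles under connected sum (used in the proof of \cref{lem:basic:satellites}), the right-hand side of \eqref{plan:PJ} becomes \((T_K+T_J)(\nicefrac{1}{2}+2t)\). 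Expanding this with \cref{lem:basics:tangle_identities} and the naturality of \(\muty\) (which fixes \(Q_{\nicefrac{-1}{2}}\) and exchanges \(T_J\) with \(T_{J^r}\)), while expanding the left-hand side with \cref{lem:basic:satellites}, one finds that both sides are the closure of \(T_K\) along the tangle \(Q_{\nicefrac{-1}{2}}+T_{J^r}+Q_{-2t}\), up to moving the integer twist region \(Q_{-2t}\) from one side of \(T_{J^r}\) to the other. That last step is exactly the flype invoked in the proof of \cref{prop:doubletangles:orientation} — integer twists slide freely past a double tangle — giving \(T_{J^r}+Q_{-2t}=Q_{-2t}+T_{J^r}\) and hence \eqref{plan:PJ}.

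The part needing genuine care is this tangle bookkeeping behind \eqref{plan:PJ}, and in particular the two structural inputs on double tangles: additivity under connected sum, and commutation of integer twists past a double tangle; once \eqref{plan:PJ} is known, the rest is a formal consequence of \cref{rmk:slice} and \cref{thm:winding0:thetas}. (One could instead verify \eqref{plan:PJ} directly in the solid torus, observing that \(P_J(K)\) is by construction the Whitehead double of a knot whose core traverses \(J^r\) and then \(K\), i.e.\ of \(K\#J\), with the \(t\)-twist of \(P_J\) recording the framing change of \(K\#J\); I would use whichever formulation reads more cleanly in context.)
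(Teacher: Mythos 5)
Your proposal is correct and, up to cosmetic reshuffling, it is the same argument as the paper's: you build the pattern $P_J=P^{Q_{-1/2}+T_{J^r}}$ (which is exactly the paper's $\Wh+\muty(T_J)$), observe that $(P_J)_t(K)=\Wh_t(K\#J)$ and $(P_J)_t(U)=\Wh_t(J)$, and then transfer additivity from the pattern--companion splitting to the jump points; the only small difference is that you route the endgame through \cref{thm:winding0:thetas} and \cref{rmk:slice}, whereas the paper carries out the jump-point comparison and the concordance-invariance check by hand.
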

\begin{proof}
Let knots $K$, $J$ be given. Let $P$ be the pattern $W^+ + \muty(T_K)$, where $T_K$ is the double tangle of $K$, see \cref{def:double_tangle}, and \(\muty\) is the operation that rotates tangles around their vertical axis. Then we have the following equalities for all $t\in \Z$:
\begin{align*}
\nu(W^+_{t-\theta_{\nu}(K\# J)}(U)) & = \nu(W^+_t(K \# J))
         = \nu(P_t(J)) \\
        & = \nu(P_{t-\theta_{\nu}(J)}(U))
         = \nu(W^+_{t-\theta_{\nu}(J)}(K)) \\
        & = \nu(W^+_{t-\theta_{\nu}(K)-\theta_{\nu}(J)}(U)),
\end{align*}
where the first, third, and fifth equality use the assumption that the answer to \cref{que:generalize} is `yes' for $\nu$,
and the second and fourth equality use the definition of $P$.
Setting $t$ to $\theta_{\nu}(K\# J)$ and $\theta_{\nu}(K\# J) - 1$ now respectively yields
\begin{align*}
\nu(W^+_{\theta_{\nu}(K\# J)-\theta_{\nu}(K)-\theta_{\nu}(J)}(U)) & = \nu(W^+(U)) = 0, \\
\nu(W^+_{\theta_{\nu}(K\# J)-\theta_{\nu}(K)-\theta_{\nu}(J) - 1}(U)) & = \nu(W_{-1}^+(U)) = 1,
\end{align*}
see \cref{ex:nu_of_twist_knots}.
Because of the uniqueness of the jump point (see \cref{prop:theta:winding0}),
it follows that $\theta_{\nu}(K\# J) - \theta_{\nu}(K) - \theta_{\nu}(J) = \theta_{\nu}(U) = 0$,
and thus $\theta_{\nu}$ is additive with respect to $\#$.

So, it just remains to show that $\theta_{\nu}$ is a concordance invariant.
Let $K$ and $J$ be concordant knots. It follows that for all $t\in \Z$,
the knots $W^+_t(K)$ and $W^+_t(J)$ are also concordant, and thus
$\nu(W^+_t(K)) = \nu(W^+_t(J))$. Again using the uniqueness of the jump point,
we find $\theta_{\nu}(K) = \theta_{\nu}(J)$ as desired.
This concludes the proof.
\end{proof}

\subsection{\texorpdfstring{Pattern-twisting behaviour of \(\bm{\tau}\)}{Pattern-twisting behaviour of τ}}
\label{subsec:geom:tau}

Let us now resume the investigation started in \cref{sec:intro:tau:winding0} of the behaviour of $\tau(P(K))$.

\begin{figure}
	\centering
	\labellist
        \pinlabel \rotatebox{90}{$Q_{-2u}$} at 35.5 95
        \pinlabel \rotatebox{90}{$T_J$} at 35.5 32
        \pinlabel $Q_{-2t}$ at 165 63
        \pinlabel $T_K$ at 102 63
	\endlabellist
	\includegraphics{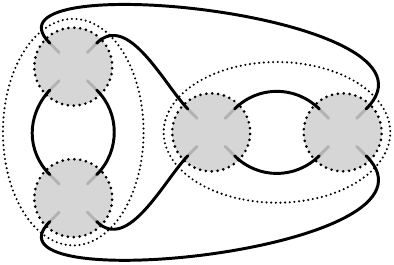}
	\caption{The knots \(D^{J,u}_t(K)\) discussed in \cref{rem:levine}.}
	\label{fig:levine}
\end{figure}
\begin{remark}\label{rem:levine}
	Levine~\cite{MR2971610} has established the following generalisation of Hedden's result
\eqref{eq:hedden}~$\theta_{\tau}(K) = 2\tau(K)$.
	Consider knots as shown in \cref{fig:levine}, which we denote by \(D^{J,u}_t(K)\) for knots \(K, J\) and integers $t,u$. Here, we think of \(D^{J,u}\) as a pattern, which has wrapping number 2 and winding number 0.
Indeed, in the language of \cref{def:patterntangle}, \(D^{J,u}\) is the pattern of the tangle $T$, where $T$ is $T_J + Q_{-2u}$ rotated by $\pi/2$.
Note that \(D^{U,\pm 1} = \Whmp\)
	and
        \begin{equation}\label{eq:levineswitch}
        D^{J,u}_t(K) = D^{K,t}_u(J)^r,
        \end{equation}
where the superscript $r$ denotes taking the reverse of a knot.
         Levine shows that
	\begin{equation}\label{eq:levine}
	\tau(D^{J,u}_t(K)) = \begin{cases}
	1  & \text{if } t < 2\tau(K) \text{ and } u < 2\tau(J), \\
	-1 & \text{if } t > 2\tau(K) \text{ and } u > 2\tau(J), \\
	0  & \text{else.}
	\end{cases}
	\end{equation}
	Assume that \(\nu\) is a slice-torus invariant for which the answer to \cref{que:generalize} is `yes', such as \(\nu = s_2\), i.e.~for all knots $K$ and patterns $P$ with wrapping number two and winding number zero, we have
        \begin{equation}\label{eq:levinereminder}
                \nu(P(K)) = \nu(P_{-\theta_{\nu}(K)}(U)).
        \end{equation}
	Then an analogous formula to \cref{eq:levine} can be deduced for \(\nu\). First note that
	\[
	\nu(D^{J,u}_t(K)) \stackrel{\eqref{eq:levinereminder}}{=} \nu(D^{J,u}_{t-\theta_\nu(K)}(U))
	\stackrel{\eqref{eq:levineswitch}}{=} \nu(D^{U,t-\theta_\nu(K)}_u(J)^r)
	\stackrel{\eqref{eq:levinereminder}}{=} \nu(D^{U,t-\theta_\nu(K)}_{u-\theta_\nu(J)}(U)^r).
	\] 
	Now, $D^{U,t-\theta_\nu(K)}_{u-\theta_\nu(J)}(U)^r$ is just a Hopf plumbing of two unknotted annuli with 
	\(t-\theta_\nu(K)\) and \(u-\theta_\nu(J)\) right-handed full twists, respectively, for which \(\nu\) can be easily determined as follows, using \cref{lemma:slicetorusprop}(d):
	\begin{equation}\label{eq:levinetheta}
	\nu(D^{J,u}_t(K)) = \begin{cases}
	1  & \text{if } t < \theta_\nu(K) \text{ and } u < \theta_\nu(J) \\
	-1 & \text{if } t > \theta_\nu(K) \text{ and } u > \theta_\nu(J) \\
	0  & \text{else.}
	\end{cases}
	\end{equation}
	In particular, a positive answer to \cref{q:tau} would recover Levine's formula \cref{eq:levine}.
\end{remark}

  \begin{wraptable}[9]{r}{0.26\textwidth}
    \centering
    \begin{tabular}{r|cc}
			knot        & \(\tau\) & \(s_2\) \\\hline
			\(\Wh(T_{2,3})\) & 1        & $2$ \rule{0pt}{2.3ex}\\
			\(\Wh(J)\)  & 1        & $0$ \rule{0pt}{2.3ex}\\
			\(K\)       & 1        & $-2$ \rule{0pt}{2.3ex}\\
			\(\Wh(-T_{2,3})\)& 0        & $0$ \rule{0pt}{2.3ex}\\
			\(\Wh(-J)\) & 0        & $2$ \rule{0pt}{2.3ex}
		\end{tabular}
		\medskip
    \caption{}\label{table:taus2}
	\end{wraptable}
Let us now prove \cref{prop:genus-1-knots}, which we restate for the reader's convenience.
\genusone*
\noindent\emph{Proof.}
	\label{proof:prop:genus-1-knots}
	Take again \(J = T_{3,4}^{\# 2}\# T_{-2,3}^{\#5}\), as in the proof of \cref{thm:hpc}.
	Let \(K = D^{J,0}(J)\). This knot is obtained from the Hopf plumbing \(\Sigma\) of two unknotted untwisted bands
	by tying the knot \(J\) into each of the two bands, with framing zero.
	So \(\Sigma'\) and \(\Sigma\) have the same genus and the same Seifert form.
	Since \(\Sigma\) is a genus one Seifert surface of the unknot, it follows that
	\(K\) has three-genus 1 and Alexander polynomial 1.
	The values of \(\tau\) and \(s_2\) for \(K\) may be computed from \cref{eq:levine} and \cref{eq:levinetheta}, respectively, using that \(\tau(J) = 1\) and \(\theta_2(J) = -4\).
	So we have constructed a knot $K$ as desired.

	The knot $L$ in the second claim may be obtained as a connected sum of $g$~summands, with each summand equal to
        one of the five knots listed in \cref{table:taus2}, or their mirror images.
\qed\bigskip

\subsection{Pattern-twisting behaviour of other knot invariants}
\label{subsec:geom:other}

While we have focused on slice-torus invariants and patterns with wrapping number two so far, one could also pursue the following more general question:
For which triples $(y, \mathcal{P}, \theta)$ of a knot invariant $y$, a set of patterns $\mathcal{P}$,
and an integer-valued knot invariant $\theta$ does
\begin{equation}\label{eq:ytheta}
y(P(K)) = y(P_{-\theta(K)}(U))
\end{equation}
hold for all knots $K$ and patterns $P \in \mathcal{P}$? Let us reformulate some previously stated results in these terms:
\begin{itemize}
\item \cref{thm:main} says that \cref{eq:ytheta} holds for $y = s_2$, $\theta = \theta_2$, and
$\mathcal{P}$ the set $\mathcal{P}_1$ of all patterns with wrapping number two and winding number zero.
\item \cref{prop:theta:winding0} says that \cref{eq:ytheta} holds for $y$ equal to a slice-torus invariant $\nu$,
$\mathcal{P} = \{P_t\mid t\in\Z\} \subset \mathcal{P}_1$ for a single fixed pattern $P\in\mathcal{P}_1$,
and $\theta(K) = \theta_{\nu}(P,K)$ if $\theta_{\nu}(P,K) \neq \infty$,
and $\theta(K)$ arbitrary if $\theta_{\nu}(P,K) = \infty$.
\item Hedden's result \eqref{eq:hedden}~$\theta_{\tau}(K) = 2\tau(K)$ implies that
\cref{eq:ytheta} is satisfied for $y = \tau$, $\theta = 2\tau$ and $\mathcal{P} = \mathcal{P}_2 = \{\Wh_t \mid t\in \Z\} \cup \{\Whn_t \mid t\in \Z\} \subset \mathcal{P}_1$.
\item Levine's result discussed in \cref{rem:levine} means that 
\cref{eq:ytheta} holds for $y = \tau$, $\theta = 2\tau$ and $\mathcal{P} = \mathcal{P}_3 = \{D^{J,u}_t \mid u,t\in \Z, J\text{ any knot}\}$. Note that $\mathcal{P}_2 \subset \mathcal{P}_3 \subset \mathcal{P}_1$.
\end{itemize}\medskip

\noindent Let us also cast a quick glance at some further knot invariants:\medskip

\begin{itemize}
\item The Upsilon invariant $\Upsilon_K$, a piecewise linear function $[0, 2] \to \mathbb{R}$, is another knot invariant coming from knot Floer homology~\cite{ossz}. If the three-genus $g(K)$ of $K$ is at most $1$, then $\Upsilon_K(t) = \tau(K)\cdot(-1 + |1-t|)$ for all $t\in [0,2]$. Since $g(P(K)) \leq 1$ holds for all knots $K$ and $P\in\mathcal{P}_3$, it follows from Levine's result above that \cref{eq:ytheta} is also satisfied for $y(K) = \Upsilon_K$, $\theta = 2\tau$ and $\mathcal{P} = \mathcal{P}_3$. This was remarked first in \cite[Corollary~3.4]{MR3980298}.
\item For $n \geq 3$, the Khovanov-Rozansky $\mathfrak{sl}_n$-concordance homomorphism $\mathfrak{s}_n$ \cite{wu3,lobb1}
	is, properly normalised, a lower bound for $g_4(K)$ and satisfies $\mathfrak{s}_n(T) = g_4(T)$
	for positive torus knots $T$. But in the sense of this article, it is not a slice-torus invariant,
	since it takes values in $\tfrac{1}{n-1}\Z$ instead of $\Z$. 
	So it is conceivable that for some $n\geq 3$ and some knot $K$,
	the expression $\mathfrak{s}_n(\Wh_t(K))$ as a function of $t$ has more than one jump point.
	We do not know whether \cref{eq:ytheta} holds for $y = \mathfrak{s}_n$ and any interesting set $\mathcal{P}$ of patterns.

\item The classical Levine-Tristram signatures \(\sigma_{\omega}\), with \(\omega \in S^1\),
	satisfy \(\sigma_{\omega}(P(K)) = \sigma_{\omega}(P(U))\) for all \(P\) in the set $\mathcal{P}_4$ of patterns with winding number zero (without restrictions on the wrapping number) \cite{MR547456}.
	In other words, \eqref{eq:ytheta} holds for \(y = \sigma_{\omega}\), $\mathcal{P} = \mathcal{P}_4$, 
	and \(\theta(K) = 0\) for all knots $K$.
\end{itemize}

\subsection{Crossing changes and twists of the satellite companion}
\label{subsec:fulltwists}

In this subsection we pursue the question how a satellite $P(K)$ changes when a crossing change is applied to $K$.
This will lead us to a proof of \cref{prop:crossingchange}.
First, let us further generalise the notion of generalised crossing change from \cref{subsec:propthetacnu}.

\begin{definition}\label{def:companiontwists}
Suppose $K$ is a knot and $\lambda$ is an unknot in the complement of $K$ (the orientation of $\lambda$ is irrelevant).
Performing a $(+1)$-framed Dehn surgery on $\lambda$ transforms $K$ into another knot $J$ in~$S^3$.
Let $h = |\lk(K, \lambda)| = |\lk(J, \lambda)|$ and suppose
$\lambda$ bounds a disc $D$ that intersects~$K$ (or, equivalently,~$J$) transversely in $n$ points.
Then, we say that $J$ arises from $K$ ($K$ from $J$) by a \emph{left-handed (right-handed) $h$\nobreakdash-homologous twist on $n$ strands}.
\end{definition}
One may achieve a left-handed twist on $n$ strands by tying a left-handed full twist into the $n$ strands of $K$ close to $D$.
A generalised negative crossing change is simply a null-homologous left-handed twist.
\begin{remark}
Note that a negative crossing change may be achieved by a null-homologous left-handed twist on two strands,
but also by a 2-homologous right-handed twist on two strands.
\cref{fig:theothertwist} contains a version of this observation for generalised crossing changes.
\end{remark}
\begin{figure}[bt]
        \vspace{3ex}
	\centering
	\labellist
	\pinlabel $K$ at 46 180
	\pinlabel \textcolor{red}{$\lambda$} at  7 265
        \pinlabel $\overbrace{\hspace{19mm}}^{n\text{ strands}}$ at 45.5 308
        \pinlabel \textcolor{red}{\parbox{6cm}{\centering left-handed $h$-homologous\\ twist on $n$ strands}} [b] at 194 249

	\pinlabel $J$ at  363 180
        \pinlabel $-1$ at 363 242
        \pinlabel \rotatebox{90}{$\overbrace{\hspace{11mm}}^{\text{writhe }w}$} at 317 243

	\pinlabel $P(K)$  at 46 124
        \pinlabel $\underbrace{\hspace{5mm}}_{r}$ at 29.5 -6
        \pinlabel $\underbrace{\hspace{5mm}}_{r}$ at 62.5 -6
        \pinlabel $\underbrace{\hspace{19mm}}_{n\cdot r \text{ strands}}$ at 46 -20
        \pinlabel \textcolor{red}{\parbox{5cm}{\centering left-handed $(h\cdot |v|)$-homologous\\ twist on $n\cdot r$ strands}} [b] at 158 65
        \pinlabel $\rotatebox{90}{$\scriptstyle P^T$}$ at 62.5 14

	\pinlabel $P_{-h^2}(J)$ at 363.5 124
        \pinlabel $-1$ at 363 57
        \pinlabel $\scriptstyle -w-h^2$ at 346.5 14
        \pinlabel $\rotatebox{90}{$\scriptstyle P^T$}$ at 380 14

	\pinlabel $L$ at  264 124
        \pinlabel $-1$ at 264 57
        \pinlabel $\rotatebox{90}{$\scriptstyle P^T$}$ at 280.5 14
        
        \pinlabel \textcolor{blue!50!black}{\Large =} at 315 57 

        \pinlabel \scalebox{.7}{$\underbrace{}$} at 347 84
        \pinlabel \scalebox{.7}{$\underbrace{}$} at 380 84
        \pinlabel \scalebox{.7}{$\overbrace{}$}  at 347 32
        \pinlabel \scalebox{.7}{$\overbrace{}$}  at 380 32

	\endlabellist
\includegraphics{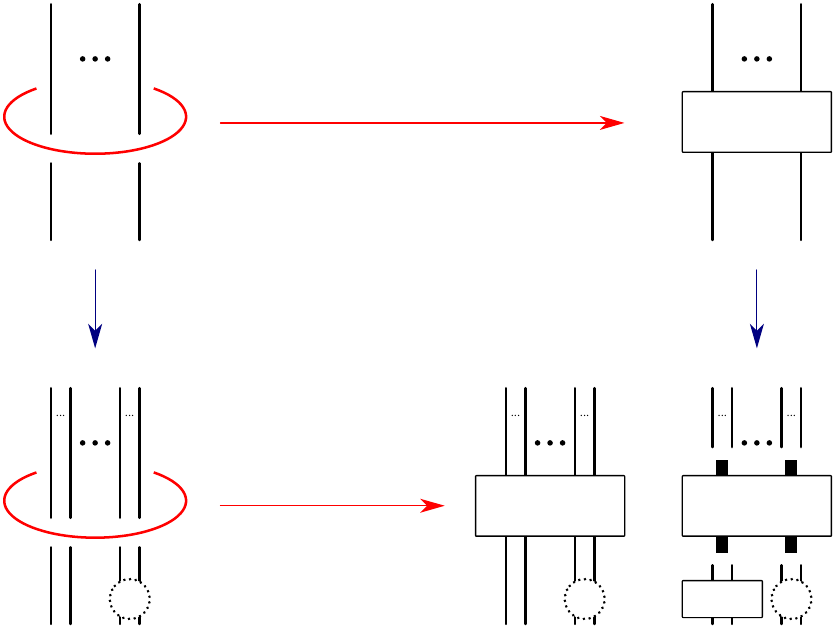}
        \vspace{5ex}
\caption{The essence of the proof of \cref{prop:satellitetwist}.
The figure shows five knots, each intersected with a neighbourhood of $\lambda$.
Away from this neighbourhood, the two knots $K$ and $J$ in the top row are identical,
and so are the three knots $P(K)$, $L$, $P_{-h^2}(J)$ in the bottom row.
Numbers in boxes indicate right-handed full twists.
In the diagram of $P_{-h^2}(J)$, bottom right, the $-1$ full twist is understood
to twist the $n$ bunches of $r$ strands each (drawn thick), but not twist each individual bunch,
cf.~\cref{fig:bunch}.
We assume that $P$ is a pattern with wrapping number $r$ and winding number $v$,
and $h = |\lk(K,\lambda)|$.}
\label{fig:twist-satellite}
\end{figure}

\begin{lemma}\label{prop:satellitetwist}
Let the knot $J$ arise from the knot $K$ by a left-handed $h$-homologous twist on $n$ strands.
Let $P$ be a pattern with winding number $v$ and wrapping number $r$.
Then $P_{-h^2}(J)$ arises from $P(K)$ by a left-handed $(h\cdot |v|)$-homologous twist on $(n\cdot r)$ strands.
\end{lemma}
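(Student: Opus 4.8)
\emph{Proof proposal.} The plan is to realise the twist on $K$ and the induced twist on $P(K)$ by one and the same self-diffeomorphism of $S^3$, and to bookkeep its effect on the $0$-framing of $K$. To set up, fix a disk $D$ bounded by $\lambda$ which meets $K$ transversely in $n$ points, and choose a representative of the pattern $P$ that realises the wrapping number, so that a meridional disk of the standard solid torus meets $P$ in exactly $r$ points. Pick a tubular neighbourhood $N(K)$ of $K$ thin enough to be disjoint from $\lambda$ (possible since $\lambda\cap K=\varnothing$). Then $P(K)\subset N(K)$, the disk $D$ meets $P(K)$ transversely in exactly $n\cdot r$ points — each of the $n$ strands of $K$ through $D$ is replaced by $r$ parallel strands of $P(K)$ — and, since $[P(K)]=v\cdot[K]$ in $H_1(N(K))$ while $\lambda$ lies outside $N(K)$, we have $\lk(P(K),\lambda)=v\cdot\lk(K,\lambda)$, hence $|\lk(P(K),\lambda)|=h\cdot|v|$. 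These are precisely the two numerical data required by \cref{def:companiontwists} for a twist on $n\cdot r$ strands that is $(h\cdot|v|)$-homologous.

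Next let $\Phi\colon S^3\to S^3$ be the diffeomorphism realising $(+1)$-surgery on $\lambda$; it is the identity outside a ball neighbourhood of $D$ and a left-handed full twist along $D$ inside. By hypothesis $\Phi(K)=J$, and since $N(K)$ is disjoint from $\lambda$, the image $\Phi(N(K))$ is a tubular neighbourhood of $J$ with core $J$. Thus $\Phi\circ\iota_K$ exhibits $J$ together with the framing given by $\Phi(\lambda_K)$, where $\lambda_K$ is the $0$-framed longitude of $K$, and $\Phi$ carries $P(K)=\iota_K(\iota_P(S^1))$ to the knot obtained by inserting $P$ into $\Phi(N(K))$ with that framing. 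The key computation is the resulting framing shift: since $\lambda_K$ is a parallel push-off of $K$ we have $\lk(\lambda_K,\lambda)=\lk(K,\lambda)=\pm h$, and the standard linking-number formula for a left-handed full twist along $D$ gives
\[
\lk\bigl(\Phi(\lambda_K),J\bigr)=\lk(\lambda_K,K)-\lk(\lambda_K,\lambda)\cdot\lk(K,\lambda)=0-h^2=-h^2 .
\]
So $\Phi(\lambda_K)$ is the framing of $J$ differing from its $0$-framed longitude by $-h^2$ meridians, and by the description of twisted patterns as reparametrisations of the solid torus in \cref{def:patterntangle} we conclude $\Phi(P(K))=P_{-h^2}(J)$.

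Finally, by \cref{def:companiontwists} and the numerical facts recorded in the first paragraph, the knot $\Phi(P(K))$ is by definition the result of applying to $P(K)$ a left-handed $(h\cdot|v|)$-homologous twist on $n\cdot r$ strands. Combining this with $\Phi(P(K))=P_{-h^2}(J)$ proves the lemma. The one point that needs genuine care is the sign in the framing computation — equivalently, the fact that the internal twisting absorbed into each bunch of $r$ strands is exactly the twisting already recorded by the passage $P\rightsquigarrow P_{-h^2}$ and contributes nothing extra; this is the content of the remark in the caption of \cref{fig:twist-satellite} (cf.\ \cref{fig:bunch}) and is pinned down by the orientation and framing conventions fixed in \cref{sec:Conventions}.
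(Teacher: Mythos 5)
Your proof is correct and reaches the same conclusion, but it handles the key step in a genuinely different way from the paper. Both proofs begin identically: realise the twist as the self-diffeomorphism $\Phi$ coming from $(+1)$-surgery on $\lambda$, observe that $\Phi(P(K))$ is obtained from $P(K)$ by a $(h\cdot|v|)$-homologous left-handed twist on $nr$ strands (by putting $P$ in braided position and using homology to compute the linking number), and reduce the lemma to the identity $\Phi(P(K))=P_{-h^2}(J)$. The paper then proves this identity diagrammatically: it builds explicit tangle diagrams for $\Phi(P(K))$ and $P_{-h^2}(J)$, decomposes the full twist on $nr$ strands as a full twist on $n$ bunches of $r$ strands followed by a full twist in each bunch (\cref{fig:bunch}), and matches the residual twisting by computing the writhe of the local tangle $D'$ to be $w(D')=n-h^2$. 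You instead argue at the level of framed tubular neighbourhoods: $\Phi(N(K))$ is a tubular neighbourhood of $J$, and the surgery linking-number formula gives
\[
\lk\bigl(\Phi(\lambda_K),J\bigr)=\lk(\lambda_K,K)-\lk(\lambda_K,\lambda)\cdot\lk(K,\lambda)=-h^2,
\]
so the induced solid-torus parametrisation of $N(J)$ is the $0$-framed one precomposed with $-h^2$ Dehn twists, hence carries $P$ to $P_{-h^2}$. Your version is slicker and more conceptual — it explains \emph{why} the shift is $-h^2$ by a one-line linking computation rather than by matching crossings — while the paper's diagrammatic proof has the advantage of pinning down the orientation and framing conventions explicitly, which is exactly the point you flag as needing care. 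Two small remarks: you should say that \emph{every} meridional disk of the braided representative meets $P$ in $r$ points (not merely ``a'' meridional disk), so that each of the $n$ meridional subdisks of $D\cap N(K)$ contributes $r$ intersection points; and the final sign check (that a longitude with $\lk=-h^2$ corresponds to $P_{-h^2}$ rather than $P_{+h^2}$ under the paper's convention $P_t=P^{T_P+Q_{-2t}}$) does hold, but it deserves the one additional line of justification rather than a pointer to the paper's figure — e.g.\ by comparing with the cable patterns, where $(C_{2,1})_t = C_{2,2t+1}$ and a cable with respect to the $f$-framing is the $(2,1+2f)$-cable with respect to the $0$-framing.
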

\begin{proof}
By the definition of a twist, there is an unknot $\lambda$ in the complement of $K$ with $|\lk(K,\lambda)| = h$
such that $(+1)$-Dehn surgery along $\lambda$ transforms $K$ into $J$.
Denote by $L$ the knot arising from $P(K)$ by $(+1)$-Dehn surgery along $\lambda$.
Note that $L$ arises from $P(K)$ by a left-handed $(h\cdot |v|)$-homologous twist on $(n\cdot r)$ strands.
To prove the statement of the lemma, we have to show $L = P_{-h^2}(J)$.

Let us take a diagrammatic approach.
The top row of \cref{fig:twist-satellite} shows the intersection of diagrams of $K$ and $J$
with a neighbourhood of $\lambda$. 
From these diagrams, we construct diagrams of $P(K)$ and $P_{-h^2}(J)$ as follows:
First, we apply an isotopy to the pattern $P \subset S^1 \times D^2$
so that it equals the union of a $2r$-ended tangle $P^T$ sitting inside a ball with $2r$ curves that are parallel to the core curve of $S^1 \times D^2$ (as described in \cref{def:patterntangle} for $r = 2$).
We then do the following: 
\begin{itemize}
\item We replace every strand with a bunch of $r$ parallel strands;
\item we insert $P^T$ into a bunch of $r$ strands somewhere; and
\item we tie left-handed full twists into a bunch of $r$ strands somewhere, as many as the writhe of the original diagram.
\end{itemize}

From the diagram of $P(K)$, we obtain a diagram of $L$ by tying a left-handed full twist into $nr$ strands.

Let us now show that our diagrams of $L$ and $P_{-h^2}(J)$ represent the same knot.
Since these diagrams are identical outside from the neighbourhood of $\lambda$, we just have to compare them inside of that neighbourhood, as drawn in \cref{fig:twist-satellite}.

In the diagram of $L$, the full twist on $nr$ strands (consisting of $nr(nr-1)$ crossings) is isotopic to
a full twist tied into $n$ bunches of $r$ strands each (consisting of $n(n-1)r^2$ crossings),
followed by a full twist in each of the individual bunches (consisting of a total of $nr(r-1)$ crossings).
This is illustrated for $n=2=r$ in \cref{fig:bunch}.
Since $K$ is a knot, the full twists in the individual bunches may be slid around the knot so that they lie next to each other, giving a total of $n$ left-handed full twists tied into one of the bunches.
\begin{figure}[tb]
	\centering
	\labellist
	\pinlabel $+1$ at  30 36
	\pinlabel $=$  at  77 36
	\pinlabel $=$  at 226 36
	\pinlabel $=$  at 349 36
	\pinlabel $+1$ at 407 36
        \pinlabel  \rotatebox{90}{\scalebox{.7}{$\underbrace{}$}} at 381 55
        \pinlabel  \rotatebox{90}{\scalebox{.7}{$\underbrace{}$}} at 381 17.5
        \pinlabel \rotatebox{-90}{\scalebox{.7}{$\underbrace{}$}} at 434 55
        \pinlabel \rotatebox{-90}{\scalebox{.7}{$\underbrace{}$}} at 434 17.5
	\pinlabel $+1$ at 467 53
	\pinlabel $+1$ at 467 18
	\endlabellist
        \includegraphics[width=.98\textwidth]{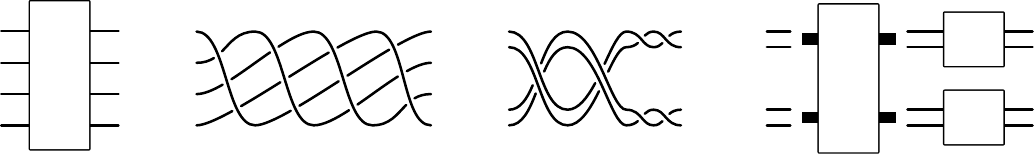}
        \caption{A full twist on four strands equals a full twist on two bunches of two strands each, followed by a full twist on each of the individual bunches.}
        \label{fig:bunch}
\end{figure}

So, the diagrams of $L$ and of $P_{-h^2}(J)$ both consist of a left-handed full twist tied into $n$ bunches of $r$ strands each, and a number of left-handed full twists tied into one of the bunches.
To show the equality of $L$ and $P_{-h^2}(J)$, it thus only remains to show that these numbers of twists agree.
For $L$, this number is $n$, as determined above.
For $P_{-h^2}(J)$, this number equals $h^2 + w$, where $w$ is the writhe of the tangle diagram $D'$ given as intersection of the knot diagram of $J$ with the neighbourhood of $\lambda$, as shown in the top right of \cref{fig:twist-satellite}.

Let us verify that $w = n - h^2$. The tangle diagram $D'$ has $n(n-1)$ crossings. Of the $n$ strands, $(n + h)/2$ are oriented upwards,
and $(n-h)/2$ are oriented downwards (or the other way around).
In a full twist, every pair of strands crosses twice, and since the full twist is left-handed, parallely oriented strands cross negatively, while oppositely oriented strands cross positively. One thus computes
\[
w(D') = -2\binom{(n + h)/2}{2}- 2\binom{(n - h)/2}{2} + 2\cdot\frac{n+h}{2}\cdot\frac{n-h}{2} = n - h^2.
\]
This concludes the proof.
\end{proof}


\begin{figure}[tb]
	\centering
	\labellist
	\pinlabel $K_+$ [r] at  18 145
	\pinlabel $K_-$ [r] at 388 145
	\pinlabel $L$   [r] at 203  55

	\pinlabel \textcolor{blue}{$\lambda'$} [r] at  3 110
	\pinlabel   \textcolor{red}{$\lambda$} [l] at  68 135

        \pinlabel  \rotatebox{45}{\scalebox{.7}{$\overbrace{}^{\scalebox{1.42}{$\scriptstyle \;n$}}$}} at 1 180
        \pinlabel \rotatebox{-45}{\scalebox{.7}{$\overbrace{}^{\scalebox{1.42}{$\scriptstyle \;n$}}$}} at 93 180

        \pinlabel  \rotatebox{45}{$-1$} at 255 21
        \pinlabel \rotatebox{-45}{$-1$} at 207 21

        \pinlabel  \rotatebox{45}{$+1$} at 439 111
        \pinlabel \rotatebox{-45}{$+1$} at 391 111

        \pinlabel \textcolor{red}{\parbox{6cm}{\centering right-handed null-homologous\\ twist along $\lambda$}} [b] at 230 136
        \pinlabel \textcolor{blue}{\parbox{4cm}{\raggedright left-handed $2n$-homologous\\ twist along $\lambda'$}} [tr] at 145 50
        \pinlabel \textcolor{blue!50!black}{\parbox{32mm}{\raggedleft $2n(n-1)$ negative\\ crossing changes}} [tl] at 345 45
	\endlabellist
\includegraphics[width=.95\textwidth]{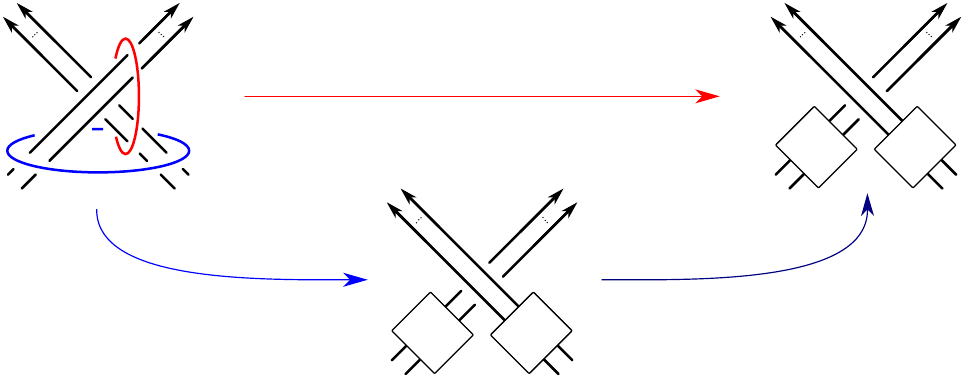}
\caption{A generalised positive crossing change on $2n$ strands may also be achieved by
a left-handed $2n$-homologous $2n$-stranded twist followed by $2n(n-1)$ negative crossing changes.}
\label{fig:theothertwist}
\end{figure}
\begin{proposition}\label{prop:generaltwists}
Let the knot $J$ arise from the knot $K$ by a left-handed $h$-homologous twist on $n$ strands.
Let $\nu$ be a slice-torus invariant satisfying the $2n$-stranded generalised crossing change inequality.
Let $P$ be a pattern with wrapping number two and winding number zero.
Assume that neither $\theta_{\nu}(J, P)$ nor $\theta_{\nu}(K, P)$ are equal to $\infty$.
Then we have
\[
\theta_{\nu}(K, P) \leq \theta_{\nu}(J, P) + h^2.
\]
In particular, if $\nu$ satisfies the generalised crossing change inequality,
then so does the knot invariant $\theta_{\nu}(\,\cdot\,, P)$
for knots $K_{\pm}$ with $\theta_{\nu}(K_{\pm}, P)\neq\infty$.
\end{proposition}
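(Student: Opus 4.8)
The plan is to control the two weakly decreasing functions \(f,g\colon\Z\to\Z\) given by \(f(t)=\nu(P_t(K))\) and \(g(t)=\nu(P_{t-h^2}(J))\). By \cref{prop:theta:winding0} both are weakly decreasing and eventually constant, and since \(\theta_\nu(K,P)\) and \(\theta_\nu(J,P)\) are assumed finite, \(f\) drops by exactly \(1\) at \(t=\theta_\nu(K,P)\) and \(g\) drops by exactly \(1\) at \(t=\theta_\nu(J,P)+h^2\). The assertion \(\theta_\nu(K,P)\le\theta_\nu(J,P)+h^2\) is precisely the statement that \(f\) drops no later than \(g\), so it is enough to show that \(f\le g\) pointwise and that \(f\) and \(g\) attain the same (larger) value as \(t\to-\infty\).

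For the pointwise inequality I would apply \cref{prop:satellitetwist} to the pattern \(P_t\) for every \(t\in\Z\). Since \(P_t\) again has wrapping number \(2\) and winding number \(0\), the lemma gives that \(P_{t-h^2}(J)=(P_t)_{-h^2}(J)\) arises from \(P_t(K)\) by a left-handed \((h\cdot 0)\)-homologous — i.e.\ null-homologous — twist on \(2n\) strands, that is, by a negative generalised crossing change on \(2n\) strands. As \(\nu\) satisfies the \(2n\)-stranded generalised crossing change inequality, this yields \(f(t)\le g(t)\) for all \(t\).

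The main work, and the step I expect to be the obstacle, is matching the two limiting values. Here I would argue in the reverse direction: \(K\) arises from \(J\) by a right-handed \(h\)-homologous twist on \(n\) strands, and by the reinterpretation of \cref{fig:theothertwist} — together with its analogue for \(h\)-homologous rather than null-homologous twists — this right-handed twist can be realised as a left-handed homologous twist on \(n\) strands followed by finitely many negative crossing changes. Feeding each of these moves through \cref{prop:satellitetwist} and the crossing change inequalities for \(\nu\) (the \(2n\)-stranded one from the hypothesis, together with \cref{lemma:slicetorusprop}(b) for the classical crossing changes, keeping track of the strand numbers produced by \cref{prop:satellitetwist}) should produce, exactly as above, an inequality \(\nu(P_t(J))\le\nu(P_{t-N}(K))\) valid for all \(t\) and some constant \(N\ge 0\) depending only on \(h\) and \(n\). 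Combining this with \(f\le g\) gives \(f(t)\le g(t)\le f(t-h^2-N)\); letting \(t\to-\infty\) and using that \(f\) is eventually constant in that direction forces \(\lim_{t\to-\infty}f(t)=\lim_{t\to-\infty}g(t)\). Hence \(f\) and \(g\) are weakly decreasing step functions taking the same pair of values, each dropping by \(1\) exactly once, and \(f\le g\) then forces the jump of \(f\) to occur no later than that of \(g\); this is the desired inequality.

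For the final sentence, a \(2m\)-stranded negative generalised crossing change is exactly a left-handed null-homologous twist on \(2m\) strands, so applying the inequality just proven with \(n=2m\) and \(h=0\) gives \(\theta_\nu(K_-,P)\le\theta_\nu(K_+,P)\) whenever \(K_+\) arises from \(K_-\) by such a move and both sides are finite; thus \(\theta_\nu(\,\cdot\,,P)\) inherits the generalised crossing change inequality from \(\nu\). The delicate points I anticipate are, first, making the reverse inequality precise — extending the \cref{fig:theothertwist} move to \(h\)-homologous twists — and, second, the strand-number bookkeeping: a crossing change on the companion becomes a \(4\)-stranded generalised crossing change on the satellite after \cref{prop:satellitetwist}, so one has to check that the crossing change hypotheses on \(\nu\) are strong enough at every place they are invoked.
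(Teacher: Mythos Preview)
Your argument that \(f\le g\) pointwise is correct and is exactly the paper's first step. The divergence is at the step you yourself flag as delicate: matching the limiting values as \(t\to-\infty\).

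Your proposed route there --- reversing the twist via an analogue of \cref{fig:theothertwist} and pushing the resulting moves through \cref{prop:satellitetwist} --- does not close. The classical crossing changes on the companion are \(2\)-stranded; after \cref{prop:satellitetwist} with wrapping number \(2\) they become \(4\)-stranded null-homologous twists on the satellite. But the hypothesis only supplies the \(2n\)-stranded generalised crossing change inequality, and nothing in the definition of a slice-torus invariant (nor \cref{lemma:slicetorusprop}(b), which is the \(2\)-stranded case) gives you the \(4\)-stranded one when \(n\neq 2\). So the reverse inequality, as you have set it up, fails for general \(n\); this is precisely the bookkeeping obstruction you anticipated.

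The paper avoids this entirely with a one-line observation that you have available but did not use: for any \(t,s\in\Z\), the knots \(P_{t}(K)\) and \(P_s(J)\) are related by a smooth cobordism of genus \(1\). The point is that the oriented resolution at a crossing in the twisting region produces a link \(L\) which, because the winding number is zero, has wrapping number zero and hence sits in a ball inside the solid torus --- so \(L\subset S^3\) is independent both of the number of twists and of the companion. Composing the two saddles \(P_t(K)\to L\) and \(L\to P_s(J)\) gives the genus-\(1\) cobordism, and \cref{lemma:slicetorusprop}(a) yields \(\bigl|\nu(P_t(K))-\nu(P_s(J))\bigr|\le 1\). The paper then takes \(t=\theta_\nu(K,P)-1\): from \(f(t)\le g(t)\) one gets \(\nu(P_{t+1}(K))+1\le\nu(P_s(J))\) for all \(s\le t-h^2\), and the genus-\(1\) bound forces equality. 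Hence \(s\mapsto\nu(P_s(J))\) is constant on \(s\le t-h^2\), so \(\theta_\nu(J,P)\ge t-h^2+1=\theta_\nu(K,P)-h^2\). No reverse twist, no extra crossing-change hypotheses.

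Your handling of the final sentence (set \(h=0\)) is correct and matches the paper.
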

\begin{proof}
Let $t = \theta_{\nu}(K, P) - 1$.
By \cref{prop:satellitetwist}, $P_{t-h^2}(J)$ arises from $P_t(K)$ by a left-handed null-homologous twist on $2n$ strands.
Since $\nu$ satisfies the generalised crossing change inequality, it follows that $\nu(P_t(K)) \leq \nu(P_{t-h^2}(J))$.
For all $s \leq t - h^2$, we have $\nu(P_{t-h^2}(J)) \leq \nu(P_s(J))$ by \cref{prop:theta:winding0}. 
By choice of $t$, we have $\nu(P_t(K)) = \nu(P_{t+1}(K)) + 1$. All combined, we obtain
\begin{equation}\label{eq:twistproof}
\nu(P_{t+1}(K)) + 1 \leq \nu(P_s(J))
\end{equation}
for all $s \leq t - h^2$. However, since $P_{t+1}(K)$ and $P_s(J)$ are related
by smooth cobordism of genus $1$, \cref{eq:twistproof} is in fact an equality.
So the function $s\mapsto \nu(P_s(J))$ is constant for $s \leq t - h^2$.
Therefore, $\theta_{\nu}(J, P) \geq t - h^2 + 1 = \theta_{\nu}(K, P) - h^2$, as desired.
The last statement follows from setting $h=0$. 
\end{proof}

\begin{proof}[Proof of \cref{prop:crossingchange}]
Applying \cref{prop:generaltwists} to the case $h = 0$, $K = K_-$, $J = K_+$ and $P = W^+$ yields
the first inequality $\theta_\nu(K_-) \leq \theta_\nu(K_+)$ to be proven.
For the second inequality, let $L$ be the knot obtained from $K_+$ by a left-handed $2n$-homologous $2n$-stranded twist as indicated in \cref{fig:theothertwist}. Applying \cref{prop:generaltwists} to $h = 2n$, $K = K_+$, $J = K_-$ and $P = W^+$, we have
$\theta_\nu(K_+) \leq \theta_\nu(L) + 4n^2$. Since $K_-$ arises from $L$ by a sequence of $2n(n-1)$ negative crossing changes, we have $\theta_\nu(L) \leq \theta_\nu(K_-)$, implying $\theta_\nu(K_+) \leq \theta_\nu(K_-) + 4n^2$ as desired.
\end{proof}

\section{Review of multicurves for Bar-Natan homology}\label{sec:review}

In~\cite{KWZ}, Artem Kotelskiy, Liam Watson, and the second author introduced the multicurve invariant \(\BNr\) for Conway tangles.
In this section, we give a brief overview of this invariant, discussing only those of its properties that we need in subsequent sections. 
More elaborate introductions to this invariant can be found in~\cite{KWZ_strong_inversions,KWZ_thin}. 

\subsection{The construction of the multicurve invariant}
Given some field of coefficients \(\field\) and a (standard) oriented Conway tangle \(T\), the invariant \(\BNr(T;\field)\) is defined in two steps.
First, following a construction due to Bar-Natan, one defines a bigraded chain complex \(\KhTl{T}\) over a certain cobordism category, whose objects are crossingless tangle diagrams \cite{BarNatanKhT}. 
Any such complex can be rewritten as a chain complex \(\DD(\Diag_T;\field)\) over the following quiver algebra \cite[Theorem~1.1]{KWZ}:
\begin{equation}\label{eq:B_quiver}
\BNAlgH
\coloneqq
k\Big[
\begin{tikzcd}[row sep=2cm, column sep=1.5cm]
\Do
\arrow[leftarrow,in=145, out=-145,looseness=5]{rl}[description]{D_{\bullet}}
\arrow[leftarrow,bend left]{r}[description]{S_{\circ}}
&
\Di
\arrow[leftarrow,bend left]{l}[description]{S_{\bullet}}
\arrow[leftarrow,in=35, out=-35,looseness=5]{rl}[description]{D_{\circ}}
\end{tikzcd}
\Big]\Big/\Big(
\parbox[c]{90pt}{\footnotesize\centering
	$D_{\bullet} \cdot S_{\circ}=0=S_{\circ}\cdot D_{\circ}$\\
	$D_{\circ}\cdot S_{\bullet}=0=S_{\bullet}\cdot D_{\bullet}$
}\Big)
\end{equation}
We read algebra elements from right to left. 
So for instance, \(S_{\circ}=\iota_\bullet\cdot S_{\circ}\cdot\iota_\circ\), where \(\iota_\bullet\) and \(\iota_\circ\) are the two idempotent algebra elements corresponding to the constant paths at \(\Do\) and \(\Di\), respectively. 
The objects \(\Do\) and \(\Di\) correspond to the crossingless tangles \(\No\) and \(\Ni\), respectively. 
We write 
\[
D\coloneqq D_{\circ}+D_{\bullet}
\quad\text{and}\quad
S\coloneqq S_{\circ}+S_{\bullet}.
\]
The algebra \(\BNAlgH\) carries a  quantum grading \(\QGrad{q}\), which is determined by 
\[
\QGrad{q}(D_{\bullet}) = \QGrad{q}(D_{\circ}) = -2
\qquad 
\text{and}
\qquad
\QGrad{q}(S_{\bullet}) = \QGrad{q}(S_{\circ}) = -1.
\] 
The homological grading \(h\) vanishes on \(\BNAlgH\). 
Differentials of bigraded chain complexes over \(\BNAlgH\) are defined to preserve quantum grading and increase the homological grading by 1. 
More explicitly, if there is an arrow $x\xrightarrow{a} y$ in the differential, then $\QGrad{q}(y) + \QGrad{q}(a) - \QGrad{q}(x)=0,~h(y) - h(x)=1$. 
We indicate these gradings on generators \(\bullet\) by super- and subscripts like so: \(\GGzqh{\Do}{}{q}{h}\).
The bigraded chain homotopy type of \(\DD(T;\field)\) is an invariant of the tangle~\(T\) \cite[Section~4.2]{KWZ}. 


\begin{example}\label{ex:DD}
	The chain complex corresponding to the trivial tangle \(Q_0=\No\) (with any orientation) consists of exactly one object and the differential vanishes:
	\begin{equation*}
	\DD(Q_0)=\left[\GGzqh{\Do}{}{0}{0}\right].
	\end{equation*}
	The invariant of the \((-2,3)\)-pretzel tangle \(T_a\) shown in \cref{fig:T_a} is equal to 
	\begin{equation*}
	\DD(T_a)
	=
	\left[
	\tikzcdset{diagrams={nodes={inner sep=1pt}}}
	\begin{tikzcd}[column sep=21pt]
	\GGzqh{\Di}{}{-2}{-2}
	\arrow{r}{S}
	&
	\GGzqh{\Do}{}{-1}{-1}
	\arrow{r}{D}
	&
	\GGzqh{\Do}{}{1}{0}
	\arrow{r}{S^2}
	&
	\GGzqh{\Do}{}{3}{1}
	\arrow{r}{D}
	&
	\GGzqh{\Do}{}{5}{2}
	\arrow{r}{S}
	&
	\GGzqh{\Di}{}{6}{3}
	&
	\GGzqh{\Di}{}{4}{2}
	\arrow[swap]{l}{D}
	&
	\GGzqh{\Do}{}{3}{1}
	\arrow[swap]{l}{S}
	&
	\GGzqh{\Do}{}{1}{0}
	\arrow[swap]{l}{D}
	\end{tikzcd}
	\right]\!.
	\end{equation*}
	(This and other computations for this paper can be found at \cite{tangle-atlas}.)
	Here, the orientation on \(T_a\) is chosen to be pointing outwards at the punctures on the top left and bottom right; altering this choice only affects the bigrading (see below). 
\end{example}

In the second step, one interprets \(\DD(T;\field)\) geometrically. 
This relies on the following classification result:
Chain homotopy classes of bigraded chain complexes over \(\BNAlgH\) are in one-to-one correspondence with certain geometric objects, called multicurves \cite[Theorem~1.5]{KWZ}. 
Using this correspondence between homological algebra and geometry, one now defines the invariant \(\BNr(T;\field)\) as the multicurve corresponding to \(\DD(T;\field)\).

\begin{figure}[b]
	\centering
	\begin{subfigure}{0.32\textwidth}
		\centering
		\includegraphics{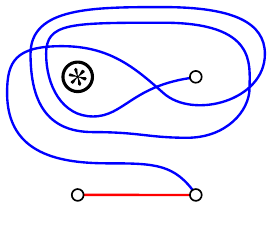}
		\caption{}\label{fig:curves_plain}
	\end{subfigure}	
	\begin{subfigure}{0.32\textwidth}
		\centering
		\includegraphics{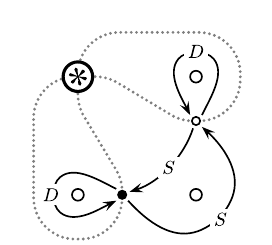}
		\caption{}\label{fig:para:abstract}
	\end{subfigure}
	\begin{subfigure}{0.32\textwidth}
		\centering
		\includegraphics{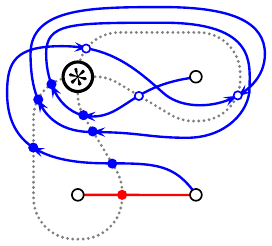}
		\caption{}\label{fig:curves_combined}
	\end{subfigure}
	\caption{%
		(a) Two non-compact curves on \(\FourPuncturedSphereKh\): The short one in red shows \(\BNr_a(\protect\No)\), the long, blue curve is \(\BNr_a(T_a)\); 
		(b) the parametrisation of \(\FourPuncturedSphereKh\) with the embedded quiver from equation~\eqref{eq:B_quiver}; and 
		(c) the curves from (a) half-way translated into chain complexes over \(\BNAlgH\). 
}
	\label{fig:BNrExamples}
\end{figure}

Multicurves are, by definition, collections of certain immersed curves on \(\FourPuncturedSphereKh\), a four-punctured sphere with one distinguished/special puncture. 
The curves also carry a bigrading, but we will ignore this for a moment. 
We distinguish between compact and non-compact curves. 
A non-compact curve in \(\FourPuncturedSphereKh\) is a non-null-homotopic immersion of an interval, with ends on the three non-special punctures of \(\FourPuncturedSphereKh\)  \cite[Definition~1.4]{KWZ}. 
We consider these curves up to homotopy relative boundary. 
For the purpose of this paper, we can completely ignore compact curves, thus avoiding the discussion of local systems. 
Since our notion of Conway tangles does not allow for closed tangle components, we know that for any Conway tangle \(T\), \(\BNr(T;\field)\) contains exactly one non-compact component \cite[Proposition~6.15]{KWZ}.
We denote this component by \(\BNr_a(T;\field)\); it is this invariant that we will focus on below. 

\begin{example}\label{ex:BNr}
	\cref{fig:curves_plain} shows two non-compact immersed curves on the four-punctured sphere \(\FourPuncturedSphereKh\), which is drawn as the plane plus a point at \(\infty\), minus the special puncture \(\ast\) and the three non-special punctures \(\circ\). 
	The short, horizontal curve (drawn in red) shows the invariant \(\BNr(Q_0;\field)=\BNr_a(Q_0;\field)\), which consists of a single component. 
	The other curve (drawn in blue) is equal to the invariant  \(\BNr(T_a;\field)=\BNr_a(T_a;\field)\). 
	The invariants of both tangles are independent of the field of coefficients \(\field\); this is not true in general. 
\end{example}

At multiple points in this paper, we need to apply the correspondence between chain complexes and multicurves very explicitly, so we now briefly describe how this works. 
Consider the two dotted grey arcs in \cref{fig:para:abstract}, which start and end on the special puncture of \(\FourPuncturedSphereKh\) and separate the three non-special punctures. 
We call these arcs the \textit{parametrisation of \(\FourPuncturedSphereKh\)}. 
Furthermore, fix an embedding of the quiver \eqref{eq:B_quiver} into \(\FourPuncturedSphereKh\) as shown also in \cref{fig:para:abstract}. 
Now, given an immersed curve \(\gamma\) (say, non-compact), we homotope it such that it intersects the parametrisation of \(\FourPuncturedSphereKh\) minimally. 
Then each intersection point corresponds to a generator of the associated chain complex, namely an object \(\Di\) if the intersection point lies on the grey arc at the top and an object \(\Do\) if it lies on the arc on the left, as prescribed by the vertices of the embedded quiver. 
The paths between those intersection points that meet the parametrisation only in their two endpoints correspond to the differentials of the chain complex. 
More explicitly, suppose \(\alpha\co[a_0,a_1]\rightarrow\FourPuncturedSphereKh\) is a subpath of \(\gamma\) such that the preimage of the parametrisation is equal to \(\{a_0,a_1\}\). Then the chain complex contains an arrow 
\[
\begin{tikzcd}
\alpha(a_0)
\arrow{r}{b_{\alpha}}
&
\alpha(a_1)
\end{tikzcd}
\quad\text{or}\quad
\begin{tikzcd}
\alpha(a_1)
\arrow{r}{b_{\alpha}}
&
\alpha(a_0)
\end{tikzcd}
\]
where the direction of the arrow and \(b_{\alpha}\in\BNAlgH\) are determined by the unique path in the embedded quiver that is homotopic to \(\alpha\) (relative to the parametrisation). 

\begin{example}\label{ex:BNr2DD}
	The curve \(\BNr_a(Q_0)\) from \cref{ex:BNr} intersects the parametrisation in exactly one point \(\Do\). 
	So the corresponding complex consists of a single object \(\Di\) and the differential vanishes. 
	This is precisely the complex \(\DD(Q_0)\) from \cref{ex:DD}. 
	The curve \(\BNr_a(T_a)\) intersects the left arc of the parametrisation three times and the other arc six times. 
	So the corresponding complex contains three generators \(\Di\) and six generators \(\Do\). 
	The directions of the differentials are shown in \cref{fig:curves_combined}. 
	The corresponding complex agrees with \(\DD(T_a)\) from \cref{ex:DD}.
\end{example}

The reverse direction of the correspondence between chain complexes and multicurves, i.e.\ translating algebra into geometry, is generally much more involved and relies on the arrow pushing algorithm from \cite{HRW}. 
Notice, however, that any chain complex represented by a directed graph with only 0-, 1-, or 2-valent vertices whose arrows are labelled alternately by powers of \(S,D\in\BNAlgH\) arises directly from a multicurve in the way described above. 
So in practice, it suffices to find such a representative in the homotopy class of a given chain complex. If the number of objects is small this is usually straightforward. 

\begin{remark}\label{obs:direct_sums}
	The multicurve corresponding to a direct sum of chain complexes is equal to the disjoint union of the multicurves corresponding to the individual summands. 
\end{remark}



The parametrisation of \(\FourPuncturedSphereKh\) not only plays a key role for translating between chain complexes and multicurves, it is also important for the definition of the bigrading on immersed curves. 
The bigrading takes the form of a \(\Z^2\)-valued function \(\mathrm{gr}=(\QGrad{q},h)\) on the set of intersection points with the parametrisation that is compatible with how these intersection points are connected by the curve, in the sense that the bigrading on the corresponding chain complex be well-defined. 
Thus, the bigrading of any given component of a multicurve is determined by the bigrading of a single generator of this component.

In general, the bigrading on \(\DD(T)\) depends on an orientation of the tangle \(T\). 
If \(T\) is an unoriented tangle, \(\BNr(T)\) only carries \emph{relative} bigradings, i.e.\ bigradings that are well-defined up to an overall shift that is determined by the linking number of the tangle (\cref{def:double_tangle}). 

\begin{proposition}
	If \(T\) is a Conway tangle with linking number \(0\), the (absolute) bigrading on \(\BNr(T)\) is independent of the orientation of \(T\). 
\end{proposition}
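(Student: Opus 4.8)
The plan is to reduce the statement to the behaviour of the overall grading normalisation under a change of orientation. Since the bigrading on the multicurve $\BNr(T)$ is defined so as to match the $(\QGrad{q},h)$-bigrading on the chain complex $\DD(T)$ over $\BNAlgH$, it suffices to prove the corresponding statement for $\DD(T;\field)$. Fix once and for all a diagram $D$ of $T$. For any orientation $o$ of $T$, the complex $\DD(T;\field)$ obtained from $(D,o)$ arises from a single underlying bigraded complex $C(D)$ — the complex built from the cube of resolutions of $D$ with Bar-Natan's internal grading conventions \cite{BarNatanKhT}, which never refer to an orientation — by applying a fixed overall shift depending only on the pair $(n_+(D,o),n_-(D,o))$ of numbers of positive and negative crossings of $(D,o)$; concretely, in the usual conventions, a homological shift by $-n_-(D,o)$ and a quantum shift by $n_+(D,o)-2n_-(D,o)$. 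Consequently, two orientations $o$, $o'$ induce the same absolute bigrading on $\DD(T;\field)$ as soon as $n_+(D,o)=n_+(D,o')$ and $n_-(D,o)=n_-(D,o')$.

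Next I would compare $n_\pm$ across all orientations. A Conway tangle has two components, so an orientation is a choice of orientation on each; there are four in total, and any two of them differ by reversing the first component, the second component, or both. Reversing both components at once reverses both strands at every crossing, hence leaves every crossing sign — and therefore $(n_+,n_-)$ — unchanged. Reversing a single component, say the first, leaves the sign of every self-crossing of either component unchanged (at such a crossing both strands are simultaneously reversed or simultaneously fixed) and flips the sign of every crossing between the two components. If, with respect to $o$, there are $p$ positive and $m$ negative inter-component crossings, then with respect to the reversed orientation $o'$ there are $m$ positive and $p$ negative such crossings, so
\[
n_+(D,o') = n_+(D,o) - (p-m),
\qquad
n_-(D,o') = n_-(D,o) + (p-m).
\]
By \cref{def:lk}, $p-m = 2\lk(T,o)$. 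Hence, if $\lk(T)=0$ — a condition independent of the orientation, also by \cref{def:lk} — every single-component reversal (and a fortiori a reversal of both components) leaves $(n_+,n_-)$ unchanged. It follows that all four orientations yield the same absolute bigrading, first on $\DD(T;\field)$ and then on $\BNr(T)$.

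I do not expect a serious obstacle: once the grading conventions of \cite{KWZ, BarNatanKhT} are spelled out, the argument is bookkeeping. The two points that need care are the observation that $C(D)$ together with its relative bigrading is genuinely orientation-free — so that an orientation of $T$ enters the construction of $\DD(T;\field)$ only through the scalar shift depending on $(n_+(D,o),n_-(D,o))$ — and the fact that ``$\lk(T)=0$'' is orientation-independent, so that the hypothesis survives each single-component reversal used in the reduction. The identity $p-m = 2\lk(T,o)$ and the invariance of self-crossing signs are immediate from the definitions.
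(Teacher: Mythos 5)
Your argument is correct. The paper's own proof is a one-line citation of \cite[Proposition~4.8]{KWZ}, which records the general formula for how the absolute bigrading on $\BNr(T)$ shifts under a change of orientation as a function of the linking number; your argument is essentially a re-derivation of that result in the special case $\lk(T)=0$. The two points that genuinely needed care — that Bar-Natan's cube-of-resolutions complex together with its internal bigrading is orientation-free, so that the orientation enters $\DD(T;\field)$ only through the overall shift by $(-n_-,\ n_+ - 2n_-)$, and that reversing a single component of $T$ changes $(n_+,n_-)$ by $(\mp(p-m),\pm(p-m))$ with $p-m = 2\lk(T)$ — you state and handle correctly, and you rightly observe that reversing both components at once is automatic. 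What the citation buys the authors is brevity; what your elementary $n_\pm$-bookkeeping buys is that the reader sees exactly where the hypothesis $\lk(T)=0$ is used, and your computation in fact yields the sharper statement for nonzero linking number that the cited proposition records.
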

\begin{proof}
	This follows directly from \cite[Proposition~4.8]{KWZ}. 
\end{proof}

This proposition applies in particular to the trivial tangle \(\No\) and, more generally, to the double tangle \(T_K\)  for any knot \(K\). 

\subsection{Properties of the multicurve invariant}

The reader may have already noticed the appearance of two conspicuously distinct four-punctured spheres in the construction of the multicurve invariants above:  
the surface \(\FourPuncturedSphereKh\), on which the curves live, and the boundary of the unit 3-ball \(B^3\) with the four marked points \(\{\pm e^{\pm i\pi/4}\}\times\{0\}\), which are the tangle ends~\(\partial T\).
The parametrisation of \(\FourPuncturedSphereKh\) used for translating between chain complexes and multicurves distinguishes one of the four punctures from the other three. 
A choice of a distinguished point \(\ast\) among the four marked points on \(\partial B^3\) is in fact also required in the definition of the homological invariant \(\DD(T;\field)\). 
(We always choose the top left tangle end \((-e^{-i\pi/4},0)\) as the point \(\ast\).)

This coincidence suggests that one should identify \(\FourPuncturedSphereKh\) with \(\partial B^3\smallsetminus \partial T\). 
We do this as follows. 
Assume that the four punctures in \cref{fig:para:abstract} are the points \(\pm e^{\pm i\pi/4}\) on the unit circle in \(\mathbb{C}\subset \mathbb{C}\cup\{\infty\}=S^2\).
We identify this circle with \(S^1\times \{0\}\subset \partial B^3\) via the identity map. 
We then extend this map to the entire spheres such that the interior of the unit disc in \(\mathbb{C}\subset S^2\) is mapped to those points on \(\partial B^3\) with positive \(z\)-coordinate (i.e.\ the front). 
Of course, identifying these spheres begs the question if the identification is natural with respect to the action of the mapping class group \(\Mod(\FourPuncturedSphereKh)\): 
Does adding a twist to the tangle ends correspond to twisting the curves?
At least if \(\field=\mathbb{F}_2\), the answer is yes: 
\begin{theorem}[{\cite[Theorem~1.13]{KWZ}}]\label{thm:Kh:Twisting:mod2}
	For all oriented Conway tangles \(T\) and  
	\( 
	\tau\in \Mod(\FourPuncturedSphereKh)
	\), 
	\[
	\BNr(\tau(T;\F_2))
	=
	\tau(\BNr(T;\F_2)).\myqed
	\]
\end{theorem}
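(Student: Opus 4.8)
The plan is to use that both sides of the claimed identity are compatible with composition in \(\Mod(\FourPuncturedSphereKh)\), and then reduce to a local model for a single elementary twist. Write \(P(\tau)\) for the assertion ``\(\BNr(\tau(T);\F_2)=\tau(\BNr(T);\F_2)\) for every Conway tangle \(T\)''. The set of mapping classes \(\tau\) for which \(P(\tau)\) holds is a subgroup of \(\Mod(\FourPuncturedSphereKh)\): it contains the identity, it is closed under inverses (apply \(P(\tau)\) to the tangle \(\tau^{-1}(T)\)), and \(P(\tau_1)\) together with \(P(\tau_2)\) implies \(P(\tau_1\tau_2)\) because \(\BNr\) is an invariant and the geometric action on multicurves is a genuine group action. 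Hence it suffices to prove \(P(\tau)\) for \(\tau\) in a generating set. I would take as generators the two ``elementary crossings'': the operations \(T\mapsto T\cup e_{\mathrm t}\) and \(T\mapsto T\cup e_{\mathrm r}\) of adding a single positive crossing between the two top, resp.\ the two right, tangle ends, together with their inverses. Geometrically these are the half-twists supported near the two parametrising arcs of \cref{fig:para:abstract}, and that they generate \(\Mod(\FourPuncturedSphereKh)\) is the mapping-class-group counterpart of the construction of rational tangles by iterated elementary twisting.

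For a single elementary crossing \(e\), I would compute \(\DD(T\cup e;\F_2)\) from Bar-Natan's cube of resolutions: it is the mapping cone of the saddle cobordism relating \(\KhTl{T}\) (the \(0\)-resolution of the new crossing) to the complex of its \(1\)-resolution, and after delooping the circle created by the \(1\)-resolution and rewriting everything over \(\BNAlgH\) via \cite[Theorem~1.1]{KWZ}, this presents \(\DD(T\cup e;\F_2)\) as \(\Phi_e\bigl(\DD(T;\F_2)\bigr)\) for an explicit cone endofunctor \(\Phi_e\) of the homotopy category of chain complexes over \(\BNAlgH\) (a special case of the gluing theorem, restricted to gluing on a one-crossing rational tangle). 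The key structural point is that \(\Phi_e\) modifies a complex only in the part living over one of the two parametrising arcs, appending \(S\)- or \(D\)-arrows to the generators there; this mirrors the fact that the half-twist is supported near that arc.

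It then remains to check that \(\Phi_e\) corresponds, under the curve-to-complex dictionary of \cite[Theorem~1.5]{KWZ}, to the half-twist \(\tau_e\). Since a multicurve is determined by the homotopy type of its complex over \(\BNAlgH\), and since both \(\Phi_e\) and \(\tau_e\) are local along a single parametrising arc, I would reduce this to a finite case analysis over the possible local pictures of how an immersed arc meets that arc: for each one, apply \(\Phi_e\) to the corresponding local piece of the complex, bring the result into ``loop form'' by Gaussian elimination, read off the resulting curve, and confirm it is the Dehn-twisted arc carrying the correct \((\QGrad{q},h)\)-bigrading; iterating then shows that the bigrading shifts accumulate correctly along a word in the generators.

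The hard part will be exactly this last verification --- the delicate bookkeeping of gradings through the Gaussian eliminations and the check that no spurious components are created or destroyed is where the real work lies. This is also where the restriction to \(\field=\F_2\) is felt: the classification \cite[Theorem~1.5]{KWZ} and the arrow-pushing and gluing machinery it rests on are at present set up only in characteristic \(2\), and over other fields one would in addition have to verify coherence of signs throughout these reductions.
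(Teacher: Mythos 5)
Since the paper cites this result from \cite[Theorem~1.13]{KWZ} and does not reprove it, the closest comparison available is the sketch proof of \cref{thm:Kh:Twisting:arcs}, which explicitly says it follows ``the same strategy as the proof of \cref{thm:Kh:Twisting:mod2}'': restrict to single twists, compute their action on the objects and morphisms of \(\BNAlgH\) via Bar-Natan's gluing formalism, and then work out the induced action on elementary curve segments. Your outline --- reduction to a generating set of half-twists, presentation of each generator as a cone endofunctor obtained from the one-crossing cube, and a finite case-check over local curve configurations --- is the same approach.

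The gap worth flagging is that you never discuss local systems, which is where the \(\F_2\)-restriction in the statement actually lives. The theorem concerns the full multicurve \(\BNr(T;\F_2)\), whose compact components in general carry local systems; whether these transform correctly under twisting is not automatic from tracking the underlying curves. The remark following the paper's sketch of \cref{thm:Kh:Twisting:arcs} explains what goes wrong over other fields: local systems on compact components \emph{can} change under twisting, which is why the general statement is only asserted over \(\F_2\) (where signs are invisible), whereas for non-compact components the minus signs can be pushed off the ends, so \cref{thm:Kh:Twisting:arcs} holds over all fields. You correctly intuit that ``coherence of signs'' is the obstruction to other characteristics, but you attribute the \(\F_2\)-restriction to the classification and gluing machinery ``being set up only in characteristic 2,'' which is not accurate --- the classification of complexes over \(\BNAlgH\) as multicurves and the pairing/gluing results all hold over any field; it is naturality under twisting, precisely at the level of local systems on compact components, that currently requires \(\F_2\). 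A smaller point: writing \(T\cup e\) for adding a crossing conflicts with this paper's convention that \(T_1\cup T_2\) denotes the link-valued closure; the tangle-level operation you mean is a tangle sum or the analogous top-gluing.
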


We expect that \cref{thm:Kh:Twisting:mod2} generalises to arbitrary fields, but we will not prove it in this article. 
We only need to understand how the mapping class group acts on non-compact curves \(\BNr_a(T;\field)\).
\begin{proposition}\label{thm:Kh:Twisting:arcs}
		For all oriented Conway tangles \(T\), fields \(\field\), and 
	\( 
	\tau\in \Mod(\FourPuncturedSphereKh)
	\), 
	\[
	\BNr_a(\tau(T;\field))
	=
	\tau(\BNr_a(T;\field)).
	\]
\end{proposition}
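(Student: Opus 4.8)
The plan is to avoid proving the full extension of \cref{thm:Kh:Twisting:mod2} to all fields, and instead to pin down the curve \(\BNr_a(\tau(T);\field)\) indirectly, through the Bar-Natan homologies of the rational fillings \(\tau(T)\cup Q_{-s}\), \(s\in\QPI\) — honest link invariants, for which base change in \(\field\) causes no difficulty. As a preliminary remark, every element of \(\Mod(\FourPuncturedSphereKh)\) is a finite product of half-twists along arcs joining two of the four marked points, each realised on tangles by inserting a single crossing between the corresponding tangle ends; moreover, under the identification of \(\FourPuncturedSphereKh\) with \(\partial B^3\smallsetminus\partial T\) used in \cite{KWZ}, the \(\Mod(\FourPuncturedSphereKh)\)-action on rational tangles is the standard action on slopes, so \(\tau(Q_s)=Q_{\tau\cdot s}\) for all \(s\in\QPI\). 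Since both sides of the claimed identity are compatible with composition of mapping classes — if the identity holds for \(\tau_1\) and \(\tau_2\), then applying the \(\tau_2\)-case to \(\tau_1(T)\) shows it for \(\tau_2\circ\tau_1\), and it holds trivially for \(\mathrm{id}\) and for inverses — and it holds over \(\F_2\) by \cref{thm:Kh:Twisting:mod2}, it suffices to treat a single \(\tau\) and a single field \(\field\).

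I will use three ingredients from \cite{KWZ}, each valid over an arbitrary field: (1) the gluing theorem, by which the (reduced) Bar-Natan homology of a link \(T'\cup T''\) is recovered, up to an overall grading shift depending only on the linking numbers and chosen orientations, from the Lagrangian Floer homology \(\HF(\BNr_a(T';\field),\BNr_a(T'';\field))\) on \(\FourPuncturedSphereKh\); (2) the computation that \(\BNr_a(Q_s;\field)\) is the embedded arc of slope \(s\), independently of \(\field\), whence \(\tau(\BNr_a(Q_s;\field))=\BNr_a(Q_{\tau\cdot s};\field)\) by the preliminary remark; and (3) that a non-compact multicurve on \(\FourPuncturedSphereKh\) is determined, as a bigraded homotopy class, by the collection of bigraded groups \(\HF(\,\cdot\,,\BNr_a(Q_s;\field))\), \(s\in\QPI\). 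Granting these, observe that for every slope \(s\in\QPI\) the link \(\tau(T)\cup Q_{-s}\) is a rational filling of \(T\):
\[
\tau(T)\cup Q_{-s}=T\cup \tau^{-1}(Q_{-s})=T\cup Q_{\tau^{-1}\cdot(-s)},
\]
because applying \(\tau\) to \(T\) amounts to applying \(\tau^{-1}\) to the gluing sphere. Feeding this into (1), and then using \(\HF(\tau(\gamma),\delta)=\HF(\gamma,\tau^{-1}(\delta))\) together with (2), we find that the bigraded Floer homology of \(\BNr_a(\tau(T);\field)\) against \(\BNr_a(Q_{-s};\field)\) equals that of \(\BNr_a(T;\field)\) against \(\BNr_a(Q_{\tau^{-1}\cdot(-s)};\field)=\tau^{-1}(\BNr_a(Q_{-s};\field))\), which in turn equals that of \(\tau(\BNr_a(T;\field))\) against \(\BNr_a(Q_{-s};\field)\). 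Hence \(\BNr_a(\tau(T);\field)\) and \(\tau(\BNr_a(T;\field))\) pair identically with every rational arc, and by (3) they coincide.

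The principal obstacle is ingredient (3) — that a non-compact curve is detected by its graded Floer pairings with the family of rational arcs — together with the attendant grading bookkeeping: one must track the linking-number-dependent shifts in the gluing theorem and the orientation conventions for \(\tau(T)\) precisely enough to see that the shifts on the two sides of the comparison match. A purely algebraic alternative, which sidesteps (3), is to note that inserting a crossing corresponds to tensoring \(\DD(T;\field)\) over \(\BNAlgH\) with a two-term, sign-free crossing bimodule that is invertible by Reidemeister~II; this tensor operation is a characteristic-independent auto-equivalence of the relevant homotopy category and therefore sends the unique non-compact indecomposable summand of \(\DD(T;\field)\) to that of \(\DD(\tau(T);\field)\), reducing the statement to the identity \(C_\gamma\otimes B_\tau\simeq C_{\tau(\gamma)}\) for the explicit arc complexes \(C_\gamma\) of non-compact curves \(\gamma\); since all structure maps involved carry unit coefficients, this last check is insensitive to the characteristic and hence follows from its \(\F_2\)-instance, which is \cref{thm:Kh:Twisting:mod2}.
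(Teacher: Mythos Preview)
Your first approach has a genuine gap in ingredient~(1). The pairing theorem in \cite{KWZ} computes \(\BNr(T'\cup T'')\) from \(\HF(-\BNr(T'),\BNr(T''))\), with the \emph{full} multicurves, not from \(\HF(\BNr_a(T'),\BNr_a(T''))\). When \(T''=Q_{-s}\) is rational this distinction is harmless on that side, since \(\BNr(Q_{-s})=\BNr_a(Q_{-s})\); but on the other side you still get \(\HF(-\BNr(\tau(T)),\arc{-s})\), which mixes the contributions of \(\BNr_a(\tau(T))\) with those of all compact components. There is no link-level operation that isolates the arc's pairing from this, so you cannot recover \(\HF(\BNr_a(\tau(T)),\arc{s})\) from Bar-Natan homologies of rational fillings alone. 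Ingredient~(3), which you rightly flag as the principal obstacle, is therefore not even reached: the inputs you would feed into it are not available. (You also drop the mirror in the pairing theorem, though that is a minor bookkeeping issue by comparison.)

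Your algebraic alternative is essentially the route the paper takes, but one claim needs correcting. The paper computes the action of a single twist on objects and morphisms of \(\BNAlgH\) and finds that over a general field the formulae differ from those in \cite{KWZ} by \emph{signs}: the structure maps do \emph{not} all carry unit coefficients. The reason the argument still goes through for \(\BNr_a\) is not that no signs appear, but that a non-compact curve carries the trivial one-dimensional local system, so any sign appearing along it can be absorbed by rescaling generators---informally, pushed off the open ends. This is exactly why the paper restricts the statement to the arc component \(\BNr_a\) rather than claiming full naturality of \(\BNr\) over arbitrary fields. So your second strategy is the right one, but replace ``all structure maps involved carry unit coefficients'' with the observation that non-compact curves admit no non-trivial local system, whence the signs produced by the twist computation are homotopically irrelevant on \(\BNr_a\).
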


\begin{proof}[Sketch proof]
	Following the same strategy of the proof of \cref{thm:Kh:Twisting:mod2}, it suffices to consider how single twists act on the multicurves. Using Bar-Natan's gluing formalism for cobordism complexes of tangles, we compute how a single twist acts on the objects and morphisms in \(\BNAlgH\). The action on objects is given by 
	\[ 
	\Di
	\mapsto 
	\left[
	\begin{tikzcd}[column sep=15pt]
	\Di
	&
	\Di
	\arrow{r}{1}
	\arrow[swap]{l}{D}
	&
	\Di
	\end{tikzcd}\right]
	\qquad
	\Do
	\mapsto 
	\left[
	\begin{tikzcd}[column sep=15pt]
	\Do
	\arrow{r}{D}
	&
	\Di
	\end{tikzcd}\right]
	\]
	with the appropriate grading shifts. The action on morphisms is the same as \cite[Figure~45a]{KWZ}, except that some of the vertical arrows carry minus signs. From this, one can compute the action on type~D structures over \(\BNAlgH\) corresponding to elementary curve segments. 
	Again, the answer differs from \cite[Figure~47]{KWZ} only in a number of minus signs.
	From this we conclude that the underlying curves of \(\BNr(T;\field)\) behave under twisting as expected. Only the local systems may change. Since non-compact components carry no interesting local systems---all minus signs can be ``pushed off the ends'', so to speak---the claim follows. 
\end{proof}

\begin{remark}
	A more careful analysis of the signs in the sketch proof above shows that the local systems of curves corresponding to reduced type~D structures whose differentials only contain linear combinations of \(D\), \(S\), and \(S^2\in\BNAlgH\) do not change under adding twists. 
	However, there do exist tangles \(T\), for which the local systems on the associated multicurves seem to change under twisting, suggesting that the definition of local systems is unnatural. 
	As will be shown in forthcoming work, in which \cref{thm:Kh:Twisting:mod2} is generalised in full, this is because of the unfortunate choice of the basis 
	\[
	\{\iota_\bullet,\iota_\circ, H^n\cdot S_\bullet, H^n\cdot S_\circ,H^n\cdot S_\circ S_\bullet,H^n\cdot S_\bullet S_\circ,H^n\cdot D_\bullet,H^n\cdot D_\circ\mid n\geq0\}
	\] 
	of the algebra \(\BNAlgH\) used in \cite{KWZ} in the definition of local systems. 
	Replacing \(H\) by \((-H)\) in this basis makes local systems become natural under twisting.
\end{remark}

We note a few more useful facts about the invariant \(\BNr_a(T;\field)\):

\begin{theorem}\label{thm:BNr-contains-a-single-arc}
	For any oriented Conway tangle \(T\), the non-compact curve \(\BNr_a(T;\field)\) connects the two non-special tangle ends that are connected via the tangle \(T\). \myqed
\end{theorem}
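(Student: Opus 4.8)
The pair of punctures at which the non-compact curve \(\BNr_a(T;\field)\) terminates is a homotopy invariant of the curve, hence depends only on the chain homotopy type of \(\DD(T;\field)\) over \(\BNAlgH\), and is therefore a tangle invariant. The plan is to prove the theorem by induction on the number of crossings of a diagram \(D\) of \(T\), tracking all three connectivities \(\{\Ni,\ConnectivityX,\No\}\) at once; recall that the unordered pair of non-special ends joined to each other by \(T\) is determined precisely by \(\conn{T}\).

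For the base case, a crossingless diagram represents \(Q_0=\No\) or \(Q_\infty=\Ni\). Here \(\DD(Q_0)\) consists of a single generator \(\Do\) with vanishing differential, and \(\DD(Q_\infty)\) of a single generator \(\Di\) (see \cref{ex:DD} and the analogous computation). Translating this single intersection point back into a curve through the parametrisation of \(\FourPuncturedSphereKh\), exactly as in \cref{ex:BNr2DD}, yields the short arc joining the two non-special punctures lying on the strand of \(\No\), respectively \(\Ni\), that does not pass through the special puncture — which is the assertion of the theorem. (The connectivity \(\ConnectivityX\) necessarily involves a crossing and is handled by the inductive step; alternatively one may verify it directly for \(Q_1\) or deduce it from \(Q_0\) via the mapping class equivariance of \cref{thm:Kh:Twisting:arcs}.)

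For the inductive step, resolve one crossing of \(D\) to obtain diagrams of Conway tangles \(T_0,T_1\) with one crossing fewer (delooping any closed component that appears, which only affects the compact part of the invariant), so that \(\KhTl{T}\) is, up to an overall grading shift, the mapping cone of the map given by the saddle cobordism between the two resolutions. Under the correspondence recalled in \cref{sec:review}, this mapping cone translates into a surgery operation on the multicurves \(\BNr(T_0;\field)\) and \(\BNr(T_1;\field)\). The inductive hypothesis determines the non-compact components \(\BNr_a(T_0;\field)\) and \(\BNr_a(T_1;\field)\); a short case analysis of how the two strand segments meeting at the resolved crossing sit relative to the special puncture and to each other fixes \(\conn{T_0}\), \(\conn{T_1}\), and \(\conn{T}\), as well as the way these two non-compact components enter the surgery. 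Combined with the fact that \(\BNr(T;\field)\) has exactly one non-compact component \cite[Proposition~6.15]{KWZ}, this forces \(\BNr_a(T;\field)\) to have the predicted endpoints.

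The crux, and the main obstacle, is this last step: under the cone the non-compact component of one of the \(\BNr_a(T_i;\field)\) may merge with a compact component or with the non-compact component of the other, so that \emph{a priori} the resulting non-compact arc could acquire different endpoints. One must use the precise surgery-on-curves description of the cone, together with the crossingless computations of the base case, to check that in every case of the analysis the endpoints of \(\BNr_a(T;\field)\) come out as dictated by \(\conn{T}\). I note that this bookkeeping is essentially already carried out in \cite[Section~6]{KWZ}, where the uniqueness of the non-compact component is established by tracking exactly which punctures it joins — which is presumably why the statement is recorded here without a separate proof.
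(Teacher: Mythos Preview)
The paper gives no proof beyond recording that the statement is a special case of \cite[Proposition~6.15]{KWZ} and placing a \(\qed\). Your proposal attempts an independent inductive argument, which is a different route, but the crucial step contains a genuine gap.

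You assert that the mapping cone structure on \(\DD(T;\field)\) ``translates into a surgery operation on the multicurves \(\BNr(T_0;\field)\) and \(\BNr(T_1;\field)\)''. No such geometric description of cones is established in \cite{KWZ} or in the present paper. The classification theorem only says that the cone is chain homotopy equivalent to \emph{some} multicurve; it does not provide a recipe for reading off that multicurve from the multicurves of the two resolutions, and the arrow-pushing algorithm that actually produces the curve is algebraic and does not visibly respect the cone decomposition. In particular, your ``short case analysis'' of how the two non-compact arcs combine under the cone cannot be carried out with the tools at hand. You yourself flag this in the final paragraph and defer the substance to \cite[Section~6]{KWZ} --- at which point your argument has collapsed back to the same citation the paper makes, only dressed up in an inductive framework that does no independent work.

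If you want a self-contained reduction, a cleaner line than the skein induction is the one you mention parenthetically: verify the claim for a single connectivity (say \(\No\), where \(\DD(Q_0)\) is a single \(\Do\)) and then transport it to the other two via the equivariance of \cref{thm:Kh:Twisting:arcs}, since the mapping class group fixing the special puncture acts transitively on the three non-special ones. This still relies on the existence and uniqueness of the non-compact component from \cite[Proposition~6.15]{KWZ}, but at least it does not invoke an unestablished surgery formula.
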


\begin{theorem}\label{thm:BNra:mutation_invariance}
	For all oriented Conway tangles \(T\), \(\BNr_a(\muty(T);\field)=\BNr_a(T;\field)\).\myqed
\end{theorem}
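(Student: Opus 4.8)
The plan is to deduce the equality from the fact that $T$ and $\muty(T)$ have isotopic rational fillings, together with the fact that the non-compact component $\BNr_a$ is already recovered from the Khovanov-type invariants of those fillings.

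First I would record the following purely topological observation. The operation $\muty$ is the restriction to $B^3$ of a $\pi$-rotation about an axis through the centre of $B^3$; this rotation extends to a diffeomorphism of $S^3$ which preserves the decomposition of $S^3$ into $B^3$ and the complementary ball, which is smoothly isotopic to the identity (the group of rotations is path-connected), and which fixes every rational tangle, as used in \cref{lem:basics:tangle_identities} in the form $\muty(Q_{\nicefrac{p}{q}}) = Q_{\nicefrac{p}{q}}$. Applying this rotation to the link $T(\nicefrac{p}{q}) = Q_{-\nicefrac{p}{q}}\cup T$ (it fixes the rational tangle sitting in the complementary ball and sends the copy of $T$ inside $B^3$ to $\muty(T)$) therefore yields the isotopic link $Q_{-\nicefrac{p}{q}}\cup\muty(T) = \muty(T)(\nicefrac{p}{q})$. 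Since isotopic links have identical Khovanov-type invariants, we obtain
\[
\BNr\bigl(\muty(T)(\nicefrac{p}{q});\field\bigr)\;\cong\;\BNr\bigl(T(\nicefrac{p}{q});\field\bigr),
\]
and likewise for the reduced variants, for every slope $\nicefrac{p}{q}\in\QPI$.

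Second, I would invoke the gluing theorem of \cite{KWZ}: the reduced Bar-Natan/Khovanov homology of $T(\nicefrac{p}{q}) = Q_{-\nicefrac{p}{q}}\cup T$ is computed as a Lagrangian-intersection-type pairing of the non-compact curve $\BNr_a(T;\field)$ with the non-compact curve $\BNr_a(Q_{-\nicefrac{p}{q}};\field)$, the latter being an embedded line whose slope is recorded by $\nicefrac{p}{q}$. Hence the displayed bigraded groups encode, for each slope, the bigraded intersection pattern of the arc $\BNr_a(T;\field)$ with that slope. The crux — and the step I expect to be the main obstacle — is the faithfulness statement that a bigraded non-compact immersed arc on the parametrised four-punctured sphere is uniquely determined by the totality of these intersection patterns over all $\nicefrac{p}{q}\in\QPI$; this is a Dehn--Thurston-style coordinatisation of arcs on $S^2_{0,4}$, refined by the $\Z^2$-grading, which I would either read off from the curve classification \cite[Theorem~1.5]{KWZ} or establish directly by induction on the geometric complexity of the arc. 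Granting it, $\BNr_a(T;\field)$ and $\BNr_a(\muty(T);\field)$ are two arcs producing the same invariants of all rational fillings, hence are equal. Alternatively, one can simply cite the mutation-invariance statement for $\BNr_a$ already contained in \cite{KWZ}.
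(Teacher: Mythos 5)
Your fallback — citing [KWZ] — is in fact exactly what the paper does: \cref{thm:BNra:mutation_invariance} is stated without proof, attributed to [KWZ, Theorem~9.17]. The body of your proposal is an independent attempt, and it does not close; the gap is larger than the single faithfulness step you flag.

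The problem begins at your second step, which misstates the pairing theorem. By \cref{thm:pairing:BNr}, $\BNr(T(\nicefrac{p}{q});\field)$ is the Floer pairing of the \emph{entire} multicurve $\BNr(T;\field)$ — compact components and local systems included — against the arc $\BNr(Q_{-\nicefrac{p}{q}};\field)$; it is not the pairing of $\BNr_a(T;\field)$ alone. So the family of rational-filling homologies records the Floer theory of the full multicurve with every slope, and does not encode ``the bigraded intersection pattern of the arc $\BNr_a(T;\field)$'' in isolation. The only part cleanly attributable to $\BNr_a(T;\field)$ is the $\field[H]$-free summand, one bigraded copy of $\field[H]$ per slope, which is far too little data to reconstruct the arc. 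Because of this, the Dehn--Thurston reconstruction you invoke would have to recover the full multicurve $\BNr(T;\field)$ from its rational fillings, and the conclusion would then be $\BNr(\muty(T);\field)=\BNr(T;\field)$ including all compact components. That is a strictly stronger claim than the theorem at hand, which is deliberately stated only for $\BNr_a$; if it held unconditionally it would yield mutation invariance of Bar-Natan homology over every field via \cref{thm:pairing:BNr}, well beyond what is established. Finally, the faithfulness statement itself — that a bigraded multicurve on $\FourPuncturedSphereKh$ is determined by its pairings with all $\arc{\nicefrac{p}{q}}$ — is not a formal consequence of the classification [KWZ, Theorem~1.5] and is not argued in your sketch; over fields where local systems can be nontrivial it is unclear already at the level of ungraded curves. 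The proof in [KWZ] does not proceed through rational fillings.
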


\Cref{thm:BNr-contains-a-single-arc} is a special case of \cite[Proposition~6.15]{KWZ}. 
\Cref{thm:BNra:mutation_invariance} is the key observation used in the proof of invariance of the Rasmussen invariant under Conway mutation \cite[Theorem~9.17]{KWZ}.

\begin{corollary}\label{cor:BNra:mutation_invariance}
	For any oriented knot \(K\), \(\BNr_a(T_{K^r};\field)=\BNr_a(T_{K};\field)\). 
\end{corollary}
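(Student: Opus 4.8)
The plan is to deduce this immediately from two facts already established in the excerpt: the geometric identity $T_{K^r}=\muty(T_K)$ from \cref{prop:doubletangles:orientation}, and the $\muty$-invariance of the non-compact curve from \cref{thm:BNra:mutation_invariance}.

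First I would recall that, by the first statement of \cref{prop:doubletangles:orientation}, for any oriented knot $K$ the double tangle of the reversed knot satisfies $T_{K^r}=\muty(T_K)$ as unoriented Conway tangles (this is where the framing normalisation of \cref{def:double_tangle} matters, but that bookkeeping is already done in \cref{prop:doubletangles:orientation}, so no further care is needed here). Next I would apply \cref{thm:BNra:mutation_invariance} with $T=T_K$, which gives $\BNr_a(\muty(T_K);\field)=\BNr_a(T_K;\field)$ for every field $\field$. Chaining the two identities yields
\[
\BNr_a(T_{K^r};\field)=\BNr_a(\muty(T_K);\field)=\BNr_a(T_K;\field),
\]
as claimed. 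One small point worth spelling out is that \cref{thm:BNra:mutation_invariance} is stated for oriented tangles whereas $T_K$ is unoriented; since $\BNr_a$ depends on the orientation only through an overall bigrading shift governed by the linking number, and $\lk(T_K)=0$ by \cref{def:double_tangle}, any choice of orientation gives the same curve, so the statement applies verbatim.

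There is essentially no obstacle: the content of the corollary is entirely carried by the two cited results, and the argument is a one-line composition. The only thing to be careful about is to invoke \cref{prop:doubletangles:orientation} rather than re-deriving $T_{K^r}=\muty(T_K)$, and to note (as above) that the vanishing linking number makes the orientation ambiguity harmless.
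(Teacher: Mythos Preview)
Your proposal is correct and matches the paper's own proof exactly: the paper simply cites \cref{prop:doubletangles:orientation} and \cref{thm:BNra:mutation_invariance}, which is precisely the two-step composition you describe. Your added remark about the vanishing linking number making the orientation choice immaterial is a helpful clarification that the paper leaves implicit.
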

\begin{proof}
	This follows from \cref{prop:doubletangles:orientation} and \cref{thm:BNra:mutation_invariance}. 
\end{proof}

\begin{definition}
	Given a non-compact curve \(\gamma\), let \(-\gamma\) denote its mirror, i.e.\ the curve obtained by applying the involution that multiplies the \(z\)-coordinate by \((-1)\) and reverses all gradings. 
	Similarly, given a chain complex \(X\) over \(\BNAlgH\), let \(-X\) be the complex obtained by reversing the directions of all differentials and reversing all gradings. 
\end{definition}

The following is \cite[Propositions~4.26 and~7.1]{KWZ}. 

\begin{proposition}
	For any oriented Conway tangle \(T\), 
	\[
	\DD(-T;\field)=-\DD(T;\field)
	\quad
	\text{and}
	\quad
	\BNr(-T;\field)=-\BNr(T;\field).\myqed
	\]
\end{proposition}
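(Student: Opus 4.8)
The plan is to prove the algebraic identity $\DD(-T;\field)=-\DD(T;\field)$ first, and then to deduce the geometric identity $\BNr(-T;\field)=-\BNr(T;\field)$ by transporting it through the curve dictionary. For the algebraic identity, recall that $\KhTl{T}$ is assembled from Bar-Natan's cube of resolutions: at each crossing one forms the mapping cone of the saddle cobordism between the two resolutions, and the complex of the whole tangle is obtained from these local cones by the gluing operation of~\cite{BarNatanKhT}. Passing from $T$ to its mirror $-T$ switches every crossing, which interchanges the two resolutions at each crossing and turns the corresponding saddle cobordism upside down. Checking this for a single crossing and propagating it through the gluing formula exhibits $\KhTl{-T}$ as the ``upside-down dual'' $\KhTl{T}^{\vee}$ of $\KhTl{T}$, i.e.\ the complex obtained by reversing every cobordism and negating both the homological and the quantum grading; this is the tangle-level refinement of the classical isomorphism $\operatorname{Kh}^{i,j}(\overline{K})\cong\operatorname{Kh}^{-i,-j}(K)$ for mirror knots.

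\textbf{Transporting to $\BNAlgH$.} It then remains to transport the operation $Y\mapsto Y^{\vee}$ through the equivalence of~\cite[Theorem~1.1]{KWZ} between cobordism complexes and chain complexes over $\BNAlgH$. Turning cobordisms upside down is contravariant, so it induces an order-reversing involution of the morphism algebra $\BNAlgH$; a direct computation on the four generators $D_{\bullet},D_{\circ},S_{\bullet},S_{\circ}$ (it interchanges $S_{\bullet}$ with $S_{\circ}$ and fixes $D_{\bullet},D_{\circ}$, consistently with the relations in~\eqref{eq:B_quiver}, which are themselves mirror-symmetric) shows that $\KhTl{T}^{\vee}$ is carried to the complex with all differentials reversed and all gradings negated, that is, to $-\DD(T;\field)$. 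Combining the two steps yields $\DD(-T;\field)=-\DD(T;\field)$.

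\textbf{The geometric identity.} By the classification~\cite[Theorem~1.5]{KWZ}, chain homotopy classes of complexes over $\BNAlgH$ correspond bijectively to multicurves on $\FourPuncturedSphereKh$, with $\DD(T;\field)$ corresponding to $\BNr(T;\field)$. In view of the algebraic identity just proved, it suffices to verify that the operation $X\mapsto-X$ on complexes matches $\gamma\mapsto-\gamma$ on multicurves. Switching all crossings of a diagram of $T$ is reflection through the projection plane, and $\FourPuncturedSphereKh$ is identified with $\partial B^3\smallsetminus\partial T$ so that the interior of the unit disc corresponds to the front hemisphere; hence $-\gamma$ is precisely the image of $\gamma$ under the induced front–back involution of $\FourPuncturedSphereKh$, composed with grading reversal. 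This involution fixes the four punctures. Computing its effect on the parametrisation of $\FourPuncturedSphereKh$ and on the embedded quiver of~\eqref{eq:B_quiver}, and feeding the result into the curve-to-complex recipe of \cref{ex:BNr2DD}, one finds that the complex associated with $-\gamma$ is obtained from that associated with $\gamma$ by reversing all differentials and negating all gradings, i.e.\ it is $-X$. The identity $\BNr(-T;\field)=-\BNr(T;\field)$ follows; alternatively, it can be extracted directly from the sketch proof of \cref{thm:Kh:Twisting:arcs} combined with the algebraic identity.

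\textbf{Main obstacle.} The delicate point is the second step: the flip functor on cobordisms is contravariant, so one must check that arrows get reversed exactly once rather than twice (which would return $\DD(T)$), and one must track how the grading shifts in Bar-Natan's cone construction conspire to produce the simultaneous reversal $(i,j)\mapsto(-i,-j)$, together with the auxiliary signs built into the conventions of~\cite{KWZ}. In short, the real content is bookkeeping the mirror symmetry through the two equivalences~\cite[Theorems~1.1 and~1.5]{KWZ}; once its effect on the generators $D_{\bullet},D_{\circ},S_{\bullet},S_{\circ}$ and on the parametrisation is pinned down, the rest is formal.
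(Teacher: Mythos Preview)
The paper does not prove this proposition at all: note the \verb|\myqed| at the end of the statement and the sentence immediately preceding it, ``The following is \cite[Propositions~4.26 and~7.1]{KWZ}.'' The result is simply quoted from~\cite{KWZ}.

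Your sketch is a plausible outline of how such a proof goes, and is presumably close in spirit to what is actually carried out in~\cite{KWZ}: mirror the cube of resolutions to see that $\KhTl{-T}$ is the dual of $\KhTl{T}$ with reversed arrows and negated bigradings, then push this through the equivalence with complexes over~$\BNAlgH$ and through the curve dictionary. Your identification of the induced anti-involution on~$\BNAlgH$ (swapping $S_\bullet\leftrightarrow S_\circ$, fixing $D_\bullet,D_\circ$) and of $-\gamma$ as the front--back reflection of $\FourPuncturedSphereKh$ is consistent with the paper's conventions. As you yourself note, the substance is entirely in the bookkeeping of signs, grading shifts, and local systems through the two equivalences, none of which your sketch actually carries out; so what you have written is an outline rather than a proof, but it points in the right direction.
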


\subsection{The pairing theorem for multicurves}

The following is a specialisation of \cite[Proposition~4.31 and Theorem~7.2]{KWZ} to knots.
\begin{theorem}[Pairing Theorem]\label{thm:pairing:BNr}
	Let \(K=T\cup T'\) be an oriented knot where \(T\) and \(T'\) are two oriented Conway tangles. 
	Then \(\BNr(K;\field)\) is bigraded isomorphic to both 
	\begin{equation}\label{eq:MorAndHF}
	h^0q^1\homology\left(
	\Mor\Big(\!-\DD(T;\field),\DD(T';\field)\Big)
	\right)
	\quad
	\text{and}
	\quad
	h^0q^1\HF\Big(\!-\BNr(T;\field),\BNr(T';\field)\Big),
	\end{equation}
	where \(\HF\) denotes the wrapped Lagrangian Floer homology.\myqed
\end{theorem}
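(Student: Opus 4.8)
The plan is to deduce the statement directly from \cite[Proposition~4.31 and Theorem~7.2]{KWZ}, verifying only that nothing is lost in restricting from links to knots. First I would recall that $\BNr(K;\field)$ is by definition the homology of Bar-Natan's cobordism complex $\KhTl{K}$ of the knot $K$, computed over $\field$; this is the complex whose structure is recorded in \cref{thm:CBNr:structure}. Cutting $K=T\cup T'$ along a Conway sphere exhibits $\KhTl{K}$, via Bar-Natan's planar-algebra gluing formalism for cobordism complexes, as a pairing of the tangle complexes $\KhTl{T}$ and $\KhTl{T'}$; the choice of distinguished tangle end $\ast$ used in the definition of $\DD(T;\field)$ is precisely the datum that makes this gluing unambiguous.

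Next I would pass to the quiver algebra $\BNAlgH$ using \cite[Theorem~1.1]{KWZ}, replacing $\KhTl{T}$ and $\KhTl{T'}$ by $\DD(T;\field)$ and $\DD(T';\field)$. The content of \cite[Proposition~4.31]{KWZ} is that, under this translation, the glued complex becomes the morphism complex $\Mor(-\DD(T;\field),\DD(T';\field))$ over $\BNAlgH$: the mirror on the first factor records the orientation reversal needed to identify the boundary spheres of the two balls, and the normalisation $h^0q^1$ absorbs the conventional shift of Bar-Natan's reduced theory. Taking homology gives the first isomorphism. For the second, I would invoke the curve-to-complex dictionary \cite[Theorem~1.5]{KWZ}, under which $\DD(T;\field)$ and $\DD(T';\field)$ correspond to the multicurves $\BNr(T;\field)$ and $\BNr(T';\field)$ (and $-\DD(T;\field)$ to $-\BNr(T;\field)$), together with \cite[Theorem~7.2]{KWZ}, which identifies the homology of the $\Mor$-complex with the wrapped Lagrangian Floer homology $\HF(-\BNr(T;\field),\BNr(T';\field))$ of the corresponding multicurves; chaining the two isomorphisms gives the assertion.

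Finally I would note that specialising from links to knots needs no extra argument: the cited statements are proved at the level of links, and here $T\cup T'$ is assumed to be a knot, so they apply verbatim; the knot condition is reflected geometrically in the fact that the non-compact curves $\BNr_a(T;\field)$ and $\BNr_a(T';\field)$ join along the tangle ends that $T$ and $T'$ connect (\cref{thm:BNr-contains-a-single-arc}). The one genuinely non-formal ingredient is \cite[Theorem~7.2]{KWZ} itself, the geometric pairing between morphism homology and wrapped Floer homology; granting that, the only work remaining is the bookkeeping of the $h^0q^1$ grading shift and of the two duality conventions ($-\DD(T;\field)$ versus $\DD(T;\field)$, and the matching mirror on curves), which I expect to be the mild main obstacle.
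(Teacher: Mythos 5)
Your plan matches the paper's treatment exactly: the paper gives no proof but simply remarks that the statement ``is a specialisation of \cite[Proposition~4.31 and Theorem~7.2]{KWZ} to knots,'' which is precisely the pair of citations you invoke, together with the observation (correct, and shared by the paper implicitly) that restricting from links to knots costs nothing. Your expanded bookkeeping of the $h^0q^1$ shift, the mirror convention, and the curve-to-complex dictionary is a faithful unpacking of what the citation encapsulates.
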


We now explain how the two expressions in \eqref{eq:MorAndHF} are defined. 

The morphism space between two chain complexes \(C\) and \(C'\) over \(\BNAlgH\) is a chain complex over~\(\field\). 
Its underlying vector space is generated by homomorphisms from \(C\) to \(C'\). 
These do not necessarily preserve the bigrading nor do they necessarily commute with the differentials. 
The differential \(\partial\) on \(\Mor(C,C')\) is defined by pre- and post-composition with the differentials of \(C\) and \(C'\) in the usual way \cite[Section~2]{KWZ}.

\begin{figure}[b]
	\centering
	\begin{subfigure}{0.45\textwidth}
		\centering
		\includegraphics{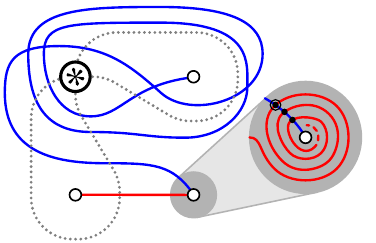}
		\caption{}\label{fig:pairing_plain}
	\end{subfigure}
	\begin{subfigure}{0.45\textwidth}
		\centering
		\labellist \tiny
		\pinlabel $\mathrm{id}$ at 92 68
		\pinlabel $D$ at 122 85
		\pinlabel $D^2$ at 134 92
		\pinlabel $S$ at 60 55
		\pinlabel $S$ at 60 44
		\endlabellist
		\includegraphics{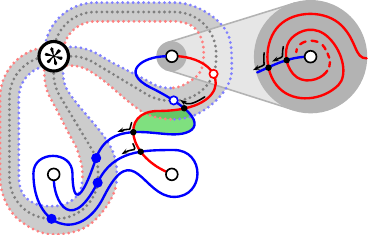}
		\caption{}\label{fig:pairing_position}
	\end{subfigure}
	\caption{%
		(a) The pairing of the curves from \cref{fig:curves_plain};
		the first intersection point in the winding region around the lower right puncture is highlighted;
		(b) Two curves in pairing position; see paragraph before \cref{thm:PairingMorLagrangianFH}. 
		The labelled arrows at the intersection points indicate the algebra elements used in the definition of the quantum grading of the intersections points. 
	}\label{fig:para}
\end{figure}

To define wrapped Lagrangian Floer homology, let \(\g\) and \(\gp\) be two immersed curves on \(\FourPuncturedSphereKh\). 
For simplicity, let us assume they are both non-compact. 
Let us further assume that \(\g\) and \(\gp\) connect different pairs of non-special tangle ends.  
With notation as in \cref{thm:pairing:BNr}, this assumption holds for \(\g=\BNr_a(T)\) and \(\gp=\BNr_a(T')\) by \cref{thm:BNr-contains-a-single-arc}, since \(K\) is a knot. 
So the endpoints of \(\g\) and \(\gp\) only meet in one tangle end; let us denote this tangle end by \(p\). 
By removing bigons between \(\g\) and \(\gp\), we can make sure that they intersect each other in a minimal number of points, like the two arcs in \cref{fig:curves_plain}. 
We call this the \textit{minimal position} for the two curves. 
Now modify the curve \(\g\) in a small neighbourhood of the tangle end \(p\), which we call the \textit{winding region of \(p\)}, such that it wraps around \(p\) infinitely many times in counter-clockwise direction. 
The vector space \(\HF(\g,\gp)\) is now freely generated by the intersection points between \(\g\) and \(\gp\). 

\begin{example}
	\cref{fig:pairing_plain} shows the Lagrangian Floer homology \(\HF(\g,\gp)\) of the curves \(\g=\BNr(Q_0)\) and  \(\gp=\BNr(T_{a})\). 
	 Note that \(T_{a}(0)=U\), so 
	 \(\HF(\g,\gp)\cong\BNr(U)\).
\end{example}

The grading on \(\HF(\g,\gp)\) is defined as follows \cite[Section~7.2]{KWZ}:
Let \(\bullet\) be an intersection point between \(\g\) and \(\gp\). 
Consider a path on \(\g\cup\gp\) that starts at an intersection point \(\aR\) of \(\g\) with the parametrisation of \(\FourPuncturedSphereKh\), turns right at \(\bullet\) and ends on an intersection point \(\bB\) of \(\gp\) with the parametrisation.
We can choose \(\aR\) and \(\bB\) such that the path only meets the parametrisation at the endpoints \(\aR\) and \(\bB\). 
This path corresponds to some algebra element \(s\in\BNAlgH\) as in the translation between curves and their associated complexes. We then define
\[
	\QGrad{q}(\bullet)
	\coloneqq
	\QGrad{q}(\bB)-\QGrad{q}(\aR)+\QGrad{q}(s).
\]
One can show that this grading is independent of any choices. 

\begin{definition}\label{def:s_of_pair}
	For \(\g\), \(\gp\), and \(p\) as above, let \(s(\g,\gp)\)
	denote the quantum grading of the first generator of \(\HF(\g,\gp)\) in the winding region around \(p\). 
\end{definition}

Recall from \cref{sec:BNr:knots}, that the Bar-Natan homology \(\BNr(K;\field)\) of a knot \(K\) is not just a bigraded vector space, but in fact a module over the ring \(\field[H]\).  
If we identify the variable \(H\) with the central algebra element
\[
H\coloneqq D+S^2 = D_{\bullet} + D_{\circ} + S_{\circ}S_{\bullet} + S_{\bullet}S_{\circ} ~ \in ~ \BNAlgH,
\]
any morphism space \(\Mor(C,C')\) can be equipped with an action by \(\field[H]\) in the obvious way and this action respects the differential. 
The first isomorphism from \cref{thm:pairing:BNr} respects these two actions by \(\field[H]\). 

The action of \(\field[H]\) on the Lagrangian Floer homology \(\HF(\g,\gp)\) of two multicurves \(\g\) and \(\gp\) is defined indirectly, 
namely by transferring the action of \(H\) on the homology \(\homology(\Mor(C,C'))\) of the morphism space between the chain complexes \(C\) and \(C'\) associated with \(\g\) and \(\gp\). 
This works as follows (see \cite[Section~5.4]{KWZ}).

\begin{definition}	
	Let \(C\) and \(C'\) be two chain complexes over the algebra \(\BNAlgH\). 
	If \(C\) and \(C'\) are fully cancelled, i.e.\ if their differentials contain no identity component, we can write the morphism space \(\Mor(C,C')\) as a mapping cone
	\[
	(\Mor(C,C'),\partial)
	\cong  
	\left[
	(\Mor^\times(C,C'),0)
	\xrightarrow{~\beta~}
	(\Mor^+(C,C'),\partial^+)
	\right],
	\]
	where \(\Mor^\times(C,C')\) consists of the morphisms that only contain idempotent components, \(\Mor^+(C,C')\) consists of the morphisms that do not contain any idempotent components and \(\beta\) and \(\partial^+\) are the restrictions of \(\partial\). 
	The map \(\beta\) induces a map 
	\[
	\left[
	\Mor^\times(C,C')
	\xrightarrow{~\beta_\ast~}
	\homology(\Mor^+(C,C'),\partial^+)
	\right]
	\]
	If we understand this map and its domain and codomain well enough, this allows us to compute the homology of the morphism space:
	\[
	\homology(\Mor(C,C'),\partial)
	\cong  
	\homology\left[
	\Mor^\times(C,C')
	\xrightarrow{~\beta_\ast~}
	\homology(\Mor^+(C,C'),\partial^+)
	\right].
	\]
	Let \(\g\) and \(\gp\) be the two multicurves corresponding to the type~D structures \(C\) and \(C'\). 
	If one puts these multicurves into a certain position, called the \textit{pairing position} in~\cite[Section~5.4]{KWZ}, we can define the Lagrangian Floer homology of \(\HF(\g,\gp)\) combinatorially. 
	
	The pairing position of two multicurves \(\g\) and \(\gp\) is generally different from the minimal position of \(\g\) and \(\gp\). 
	It is defined as follows. 
	Consider a small tubular neighbourhood of each arc of the parametrisation of \(\FourPuncturedSphereKh\). 
	Each neighbourhood has two boundary components, which we may regard as push-offs of the corresponding arc. 
	Divide each of boundary component into a left and a right half when viewed from the interior of the neighbourhood. 
	We now homotope the two multicurves such that \(\g\) avoids the left halves of these boundary components and \(\gp\) avoids the right halves and such that the curves intersect minimally under these restrictions. 
	(Explicit models for the curves are described in \cite[Section~5.4]{KWZ}.)
	Finally, we modify \(\g\) in the winding region as before. 
	This is illustrated in \cref{fig:pairing_position}.
	
	The intersection points of two multicurves in pairing position can be partitioned into two: 
	Those intersection points that lie in the neighbourhoods of the arcs are called \textit{upper intersection points} and they freely generate a vector space \(\CFtimes(C,C')\);
	the remaining intersection points are called \textit{lower intersection points} and they freely generate the vector space \( \CFplus(C,C') \). 
	By counting bigons connecting upper intersection points to lower ones, such as the bigon shaded green in \cref{fig:pairing_position}, we can define a map
		\[ d\co\CFtimes(C,C')\rightarrow\CFplus(C,C').\]
	We denote its mapping cone by \( \CF(C,C')\) and its homology is \(\HF(\g,\gp)\).
\end{definition}

The reader may have already noticed that each upper intersection point \(\bullet\) corresponds to a pair of intersection points of \(\g\) and \(\gp\) with the arc in whose neighbourhood the point \(\bullet\) lies. 
This identification of \(\CFtimes(C,C')\) and \(\Mor^\times(C,C')\) can be extended to the entirety of the two constructions \cite[Theorem~5.22]{KWZ}:

\begin{theorem}\label{thm:PairingMorLagrangianFH}
	Let \(\g\) and \(\gp\) be two multicurves in pairing position and \(C\) and \(C'\) their associated type~D structures.
	Then there exist bigraded isomorphisms 
	\(\Mor^\times(C,C')\cong \CFtimes(\g,\gp)\)
	and
	\(\Mor^+(C,C')\cong \CFplus(\g,\gp)\)
	such that the following diagram commutes:
	\[
	\begin{tikzcd}
	\Mor^\times(C,C')
	\arrow{r}{\beta_\ast}
	\arrow[leftrightarrow]{d}{\cong}
	&
	\homology(\Mor^+(C,C'),\partial^+)
	\arrow[leftrightarrow]{d}{\cong}
	\\
	\CFtimes(\g,\gp)
	\arrow{r}{d}
	&
	\CFplus(\g,\gp)
	\end{tikzcd}
	\]
	In particular, 
	\(\HF(\g,\gp)\cong \homology(\Mor(C,C'),\partial).\)\myqed
\end{theorem}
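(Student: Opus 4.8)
The plan is to prove both bigraded isomorphisms by a local, strand-by-strand comparison of the two constructions, and then to deduce the commuting square from a bijection between bigons and differential arrows. First I would fix explicit models for \(\g\) and \(\gp\) in pairing position, as described in \cite[Section~5.4]{KWZ}: in a small neighbourhood of each arc of the parametrisation of \(\FourPuncturedSphereKh\) both curves consist of nearly parallel strands crossing the arc, with \(\g\) pushed onto the right push-offs and \(\gp\) onto the left, and with \(\g\) additionally wound infinitely often around the distinguished tangle end \(p\). By the dictionary between immersed curves and type~D structures recalled in \cref{sec:review}, every strand of \(\g\) (resp.\ \(\gp\)) crossing such an arc is a generator of \(C\) (resp.\ \(C'\)), supported at \(\Di\) or \(\Do\) according to the arc, and every differential arrow of \(C\) (resp.\ \(C'\)) is a segment of the curve between two consecutive crossings, labelled by the algebra element its homotopy class prescribes.

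Given this, the generator-level bijections are essentially forced. An upper intersection point of \(\g\) and \(\gp\) lies in one of the arc neighbourhoods, hence is determined by a pair (strand of \(\g\), strand of \(\gp\)) crossing the same arc; sending it to the morphism whose single component is the corresponding idempotent-type map \(C\to C'\) defines a bijection \(\CFtimes(\g,\gp)\to\Mor^\times(C,C')\). A lower intersection point is cut out by a strand of \(\g\) and a strand of \(\gp\) passing one of the non-special punctures, and the short path turning right at it records a non-idempotent element of \(\BNAlgH\) — a power of \(D\), or \(S\), or \(S^2\) — which is exactly the label of the matching component of a morphism in \(\Mor^+(C,C')\); this gives the bijection \(\CFplus(\g,\gp)\to\Mor^+(C,C')\). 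The infinite winding of \(\g\) around \(p\) produces precisely the \(\field[H]\)-tower matching the central element \(H=D+S^2\), so both identifications respect the \(\field[H]\)-actions. Checking that these bijections preserve the bigrading reduces, for the homological grading, to the observation that idempotent morphisms sit in the expected degree and the degree shifts by one along each differential arrow, and, for the quantum grading, to comparing the formula \(\QGrad{q}(\bullet)=\QGrad{q}(\bB)-\QGrad{q}(\aR)+\QGrad{q}(s)\) from before the theorem with the induced grading on \(\Mor(C,C')\); since the grading on a curve is pinned down by one generator and propagates along the curve exactly as the differentials dictate, agreement on one reference pair suffices.

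The crux, and the step I expect to be the main obstacle, is the commutativity of the square, i.e.\ the identification of the bigon-counting map \(d\) with \(\beta_\ast\). Here one must show that the bigons in \(\FourPuncturedSphereKh\) from an upper to a lower intersection point correspond, with signs, to the terms of \(\beta\), which is pre- and post-composition with the differentials of \(C\) and \(C'\). The point is that, after isotoping the curves into the explicit pairing-position models, every such bigon is elementary: one of its two boundary arcs runs along a single differential segment of \(\g\) or of \(\gp\) and the other along the parametrisation, so the bigon realises multiplication on one side by the algebra element labelling that differential arrow — precisely the corresponding term of \(\beta\). The real work is to verify that no further bigons occur (a transversality argument using the explicit models), that the remaining composite contributions match up after passing to \(\homology(\Mor^+(C,C'),\partial^+)\), and that all signs are consistent with the conventions of \cite{KWZ}; this last bookkeeping is the delicate part.

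Finally, with the two bigraded isomorphisms and the commuting square established, \(\CF(\g,\gp)=\mathrm{Cone}(d)\) and \(\mathrm{Cone}(\beta_\ast)\) are isomorphic bigraded complexes. Since \(\homology(\Mor(C,C'),\partial)\) is computed by \(\mathrm{Cone}(\beta_\ast)\) via the mapping-cone description recalled above, passing to homology yields \(\HF(\g,\gp)\cong\homology(\Mor(C,C'),\partial)\), which is the asserted statement.
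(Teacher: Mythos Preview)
The paper does not prove this theorem; it is quoted from \cite[Theorem~5.22]{KWZ} and marked with a \qedsymbol. Your outline is broadly in the spirit of the argument given there: setting up explicit pairing-position models, matching upper intersection points with idempotent morphisms and lower intersection points with non-idempotent ones, and identifying bigons with terms of \(\beta\).

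That said, two points deserve care. First, your description of lower intersection points as recording ``a power of \(D\), or \(S\), or \(S^2\)'' is too restrictive. In the explicit models of \cite[Section~5.4]{KWZ}, near each puncture the curves produce one lower intersection point for every power \(D^k\) (near the special puncture) or \(S^k\) (near the non-special punctures), and the bijection \(\CFplus(\g,\gp)\to\Mor^+(C,C')\) must account for all of these; this is also where the infinite winding around \(p\) enters, not as a separate \(\field[H]\)-tower argument. Your remark that the identifications respect the \(\field[H]\)-action is extraneous here --- that structure is transferred \emph{after} the theorem, using it as a black box. Second, note that the right-hand column of the diagram lands in \(\homology(\Mor^+(C,C'),\partial^+)\), not in \(\Mor^+(C,C')\) itself, so commutativity is a statement about the induced map on homology; the bigon-counting map \(d\) must be matched with \(\beta_\ast\), not with \(\beta\), and this requires understanding which bigon contributions cancel in homology. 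Your sketch acknowledges this as ``the delicate part'' but does not indicate how it is resolved; in \cite{KWZ} this is handled by a careful case analysis of the local models.
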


The morphism space \(\Mor(C,C')\) between two type~D structures \(C\) and \(C'\) over \(\BNAlgH\) carries a natural action \(H\) which is defined by multiplying a morphism by the central element \(H\in\BNAlgH\). 
Of course, this action commutes with the differential, so we obtain an induced (but not necessarily canonical) action on the mapping cone
\[
\left[
\Mor^\times(C,C')
\xrightarrow{~\beta_\ast~}
\homology(\Mor^+(C,C'),\partial^+)
\right]
\]
and both induce the same action on \(\homology(\Mor(C,C'),\partial)\). 
We can now use the isomorphisms from \cref{thm:PairingMorLagrangianFH} to transfer this action to \(\CF(\g,\gp)\) and ultimately to \(\HF(\g,\gp)\).

We will now assume that \(\g\) and \(\gp\) are two non-compact curves.  
For computing \(\HF(\g,\gp)\), minimal positions of \(\g\) and \(\gp\) are generally more convenient than pairing positions. 
Theorem~5.25 from \cite{KWZ}, or rather its proof (see \cite[Theorem~4.45]{pqMod}), tells us how to pass from one to the other. 
Namely, we can identify each intersection point \(x\) in a minimal position with either a lower intersection point \(x_0\in\CF(\g,\gp)\) or a formal sum \(\sum\varepsilon_i x_i\), where \(\varepsilon_i\in\{\pm1\}\) and \(x_i\) are upper generators that are connected via a chain of bigons. 

In the first case, the action can be easily understood, see \cite[Remark~5.23]{KWZ}, and such an intersection point has infinite order exactly if it lies in the winding region of \(\g\) and~\(\gp\). 

The second case requires a bit more care: Under the isomorphism from \cref{thm:PairingMorLagrangianFH}, \(\sum\varepsilon_i x_i\) corresponds to some element \(\sum\varepsilon_i f_i\), where \(f_i\in\Mor^\times(C,C')\) is a certain morphism with a single idempotent component.
By construction, \(\beta_\ast(\sum\varepsilon_i f_i)=0\), i.e.\ there exists some \(f_+\in\Mor^+(C,C')\) such that \(\partial^+(f_+)=\beta(\sum\varepsilon_i f_i)\).
So the intersection point \(x\in\HF(\g,\gp)\) corresponds to the element \([\sum\varepsilon_i f_i -f_+]\in\homology(\Mor(C,C'))\). 
Thus, by definition, \(H^n.x\in\HF(\g,\gp)\) is the element corresponding to \([\sum\varepsilon_i H^n.f_i -H^n.f_+]\) which we may regard as an element in \(\homology(\Mor^+(C,C'))\). 
If \(n\) is large enough (namely strictly greater than the length of any differential in \(C\) and \(C'\)), this element is non-zero if and only if \(x\) lies in the winding region of \(\g\) and \(\gp\), and in this case, it has infinite order. 

This argument shows: 

\begin{lemma}\label{lem:H-tower}
	Let \(\g\) and \(\gp\) be two non-compact curves on \(\FourPuncturedSphereKh\) connecting different pairs of punctures. 
	Then 
	\[
		s(\g,\gp)=
		\max
		\{
		\QGrad{q}(x)
		\mid
		\text{homogeneous } 
		x\in\HF(\g,\gp)
		\co
		H^n.x\neq0
		\text{ for all }
		n>0 
		\}.\myqed
	\]
\end{lemma}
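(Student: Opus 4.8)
The plan is to characterise, among the homogeneous classes in \(\HF(\gamma,\gamma')\), exactly those of infinite \(H\)-order, and then to determine their quantum gradings. I would do this in two steps. First, claim \emph{(A)}: after bringing \(\gamma\) and \(\gamma'\) into minimal position, an intersection point \(x\) of \(\gamma\) and \(\gamma'\) satisfies \(H^n.x\neq0\) for all \(n>0\) precisely when \(x\) lies in the winding region of \(\gamma\) and \(\gamma'\) around \(p\), while every other intersection point is \(H\)-torsion. Second, claim \emph{(B)}: the quantum gradings of the winding-region intersection points are \(s(\gamma,\gamma')\), \(s(\gamma,\gamma')-2\), \(s(\gamma,\gamma')-4,\dots\). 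Granting these, the statement follows: since the intersection points in minimal position form a homogeneous basis of \(\HF(\gamma,\gamma')\) and, by \emph{(B)}, no two winding-region generators share a quantum grading, a homogeneous class of infinite \(H\)-order must be a nonzero scalar multiple of a single winding-region generator plus an \(H\)-torsion summand; hence its quantum grading has the form \(s(\gamma,\gamma')-2k\) with \(k\geq0\), and the maximum \(s(\gamma,\gamma')\) is realised by, and only by, the first winding-region generator.

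For \emph{(A)} I would argue exactly as in the discussion preceding the statement: using the \(\F[H]\)-module isomorphism \(\HF(\gamma,\gamma')\cong\homology(\Mor(C,C'))\) from \cref{thm:PairingMorLagrangianFH} together with the transfer of the \(H\)-action described just above, and passing between pairing and minimal position as in \cite[Section~5.4]{KWZ} and \cite[Theorem~4.45]{pqMod}. Each intersection point \(x\) in minimal position is identified either with a lower generator \(x_0\in\CFplus(C,C')\) — in which case the \(H\)-action is the explicit one of \cite[Remark~5.23]{KWZ}, and one reads off that \(x\) has infinite \(H\)-order exactly when \(x\) is in the winding region — or with a signed sum \(\sum\varepsilon_i x_i\) of upper generators joined by a chain of bigons, corresponding under \cref{thm:PairingMorLagrangianFH} to a class \([\sum\varepsilon_i f_i - f_+]\in\homology(\Mor(C,C'))\), where \(f_i\in\Mor^\times(C,C')\) carries a single idempotent component and \(\partial^+(f_+)=\beta(\sum\varepsilon_i f_i)\). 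Evaluating \(H^n.x\) for \(n\) strictly larger than the length of every differential occurring in \(C\) and \(C'\), and viewing the result in \(\homology(\Mor^+(C,C'))\), one checks that it is nonzero precisely when \(x\) lies in the winding region, and then of infinite order. This gives \emph{(A)}.

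For \emph{(B)} I would use that winding \(\gamma\) one further time around \(p\) — i.e.\ advancing by one intersection point in the winding region — is realised algebraically by multiplication with the central element \(H=D+S^2\in\BNAlgH\); in particular the winding-region generators assemble into a single \(H\)-tower headed by the first one. Since \(\QGrad{q}(H)=-2\) and, by definition, \(s(\gamma,\gamma')\) is the quantum grading of that first generator, the quantum gradings along the tower are \(s(\gamma,\gamma')-2k\) for \(k\geq0\), which is \emph{(B)}.

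The main obstacle is the second case in the proof of \emph{(A)}: tracking the \(\F[H]\)-action carefully through the identification of minimal-position with pairing-position intersection points, and verifying the dichotomy ``\(H^n.x\neq0\) for all large \(n\) \(\iff\) \(x\) lies in the winding region''. This is where the full bookkeeping of \cite[Section~5.4]{KWZ} and \cite[Theorem~4.45]{pqMod} is needed. Once it is in place, establishing \emph{(B)} via the geometric meaning of \(H=D+S^2\), and combining the two steps, is comparatively routine.
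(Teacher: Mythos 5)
Your proposal is correct and follows essentially the same route as the paper: the paper's ``proof'' of \cref{lem:H-tower} is precisely the discussion that precedes it, which establishes your claim (A) by passing from minimal position to pairing position via \cite[Theorem~4.45]{pqMod} and tracking the \(H\)-action through the identification of \cref{thm:PairingMorLagrangianFH} (handling the lower-generator case via \cite[Remark~5.23]{KWZ} and the upper-generator case via the \(\beta_\ast\)-cone). Your claim (B) --- that the winding generators form a single \(H\)-tower with quantum gradings dropping by \(2\) from the first one --- is left implicit in the paper but is exactly the missing half of the argument, following from the definition of \(s(\g,\gp)\), the fact that \(\QGrad{q}(H)=-2\), and the geometric meaning of the winding; making it explicit as you do is a reasonable (and slightly more careful) way to close the logical gap between (A) and the stated equality.
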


\begin{proposition}\label{thm:s2s}
	With notation as in \cref{thm:PairingMorLagrangianFH}, 
	\[
	s_c(K)=s(-\BNr_a(T),\BNr_a(T'))+1.
	\]
\end{proposition}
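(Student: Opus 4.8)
The plan is to identify each side of the asserted identity with the maximal quantum grading of a homogeneous element of infinite $H$-order in an appropriate bigraded $\field[H]$-module. Accordingly, for such a module $M$ set
\[
\sigma(M)\coloneqq\sup\bigl\{\QGrad{q}(x)\mid x\in M\text{ homogeneous},\ H^n\cdot x\neq 0\text{ for all }n>0\bigr\},
\]
with the convention $\sup\emptyset=-\infty$. First I would record the following three facts. (1) $s_c(K)=\sigma\bigl(\BNr(K;\field)\bigr)$: by \cref{defprop:s} and \eqref{eq:BNr:decomposition} we may write $\BNr(K;\field)\cong\QGrad{q}^{s_c(K)}h^0\field[H]\oplus(H\text{-torsion})$, and a homogeneous element has all $H$-powers nonzero precisely when its projection to the free summand is nonzero, in which case its quantum grading is $s_c(K)-2k$ for some $k\geq 0$, the value $s_c(K)$ being attained by the generator of the free summand. (2) $\sigma(M\oplus N)=\max\bigl(\sigma(M),\sigma(N)\bigr)$, and $\sigma(M)=-\infty$ if $M$ is $H$-torsion; the first assertion holds because if $H^{n_0}\cdot m=0$ and $H^{n_1}\cdot n=0$ then $H^{n}\cdot(m,n)=0$ for all $n\geq\max(n_0,n_1)$, so $(m,n)$ has all $H$-powers nonzero if and only if $m$ or $n$ does, and a homogeneous element of $M\oplus N$ has the same quantum grading as its witnessing component. (3) By \cref{lem:H-tower}, $\sigma\bigl(\HF(\g,\gp)\bigr)=s(\g,\gp)$ for any two non-compact curves $\g,\gp$ connecting different pairs of non-special punctures; since $K=T\cup T'$ is a knot, \cref{thm:BNr-contains-a-single-arc} ensures that $\g=-\BNr_a(T;\field)$ and $\gp=\BNr_a(T';\field)$ satisfy this hypothesis (mirroring a curve does not move its endpoints).

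Next I would apply the Pairing Theorem. By \cref{thm:pairing:BNr}, together with the compatibility of the two $\field[H]$-actions explained around \cref{thm:PairingMorLagrangianFH}, there is an isomorphism of bigraded $\field[H]$-modules $\BNr(K;\field)\cong h^0q^1\,\HF\bigl(-\BNr(T;\field),\BNr(T';\field)\bigr)$. Now write $-\BNr(T;\field)$ and $\BNr(T';\field)$ as the disjoint unions of their connected components, of which there is exactly one non-compact one in each case and otherwise finitely many compact ones. By \cref{obs:direct_sums}, the type~D structures associated with these multicurves are the corresponding direct sums, so the morphism complex, and hence $\HF$ together with its $H$-action — which is multiplication by the central element $H\in\BNAlgH$ and therefore block-diagonal — splits as a direct sum indexed by pairs $(\g_i,\gp_j)$ of components. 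Combining this with the facts from the previous paragraph,
\[
s_c(K)=\sigma\bigl(\BNr(K;\field)\bigr)=1+\sigma\Bigl(\HF\bigl(-\BNr(T;\field),\BNr(T';\field)\bigr)\Bigr)=1+\max_{i,j}\sigma\bigl(\HF(\g_i,\gp_j)\bigr).
\]

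It remains to show that the maximum is realised by the pairing $\HF\bigl(-\BNr_a(T;\field),\BNr_a(T';\field)\bigr)$ of the two non-compact components and that every other summand is $H$-torsion. For this I would argue by ranks over the principal ideal domain $\field[H]$: each $\HF(\g_i,\gp_j)$ is finitely generated over $\field[H]$, so modulo torsion the displayed direct sum becomes a direct sum of finite-rank free modules whose ranks sum to $\rank_{\field[H]}\BNr(K;\field)=1$ by \eqref{eq:BNr:decomposition}. On the other hand $\HF\bigl(-\BNr_a(T;\field),\BNr_a(T';\field)\bigr)$ has $\field[H]$-rank at least one, since by \cref{lem:H-tower} its $\sigma$ equals the finite integer $s(-\BNr_a(T),\BNr_a(T'))$, hence it contains a homogeneous element of infinite $H$-order, which generates a free $\field[H]$-submodule of rank one. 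Therefore this summand has rank exactly one, and every other summand — in particular every summand involving a compact component — is $H$-torsion and contributes $\sigma=-\infty$. Substituting back into the last display and applying \cref{lem:H-tower} once more yields $s_c(K)=1+\sigma\bigl(\HF(-\BNr_a(T),\BNr_a(T'))\bigr)=s(-\BNr_a(T),\BNr_a(T'))+1$.

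I expect the main obstacle to be the compatibility of the decomposition of the multicurves into components with the $\field[H]$-module structure on wrapped Lagrangian Floer homology used in the second paragraph. Since that action is defined only indirectly, by transporting the $H$-action on $\homology\bigl(\Mor(C,C')\bigr)$ through the identifications of \cref{thm:PairingMorLagrangianFH}, one must check that the splitting $\Mor(C_1\oplus C_2,C_1'\oplus C_2')\cong\bigoplus_{i,j}\Mor(C_i,C_j')$ is $H$-equivariant and respects the partition of intersection points into upper and lower ones component by component. This is true because $H$ is central, so post-composition with it preserves any block decomposition of source and target; but verifying it against the explicit combinatorial model behind \cref{thm:PairingMorLagrangianFH} is where genuine care is needed. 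Everything else reduces to the structure theorem and \cref{lem:H-tower}.
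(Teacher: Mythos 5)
Your proof is correct and follows the same skeleton as the paper's: reinterpret $s_c(K)$ as the top quantum grading of a homogeneous element of infinite $H$-order in $\BNr(K;\field)$, apply the Pairing Theorem together with the component decomposition of the multicurves (noting the decomposition respects the $\field[H]$-module structure), discard all pairings except the non-compact one, and finish with \cref{lem:H-tower}. The one place where you argue differently is in showing that pairings involving a compact component contribute no free part. The paper observes directly that the wrapped Floer homology of a pair in which one curve is compact is finite-dimensional over $\field$ (compact curves have no ends and hence no winding region), hence automatically $H$-torsion. You instead count $\field[H]$-ranks: the total rank is $1$ by the structure theorem \cref{thm:CBNr:structure}, and \cref{lem:H-tower} already supplies an element of infinite $H$-order in the non-compact pairing, giving it rank at least $1$; so everything else must have rank zero. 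Both are valid; the paper's observation is shorter and geometric, while your argument is more algebraic and bypasses the need to notice the finite-dimensionality of compact pairings. One small point worth making explicit in your argument: the implication "torsion $\Rightarrow\sigma=-\infty$" uses that the modules are quantum-graded and $\QGrad{q}(H)=-2\neq 0$, so every homogeneous torsion element is annihilated by a power of $H$; for an ungraded $\field[H]$-module such as $\field[H]/(H-1)$ this would fail. Your closing concern about the $H$-equivariance of the component splitting is reasonable, but the paper asserts this compatibility without further proof as well, so you are not omitting a step that the paper supplies.
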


\begin{proof}
	Recall from \cref{defprop:s} that \(s_c(K)\) is defined as the quantum grading of the generator of the free summand \(\F_c[H]\) in any decomposition of \(\BNr(K;\F_c)\) as in \eqref{eq:BNr:decomposition} for any knot \(K\).
	Equivalently, 
	\[
	s_c(K)
	=
	\max
	\{
	\QGrad{q}(x)
	\mid
	\text{homogeneous } 
	x\in\BNr(K;\F_c)
	\co
	H^n.x\neq0
	\text{ for all }
	n>0 
	\}.
	\]
	By the Pairing Theorem, 
	\(\BNr(K;\F_c)\) is bigraded isomorphic to the Lagrangian Floer homology of \(-\BNr(T)\) and \(\BNr(T')\), shifted by 1 in quantum grading. 
	Furthermore, the wrapped Lagrangian Floer homology of two multicurves is equal to the direct sum of the wrapped Lagrangian Floer homologies between their individual components. 
	This identity naturally respects the \(\field[H]\)-module structures. 
	The wrapped Lagrangian Floer homology between two curves of which at least one is compact is finite dimensional as a vector space over \(\field\) and hence, it is \(H\)-torsion. 
	So we may replace \(-\BNr(T)\) and \(\BNr(T')\) by \(-\BNr_a(T)\) and \(\BNr_a(T')\), respectively, without changing the free part of their wrapped Lagrangian Floer homology.
	Now apply \cref{lem:H-tower}.  
\end{proof}

\section{Rasmussen invariants and Conway spheres}\label{sec:RasmussenCurves}

This section forms the technical heart of this paper and contains the proofs of the main results. 

\begin{definition}\label{def:i_p}
	Let \(p\) be a non-special puncture of \(\FourPuncturedSphereKh\).
	We define a cyclic order on the set \(C_p\) of non-compact curves on \(\FourPuncturedSphereKh\) that have exactly one end on \(p\). 
	We do this as follows: 
	Given three curves \(\gamma_1,\gamma_2,\gamma_3\in C_p\) that are pairwise distinct as ungraded curves, we may assume without loss of generality that each pair of curves intersects minimally.  
	(This can always be achieved by homotopies that remove innermost bigons.)
	Then \([\gamma_1,\gamma_2,\gamma_3]\) is cyclically ordered with respect to \(p\), or \textit{\(p\)-ordered} for short, if it induces the counterclockwise cyclic order on the intersection points of the curves with a sufficiently small circle around~\(p\).
	
	Given four non-compact curves \(\gi,\gpi,\gii,\gpii\in C_p\) with \(\textcolor{red}{\gamma_i}\neq\textcolor{blue}{\gamma'_j}\) for all \(i,j\in\{1,2\}\) (as ungraded curves), 
	we define 
	\[
	i_p(\gi,\gpi,\gii,\gpii)
	\coloneqq
	\begin{cases*}
		1
		&
		\parbox{7.5cm}{if \(\gi\neq\gii\) and \(\gpi\neq\gpii\) (as ungraded curves)\\ and \([\gi,\gpi,\gii,\gpii]\) is \(p\)-ordered}
		\\
		0
		&
		otherwise.
	\end{cases*}
	\]
\end{definition}

For the next result, recall the definition of an expression \(s(\g,\gp)\) from \cref{def:s_of_pair}.

\begin{lemma}[Four Curves Lemma]\label{lem:4curves}
	Let \(p\), \(p'\), and \(p''\) be the three non-special punctures of \(\FourPuncturedSphereKh\).
	Suppose \(\gi\) and \(\gii\) are two curves with ends on the punctures $p$ and \(p'\), and \(\gpi\) and \(\gpii\) two curves with ends on the punctures $p$ and \(p''\). 
	Then
	\begin{equation}\label{eq:4curves}
	s(\gi,\gpi)+s(\gii,\gpii)-2\cdot i_p(\gi,\gpi,\gii,\gpii)
	=
	s(\gi,\gpii)+s(\gii,\gpi)-2\cdot i_p(\gi,\gpii,\gii,\gpi).
	\end{equation}
\end{lemma}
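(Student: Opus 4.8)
The plan is to reduce everything to the combinatorics of intersection points of curves with a small circle around the shared puncture $p$, using $s(\g,\gp)$ as the quantum grading of the first generator in the winding region (the definition preceding \cref{thm:s2s}, together with \cref{lem:H-tower}). Near $p$ all four curves look like finitely many parallel rays emanating from $p$, cyclically ordered by the $p$-order; the winding region of $p$ wraps $\g$ around $p$ counter-clockwise infinitely often, and the ``first generator'' of $\HF(\g,\gp)$ in that region is the intersection point one encounters when following the wrapped $\g$ past the endpoint of $\gp'$ on $p$. Its quantum grading is computed from the local model, so $s(\g,\gp)$ depends, up to a universal shift, only on (i) the absolute gradings of $\g$ and $\gp$ near $p$ and (ii) the cyclic position of the endpoint of $\gp$ relative to that of $\g$ in the $p$-order. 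The quantity $i_p(\gi,\gpi,\gii,\gpii)\in\{0,1\}$ is precisely the indicator that tells us, when we swap the partners, whether we have to wrap one extra full loop around $p$, which costs exactly $2$ in quantum grading (one full loop decreases $\QGrad q$ by $2$, since going once around a puncture in the winding region shifts by the grading of $H^{-1}$, i.e.\ by $+2$, so passing an extra endpoint in the ``wrong'' cyclic slot costs $-2$).

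The key steps, in order, are: First, fix absolute gradings; since $T$ and $T'$ are only relevant through their non-compact components and the statement is about a single linear identity, I would normalize so that all four curves carry fixed grading data near $p$, noting that each side of \eqref{eq:4curves} is invariant under the allowed overall grading shifts. Second, establish the \emph{local normal form}: in a small disc around $p$, replace each $\g_i,\gp_j$ by its germ, a graded ray; record the cyclic order of the four rays' endpoints on the boundary circle and the gradings assigned to them by the parametrisation. Third, write down an explicit formula $s(\g,\gp)=(\text{grading contribution of }\g)+(\text{grading contribution of }\gp)+2\cdot(\text{number of endpoints of }\gp\text{ strictly between }\g\text{ and its wrap, in the }p\text{-order})+\text{const}$, valid when $\g,\gp$ connect different pairs of punctures, which is exactly the hypothesis here. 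Fourth, substitute this formula into both sides of \eqref{eq:4curves}: the grading-contribution terms are symmetric under swapping partners and cancel pairwise, and the ``number of endpoints between'' terms collapse, by an elementary count on four cyclically ordered points, to $i_p(\gi,\gpi,\gii,\gpii)$ on the left and $i_p(\gi,\gpii,\gii,\gpi)$ on the right, up to the same additive constant on both sides. Fifth, conclude that the two sides are equal.

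The main obstacle is the third step: pinning down the precise local formula for $s(\g,\gp)$ in terms of the cyclic data, with the correct sign and the correct value ($2$, not $1$) for the ``extra loop'' penalty. This requires unwinding the grading conventions of \cref{thm:s2s} and \cref{lem:H-tower}, in particular the definition $\QGrad q(\bullet)=\QGrad q(\bB)-\QGrad q(\aR)+\QGrad q(s)$ for intersection points, applied to the intersection point created when the wrapped copy of $\g$ crosses $\gp$ in the winding region. Concretely I expect to argue that moving the endpoint of $\gp$ one slot forward in the $p$-order either leaves the first winding-region generator in place (if the slot is not between $\g$ and its first wrap) or pushes it one full turn further (contributing $-2$), and that this is exactly what $i_p$ detects; the bookkeeping is most cleanly done by drawing the four germs, labelling the finitely many winding-region intersection points by their $\QGrad q$, and comparing the two pairings directly. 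Once this local computation is in hand, the identity \eqref{eq:4curves} is immediate because every term except the $i_p$-terms appears symmetrically on both sides.
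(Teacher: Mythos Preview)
Your proposal is correct and follows essentially the same approach as the paper: both reduce to the local grading formula \(s(\textcolor{red}{\gamma_i},\textcolor{blue}{\gamma'_j})=\QGrad{q}(b_j)-\QGrad{q}(a_i)+\QGrad{q}(s_{ij})\) near \(p\), observe that the \(a_i,b_j\) contributions cancel symmetrically across the two sides, and then verify that the remaining algebra-element terms \(\QGrad{q}(s_{ij})\) differ by exactly the \(i_p\) correction (the ``extra full loop costs \(-2\)'' being precisely the factor \(H\) appearing in the paper's Case~a). The paper's execution differs only in presentation---it first disposes of the degenerate cases \(\gi=\gii\) or \(\gpi=\gpii\) separately and then, rather than writing a single unified formula, splits into two cyclic-order cases and compares the four \(s_{ij}\) to a single reference element \(s_\star\), which keeps the bookkeeping of your ``third step'' to a minimum.
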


\begin{figure}[t]
	\centering
	\begin{subfigure}{0.45\textwidth}
		\centering
		\labellist \tiny
		\pinlabel $\textcolor{red}{a_{1}}$ at 100 10
		\pinlabel $\textcolor{red}{a_{2}}$ at 87 176
		\pinlabel $\textcolor{blue}{b_{1}}$ at 179 98
		\pinlabel $\textcolor{blue}{b_{2}}$ at 10 88
		\pinlabel $x_{11}$ at 134 97
		\pinlabel $x_{21}$ at 112 97
		\pinlabel $x_{12}$ at 55 89
		\pinlabel $x_{22}$ at 38 89
		\pinlabel $s_{11}$ at 150 85
		\pinlabel $s_{12}$ at 55 110
		\pinlabel $H\!\cdot\!s_{22}$ at 75 110
		\pinlabel $s_{21}$ at 127 85
		\pinlabel $s_{22}$ at 38 110
		\pinlabel $s_\star$ at 140 150
		\endlabellist
		\includegraphics{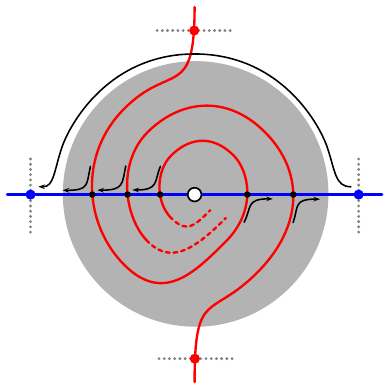}
		\caption{}\label{fig:FourCurvesLemma:1}
	\end{subfigure}	
	\begin{subfigure}{0.45\textwidth}
		\centering
		\labellist \tiny
		\pinlabel $\textcolor{red}{a_{1}}$ at 141 23
		\pinlabel $\textcolor{red}{a_{2}}$ at 57 18
		\pinlabel $\textcolor{blue}{b_{1}}$ at 179 98
		\pinlabel $\textcolor{blue}{b_{2}}$ at 10 88
		\pinlabel $x_{11}$ at 134 97
		\pinlabel $x_{21}$ at 112 97
		\pinlabel $x_{22}$ at 84 89
		\pinlabel $x_{12}$ at 60 89
		\pinlabel $s_{11}$ at 151 85
		\pinlabel $s_{21}$ at 128 85
		\pinlabel $s_{12}$ at 40 101
		\pinlabel $s_{22}$ at 65 101
		\pinlabel $s_\star$ at 140 150
		\endlabellist
		\includegraphics{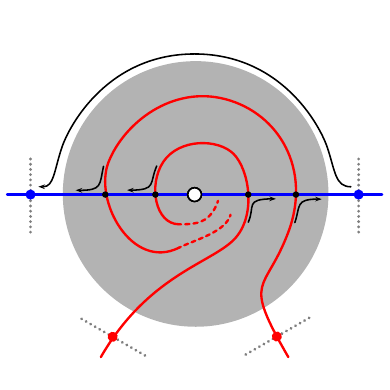}
		\caption{}\label{fig:FourCurvesLemma:2}
	\end{subfigure}
	\caption{An illustration of the curves in the winding region around the puncture \(p\) in the two cases a and b in the proof of \cref{lem:4curves}}\label{fig:FourCurvesLemma}
\end{figure}

Note that the condition on the punctures at which the curves end ensures that the quantities \(i_p(\gi,\gpi,\gii,\gpii)\) and \(i_p(\gi,\gpii,\gii,\gpi)\) are well-defined. 
Note also that at most one of them is non-zero. 

\begin{proof}[Proof of \cref{lem:4curves}]
	If \(\gi=\gii\) or \(\gpi=\gpii\) as ungraded curves, the statement follows from the gluing theorem (\cref{thm:pairing:BNr}) combined with the fact that for any two curves \(\g\) and \(\gp\),
	\[
	\HF(\QGrad{q}^{x}\g,\QGrad{q}^{y}\gp)=\QGrad{q}^{y-x}\HF(\g,\gp),
	\]
	as for morphism spaces \cite[Definition~2.2]{KWZ}.
	Since the punctures \(p'\) and \(p''\) are distinct, this leaves the case that all four curves are distinct as ungraded curves. 
	We introduce some notation. 
	For \(i,j\in\{1,2\}\), let \(x_{ij}\) be the first generator in the winding region around \(p\) for the pair \((\textcolor{red}{\gamma_i},\textcolor{blue}{\gamma'_j})\). 
	Then by definition of the quantum grading,
	\[
	s(\textcolor{red}{\gamma_i},\textcolor{blue}{\gamma'_j})
	=
	\QGrad{q}(x_{ij})
	=
	\QGrad{q}(b_{j})-\QGrad{q}(a_{i})+\QGrad{q}(s_{ij})
	\]
	where \(b_j\) is the first generator near the end \(p\) of \(\textcolor{blue}{\gamma'_j}\), \(a_i\) is the first generator near the end \(p\) of \(\textcolor{red}{\gamma_i}\), and \(s_{ij}\in\BNAlgH\) is the algebra element that labels the morphism corresponding to \(x_{ij}\). 
	Since equation \eqref{eq:4curves} is symmetric in \(\gi\) and \(\gii\) as well as \(\gpi\) and \(\gpii\), it suffices to consider the following two subcases:
	\newcommand{\myitheading}[1]{\medskip\noindent\textit{#1.}}%
	
	\myitheading{Case (a): \([\gi,\gpi,\gii,\gpii]\) is \(p\)-ordered} 
	In this case, equation \eqref{eq:4curves} is equivalent to
	\[
	\QGrad{q}(s_{11})+\QGrad{q}(s_{22})-2
	=
	\QGrad{q}(s_{12})+\QGrad{q}(s_{21}).
	\] 
	Let \(s_\star\in\BNAlgH\) correspond to the path of minimal length from \(b_1\) to \(b_2\).
	Then
	\[
	\QGrad{q}(s_{12})-\QGrad{q}(s_{11})
	=
	\QGrad{q}(s_\star)
	=
	\QGrad{q}(H\cdot s_{22})-\QGrad{q}(s_{21})
	=
	\QGrad{q}(s_{22})-2-\QGrad{q}(s_{21})
	\]
	as	\cref{fig:FourCurvesLemma:1} illustrates. 
	This is equivalent to the desired identity. 
	
	\myitheading{Case (b): \([\gi,\gpi,\gpii,\gii]\) is \(p\)-ordered}
	In this case, equation \eqref{eq:4curves} is equivalent to
	\[
	\QGrad{q}(s_{11})+\QGrad{q}(s_{22})
	=
	\QGrad{q}(s_{12})+\QGrad{q}(s_{21}).
	\]
	This can be seen in the same way as the identity in Case~(a) by comparing the algebra elements \(s_{ij}\) to the algebra element \(s_\star\) corresponding to the path of minimal length from \(b_1\) to \(b_2\):
	\[
	\QGrad{q}(s_{12})-\QGrad{q}(s_{11})
	=
	\QGrad{q}(s_\star)
	=
	\QGrad{q}(s_{22})-\QGrad{q}(s_{21}).
	\]
	This case is illustrated in \cref{fig:FourCurvesLemma:2}.
\end{proof}

Often, it is useful to lift immersed curves to a covering space of \(\FourPuncturedSphereKh\), namely the plane \(\mathbb{R}^2\) minus the integer lattice \(\mathbb{Z}^2\). 
We may regard \(\mathbb{R}^2\) as the universal cover of the torus, and the torus as the two-fold branched cover of the sphere branched at four marked points; 
then the integer lattice \(\mathbb{Z}^2\) is the preimage of the branch set.
We choose the covering map such that the left arc of the parametrisation of \(\FourPuncturedSphereKh\) (the one corresponding to \(\Do\) in \cref{fig:para:abstract}) is lifted to straight vertical lines and the other arc (\(\Di\)) is lifted to straight horizontal lines. 
This covering space along with the lifts of \(\BNr_a(T)\) of a selection of tangles \(T\) is illustrated in \cref{fig:lifts:loose}.

\begin{figure}[t]
		\begin{subfigure}{0.45\textwidth}
		\centering
		\labellist \tiny
		\pinlabel $\textcolor{blue}{T_{3_1}}$ at 22 135
		\pinlabel $\textcolor{blue}{T_{4_1}}$ at 95 20
		\pinlabel $\textcolor{blue}{T_{8_{19}}}$ at 108 135
		\pinlabel $\textcolor{red}{Q_{\nicefrac{1}{2}}}$ at 22 283
		\pinlabel $\textcolor{red}{Q_{\nicefrac{-1}{2}}}$ at 110 283
		\pinlabel $\textcolor{red}{T_a}$ at 95 256
		\pinlabel $\textcolor{red}{T_b}$ at 65 233
		\pinlabel $\textcolor{red}{T_c}$ at 95 208
		\pinlabel $\textcolor{red}{T'_a}$ at 70 160
		\pinlabel $\textcolor{red}{T'_b}$ at 55 111
		\pinlabel $\textcolor{red}{T'_c}$ at 65 35
		\endlabellist
		\includegraphics{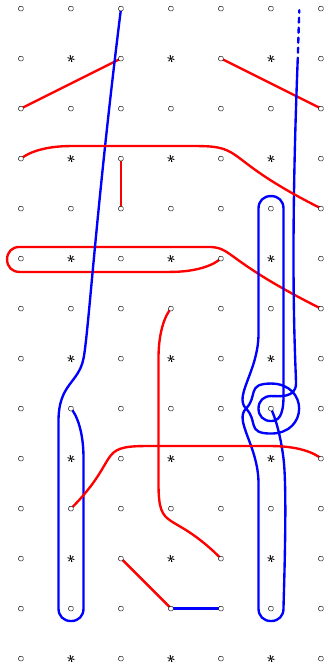}
		\caption{}\label{fig:lifts:loose}
	\end{subfigure}	
	\begin{subfigure}{0.45\textwidth}
		\centering
		\includegraphics{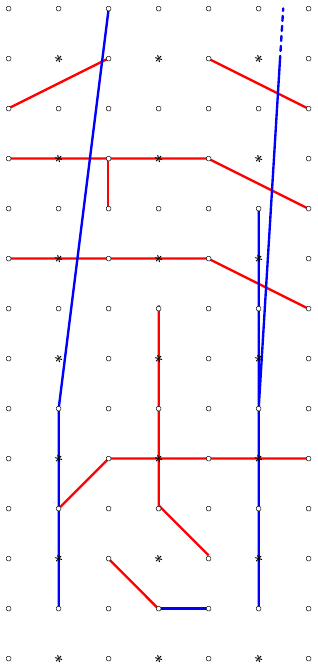}
		\caption{}\label{fig:lifts:pulled-tight}
	\end{subfigure}
	\caption{(a) The lifts of the curves \(\BNr_a(T)\) for some companion tangles (blue) and the pattern tangles (red) from \cref{fig:mainthm,fig:pattern_tangles:winding2}, and (b)~their singular peg-board representatives. 
	\(3_{1} = T_{2,3}\) and \(8_{19} = T_{3,4}\) denote the positive torus knots. 
	The slope of the finite slope segment of the curve in (b) for \(8_{19}\) is 16. 
	The invariants of \(T'_d\) and \(T'_e\) agree with the mirror of the invariant of \(T_{3_1}\) up to single vertical half-twists. 
	}\label{fig:lifts}
\end{figure}

Recall the following ``normal form'' for curves in~\(\FourPuncturedSphereKh\) from \cite[Section~2.3]{KWZ}; see also \cite[Section~7.1]{HRW}.

\begin{definition}\label{def:peg_board_rep}	
	Consider the standard Riemannian metric on \(\PuncturedPlane\), which induces a Riemannian metric on \(\FourPuncturedSphereKh\). 
	Fix some \(\varepsilon\) with \(0<\varepsilon<\nicefrac{1}{2}\). 
	An \textit{\(\varepsilon\)-peg-board representative} of a non-compact curve  \(\gamma\) in~\(\FourPuncturedSphereKh\) is a representative of the homotopy class of \(\gamma\) which has minimal length among all representatives that (away from the two ends) have distance \(\varepsilon\) to all four punctures in~\(\FourPuncturedSphereKh\). 
\end{definition}

The intuition behind this definition is to think of the four punctures of \(\FourPuncturedSphereKh\) as pegs of radii \(\varepsilon\) and then to imagine pulling the curve \(\gamma\) ``tight'', like a rubber band. 
This is illustrated in \cref{fig:lifts:pulled-tight}.

\begin{definition}
	Given \(\gamma\in C_p\), consider the lifts of \(\varepsilon\)-peg-board representatives of \(\gamma\), one for each \(\varepsilon\) with \(0<\varepsilon<\nicefrac{1}{2}\), all with respect to the same lift of the basepoint \(p\).
	We define a \textit{singular peg-board representative of~\(\gamma\)} as the piecewise linear curve that is the limit of these lifts as \(\varepsilon\rightarrow0\). 
	Since singular peg-board representatives of~\(\gamma\) are unique up to deck transformations, this allows us to define the \textit{slope \(\sigma_p(\gamma)\) of~\(\gamma\)} at~\(p\) as the slope of the singular peg-board representative of~\(\gamma\) near the end \(p\). 
	The cyclic order on \(C_p\) (\cref{def:i_p}) induces a strict total order on the curves of the same slope at~\(p\); we denote this order by \(<_p\). 
\end{definition}

\begin{definition}
	For any slope \(\nicefrac{p}{q}\), we write \(\arc{\nicefrac{p}{q}}\) for the curve \(\BNr(Q_{\nicefrac{p}{q}};\field)\). 
\end{definition}

For \(\varepsilon>0\) sufficiently small, the lift of any \(\varepsilon\)-peg-board representative of \(\arc{\nicefrac{p}{q}}\) is a straight line segment of slope \(\nicefrac{p}{q}\) connecting two non-special punctures. 
In particular, the curve \(\arc{\nicefrac{p}{q}}\) is independent of the field \(\field\). 
For \(\nicefrac{p}{q}\in\{0,\infty\}\), the absolute bigrading is well-defined. 
Otherwise, whenever necessary, we will specify the absolute bigrading on \(\arc{\nicefrac{p}{q}}\) by specifying the orientation of~\(Q_{\nicefrac{p}{q}}\). 


We now relate the invariants \(\theta_c\) from \cref{def:theta,def:theta:winding2} to the slopes of the multicurve invariants \(\BNr_a\).

\begin{proposition}\label{prop:eta-slopes}
	Let \(P\) be a pattern of wrapping number \(2\) and \(T_P\) a pattern tangle for \(P\) (see \cref{def:patterntangle}). 
	Let \(\sigma\) be the slope of \(\gb\coloneqq\BNr_a(T_P;\F_c)\) near 
	\[
	\begin{cases*}
	\text{the lower right tangle end}
	&
	if \(P\) has winding number 0 (i.e.\ \(\conn{T_P}=\Ni\));
	\\
	\text{the lower left tangle end}
	&
	if \(P\) has winding number ±2 (i.e.\ \(\conn{T_P}=\ConnectivityX\)). 
	\end{cases*}
	\]
	Then \(\theta_c(P)=\infty\) if and only if \(\sigma=\infty\). Moreover, 
	\begin{equation}\label{eq:eta-slopes}
	\theta_c(P)=
	\begin{cases*}
	\lceil\nicefrac{\sigma}{2}\rceil 
	&
	if \(\sigma\in\Q\smallsetminus 2\Z\) 
	\\
	\nicefrac{\sigma}{2} &
	if \(\sigma\in2\Z\) and \(\gb<_p\arc{\sigma}\)
	\\
	\nicefrac{\sigma}{2}+1 &
	if \(\sigma\in2\Z\) and \(\gb>_p\arc{\sigma}\)
	\end{cases*}
	\end{equation}
\end{proposition}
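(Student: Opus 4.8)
The proposition relates the invariant $\theta_c(P)$, defined via the twisting behaviour of $s_c(P_t(U))$, to the slope of the arc invariant $\gb = \BNr_a(T_P;\F_c)$ at a specific tangle end. The strategy is to combine three ingredients already available in the text: (i) the computation of $s_c$ of satellites of the unknot in terms of pairing arc invariants, via \cref{thm:s2s} and \cref{lem:cables-as-patterns}/\cref{lem:basic:satellites}; (ii) the twisting behaviour of arcs, \cref{thm:Kh:Twisting:arcs}, which says that adding twists to $T_P$ acts on $\gb$ by a Dehn twist on $\FourPuncturedSphereKh$; and (iii) the elementary geometry of how the pairing grading $s(\cdot,\cdot)$ and the auxiliary quantity $i_p$ change as a straight-slope curve rotates past a rational arc $\arc{n}$, for which the Four Curves Lemma (\cref{lem:4curves}) is the right bookkeeping tool.

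\textbf{Step 1: Reduce to pairing with rational arcs.}
First I would express $s_c(P_t(U))$ using the tangle decomposition. For winding number $0$, $P_t(U) = T_P(2t) = (T_P + Q_{-2t})(\infty)$ by \cref{lem:basic:satellites}(b), and since $Q_\infty$ is the trivial tangle with $\BNr_a(Q_\infty;\F_c)=\arc{\infty}$, \cref{thm:s2s} gives
\[
s_c(P_t(U)) = s(-\BNr_a(T_P+Q_{-2t};\F_c), \arc{\infty}) + 1.
\]
For winding number $\pm2$, the relevant closure is the $0$-closure rather than the $\infty$-closure (because $\conn{T_P}=\ConnectivityX$ forces the complementary rational filling to be $Q_0$ to obtain a knot), so $\arc{\infty}$ is replaced by $\arc{0}$ and there is a corresponding $6t$-type grading shift; this is where the shift appearing in \cref{thm:theta-rationals} originates, but for the purposes of \emph{this} proposition only the combinatorial pattern of jumps matters, not the absolute shift.

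\textbf{Step 2: Convert twisting of $T_P$ into a family of pairings with a fixed curve.}
By \cref{lem:basics:tangle_identities}(b), $\BNr_a(T_P+Q_{-2t};\F_c)$ is obtained from $\gb$ by applying the mapping class corresponding to $2t$ half-twists at the relevant pair of tangle ends; equivalently, by adjunction for the pairing (moving the twists across the Conway sphere via \cref{thm:Kh:Twisting:arcs} and \cref{lem:basics:tangle_identities}(b)), one may instead keep $\gb$ fixed and pair it against $\arc{-2t}$ (resp.\ the image of $\arc{\infty}$ or $\arc{0}$ under $t$ twists, which is a rational arc of slope $2t$ up to a $\QPI$ computation). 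So up to a global additive constant,
\[
s_c(P_t(U)) = s(-\gb, \arc{n_t}) + \text{const},
\]
where $\arc{n_t}$ is a rational arc whose slope runs linearly through $2\Z$ (for winding $0$) as $t$ varies. This is the key reformulation: the function $t\mapsto s_c(P_t(U))$ becomes a function of how the straight line $\arc{n_t}$ sweeps across the fixed curve $-\gb$.

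\textbf{Step 3: Analyse the sweep using the Four Curves Lemma.}
Now fix consecutive values $t-1, t$ and apply \cref{lem:4curves} with $\gi = -\gb$, $\gii$ a second copy, and $\gpi = \arc{n_{t-1}}$, $\gpii = \arc{n_t}$ — more precisely, one applies it to compare $s(-\gb,\arc{n_t})$ and $s(-\gb,\arc{n_{t-1}})$ by a standard "rotate one arc past a reference arc" argument, tracking the correction term $i_p$. The geometric content is: as the straight arc of slope $\sigma_0$ (the lift of $\arc{n_t}$) rotates, the pairing grading with $-\gb$ stays constant \emph{except} when the rotating arc crosses the slope $\sigma$ of $\gb$ at $p$; at that moment $i_p$ toggles, producing exactly the single jump by $-1$ predicted by \cref{prop:theta:winding0}. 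The three cases of \eqref{eq:eta-slopes} then fall out of a careful accounting of \emph{when} the jump occurs relative to the integer parametrisation: if $\sigma\notin 2\Z$ the jump happens strictly between two lattice twist-values, forcing the ceiling $\lceil\sigma/2\rceil$; if $\sigma\in 2\Z$, the rotating rational arc $\arc{\sigma}$ coincides in slope with $\gb$ at some $t$, and then the tie is broken by the total order $<_p$ on curves of equal slope — $\gb <_p \arc{\sigma}$ versus $\gb >_p \arc{\sigma}$ deciding whether the jump is "just before" or "just after", i.e.\ $\sigma/2$ versus $\sigma/2+1$. The statement $\theta_c(P)=\infty \iff \sigma=\infty$ corresponds to the degenerate case where $\gb$ is parallel to the reference arc $\arc{\infty}$ (resp.\ $\arc 0$) for all $t$, so the sweep never crosses it and $s_c(P_t(U))$ is constant.

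\textbf{Main obstacle.}
The genuinely delicate point is Step 3, specifically the bookkeeping of the \emph{direction} of the jump and the boundary cases where $\sigma\in 2\Z$: one must carefully match the cyclic/total order conventions of \cref{def:i_p} and the singular peg-board slope with the sign conventions in \cref{thm:s2s} (note the mirror $-\gb$ and the $+1$ shift) and with which tangle end — lower right for winding $0$, lower left for winding $\pm2$ — governs the relevant slope. Getting the off-by-one right (ceiling vs.\ floor, $\sigma/2$ vs.\ $\sigma/2+1$) is where essentially all the work lies; the rest is a routine application of the Pairing Theorem and the Four Curves Lemma. I would organise this by first treating the irrational-slope case cleanly, then handling $\sigma\in 2\Z$ as a limiting case, using \cref{lem:4curves} with one of the four curves equal to $\arc{\sigma}$ itself to pin down the order-dependent cases.
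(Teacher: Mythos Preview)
Your high-level strategy matches the paper's: reduce $s_c(P_t(U))$ to a pairing $s(\arc{2t},\gb)$ via \cref{thm:s2s}, then use the Four Curves Lemma to track how this changes with $t$ in terms of the slope $\sigma$ at the relevant puncture. However, two concrete points in your execution are off.

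First, a small slip in Step~1: $P_t(U)=T_P(2t)=Q_{-2t}\cup T_P$ directly, not $(T_P+Q_{-2t})(\infty)$; the $\infty$-filling absorbs horizontal twists, so your written identity is false. You recover the correct picture in Step~2, so this is not fatal.

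The real gap is your application of the Four Curves Lemma in Step~3. Taking $\gi=-\gb$ and ``$\gii$ a second copy'' yields the trivial identity $s(-\gb,\gpi)+s(-\gb,\gpii)=s(-\gb,\gpii)+s(-\gb,\gpi)$, because whenever $\gi=\gii$ both correction terms $i_p$ vanish by definition. What makes the paper's argument work is the introduction of a genuine \emph{fourth} reference curve: for winding number~$0$ one takes $\gi=\arcr{2n}$, $\gii=\arcr{2m}$, $\gpi=\gb$, $\gpii=\arcb{\infty}$, chosen so that $s(\arcr{2n},\arcb{\infty})=s_c(U)-1$ is known for every $n$. This reference curve is what creates the non-trivial $i_p$-term whose dependence on the cyclic order of $\arcr{2n},\gb,\arcr{2m},\arcb{\infty}$ produces the single jump. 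Your vague ``rotate one arc past a reference arc'' gestures at this but your explicit setup does not contain it.

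Relatedly, your treatment of winding number $\pm2$ misidentifies the difference from the winding~$0$ case. It is not about pairing $\gb$ against $\arc{0}$ instead of $\arc{\infty}$; in both cases the pairing computing $s_c(P_t(U))$ is with the curves $\arc{2t}$. The difference is (i) which puncture~$p$ is shared (lower right vs.\ lower left, since $\conn{T_P}$ changes), and (ii) which reference curve has the same connectivity as $\gb$ and computable pairings with the $\arc{2n}$. For $\conn{T_P}=\ConnectivityX$ the curve $\arcb{\infty}$ no longer works; the paper instead takes $\arcb{N}$ for a large odd $N$, so that $s(\arcr{2n},\arcb{N})=s_c(T_{2,2n-N})-1$ is a known affine function of $n$. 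Supplying these reference curves is exactly the ``off-by-one bookkeeping'' you flag as the main obstacle, and it is the step your proposal is missing.
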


\begin{proof}
	Let us first consider the winding number 0 case. 
	For \(n,m\in\Z\), let \(\arcr{2n}=\BNr(Q_{2n})\), and \(\arcr{2m}=\BNr(Q_{2m})\), where \(Q_{2n}\) and \(Q_{2m}\) are oriented compatibly with the \(\infty\)-rational filling. 
	Let \(p\) be the lower right tangle end. 
	Then by the Four Curves Lemma,
	\begin{align*}
	& s(\arcr{2n},\gb)+s(\arcr{2m},\arcb{\infty})
	-2\cdot i_p(\arcr{2n},\gb,\arcr{2m},\arcb{\infty})
	\\
	=~ &
	s(\arcr{2n},\arcb{\infty})+s(\arcr{2m},\gb)
	-2\cdot i_p(\arcr{2n},\arcb{\infty},\arcr{2m},\gb).
	\end{align*}
	By \cref{thm:s2s}, \(s_c(\arcr{2n},\arcb{\infty})=s_c(U)-1=s_c(\arcr{2m},\arcb{\infty})\). 
	Similarly, \(s_c(\arcr{2n},\gb)=s_c(T_P(2n))-1\) and \(s_c(\arcr{2m},\gb)=s_c(T_P(2m))-1\).
	Recall from \cref{lem:basic:satellites}\ref{lem:basic:satellites:pattern_of_unknot} that \(T_P(2n)=P_n(U)\) and \(T_P(2m)=P_m(U)\).
	Thus, 
	\[
	s_c(P_n(U))
	=
	s_c(P_m(U))
	+2\cdot i_p(\arcr{2n},\gb,\arcr{2m},\arcb{\infty})
	-2\cdot i_p(\arcr{2n},\arcb{\infty},\arcr{2m},\gb).
	\] 
	If \(\sigma=\infty\), then clearly \(i_p(\arcr{2n},\gb,\arcr{2m},\arcb{\infty})=i_p(\arcr{2n},\arcb{\infty},\arcr{2m},\gb)=0\), and so \(\theta_c(P)=\infty\). 
	If \(\sigma\neq\infty\), let \(m\) be equal to the right hand side of \eqref{eq:eta-slopes}. 
	Then \(i_p(\arcr{2n},\arcb{\infty},\arcr{2m},\gb)=0\) for all \(n\). 
	Moreover, \(i_p(\arcr{2n},\gb,\arcr{2m},\arcb{\infty})=1\) if and only if $n<m$; so
	\[
	s_c(P_n(U))
	=
	\begin{cases*}
	s_c(P_m(U))&
	if \(n\geq m\)
	\\
	s_c(P_m(U))+2
	&
	if \(n<m\),
	\end{cases*}
	\] 
	and hence \(\theta_c(P)=m\). 
	
	Let us now consider the winding number ±2 case. 
	As before, let \(n,m\in\Z\) and \(\gb=\BNr_a(T_P;\F_c)\), \(\arcr{2n}=\BNr(Q_{2n})\), and \(\arcr{2m}=\BNr(Q_{2m})\), but where now \(Q_{2n}\) and \(Q_{2m}\) are oriented compatibly with the \(+1\)-rational filling. 
	Furthermore, let \(N\gg0\) be some large odd integer and \(\arcb{N}=\BNr(Q_{N})\),\ where \(Q_N\) is oriented compatibly with the \(0\)-rational filling.
	Let \(p\) be the lower left tangle end. 
	Then by the Four Curves Lemma,
	\begin{align*}
	& s(\arcr{2n},\gb)+s(\arcr{2m},\arcb{N})
	-2\cdot i_p(\arcr{2n},\gb,\arcr{2m},\arcb{N})
	\\
	=~ &
	s(\arcr{2n},\arcb{N})+s(\arcr{2m},\gb)
	-2\cdot i_p(\arcr{2n},\arcb{N},\arcr{2m},\gb).
	\end{align*}
	By \cref{thm:s2s}, \(s_c(\arcr{2n},\arcb{N})=s_c(Q_N(2n))-1=s_c(T_{2,2n-N})-1\) and similarly, \(s_c(\arcr{2m},\arcb{N})=s_c(T_{2,2m-N})-1\). Since \(N\gg 0\), we may assume that \(2n-N\) and \(2m-N\) are both negative. Therefore,
	\[
		s_c(T_{2,2n-N})-s_c(T_{2,2m-N})
		=2(n-m).
	\]
	Also, \(s_c(\arcr{2n},\gb)=s_c(P_n(U))-1\) and \(s_c(\arcr{2m},\gb)=s_c(P_m(U))-1\).
	Thus, 
	\[
	s_c(P_n(U))
	=
	s_c(P_m(U))
	+2(n-m)
	+2\cdot i_p(\arcr{2n},\gb,\arcr{2m},\arcb{N})
	-2\cdot i_p(\arcr{2n},\arcb{N},\arcr{2m},\gb).
	\] 
	With \(\arcb{N}\) playing the role of \(\arcb{\infty}\), we may now argue in the same way as in the winding number 0 case.
\end{proof}

Companion tangles \(T_K\) have rather restricted multicurve invariants since they are cap-trivial:

\begin{lemma}[{cp.\ \cite[Section~3]{KWZ_strong_inversions}}]\label{lem:slopes-for-cap-trivial}
	Let \(T\) be a cap-trivial tangle with \(\conn{T}=\No\). 
	Then the slope of \(\BNr_a(T;\F_c)\) near the lower left tangle end is either infinite or an even integer \(n\in2\Z\), in which case \(\BNr_a(T_K;\F_c)\) is equal to \(\BNr(Q_{n};\F_c)\) up to some grading shift.  
	The slope of \(\BNr_a(T;\F_c)\) near the lower right tangle end is always an integer. 
	Moreover, if \(c=2\), this integer is even. 
\end{lemma}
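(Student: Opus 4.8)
The plan is to translate the statement into one about the immersed curve $\BNr_a(T;\F_c)$ and then extract the slope constraints from cap-triviality together with the twisting behaviour. Since $\conn{T}=\No$, \cref{thm:BNr-contains-a-single-arc} shows that $\BNr_a(T;\F_c)$ is a single non-compact curve with one end on the lower-left and one end on the lower-right puncture of $\FourPuncturedSphereKh$; write $\sigma_{\mathrm{l}}$ and $\sigma_{\mathrm{r}}$ for its slopes there. Two observations set the stage. First, replacing $T$ by $T+Q_{-2t}$ for $t\in\Z$ keeps $T$ cap-trivial, because $(T+Q_{-2t})(\infty)=T(\infty)=U$ by \cref{lem:basics:tangle_identities}, while by \cref{thm:Kh:Twisting:arcs} it only applies a fixed element of $\Mod(\FourPuncturedSphereKh)$ to $\BNr_a(T;\F_c)$, shifting each of $\sigma_{\mathrm{l}},\sigma_{\mathrm{r}}$ in steps of $2$; thus it suffices to prove the statement after such twists, and in particular whether a finite slope is an even integer is a twist-invariant. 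Second, up to these twists the cap-trivial tangles with $\conn{T}=\No$ are precisely the tangle quotients of strongly invertible knots (recalled after \cref{def:double_tangle}), which is what ties the statement to the structure theory of \cite[Section~3]{KWZ_strong_inversions}.

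The key lever is cap-triviality applied through the Pairing Theorem. Writing $U=T(\infty)=Q_\infty\cup T$ and $\arc{\infty}=\BNr(Q_\infty)$, \cref{thm:pairing:BNr} identifies $\BNr(U;\F_c)$ with $h^0q^1\HF(-\arc{\infty},\BNr(T;\F_c))$; decomposing $\BNr(T;\F_c)$ into its non-compact component $\BNr_a(T;\F_c)$ and its compact components $\delta_j$ and using that the wrapped Floer homology of $\arc\infty$ with any $\delta_j$ is finite-dimensional, hence $H$-torsion, one gets an isomorphism of $\F_c[H]$-modules
\[
\BNr(U;\F_c)\;\cong\;h^0q^1\Big(\HF(-\arc{\infty},\BNr_a(T;\F_c))\oplus\bigoplus_j\HF(-\arc\infty,\delta_j)\Big).
\]
As $\BNr(U;\F_c)\cong\F_c[H]$ up to grading shift is $H$-torsion-free, every summand on the right is $H$-torsion-free; the torsion modules $\HF(-\arc\infty,\delta_j)$ vanish, and $\HF(-\arc{\infty},\BNr_a(T;\F_c))\cong\F_c[H]$ up to grading. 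Geometrically this forces $\BNr_a(T;\F_c)$ to meet the fixed rational curve $\arc\infty$ as simply as possible, which pins the slope of $\BNr_a(T;\F_c)$ at the puncture shared with $\arc\infty$ to an integer — a non-integral slope there would produce surviving $H$-torsion in the pairing. Combining this with the analogous analysis of the other rational fillings $T(n)=Q_{-n}\cup T$, whose resulting knots are constrained by \cref{prop:theta:winding0,prop:theta:winding2} (for instance $C_{2,2t+1}(K)=T_K(2t+1)$, see \cref{lem:cables-as-patterns}), handles both ends: it yields $\sigma_{\mathrm{r}}\in\Z$ and $\sigma_{\mathrm{l}}\in2\Z\cup\{\infty\}$, the extra evenness on the left being the point extracted in \cite[Section~3]{KWZ_strong_inversions}. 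Finally, when $\sigma_{\mathrm{l}}=n\in2\Z$, twisting so that $n$ becomes $0$ leaves a cap-trivial tangle whose curve has slope $0$ at the lower-left puncture and still pairs with $\arc\infty$ to the $H$-torsion-free $\BNr(U;\F_c)$; an innermost-bigon/straightening argument using the curve classification of \cite[Theorem~1.5]{KWZ} then forces this curve to be $\arc{0}$ exactly, up to grading, and untwisting gives $\BNr_a(T;\F_c)=\arc{n}=\BNr(Q_n;\F_c)$ up to grading shift.

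The remaining claim, that $\sigma_{\mathrm{r}}$ is even when $c=2$, uses the extra rigidity available in characteristic $2$: by \cref{thm:Kh:Twisting:mod2} the whole multicurve $\BNr(T;\F_2)$ — not merely its non-compact part — transforms equivariantly under $\Mod(\FourPuncturedSphereKh)$, and combining this with the mutation symmetry $\BNr_a(\muty(T);\F_2)=\BNr_a(T;\F_2)$ of \cref{thm:BNra:mutation_invariance} and the evenness of $s_2$ excludes an odd value of $\sigma_{\mathrm{r}}$; this is again the parity argument of \cite[Section~3]{KWZ_strong_inversions}.

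The \emph{main obstacle} is the second paragraph: upgrading ``$\HF(-\arc{\infty},\BNr_a(T;\F_c))\cong\F_c[H]$, stably under all vertical twists'' — together with the corresponding constraints coming from the other rational fillings — into the rigid conclusions that $\sigma_{\mathrm{l}}\in2\Z\cup\{\infty\}$, that $\sigma_{\mathrm{r}}\in\Z$, and that $\BNr_a(T;\F_c)$ is an entire rational curve whenever $\sigma_{\mathrm{l}}$ is finite. This needs the peg-board normal form and the classification of multicurves on $\FourPuncturedSphereKh$ from \cite{KWZ}, and is exactly the analysis performed in \cite[Section~3]{KWZ_strong_inversions}; everything else is bookkeeping with gradings and a check that the conventions there agree with ours.
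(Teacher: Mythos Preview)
Your starting point is right: cap-triviality plus the Pairing Theorem gives
\(\HF(\arc{\infty},\BNr(T;\F_c))\cong\BNr(U;\F_c)\cong\F_c[H]\).
But you then under-exploit this. The paper draws the immediate geometric consequence: since the pairing has rank one over \(\F_c[H]\), the multicurve \(\BNr(T;\F_c)\)---and in particular \(\BNr_a(T;\F_c)\)---can be homotoped to be \emph{disjoint} from the vertical arc \(\arc{\infty}\). Once you know that, both slope statements drop out from elementary topology of non-compact curves in the complement of \(\arc{\infty}\) (together with \cref{thm:BNr-contains-a-single-arc} for the connectivity): there is no need to look at any other rational fillings, and certainly no need to invoke \cref{prop:theta:winding0} or \cref{prop:theta:winding2}. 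Your detour through ``analogous analysis of the other rational fillings \(T(n)\)'' and slice-torus constraints is both unnecessary and too vague to constitute a proof; in particular it does not actually pin down the lower-left slope. The same disjointness from \(\arc{\infty}\) also immediately gives that if the lower-left slope is finite the whole curve is \(\arc{n}\) for some even \(n\): a curve avoiding \(\arc{\infty}\) that approaches the lower-left puncture with finite slope has nowhere to go except straight across.

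Your argument for the \(c=2\) parity is a genuine gap. Mutation invariance (\cref{thm:BNra:mutation_invariance}) and the evenness of \(s_2\) do not obviously force the lower-right slope to be even; I do not see how to make that step precise. The paper instead uses a specific structural fact about \(\BNr_a\) over \(\F_2\): the curve does not wrap around the special puncture (this is \cite[Theorem~4.1]{KWZ_linear}). Combined with disjointness from \(\arc{\infty}\), this forces the slope at the lower-right end to be even. That is the input you are missing.
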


\begin{proof}
	By assumption, \(T(\infty)\) is the unknot \(U\). 
	So by the Gluing Theorem, 
	\[
	\HF\Big(\arcr{\infty},\BNr(T;\F_c)\Big)
	\cong
	\BNr(U)
	\cong
	\F_c[H].
	\]
	Therefore, \(\BNr(T;\F_c)\)---and in particular \(\BNr_a(T;\F_c)\)---can be homotoped such that it avoids the vertical arc \(\arcr{\infty}\) altogether. 
	Together with the assumption about the connectivity of \(T\) and \cref{thm:BNr-contains-a-single-arc}, this implies the first two statements.
	The final statement follows from the fact that the curve \(\BNr_a(T;\F_2)\) does not wrap around the special puncture \cite[Theorem~4.1]{KWZ_linear}. 
\end{proof}

\subsection{Patterns with winding number 0}

The following proposition is the basis for all results in this subsection. 

\begin{proposition}\label{prop:main:Ni}
	Let \(T\) be a Conway tangle with \(\conn{T}=\Ni\) and let \(T'\) be a cap-trivial tangle. 
	Let \(p\) be the lower right tangle end and \(n\in\Z\). 
	Let \(\g=\BNr_a(-T;\F_c)\) and \(\gp=\BNr_a(T';\F_c)\).  
	Then for all characteristics \(c\),
	\[
	s_c(T\cup T')
	=
	s_c(T(-2n))
	+
	\begin{cases*}
	2 &
	if \([\arcb{2n},\g,\gp,\arcr{\infty}]\) is \(p\)-ordered
	\\
	-2 &
	if \([\gp,\g,\arcb{2n},\arcr{\infty}]\) is \(p\)-ordered
	\\
	0 & otherwise
	\end{cases*}
	\]
\end{proposition}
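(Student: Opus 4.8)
The plan is to deduce \cref{prop:main:Ni} from the Pairing Theorem together with the Four Curves Lemma (\cref{lem:4curves}). First I would reduce the claim to an identity among pairing invariants of curves. By \cref{thm:s2s}, $s_c(T\cup T')=s(\g,\gp)+1$. Moreover $T(-2n)=Q_{2n}\cup T=T\cup Q_{2n}$ by the definition of the rational filling and \cref{lem:basics:tangle_identities}\ref{lem:basics:tangle_identities:symmetry}, so, using $\BNr_a(-T;\F_c)=\g$ and $\BNr_a(Q_{2n};\F_c)=\arcb{2n}$, \cref{thm:s2s} also gives $s_c(T(-2n))=s(\g,\arcb{2n})+1$. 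Subtracting, the proposition becomes equivalent to
\[
s(\g,\gp)-s(\g,\arcb{2n})=2\,i_p(\g,\gp,\arcr{\infty},\arcb{2n})-2\,i_p(\g,\arcb{2n},\arcr{\infty},\gp).
\]
To see that the two $i_p$-terms reproduce the three cases in the statement, I would invoke \cref{def:i_p}: since $p$-orderedness is invariant under cyclic permutation, $i_p(\g,\gp,\arcr{\infty},\arcb{2n})=1$ exactly when $[\arcb{2n},\g,\gp,\arcr{\infty}]$ is $p$-ordered, and $i_p(\g,\arcb{2n},\arcr{\infty},\gp)=1$ exactly when $[\gp,\g,\arcb{2n},\arcr{\infty}]$ is $p$-ordered; at most one of these can hold, and if neither does the right-hand side vanishes. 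The distinctness hypotheses in \cref{def:i_p} are automatic here because $\g,\arcr{\infty}$ and $\gp,\arcb{2n}$ lie in two distinct connectivity classes, so the only degenerate situations are $\g=\arcr{\infty}$ (in which case $T\cup T'$ and $T(-2n)$ are both unknots) or $\gp=\arcb{2n}$ (in which case $T\cup T'=T(-2n)$), both falling under ``otherwise''.

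To prove the displayed identity I would apply \cref{lem:4curves}. Let $p$ be the lower right tangle end and $p',p''$ the two other non-special punctures of $\FourPuncturedSphereKh$. Since $\conn{-T}=\conn{T}=\Ni$, \cref{thm:BNr-contains-a-single-arc} shows that $\g$ and $\arcr{\infty}=\BNr_a(Q_\infty)$ are both curves with ends on $p$ and on one fixed other puncture, say $p'$; and since $T'$ is cap-trivial, so that $\conn{T'}=\No$ (as for all companion tangles $T_K$), $\gp$ and $\arcb{2n}=\BNr_a(Q_{2n})$ both have ends on $p$ and $p''$. Thus \cref{lem:4curves} applies with $(\gi,\gii)=(\g,\arcr{\infty})$ and $(\gpi,\gpii)=(\gp,\arcb{2n})$, yielding
\[
s(\g,\gp)+s(\arcr{\infty},\arcb{2n})-2\,i_p(\g,\gp,\arcr{\infty},\arcb{2n})=s(\g,\arcb{2n})+s(\arcr{\infty},\gp)-2\,i_p(\g,\arcb{2n},\arcr{\infty},\gp).
\]

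Finally I would check that the two remaining pairing invariants are equal and hence cancel. By \cref{thm:s2s}, $s(\arcr{\infty},\arcb{2n})=s_c(Q_\infty\cup Q_{2n})-1=s_c(Q_{2n}(\infty))-1=s_c(U)-1$, since the $\infty$-filling of $Q_{2n}$ is an unknot (equivalently, $Q_{2n}$ is itself cap-trivial), and $s(\arcr{\infty},\gp)=s_c(Q_\infty\cup T')-1=s_c(T'(\infty))-1=s_c(U)-1$ because $T'$ is cap-trivial. This is the only point at which cap-triviality of $T'$ is used, and it parallels the role of $\arcb{\infty}$ (the unknot filling) in the proof of \cref{prop:eta-slopes}. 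Substituting into the previous display and cancelling $s_c(U)-1$ from both sides gives the displayed identity, and together with the first paragraph this would prove the proposition.

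The individual steps are short; the main obstacle I anticipate is purely organisational, namely the connectivity bookkeeping: verifying that the four curves $\g,\arcr{\infty},\gp,\arcb{2n}$ are distributed into exactly the two classes required by \cref{lem:4curves} with $p$ the shared (lower right) puncture, and lining up the cyclic-order conventions of \cref{def:i_p} with the precise cases written in \cref{prop:main:Ni}. This is essentially the same computation as the winding-number-0 case of \cref{prop:eta-slopes}, with the cap-trivial tangle $T'$ playing the part there played by a rational tangle capping off to the unknot.
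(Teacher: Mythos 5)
Your proof is correct and follows essentially the same route as the paper's: reduce to pairing invariants of the four curves \(\g,\arcr{\infty},\gp,\arcb{2n}\) via \cref{thm:s2s}, apply the Four Curves Lemma (\cref{lem:4curves}), and cancel \(s(\arcr{\infty},\arcb{2n})=s(\arcr{\infty},\gp)=s_c(U)-1\) using cap-triviality. One small inaccuracy in your connectivity bookkeeping: cap-triviality of \(T'\) only forces \(\conn{T'}\neq\Ni\), not \(\conn{T'}=\No\) --- for instance \(Q_1\) is cap-trivial but has connectivity \(\ConnectivityX\). The hypothesis \(\conn{T'}=\No\) (which holds for the intended applications, namely companion tangles \(T_K\), and which is needed for \(\gp\) to have an end at \(p\) so that the \(p\)-orderings in the statement are even defined) should be treated as an implicit assumption rather than as a consequence of cap-triviality; the paper leaves it implicit as well.
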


\begin{proof}
	 Let \(\arcb{2n}=\BNr(Q_{2n};\F_c)\), where \(Q_{2n}\) is oriented compatibly with the \(\infty\)-rational closure. 
	Then by the Four Curves Lemma,
	\begin{align*}
	& s(\g,\gp)+s(\arcr{\infty},\arcb{2n})
	-2\cdot i_p(\g,\gp,\arcr{\infty},\arcb{2n})
	\\
	=~ &
	s(\g,\arcb{2n})+s(\arcr{\infty},\gp)
	-2\cdot i_p(\g,\arcb{2n},\arcr{\infty},\gp).
	\end{align*}
	By \cref{thm:s2s}, \(s(\arcr{\infty},\arcb{2n})=s_c(Q_{2n}(\infty))-1=s_c(U)-1=-1\) and similarly, \(s(\arcr{\infty},\gp)=-1\), since \(T'\) is a cap-trivial tangle. 
	Moreover, \(s(\g,\gp)=s_c(T\cup T')-1\) and \(s(\g,\arcb{2n})=s_c(T(-2n))-1\). 
	Therefore, 
	\[
	s_c(T\cup T')
	=
	s_c(T(-2n))
	+2\cdot i_p(\g,\gp,\arcr{\infty},\arcb{2n})
	-2\cdot i_p(\g,\arcb{2n},\arcr{\infty},\gp).
	\]
	This is equivalent to the desired identity. 
\end{proof}

\begin{corollary}
	Let \(T\) be a Conway tangle with \(\conn{T}=\Ni\) such that \(\theta_c(P^T)=\infty\) for the associated pattern \(P^T\). 
	Then for any cap-trivial tangle  \(T'\), 
	\[
	s_c(T\cup T')
	=
	s_c(T(2t))
	\quad
	\text{for all }t\in\Z.
	\]
\end{corollary}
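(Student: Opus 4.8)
The plan is to derive this from \cref{prop:main:Ni}: applying that proposition with $n$ replaced by $-t$, its conclusion reads $s_c(T\cup T')=s_c(T(2t))+(\text{correction})$, so it suffices to show that the correction term vanishes for every $t\in\Z$ under the hypothesis $\theta_c(P^T)=\infty$.

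The first task is to determine the slopes at the lower right tangle end $p$ of the four curves entering \cref{prop:main:Ni}, namely $\g=\BNr_a(-T;\F_c)$, $\gp=\BNr_a(T';\F_c)$, $\arcr{\infty}$, and $\arcb{2n}$. Since $T$ is a pattern tangle of $P^T$ and $P^T$ has winding number $0$, \cref{prop:eta-slopes} identifies the hypothesis $\theta_c(P^T)=\infty$ with the statement that $\BNr_a(T;\F_c)$ has slope $\infty$ at $p$; because $\BNr_a(-T;\F_c)=-\BNr_a(T;\F_c)$ and mirroring fixes every puncture while negating slopes there (and $-\infty=\infty$), the curve $\g$ also has slope $\infty$ at $p$. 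The curve $\arcr{\infty}$ has slope $\infty$ at $p$ by definition, and $\arcb{2n}$ has slope $2n\neq\infty$. Finally, since $T'$ is cap-trivial, \cref{lem:slopes-for-cap-trivial} shows that $\gp$ has \emph{finite} (indeed integral) slope at $p$.

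With these slopes in hand, the vanishing of the correction term is a short combinatorial check. On a sufficiently small circle about $p$, each of the four curves meets the circle in a single point, and the cyclic order of these four points refines the order of the corresponding slopes at $p$. Since $\g$ and $\arcr{\infty}$ both have slope $\infty$ while $\gp$ and $\arcb{2n}$ do not, the points coming from $\g$ and from $\arcr{\infty}$ are adjacent in this cyclic order of four points. On the other hand, in each of the cyclic words $[\arcb{2n},\g,\gp,\arcr{\infty}]$ and $[\gp,\g,\arcb{2n},\arcr{\infty}]$ the entries $\g$ and $\arcr{\infty}$ are separated by another entry; hence neither word is $p$-ordered. (If two of the four curves coincide as ungraded curves, these words likewise fail to be $p$-ordered, since $p$-ordering presupposes pairwise distinctness, and the matching $i_p$-terms in the proof of \cref{prop:main:Ni} vanish as well.) Therefore the correction term in \cref{prop:main:Ni} is $0$, so $s_c(T\cup T')=s_c(T(-2n))$ for all $n\in\Z$; equivalently, $s_c(T\cup T')=s_c(T(2t))$ for all $t\in\Z$. (Note that $\theta_c(P^T)=\infty$ also makes $t\mapsto s_c(T(2t))=s_c(P^T_t(U))$ constant, so in fact the right-hand side equals $s_c(P^T(U))$.)

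The only step that needs some care is the slope bookkeeping in the second paragraph: verifying that passing from $T$ to its mirror $-T$ preserves the property ``slope $\infty$ at $p$'', and that ``finite slope at $p$'' is precisely what \cref{lem:slopes-for-cap-trivial} provides for a cap-trivial tangle $T'$ (whose relevant strand, in the setting of \cref{prop:main:Ni}, ends at $p$, so that $\gp\in C_p$ is well-defined). Everything after that is immediate from the definition of $p$-ordering.
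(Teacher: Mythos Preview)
Your proof is correct and follows essentially the same approach as the paper's: both use \cref{prop:eta-slopes} to convert the hypothesis $\theta_c(P^T)=\infty$ into $\sigma_p(\g)=\infty$, use \cref{lem:slopes-for-cap-trivial} to get that $\sigma_p(\gp)$ is finite, and then conclude via \cref{prop:main:Ni} that the correction term vanishes. The paper's proof is simply terser, leaving implicit the cyclic-order argument that you spell out in your second and third paragraphs.
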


\begin{proof}
	By \cref{prop:eta-slopes}, the slope of \(\BNr_a(T;\F_c)\) near the lower right tangle end is \(\infty\). 
	The same is true for \(\BNr_a(-T;\F_c)=-\BNr_a(T;\F_c)\). 
	On the other hand, the slope of \(\BNr_a(T;\F_c)\) is finite by \cref{lem:slopes-for-cap-trivial}. 
	So the claim follows from \cref{prop:main:Ni}. 
\end{proof}

\begin{corollary}\label{cor:main:Ni}
	Let \(T\) be a Conway tangle with \(\conn{T}=\Ni\) and let \(T'\) be a cap-trivial tangle. 
	Let \(\sigma'_2\) be the slope of \(\BNr_a(T';\F_2)\) near the lower right tangle end. 
	Then
	\[
	s_2(T\cup T')
	=
	s_2(T(-\sigma'_2)).
	\]
\end{corollary}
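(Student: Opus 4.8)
The plan is to derive \cref{cor:main:Ni} from \cref{prop:main:Ni} by showing that, over $\F_2$, the three cases in \cref{prop:main:Ni} collapse into one after choosing $2n = -\sigma'_2$. First I would invoke \cref{lem:slopes-for-cap-trivial} to control the non-compact curve $\gp = \BNr_a(T';\F_2)$: since $T'$ is cap-trivial, the lemma tells us that the slope $\sigma'_2$ of $\BNr_a(T';\F_2)$ near the lower right tangle end is an integer, and moreover an \emph{even} integer because $c = 2$. Write $\sigma'_2 = 2k$ for some $k \in \Z$, and set $n = -k$ in \cref{prop:main:Ni}, so that $\arcb{2n} = \arcr{-\sigma'_2} = \BNr(Q_{-2k};\F_2)$ is the straight line of slope $-\sigma'_2$. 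The target term $s_c(T(-2n)) = s_c(T(2k)) = s_c(T(-\sigma'_2))$ already matches the right-hand side of the claimed identity, so it remains only to show the correction term vanishes, i.e.\ that neither $[\arcb{2n},\g,\gp,\arcr{\infty}]$ nor $[\gp,\g,\arcb{2n},\arcr{\infty}]$ is $p$-ordered.

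The key step is a local analysis of the singular peg-board representatives near the puncture $p$ (the lower right tangle end). Since $\g = \BNr_a(-T;\F_2)$ and $T$ has $\conn{T} = \Ni$, the curve $\g$ has an end on $p$; let $\sigma_{\g}$ be its slope there. There are two cases. If $\sigma_{\g} \neq \sigma'_2$: since $\gp$ near $p$ is, up to grading shift, the straight line $\arc{\sigma'_2}$ of slope $\sigma'_2$ (again by \cref{lem:slopes-for-cap-trivial}, as $\sigma'_2$ is a finite even integer forces $\BNr_a(T';\F_2)$ to agree with $\BNr(Q_{\sigma'_2})$ near $p$ up to grading), the curve $\arcb{2n} = \arc{-\sigma'_2} = \arc{\sigma'_2}$ coincides with $\gp$ as an ungraded curve near $p$. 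But the quantities $i_p(\gi,\gpi,\gii,\gpii)$ are only nonzero when $\gpi \neq \gpii$ as ungraded curves (see \cref{def:i_p}), so with $\arcb{2n}$ and $\gp$ playing the roles of $\gpi$ and $\gpii$ (or vice versa) in \cref{prop:main:Ni}, both correction indices vanish identically and $s_2(T\cup T') = s_2(T(-\sigma'_2))$. If $\sigma_{\g} = \sigma'_2$: then $\g$ and $\gp$ also share an ungraded curve near $p$ only if their peg-board representatives literally coincide there, which by the pairing theorem applied to $T(\infty) = U$ cannot cause the wrapped Floer homology to misbehave --- more carefully, I would argue that when the slopes of $\g$ and $\gp$ at $p$ agree, the cyclic-order condition $[\gp,\g,\arcb{2n},\arcr{\infty}]$ with $\arcb{2n}$ of the \emph{same} slope is impossible because $\arcb{2n}$ sits between $\g$ and $\gp$ in the $<_p$ order exactly when its slope equals theirs and its ``height'' is intermediate, which one rules out by the choice $-\sigma'_2 = \sigma'_2 = \sigma_{\g}$.

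I expect the main obstacle to be the careful bookkeeping in this second case: pinning down the interaction between the total order $<_p$ on curves of equal slope and the precise position of the straight line $\arc{-\sigma'_2}$ relative to both $\g$ and $\gp$. In fact, a cleaner route --- and the one I would actually pursue --- is to sidestep the case distinction entirely by noting that \cref{prop:eta-slopes} computes $\theta_2(P^T)$ in terms of the slope of $\BNr_a(T_P;\F_2)$; applied to $-T$ (whose slope at $p$ is $-\sigma_{\g}$), and combined with the already-proven winding number $0$ results of this subsection (or with the observation that $T\cup T' = T' \cup T$ and the symmetric statement of \cref{prop:main:Ni}), one can feed the even integer $-\sigma'_2$ directly into the formula. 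Concretely: since $\sigma'_2 \in 2\Z$, the curve $\arc{-\sigma'_2}$ has the same slope as $\gp$ at $p$, so the pair $(\arc{-\sigma'_2}, \gp)$ fails the distinctness hypothesis needed for $i_p$ to be nonzero, forcing the correction term in \cref{prop:main:Ni} to be $0$; hence $s_2(T\cup T') = s_2(T(-(-\sigma'_2) \cdot (-1))) = s_2(T(-\sigma'_2))$, wait --- with $2n = -\sigma'_2$ we get $s_2(T(-2n)) = s_2(T(\sigma'_2))$, so I would instead choose $2n = \sigma'_2$, giving $s_2(T(-2n)) = s_2(T(-\sigma'_2))$ as required. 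Thus the only real content is: (i) $\sigma'_2 \in 2\Z$ by \cref{lem:slopes-for-cap-trivial}, and (ii) when $\arcb{2n}$ has slope $\sigma'_2 = $ slope of $\gp$ at $p$, both $i_p$-terms in \cref{prop:main:Ni} vanish because one of the two ``$\gp$-type'' curves in each $4$-tuple duplicates $\gp$ as an ungraded curve (they are both straight segments of slope $\sigma'_2$ toward $p$, and $\gp$ agrees with $\BNr(Q_{\sigma'_2};\F_2)$ near $p$ up to grading by \cref{lem:slopes-for-cap-trivial}).
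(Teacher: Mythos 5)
Your derivation of $\sigma'_2 \in 2\Z$ from \cref{lem:slopes-for-cap-trivial} and the choice $2n = \sigma'_2$ (after the self-correction) is the right starting point, and your first case (slope of $\g$ at $p$ different from $\sigma'_2$) does reach a correct conclusion: if $\g$ has a different slope from $\arcb{\sigma'_2}$ and $\gp$ at $p$, the $p$-orderings $[\arcb{\sigma'_2},\g,\gp,\arcr{\infty}]$ and $[\gp,\g,\arcb{\sigma'_2},\arcr{\infty}]$ are impossible (though the reason is that $\g$ cannot lie in the short cyclic arc between the two slope-$\sigma'_2$ curves, not that the curves coincide). However, your second case has a genuine gap, and the claim you use to close it is false.

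You repeatedly assert that $\gp = \BNr_a(T';\F_2)$ ``agrees with $\BNr(Q_{\sigma'_2};\F_2)$ near $p$ up to grading shift'' and hence ``duplicates'' $\arcb{\sigma'_2}$ as an ungraded curve, so that the distinctness hypothesis in \cref{def:i_p} fails and the $i_p$-terms vanish automatically. But \cref{lem:slopes-for-cap-trivial} does not say this. The lemma asserts that $\BNr_a(T';\F_c)$ is a straight line $\BNr(Q_n;\F_c)$ only when the slope near the \emph{lower left} tangle end is finite --- and that slope is generically infinite (this is precisely the failure of $\theta_c$-rationality, e.g.\ for $T' = T_{3_1}$). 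For the \emph{lower right} end $p$, the lemma only guarantees that the slope is an even integer; it does not force $\gp$ to equal $\arcb{\sigma'_2}$ as an ungraded curve, and for most companion tangles (cf.\ \cref{fig:lifts}) it does not. Consequently $\gp$ and $\arcb{\sigma'_2}$ are typically distinct elements of $C_p$ of equal slope, the distinctness hypothesis is satisfied, and you still have to rule out the exceptional orderings of \cref{prop:main:Ni}. That is exactly the content of the paper's \cref{lem:no-trapped-curves}: if $\g$ sits strictly between $\arcb{\sigma}$ and $\gp$ in the order $<_p$ with all three curves of the same even slope, then an algebraic argument (mapping cone with $H$, the Clean-Up Lemma, and the fact that odd-length rational curves detect connectivity) forces $\conn{T} = \No$, contradicting $\conn{T} = \Ni$. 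This nontrivial lemma is the missing step in your argument.
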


\begin{proof}	
	By \cref{lem:slopes-for-cap-trivial}, \(\sigma'_2\) is an even integer. 
	Setting \(n=\nicefrac{\sigma'_2}{2}\) in \cref{prop:main:Ni}, we obtain the desired identity, unless \([\arcb{\sigma'_2},\g,\gp,\arcr{\infty}]\) or \([\gp,\g,\arcb{\sigma'_2},\arcr{\infty}]\) are \(p\)-ordered. 
	The following lemma says that this is impossible. 
\end{proof}

\begin{lemma}\label{lem:no-trapped-curves}
	Let \(\g=\BNr_a(-T;\F_2)\) for some tangle \(T\) and \(\gp=\BNr_a(T';\F_2)\) for some cap-trivial tangle \(T'\). 
	Suppose \(\g\) and \(\gp\) have the same slope \(\sigma\in2\Z\) near the lower right tangle end \(p\). 
	Suppose \(\arcb{\sigma}<_p\g<_p\gp\) or \(\gp<_p\g<_p\arcb{\sigma}\). Then \(\conn{T}=\No\). 
\end{lemma}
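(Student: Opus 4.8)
My plan is to argue by contradiction, ruling out \(\conn{T}=\Ni\). By \cref{thm:BNr-contains-a-single-arc} the curve \(\g=\BNr_a(-T;\F_2)=-\BNr_a(T;\F_2)\) is a single arc joining two non-special punctures of \(\FourPuncturedSphereKh\); since it has a slope at the lower right puncture \(p\) it has an end there, and as mirroring preserves connectivity, \(\conn{T}\in\{\Ni,\No\}\), so this reduction is legitimate. Assume then \(\conn{T}=\Ni\); then \(\g\) joins \(p\) to the upper right puncture \(p'\), the same one that \(\arcr{\infty}=\BNr_a(Q_\infty)\) joins to \(p\). Next I would collect what is known about the other curves: cap-triviality of \(T'\) rules out \(\conn{T'}=\Ni\) (otherwise \(T'(\infty)\) is not a knot), and the hypothesis that \(\gp\) has an end at \(p\) rules out \(\conn{T'}=\ConnectivityX\), so \(\conn{T'}=\No\) and \(\gp=\BNr_a(T';\F_2)\) joins \(p\) to the lower left puncture \(p''\); by \cref{lem:slopes-for-cap-trivial} its slope at \(p''\) is \(\infty\) or an even integer, and the latter would force \(\gp=\arcb{\sigma}\) as ungraded curves (comparing slopes at \(p\)), contradicting the strict inequality \(\arcb{\sigma}<_p\g<_p\gp\) (or its reverse), so the slope of \(\gp\) at \(p''\) is \(\infty\). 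Finally, as in the proof of \cref{lem:slopes-for-cap-trivial}, \(T'(\infty)=U\) together with the Pairing Theorem (\cref{thm:pairing:BNr}) lets us homotope \(\gp\) off \(\arcr{\infty}\) altogether, and over \(\F_2\) neither \(\g\) nor \(\gp\) winds around the special puncture \(\ast\) by \cite[Theorem~4.1]{KWZ_linear}.

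With these facts in hand, I would pass to the covering space \(\PuncturedPlane\) and argue with singular peg-board representatives. Fix a lift \(\tilde p\) of \(p\); near \(\tilde p\), each of \(\arcb{\sigma}\), \(\g\), \(\gp\) is a straight segment of slope \(\sigma\) landing on \(\tilde p\), the lift of \(\arcb{\sigma}\) extends to the full line \(\ell\) of slope \(\sigma\) through \(\tilde p\), and the trapping hypothesis places \(\g\) strictly between \(\arcb{\sigma}\) and \(\gp\) there (the two orderings being interchanged by the obvious symmetry). The structural facts above pin down the class of \(\gp\) up to winding around \(p\): disjointness from the lifts of \(\arcr{\infty}\), absence of \(\ast\)-windings, and slope \(\infty\) at \(p''\) together confine \(\gp\), once it leaves a neighbourhood of \(\tilde p\), to the same ``channel'' along \(\ell\), where it runs parallel to \(\ell\) and then turns off vertically towards a lift of \(p''\). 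Consequently \(\arcb{\sigma}\) and the initial portion of \(\gp\), together with a small arc at \(\tilde p\), cobound a bounded region \(R\) whose frontier touches the puncture lattice only in lifts of \(p\) and \(p''\), and \(\g\) starts at \(\tilde p\) into \(R\). I would then run a no-escape argument: because \(\g\) is in minimal position with both \(\arcb{\sigma}\) and \(\gp\) and does not wind around \(\ast\), it cannot leave \(R\) across a subarc of \(\arcb{\sigma}\) or of \(\gp\) without creating a bigon, and it cannot cross \(\arcr{\infty}\); hence \(\g\) terminates inside \(R\), necessarily at a lift of \(p\) or of \(p''\). Since \(\g\) is a single arc with two distinct ends, one of them \(p\), the other is \(p''\) — that is, \(\conn{T}=\No\), contradicting \(\conn{T}=\Ni\) and completing the argument.

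The crux, which I expect to be the real work, is the middle part of the second paragraph: making precise the channel that confines \(\gp\), and hence the region \(R\) (equivalently, bounding how far \(\gp\) may wind around \(\tilde p\) before veering off to \(p''\)), and then verifying that no slope-\(\sigma\) arc trapped in \(R\) that avoids \(\ast\) can reach a lift of \(p'\). This should come down to a finite combinatorial check in a small neighbourhood of \(\ell\) in \(\PuncturedPlane\): one enumerates the ways the initial segment of \(\g\) can wrap the first few lattice punctures adjacent to \(\tilde p\), identifies their puncture types (\(p\), \(p'\), or \(p''\)), and observes that reaching \(p'\) would force \(\g\) either to cross \(\arcr{\infty}\) or to wind around \(\ast\). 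Making the bookkeeping airtight — tracking both orderings allowed by the hypothesis, which lift of \(p''\) the arc \(\gp\) actually ends at, and the interaction with the winding region around \(p\) used to define \(<_p\) — is the delicate point; one could attempt to streamline it by first applying a mapping class to normalise \(\sigma\), but since such a twist need not preserve cap-triviality of \(T'\), I would instead carry out the check directly for general even \(\sigma\).
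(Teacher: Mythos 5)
Your strategy — pass to the cover $\PuncturedPlane$ and run a geometric ``no-escape'' argument trapping $\g$ between $\arcb{\sigma}$ and $\gp$ — is genuinely different from the paper's proof, but it has a real gap at exactly the step you flag as the crux. The contradiction you aim for rests on the assertion that $\g$ ``cannot cross $\arcr{\infty}$'', and later that ``reaching $p'$ would force $\g$ either to cross $\arcr{\infty}$ or to wind around $\ast$''. But you only established that $\gp$ can be homotoped off $\arcr{\infty}$, and that used cap-triviality of $T'$; the tangle $T$ is arbitrary, so nothing prevents $\g=\BNr_a(-T;\F_2)$ from meeting $\arcr{\infty}$ (and generically it will, since $T(\infty)$ need not be the unknot). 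Once $\g$ may cross $\arcr{\infty}$, the claimed obstruction to it reaching a lift of $p'$ disappears. Separately, the minimal-position step is not as clean as asserted: $\g$ could leave $R$ by wrapping a lattice puncture sitting on $\partial R$ (a lift of $p$ or $p''$), which creates no innermost bigon with $\arcb{\sigma}$ or $\gp$, so minimality does not rule it out. You acknowledge that the remaining ``finite combinatorial check'' has not been carried out, and with the constraints you have collected I do not see how to close it.

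The paper's proof is algebraic and uses a different, stronger input. One normalises $\sigma=0$ by applying naturality under twisting (\cref{thm:Kh:Twisting:mod2}) and replacing $T$ by $T+Q_{-\sigma}$, which preserves $\conn{T}$; up to mirroring one may assume $\arcb{0}<_p\g<_p\gp$. The order $\arcb{0}<_p\gp$ forces the chain complex $X$ representing $\g$ to have one of two explicit local shapes near $p$. Taking the mapping cone of $H\cdot\id_X$ — the algebraic passage from the Bar-Natan complex to the Khovanov complex — and applying the Clean-Up Lemma \cite[Lemma~2.17]{KWZ} splits off a length-one rational curve $\mathbf{r}_1(0)$ inside $\Khr$. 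The decisive step is then \cite[Theorem~5.6]{KWZ_linear}: rational Khovanov curves of odd length detect connectivity, giving $\conn{T}=\No$. So the lemma is not a consequence of the soft geometric data you assembled (endpoints, slopes, $\ast$-avoidance, minimal position); it hinges on a detection theorem for the $H\mapsto 0$ specialisation, and it is not clear that a purely geometric separation argument can substitute for that input.
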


\begin{figure}[t]
	\begin{subfigure}{0.3\textwidth}
		\centering
		\labellist \tiny
		\pinlabel $S$ at 31 65
		\pinlabel $S$ at 52 65
		\pinlabel $S$ at 31 87
		\pinlabel $S$ at 52 87
		\pinlabel $D$ at 41.2 63
		\pinlabel $D$ at 41.2 89
		\pinlabel $D$ at 28 76
		\pinlabel $D$ at 55 76
		\endlabellist
		\includegraphics{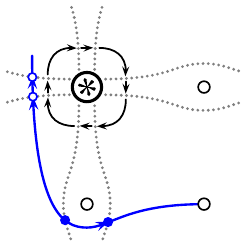}
		\caption{}
	\end{subfigure}	
	\begin{subfigure}{0.3\textwidth}
		\centering
		\labellist \tiny
		\pinlabel $S$ at 31 65
		\pinlabel $S$ at 52 65
		\pinlabel $S$ at 31 87
		\pinlabel $S$ at 52 87
		\pinlabel $D$ at 41.2 63
		\pinlabel $D$ at 41.2 89
		\pinlabel $D$ at 28 76
		\pinlabel $D$ at 55 76
		\endlabellist
		\includegraphics{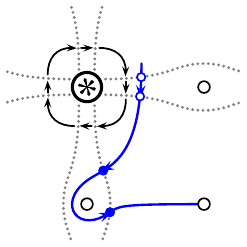}
		\caption{}
	\end{subfigure}
	\begin{subfigure}{0.3\textwidth}
		\centering
		\labellist \tiny
		\pinlabel $S$ at 31 65
		\pinlabel $S$ at 52 65
		\pinlabel $S$ at 31 87
		\pinlabel $S$ at 52 87
		\pinlabel $D$ at 41.2 63
		\pinlabel $D$ at 41.2 89
		\pinlabel $D$ at 28 76
		\pinlabel $D$ at 55 76
		\endlabellist
		\includegraphics{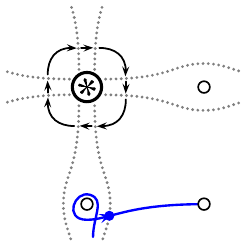}
		\caption{}
	\end{subfigure}
	\caption{An illustration for the proof of \cref{lem:no-trapped-curves}: lifts of curves \(\gp\) to \(\PuncturedPlane\)}%
	\label{fig:slope-for-cap-trivial}
\end{figure}

\begin{proof}
	Since by \cref{thm:Kh:Twisting:mod2} the multicurve invariant \(\BNr\) is natural with respect to twisting, we may assume that \(\sigma=0\) after replacing \(T\) by \(T+Q_{-\sigma}\). Note that this preserves the connectivity of \(T\). Then \(\arcb{\sigma}=\arcb{0}\) is represented by the chain complex consisting of a single object \(\Do\).
	Up to mirroring, we may also assume \(\arcb{0}<_p\g<_p\gp\).
	Then in particular \(\arcb{0}<_p\gp\), so the complex representing \(\gp\) near \(p\) has the following form:
	\[
	\begin{tikzcd}[column sep=40pt]
	\Do
	\arrow[leftarrow]{r}{D_\bullet}
	&
	\Do
	&
	\cdots
	\arrow[leftarrow,swap]{l}{S}
	\end{tikzcd}
	\quad
	\text{or}
	\quad
	\begin{tikzcd}[column sep=40pt]
	\Do
	\arrow[leftarrow]{r}{D_\bullet^j}
	&
	\Do
	&
	\cdots
	\arrow[leftrightarrow,swap]{l}{S^i}
	\end{tikzcd}
	\]
	for some integers \(i\geq1\) and \(j\geq2\); see \cref{fig:slope-for-cap-trivial}.
	Since \(\arcb{0}<_p\g<_p\gp\), the same is true for the  chain complex $X$ representing \(\g\). 
	Taking the mapping cone with \(H\cdot\id_X\) produces the following complex \(X'\) given by the solid arrows:
	\[
	\begin{tikzcd}[column sep=40pt,row sep=30pt]
	\Do
	\arrow[leftarrow]{r}{D_\bullet}
	\arrow{d}{H}
	&
	\Do
	\arrow{d}{H}
	&
	\cdots
	\arrow[dotted,bend right=30,swap]{ll}{S}
	\arrow[leftarrow,swap]{l}{S}
	\\
	\Do
	\arrow[leftarrow]{r}{D_\bullet}
	&
	\Do
	\arrow[dotted,swap]{lu}{\iota_\bullet}
	&
	\cdots
	\arrow[dotted,bend left=30]{ll}{S}
	\arrow[leftarrow,swap]{l}{S}
	\end{tikzcd}
	\quad
	\text{or}
	\quad
	\begin{tikzcd}[column sep=40pt,row sep=30pt]
	\Do
	\arrow[leftarrow]{r}{D_\bullet^j}
	\arrow{d}{H}
	&
	\Do
	\arrow{d}{H}
	&
	\cdots
	\arrow[leftrightarrow,swap]{l}{S^i}
	\\
	\Do
	\arrow[leftarrow]{r}{D_\bullet^j}
	&
	\Do
	\arrow[dotted,swap]{lu}{D_\bullet^{j-1}}
	&
	\cdots
	\arrow[leftrightarrow,swap]{l}{S^i}
	\end{tikzcd} 
	\]
	Applying the Clean-Up Lemma \cite[Lemma~2.17]{KWZ} to the dotted arrows changes the complex as follows: 
	\[
	\begin{tikzcd}[column sep=40pt,row sep=30pt]
	\Do
	\arrow{d}{H}
	&
	\Do
	\arrow{d}{H}
	&
	\cdots
	\arrow[leftarrow,swap]{l}{S}
	\\
	\Do
	&
	\Do
	&
	\cdots
	\arrow[leftarrow,swap]{l}{S}
	\end{tikzcd}
	\quad
	\text{or}
	\quad
	\begin{tikzcd}[column sep=40pt,row sep=30pt]
	\Do
	\arrow{d}{H}
	&
	\Do
	\arrow{d}{H}
	&
	\cdots
	\arrow[leftrightarrow,swap]{l}{S^i}
	\\
	\Do
	&
	\Do
	&
	\cdots
	\arrow[leftrightarrow,swap]{l}{S^i}
	\end{tikzcd} 
	\]
	(The rest of the complex remains unchanged.)
	In particular, we have split off a component of the form
	\[
	\begin{tikzcd}
	\Do
	\arrow{r}{H}
	&
	\Do
	\end{tikzcd}
	\]
	which represents a rational curve \(\mathbf{r}_1(0)\) of length 1 and slope 0 \cite[Section~2]{KWZ_linear}. Since \(X'\) is a component of \(\Khr(T)\), so is the component \(\mathbf{r}_1(0)\). Rational curves of odd length detect connectivity by \cite[Theorem~5.6]{KWZ_linear}, so \(\conn{T}=\No\).
\end{proof}

	We expect the proof of \cref{lem:no-trapped-curves} to generalise to arbitrary fields of coefficients. 
	The deep result about the invariant \(\BNr(T';\F_2)\) that is contained in the hypothesis of the lemma is the fact that for \(c=2\), the slope \(\sigma'\) is even. 
	Computations suggest these slopes to be always even, see also \cite[Questions~3.4]{KWZ_strong_inversions}. 

The proof of \cref{lem:no-trapped-curves} can be easily adapted to show the following general fact about rational tangle replacements in tangle sums:

\begin{proposition}\label{prop:tangle_sum_replacement}
	Let \(T\) be a cap-trivial tangle with connectivity \(\No\). Then there exists a unique integer \(\sigma\in2\mathbb{Z}\) with the following property: 
	For any oriented tangle \(R\) with connectivity \(\Ni\),
	\[
	\BNr_a(T+R;\F_2)
	=
	\BNr_a(Q_{\sigma}+R;\F_2)
	\]
	as bigraded curves, where \(R\) is oriented compatibly with the tangle sum \(T+R\) and the orientation of \(Q_{\sigma}+R\) is compatible with the orientation of \(R\). 
	\qed
\end{proposition}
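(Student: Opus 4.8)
The plan is to adapt the proof of \cref{lem:no-trapped-curves}. First I would identify \(\sigma\): by \cref{lem:slopes-for-cap-trivial} the slope of \(\gamma\coloneqq\BNr_a(T;\F_2)\) near the lower left tangle end is either \(\infty\) or an even integer \(\sigma\in2\Z\), and the value \(\infty\) can be excluded because the proof of that lemma shows \(\BNr(T;\F_2)\) may be homotoped off the vertical arc \(\arc{\infty}=\BNr(Q_\infty;\F_2)\); lifted to \(\PuncturedPlane\), a non-compact curve whose ends lie on the two punctures joined by \(\No\) and which is disjoint from all vertical lattice edges is confined to a vertical strip and hence is the straight segment \(\arc{\sigma}\), up to a grading shift. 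Using \cref{thm:Kh:Twisting:mod2} — which makes \(\BNr(-;\F_2)\) equivariant under the mapping class group, in particular under adding \(Q_{-\sigma}\) — I would reduce to the case \(\sigma=0\), so that \(\gamma\) is carried by the length-zero complex \([\Do]\). After this reduction it remains to prove \(\BNr_a(T+R;\F_2)=\BNr_a(Q_0+R;\F_2)\) up to grading; equivalently, that the compact components of \(\BNr(T;\F_2)\), which \(\BNr(Q_0;\F_2)\) does not possess, do not affect the non-compact component of \(\BNr(T+R;\F_2)\).

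The heart of the argument is to establish exactly this, with \(R\) taking over the role that \(T\) played in \cref{lem:no-trapped-curves}. Working with chain complexes over \(\BNAlgH\), I would choose a reduced model of \(\DD(T;\F_2)\) whose non-compact summand is \([\Do]\) and whose remaining summands carry the compact components, and then feed it into a chain-level computation of \(\DD(T+R;\F_2)\) from \(\DD(T;\F_2)\) and \(\DD(R;\F_2)\). If a compact summand were to merge with the non-compact part in \(\DD(T+R;\F_2)\), then — just as in \cref{lem:no-trapped-curves}, by forming the relevant mapping cone with \(H\cdot\id\) and applying the Clean-Up Lemma \cite[Lemma~2.17]{KWZ} — one would split off a rational curve of odd length in the associated reduced-Khovanov multicurve, which by \cite[Theorem~5.6]{KWZ_linear} would impose a connectivity constraint incompatible with \(\conn{R}=\Ni\). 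Consequently \(\BNr_a(T+R;\F_2)\) depends only on \(\BNr_a(T;\F_2)\), so it equals \(\BNr_a(Q_0+R;\F_2)\) as an ungraded curve. As a cross-check, the same conclusion can be approached more formally via \cref{thm:s2s}: rewriting \((T+R)(\nicefrac{p}{q})=T\cup(Q_{-\nicefrac{p}{q}}+\muty(R))\) with \cref{lem:basics:tangle_identities} and pairing shows that \(s_c\) of every rational filling of \(T+R\) and of \(Q_\sigma+R\) agree up to a fixed constant, which determines the non-compact curves up to the same trapped-curve ambiguity that the connectivity hypothesis removes.

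To finish I would fix the gradings and record uniqueness. The grading shift relating \(\BNr_a(T;\F_2)\) and \(\BNr(Q_\sigma;\F_2)\) is governed by the orientation and the linking number of \(T\), which matches \(\lk(Q_\sigma)\) (cf.\ the discussion of relative versus absolute bigradings and \cite[Proposition~4.8]{KWZ}); since the identical data governs the absolute bigradings on \(\BNr_a(T+R;\F_2)\) and \(\BNr_a(Q_\sigma+R;\F_2)\) once compatible orientations are fixed, the ungraded equality upgrades to the bigraded one in the statement. For uniqueness, the slope of \(\BNr_a(Q_\sigma+R;\F_2)\) near the tangle end inherited from \(Q_\sigma\) equals \(\sigma\) independently of \(R\), so \(\sigma\) is forced to be the slope of \(\BNr_a(T;\F_2)\) and is thus determined by \(T\).

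The hard part will be the middle step: one must set up a chain-level model of the tangle sum \(T+R\) in which the compact summands of \(\DD(T;\F_2)\) can genuinely be isolated, and then run the mapping-cone-with-\(H\cdot\id\) and Clean-Up argument of \cref{lem:no-trapped-curves} in that model. Tracking orientations and grading shifts through the twist reduction and through the sum is the remaining, purely bookkeeping, work; it is also the reason the statement is currently restricted to \(\F_2\), since both \cref{thm:Kh:Twisting:mod2} and the Clean-Up argument are so far available only there.
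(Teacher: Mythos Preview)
Your proposal has a genuine gap in the opening step. You claim that \(\BNr_a(T;\F_2)\) is always the straight arc \(\arc{\sigma}\), arguing that disjointness from \(\arc{\infty}\) forces the lift to avoid ``all vertical lattice edges'' and hence to be a straight segment. But the lifts of \(\arc{\infty}\) are only \emph{some} of the vertical unit edges in \(\PuncturedPlane\) --- those connecting the appropriate pair of puncture types --- so disjointness from \(\arc{\infty}\) does not prevent the curve from winding around other lattice points. A concrete counterexample is the double tangle \(T_{3_1}\) of the right-handed trefoil: it is cap-trivial with \(\conn{T_{3_1}}=\No\), yet \(\BNr_a(T_{3_1};\F_2)\) is not a straight arc (see \cref{fig:lifts}). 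Indeed, by \cref{prop:theta-rational} a knot \(K\) is \(\theta_2\)-rational precisely when \(\BNr_a(T_K;\F_2)=\arc{n}\) for some \(n\), and the trefoil is not \(\theta_2\)-rational (\cref{ex:rationality}); its curve has slope \(\infty\) near the lower-left end and slope \(8=2\theta_2(3_1)\) near the lower-right end. This also shows that \(\sigma\) in the proposition is the slope at the lower \emph{right} end --- which is always an even integer over \(\F_2\) by \cref{lem:slopes-for-cap-trivial} --- not the lower-left end as you wrote; the lower-left slope can genuinely be \(\infty\) and cannot be excluded.

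Because of this, the rest of your plan rests on a false premise. After the twist reduction, \(\BNr_a(T)\) need not equal \(\arc{0}\); the non-compact components of \(\BNr(T)\) and \(\BNr(Q_0)\) themselves differ, not merely the compact ones. So arguing that ``compact summands do not merge with the non-compact part'' is insufficient. The adaptation of \cref{lem:no-trapped-curves} must instead show that \(\BNr_a(T+R)\) is insensitive to the entire shape of \(\BNr_a(T)\) away from its lower-right end --- the end that is glued in the sum --- and it is here that the hypothesis \(\conn{R}=\Ni\) (hence \(\conn{T+R}=\Ni\)) enters, via the odd-length rational-curve connectivity detection, to rule out the unwanted contributions.
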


As with \cref{cor:main:Ni}, we expect \cref{prop:tangle_sum_replacement} to hold over arbitrary coefficients. 

\begin{remark}
	\cref{prop:tangle_sum_replacement} implies that for \(R=Q_{\nicefrac{p}{q}}\) with \(p\) odd and \(q\) even, 
	\[
	\BNr_a(T+R;\F_2)
	=
	\BNr_a(Q_{\sigma+\nicefrac{p}{q}};\F_2)
	=
	\arc{\sigma+\nicefrac{p}{q}}.
	\]
	This explains the phenomenon that the authors already exploited in \cite{postcard} for showing that the Rasmussen invariant \(s_3\) is distinct from other \(s_c\): The curves \(\BNr_a(T_{8_{19}}+Q_{\nicefrac{-1}{2}};\F_c)\) are equal to \(\arc{\sigma_c-\nicefrac{1}{2}}\) and, interestingly, \(\sigma_3\) is different from \(\sigma_c\) for other coefficients \(c\).  
\end{remark}

  The integer invariant \(\theta_c(K)\) from \cref{def:theta} admits the following geometric interpretation. 

\begin{corollary}\label{cor:theta-from-slopes}
	Given a knot \(K\subset S^3\), let \(\sigma_c\) be the slope of \(\BNr_a(T_K;\F_c)\) near the bottom right tangle end. 
	Then
	\(\theta_c(K)=\lceil\nicefrac{\sigma_c}{2}\rceil\). 
	In particular, \(\theta_2(K)=\nicefrac{\sigma_2}{2}\). 
\end{corollary}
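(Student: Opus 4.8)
The plan is to unwind \cref{def:theta}, which declares \(\theta_c(K)=\theta_{\nicefrac{s_c}{2}}(\Wh,K)\), and to compute the function \(t\mapsto s_c(\Wh_t(K))\) directly from \cref{prop:main:Ni}. Write \(T_\Wh=Q_{\nicefrac{-1}{2}}\) for the pattern tangle of the positive Whitehead pattern in \cref{fig:Q_m1_2}. Then \cref{lem:basic:satellites}\ref{lem:basic:satellites:tangle_decomposition} gives \(\Wh_t(K)=(T_\Wh+Q_{-2t})\cup T_K\), where \(\conn{T_\Wh+Q_{-2t}}=\Ni\) and \(T_K\) is cap-trivial; moreover \cref{lem:basics:tangle_identities}\ref{cor:twistingAndFilling} gives \((T_\Wh+Q_{-2t})(-2n)=T_\Wh(2(t-n))=\Wh_{t-n}(U)\) for \(n\in\Z\), whose Rasmussen invariant is \(2\) if \(t<n\) and \(0\) if \(t\geq n\) by \cref{ex:nu_of_twist_knots}. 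I would then apply \cref{prop:main:Ni} with \(T=T_\Wh+Q_{-2t}\), \(T'=T_K\) and \(p\) the lower right tangle end. The decisive simplification is that \(\g\coloneqq\BNr_a(-(T_\Wh+Q_{-2t});\F_c)=\BNr_a(Q_{(4t+1)/2};\F_c)=\arc{(4t+1)/2}\) is a straight rational curve of \emph{half-integer} slope \(2t+\nicefrac12\) near \(p\); in particular its slope differs from that of \(\arcb{2n}\) (the even integer \(2n\)), of \(\gp\coloneqq\BNr_a(T_K;\F_c)\) (the integer \(\sigma_c\), by \cref{lem:slopes-for-cap-trivial}), and of \(\arcr{\infty}\).

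Set \(m\coloneqq\lceil\nicefrac{\sigma_c}{2}\rceil\). For each \(t\in\Z\) I would invoke \cref{prop:main:Ni} with \(n=t\), so that the first summand is \(s_c(\Wh_0(U))=0\) and the correction is read off from the cyclic order at \(p\) of \(\arcb{2t}\), \(\g\), \(\gp\), \(\arcr{\infty}\), of slopes \(2t\), \(2t+\nicefrac12\), \(\sigma_c\), \(\infty\). If \(t\geq m\) then \(\sigma_c\leq 2t\): either \(\sigma_c<2t\), in which case the cyclic order at \(p\) is \((\gp,\arcb{2t},\g,\arcr{\infty})\), or \(\sigma_c=2t\), in which case \(\gp\) and \(\arcb{2t}\) share the slope \(\sigma_c\) while \(\g\) does not, so \(\g\) cannot lie cyclically between them; either way neither \([\arcb{2t},\g,\gp,\arcr{\infty}]\) nor \([\gp,\g,\arcb{2t},\arcr{\infty}]\) is \(p\)-ordered, the correction vanishes, and \(s_c(\Wh_t(K))=0\). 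If \(t<m\) then \(2t<2t+\nicefrac12<\sigma_c<\infty\), so the cyclic order at \(p\) is \((\arcb{2t},\g,\gp,\arcr{\infty})\); thus \([\arcb{2t},\g,\gp,\arcr{\infty}]\) is \(p\)-ordered (and \([\gp,\g,\arcb{2t},\arcr{\infty}]\) is not), the correction is \(+2\), and \(s_c(\Wh_t(K))=2\). Equivalently, \(s_c(\Wh_t(K))=s_c(\Wh_{t-m}(U))\) for all \(t\in\Z\); by the discussion following \cref{ex:nu_of_twist_knots} (uniqueness of the jump point, \cref{prop:theta:winding0}) this identifies \(\theta_c(K)=m=\lceil\nicefrac{\sigma_c}{2}\rceil\). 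For \(c=2\), \cref{lem:slopes-for-cap-trivial} makes \(\sigma_2\) even, so \(\lceil\nicefrac{\sigma_2}{2}\rceil=\nicefrac{\sigma_2}{2}\); for \(c=2\) one may also bypass the case analysis and instead apply \cref{cor:main:Ni} to \(T=T_\Wh+Q_{-2t}\) and \(T'=T_K\), which yields \(s_2(\Wh_t(K))=s_2(\Wh_{t-\nicefrac{\sigma_2}{2}}(U))\) for all \(t\) in one stroke.

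The step deserving the most care — and the likeliest source of trouble — is the cyclic-order bookkeeping at \(p\): I must check that near \(p\) each of \(\arcb{2t}\), \(\arc{(4t+1)/2}\), \(\arcr{\infty}\) is an embedded straight segment, that \(\gp=\BNr_a(T_K;\F_c)\) approaches \(p\) with a well-defined finite integer slope \(\sigma_c\) (this is exactly \cref{lem:slopes-for-cap-trivial}, and is the only place cap-triviality of \(T_K\) enters), and that a curve of non-integer slope cannot lie cyclically between two curves of a common integer slope. I should also pin down the orientation convention underlying \cref{prop:main:Ni} by matching it against the known monotonicity from \cref{prop:theta:winding0} — the function must jump \emph{down} by \(1\) — which confirms the dichotomy ``\(2\) for \(t<m\), \(0\) for \(t\geq m\)'' rather than its reverse. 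Finally, it is worth stressing that this argument needs no appeal to \cref{lem:no-trapped-curves}: the trapping phenomenon is sidestepped precisely because \(\g\) has half-integer slope, which is also the reason the argument goes through uniformly for all \(c\) even though \cref{cor:main:Ni} itself is stated only over \(\F_2\).
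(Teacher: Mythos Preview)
Your proof is correct and follows essentially the same approach as the paper's: both apply \cref{prop:main:Ni} to the Whitehead pattern tangle $T=Q_{-2t-\nicefrac12}$ with $T'=T_K$ and $n=t$, so that the first summand $s_c(T(-2t))=s_c(U)=0$ vanishes and the correction term is governed by whether $2t+\nicefrac12<\sigma_c$, i.e.\ $t<\lceil\nicefrac{\sigma_c}{2}\rceil$. Your more detailed case analysis of the cyclic order (including the boundary case $\sigma_c=2t$) and the remark on avoiding \cref{lem:no-trapped-curves} via the half-integer slope are correct elaborations of the same idea.
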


\begin{proof}
	By definition, \(\Wh_t(K)=T_K(2t+\nicefrac{1}{2})=T\cup T'\) with \(T=Q_{-2t-\nicefrac{1}{2}}\) and \(T'=T_K\). 
	By \cref{lem:slopes-for-cap-trivial}, the slope \(\sigma'=\sigma_c\) of \(\BNr_a(T')\) is an integer. 
	So by \cref{prop:main:Ni}, for any fixed \(n=t\), we obtain
	\[
	s_c(\Wh_t(K))
	=
	s_c(Q_{-2t-\nicefrac{1}{2}}(-2t))
	+
	\begin{cases*}
	2 &
	if \(2t+\nicefrac{1}{2}<\sigma_c\)
	\\
	0 & otherwise
	\end{cases*}
	\]
	Observe that \(Q_{-2t-\nicefrac{1}{2}}(-2t)=Q_{-\nicefrac{1}{2}}(0)\) is the unknot, so the first summand above vanishes. 
	Thus, for	\(t=\lceil\nicefrac{\sigma_c}{2}\rceil\), 
	\(s_c(\Wh_t(K))=0\) and \(s_c(\Wh_{t-1}(K))=2\). 
	So \(t=\lceil\nicefrac{\sigma_c}{2}\rceil\) agrees with \(\theta_2(K)\). 
\end{proof}

We can finally prove \cref{thm:main}, which we restate here for clarity:

\maintm*


\begin{proof}
	The condition on the winding number means that we can write \(P(K)=T_P\cup T_K\) for some pattern tangle \(T_P\) with connectivity \(\conn{T_P}=\Ni\). 
	We now combine \cref{cor:main:Ni,cor:theta-from-slopes}. 
\end{proof}

\subsection{\texorpdfstring{Patterns with winding number \(\bm{\pm2}\)}{Patterns with winding number ±2}}

The following two results play the same role for winding number \(\pm2\) patterns as \cref{prop:main:Ni} does for winding number 0 patterns. 

\begin{proposition}\label{prop:theta-rationals}
	Let \(T\) be a Conway tangle with \(\conn{T}=\ConnectivityX\) and let \(T'\) be a cap-trivial tangle with \(\lk(T')=0\). 
	Suppose \(\BNr_a(T';\F_c)\) is equal to \(\arc{2n}=\BNr(Q_{2n};\F_c)\) for some integer \(n\) up to some grading shift. 
	Then 
	\[
	s_c(T\cup T')
	=
	s_c(T(-2n))+6n.
	\]
\end{proposition}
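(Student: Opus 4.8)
The plan is to follow the template of the proof of \cref{prop:main:Ni}, using the Pairing Theorem to reduce $s_c(T\cup T')$ to $s_c$ of a rational filling of $T$, with one new ingredient: the grading shift between $\BNr_a(T';\F_c)$ and the straight curve $\arc{2n}$ has to be tracked, and it is exactly this shift that produces the summand $6n$.

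First, by \cref{thm:s2s} (and the observation, used already in its proof, that compact components of multicurves pair to $H$-torsion and hence do not affect $s(\cdot,\cdot)$),
\[
s_c(T\cup T') = s\big(-\BNr_a(T;\F_c),\,\BNr_a(T';\F_c)\big) + 1.
\]
Writing $T(-2n) = Q_{2n}\cup T$ (from $T(\nicefrac pq) = Q_{-\nicefrac pq}\cup T$) and applying \cref{thm:s2s} to the knot $Q_{2n}\cup T$, with $Q_{2n}$ oriented so that this is a coherently oriented knot, gives $s_c(T(-2n)) = s(-\BNr_a(T;\F_c),\,\arc{2n}) + 1$, where $\arc{2n} = \BNr(Q_{2n};\F_c)$ carries the bigrading determined by that orientation. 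By hypothesis the underlying ungraded curves of $\BNr_a(T';\F_c)$ and $\arc{2n}$ coincide, so they differ by a grading shift $q^a h^b$; since $\HF(\g,q^ah^b\gp) = q^ah^b\HF(\g,\gp)$, this gives $s_c(T\cup T') = s_c(T(-2n)) + a$, and everything is reduced to proving $a = 6n$.

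To identify $a$ I would pin down the absolute bigrading of $\BNr_a(T';\F_c)$ and compare it with that of $\arc{2n}$. Since $\lk(T') = 0$, the bigrading of $\BNr_a(T';\F_c)$ is orientation-independent, hence canonical; and cap-triviality, $T'(\infty) = U$, together with \cref{thm:s2s} applied to $Q_\infty\cup T'$, yields $s(\arcr{\infty},\BNr_a(T';\F_c)) = s_c(U) - 1 = -1$. Because $Q_0$ is the identity for the tangle sum, $Q_{2n}(\infty) = Q_\infty\cup Q_{2n} = Q_0(\infty) = U$ as well, so the same computation shows that $\BNr_a(T';\F_c)$ agrees with $\arc{2n}$ when the latter is bigraded using the orientation of $Q_{2n}$ compatible with $Q_\infty\cup Q_{2n}$ — with \emph{no} $q$-grading discrepancy. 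The point is then that this orientation differs from the one used in the reduction above (the one compatible with $Q_{2n}\cup T$): a quick trace of how the single component of $Q_\infty\cup Q_{2n}$ runs through the two strands of $Q_{2n}$, compared with that of $Q_{2n}\cup T$, shows that these two orientations of $Q_{2n}$ differ by reversing exactly one of its two strands — and this is where $\conn{T} = \ConnectivityX$ (as opposed to $\conn{Q_\infty} = \Ni$) enters. Reversing one strand of a $2$-component tangle shifts its $\BNr$-bigrading by $q^{6\ell}h^{2\ell}$ for $\ell$ its linking number (the standard Khovanov/Bar-Natan grading shift under reversal of a sublink, specialised via \cref{defprop:s}), and $\lk(Q_{2n}) = \pm n$; hence $a = \pm 6n$, and checking signs against the paper's conventions gives $a = 6n$.

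I expect the main obstacle to be precisely this last orientation bookkeeping: confirming rigorously that the two relevant orientations of $Q_{2n}$ really differ by a single strand reversal (a small argument about connectivities and how the gluing identifies boundary components), and then getting the signs of the $q^{6\ell}$-shift and of $\lk(Q_{2n})$ to conspire to $+6n$ rather than $-6n$. An alternative that packages the same data differently is to run the Four Curves Lemma (\cref{lem:4curves}) with the curves $-\BNr_a(T;\F_c)$, $\BNr_a(T';\F_c) = \arc{2n}$, $\arcr{2m}$ and $\arcr{N}$ for $N\gg0$ odd, exactly as in the winding number $\pm2$ part of the proof of \cref{prop:eta-slopes}, and to evaluate three of the four pairings using \cref{ex:Rasmussen_of_two_n_torus_knots}; there the same orientation choices resurface as the graded representatives of $\arcr{2m}$ and $\arcr{N}$, so this route trades the strand-reversal argument for an equivalent amount of grading bookkeeping.
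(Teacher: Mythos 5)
Your proposal takes essentially the same route as the paper's proof: both pin down the $q^{6n}$-shift between $\BNr_a(T';\F_c)$ and $\arc{2n}$ by combining $\lk(T')=0$ (orientation-independence of the bigrading), cap-triviality (comparison of $s(\cdot,\cdot)$ against $\arcr{\infty}$, both giving $s_c(U)-1$), and the $q^{6\lk}$-shift under reversal of one strand with $\lk(Q_{2n})=\mp n$, then conclude via \cref{thm:s2s}. The orientation bookkeeping you flag as the remaining obstacle is exactly what the paper resolves by introducing the strand-reversed tangles $\tilde{T}'$ and $\tilde{Q}_{2n}$ and comparing their $\infty$-fillings.
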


\begin{proposition}\label{prop:theta-non-rationals}	
	Let \(T\) be a Conway tangle with \(\conn{T}=\ConnectivityX\) and let \(T'\) be a cap-trivial tangle. 
	Let \(\g=\BNr_a(-T;\F_c)\) and \(\gp=\BNr_a(T';\F_c)\).  
	Suppose the slope of \(\gp\) near the lower left tangle end \(p\) is equal to \(\infty\). 
	Then for all coefficients \(c\) and \(n\in\Z\),
	\[
	s_c(T\cup T')
	=
	s_c(T(-2n))
	+
	s_c(T'(2n+1))
	-
	\begin{cases*}
	2 &
	if \([\g,\arc{2n},\gp]\) is \(p\)-ordered
	\\
	0 & otherwise
	\end{cases*}
	\]
\end{proposition}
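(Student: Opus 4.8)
The plan is to adapt the proof of \cref{prop:main:Ni} to the winding number \(\pm2\) setting: apply the Four Curves Lemma (\cref{lem:4curves}) to four suitably chosen non-compact curves, and then read off each pairing using \cref{thm:s2s}. Write \(p\) for the lower left tangle end and let \(p'\), \(p''\) be the two other non-special punctures, labelled so that \(\g=\BNr_a(-T;\F_c)\) has ends on \(p\) and \(p'\) — it does, by \cref{thm:BNr-contains-a-single-arc} together with \(\conn{T}=\ConnectivityX\) — and so that \(\gp=\BNr_a(T';\F_c)\) has ends on \(p\) and \(p''\), using that a cap-trivial tangle has connectivity \(\No\). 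Fix a large odd integer \(N\), put \(\arc{2n}=\BNr(Q_{2n};\F_c)\) and \(\arc{N}=\BNr(Q_{N};\F_c)\), and orient \(Q_{2n}\) and \(Q_N\) exactly as in the winding number \(\pm2\) part of the proof of \cref{prop:eta-slopes}, so that all tangle unions appearing below are oriented knots. Since \(\conn{Q_{2n}}=\No\) the curve \(\arc{2n}\) has ends on \(p\) and \(p''\), and since \(\conn{Q_{N}}=\ConnectivityX\) (as \(N\) is odd) the curve \(\arc{N}\) has ends on \(p\) and \(p'\); this is precisely the configuration required by \cref{lem:4curves}.

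Applying \cref{lem:4curves} to the pairs \(\g,\arc{N}\) (ends on \(p,p'\)) and \(\gp,\arc{2n}\) (ends on \(p,p''\)) and evaluating each of the four terms with \cref{thm:s2s}, we get \(s(\g,\gp)=s_c(T\cup T')-1\), \(s(\g,\arc{2n})=s_c(T(-2n))-1\) (recall \(\g=-\BNr_a(T)\) and \(T(-2n)=Q_{2n}\cup T\)), \(s(\arc{N},\gp)=s_c(T'(N))-1\), and
\[
s(\arc{N},\arc{2n})=s_c(Q_{2n}(N))-1=s_c(T_{2,N-2n})-1=(N-2n)-2,
\]
where \(Q_{2n}(N)=T_{2,N-2n}\) follows from \cref{lem:basics:tangle_identities} and the definitions, and the last equality uses \(N\gg0\) and \cref{ex:Rasmussen_of_two_n_torus_knots}. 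Since \(\gp\) has slope \(\infty\) at \(p\), \cref{lem:slopes-for-cap-trivial} shows that \(T'\) is not \(\theta_c\)-rational, so \cref{prop:theta:winding2} — applied to the winding number \(\pm2\) satellite \(T'(2t+1)=P^{T'+Q_{-1}}_t(U)\) (cf.\ \cref{lem:basic:satellites})  — implies that \(t\mapsto s_c(T'(2t+1))\) has no stationary point and hence is affine of slope \(2\); in particular \(s_c(T'(N))=s_c(T'(2n+1))+(N-2n-1)\). Feeding all of this into the Four Curves identity, the terms involving \(N\) cancel, leaving
\[
s_c(T\cup T')=s_c(T(-2n))+s_c(T'(2n+1))+2\,i_p(\g,\gp,\arc{N},\arc{2n})-2\,i_p(\g,\arc{2n},\arc{N},\gp).
\]

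It remains to evaluate the two \(i_p\)-terms, and this is where I expect the main difficulty. For \(N\) large the slope of \(\arc{N}\) at \(p\) is \(N\), which exceeds every finite slope at \(p\) among the four curves, so in the counterclockwise cyclic order around \(p\) the curve \(\arc{N}\) is the last one before the slope-\(\infty\) curves (namely \(\gp\), and also \(\g\) if it too has slope \(\infty\) at \(p\)). A short case analysis — on whether \(\g\) has finite or infinite slope at \(p\), and on where \(\arc{2n}\) lies in the cyclic order — then shows that \(i_p(\g,\gp,\arc{N},\arc{2n})=0\) in every case, while \(i_p(\g,\arc{2n},\arc{N},\gp)\) equals \(1\) precisely when \([\g,\arc{2n},\gp]\) is \(p\)-ordered and \(0\) otherwise (the degenerate cases, where two of \(\g,\arc{2n},\gp\) agree as ungraded curves, are subsumed in the degenerate case of \cref{lem:4curves} and give the same answer). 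Substituting this into the displayed identity, the \(i_p\)-contribution becomes \(-2\) when \([\g,\arc{2n},\gp]\) is \(p\)-ordered and \(0\) otherwise, which is exactly the asserted formula. The two points I expect to need the most care are making this cyclic-order case analysis precise — in particular ruling out the orderings that would produce a spurious \(+2\) — and justifying the implications ``\(\gp\) has slope \(\infty\)'' \(\Rightarrow\) ``\(T'\) is not \(\theta_c\)-rational'' \(\Rightarrow\) ``\(t\mapsto s_c(T'(2t+1))\) is affine'' used to replace \(s_c(T'(N))\) by \(s_c(T'(2n+1))\); the remainder is a routine variant of the proof of \cref{prop:main:Ni}.
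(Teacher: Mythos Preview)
Your approach is correct, but it takes an unnecessarily circuitous route compared to the paper. The paper applies the Four Curves Lemma with the same \(\g\), \(\gp\), \(\arc{2n}\), but uses \(\arc{2n+1}\) as the fourth curve instead of your \(\arc{N}\) with \(N\gg0\). This single change eliminates both of the difficulties you flagged. Since \(Q_{2n}(2n+1)=U\), one obtains \(s(\arc{2n+1},\arc{2n})=s_c(U)-1=-1\) directly, and \(s(\arc{2n+1},\gp)=s_c(T'(2n+1))-1\) is exactly the term appearing in the statement. There is then no need to compute \(s_c(T_{2,N-2n})\), no need to invoke the affine behaviour of \(t\mapsto s_c(T'(2t+1))\), and no need to let \(N\to\infty\). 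The \(i_p\) analysis also becomes a one-line observation: since \(\gp\) has slope \(\infty\) while \(\arc{2n}\) and \(\arc{2n+1}\) have consecutive finite slopes, \([\gp,\arc{2n+1},\arc{2n}]\) is never \(p\)-ordered, so \(i_p(\g,\gp,\arc{2n+1},\arc{2n})=0\); the remaining term \(i_p(\g,\arc{2n},\arc{2n+1},\gp)\) is then visibly \(1\) if and only if \([\g,\arc{2n},\gp]\) is \(p\)-ordered.

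Two minor issues with your write-up: your phrase ``\(T'\) is not \(\theta_c\)-rational'' is an abuse of terminology, since \(\theta_c\)-rationality is defined only for knots, not arbitrary cap-trivial tangles. What you actually need is \(\theta_c(P^{T'+Q_{-1}})=\infty\), and the correct citation for that is \cref{prop:eta-slopes} combined with \cref{thm:Kh:Twisting:arcs} (to see that the Dehn twist adding \(Q_{-1}\) preserves slope \(\infty\) at \(p\)), not \cref{lem:slopes-for-cap-trivial}. Once corrected, your argument goes through; but the paper's choice of \(2n+1\) in place of \(N\) sidesteps the entire detour.
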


\begin{proof}[Proof of \cref{prop:theta-rationals}]
	Let us equip \(T'\) and \(Q_{2n}\) with an orientation compatible with the \(1\)-rational closure. 
	The key ingredient in this proof is that we can work out the absolute grading shift between $\gp\coloneqq\BNr_a(T';\F_c)$ and $\arc{2n}$. 
	For this, let $\tilde{T}'$ be the tangle obtained from $T'$ by reversing the orientation of one of the two strands. By assumption, \(\lk(T')=0\), so \(\BNr(T';\F_c)=\BNr(\tilde{T}';\F_c)\) as bigraded multicurves by \cite[Proposition~4.8]{KWZ}. 
	By the same principle, if \(\tilde{Q}_{2n}\) denotes the oriented tangle obtained from $Q_{2n}$ by reversing the orientation of one of the two strands, then
	\[
	\BNr(\tilde{Q}_{2n};\F_c)=q^{-6\lk(Q_{2n})}\cdot\BNr(Q_{2n};\F_c)=q^{6n}\cdot\BNr(Q_{2n};\F_c).
	\]
	Together with \(\tilde{T}'(\infty)=U\), these considerations imply that
	\begin{align*}
	s(\BNr(\Ni;\F_c),\gp)
	&=
	s_c(\tilde{T}'(\infty))
	=
	s_c(U)-1
	=
	s_c(\tilde{Q}_{2n}(\infty))
	\\
	&=
	s(\BNr(\Ni;\F_c),\BNr(\tilde{Q}_{2n};\F_c))
	=
	s(\BNr(\Ni;\F_c),q^{6n}\cdot\BNr(Q_{2n};\F_c)).
	\end{align*}
	Hence \(\gp=q^{6n}\cdot\BNr(Q_{2n};\F_c)\) and
	\begin{align*}
	s_c(T\cup T')
	&=
	s(\BNr_a(-T;\F_c),\gp)+1
	=
	s(\BNr_a(-T;\F_c),q^{6n}\cdot\BNr(Q_{2n};\F_c))+1
	\\
	&
	=
	s_c(T\cup Q_{2n})+6n
	=
	s_c(T(-2n))+6n.
	\qedhere
	\end{align*}
\end{proof}

\begin{proof}[Proof of \cref{prop:theta-non-rationals}]
	Let \(\arcr{2n+1}=\BNr(Q_{2n+1};\F_c)\) and \(\arcb{2n}=\BNr(Q_{2n};\F_c)\), where \(Q_{2n+1}\) is oriented compatibly with the \(\infty\)-rational filling and \(Q_{2n}\) is oriented compatibly with the \(1\)-rational filling. 
	Then by the Four Curves Lemma, 
	\begin{align*}
	& s(\g,\gp)+s(\arcr{2n+1},\arcb{2n})
	-2\cdot i_p(\g,\gp,\arcr{2n+1},\arcb{2n})
	\\
	=~ &
	s(\g,\arcb{2n})+s(\arcr{2n+1},\gp)
	-2\cdot i_p(\g,\arcb{2n},\arcr{2n+1},\gp).
	\end{align*}
	By \cref{thm:s2s},
	\begin{align*}
	s(\arcr{2n+1},\arcb{2n})
	&=
	s_c(Q_{2n}(2n+1))-1=s_c(U)-1=-1,
	\\
	s(\g,\gp)
	&=
	s_c(T\cup T')-1,
	\\
	s(\g,\arcb{2n})
	&=
	s_c(T\cup Q_{2n})-1=s_c(T(-2n))-1, 
	\text{ and}
	\\
	s(\arcr{2n+1},\gp)
	&=
	s_c(Q_{-2n-1}\cup T')-1=s_c(T'(2n+1))-1.
	\end{align*}
	Moreover, since the slope of \(\gp\) near \(p\) is \(\infty\), \(i_p(\g,\gp,\arcr{2n+1},\arcb{2n})=0\) for all \(n\). 
	We obtain
	\[
	s_c(T\cup T')
	=
	s_c(T(-2n))
	+
	s_c(T'(2n+1))
	-2\cdot i_p(\g,\arcb{2n},\arcr{2n+1},\gp)
	\]
	which is equivalent to the statement of the proposition.
\end{proof}

Next, we explore the relationship between \(\theta_c\)-rationality (see \cref{def:theta-rational}) and the multicurve invariants, which will explain our choice of terminology.

\begin{lemma}\label{lem:theta-rational}
	Let 
	\(K\) be a knot and 
	\(\sigma\) the slope of \(\BNr_a(T_K;\F_c)\) near the lower left tangle end \(p\). 
	Then \(\sigma=\infty\) if and only if \(\theta_c(P^K)=\infty\). 
	Moreover, if \(\sigma<\infty\) then  \(\BNr_a(T_K;\F_c)\) is equal to \(\BNr(Q_{2\theta_c(K)};\F_c)\) up to some grading shift and \(\theta_c(P^K)=\theta_c(K)=\nicefrac{\sigma}{2}\). 
\end{lemma}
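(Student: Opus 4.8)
The plan is to express the twisted satellites \(P^K_t(U)\) as unions of Conway tangles and feed this into the pairing formulae \cref{prop:theta-rationals} and \cref{prop:theta-non-rationals}. By \cref{def:double_tangle}, \(T_{P^K}\coloneqq T_K+Q_{-1}\) is a pattern tangle for \(P^K\), and \(\conn{T_{P^K}}=\ConnectivityX\), so \(P^K\) has winding number \(\pm2\). Hence \(T_{P^K_t}=T_K+Q_{-2t-1}\), and using \cref{lem:basic:satellites} and \cref{lem:basics:tangle_identities} one finds
\[
P^K_t(U)=(T_K+Q_{-2t-1})\cup Q_0=Q_{-2t-1}\cup T_K.
\]
So, by \cref{def:theta:winding2}, \(\theta_c(P^K)\) is the stationary point of \(t\mapsto s_c(Q_{-2t-1}\cup T_K)\); the curve paired against \(\BNr_a(T_K;\F_c)\) at stage \(t\) is \(\BNr_a(-Q_{-2t-1};\F_c)=\arc{2t+1}\); and the same tangle identities give \(Q_{-2t-1}(-2n)=T_{2,2(t-n)+1}\) for all \(n\in\Z\).

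\emph{The case \(\sigma<\infty\).} By \cref{lem:slopes-for-cap-trivial} one has \(\sigma=2m\in2\Z\) and \(\BNr_a(T_K;\F_c)=\BNr(Q_{2m};\F_c)\) up to a grading shift; being a straight line of slope \(2m\), this curve also has slope \(2m\) near the lower right tangle end, so \cref{cor:theta-from-slopes} gives \(\theta_c(K)=\lceil 2m/2\rceil=m=\sigma/2\) and \(\BNr_a(T_K;\F_c)=\BNr(Q_{2\theta_c(K)};\F_c)\) up to a shift. Applying \cref{prop:theta-rationals} with \(T=Q_{-2t-1}\), \(T'=T_K\) and \(n=m\), and then \cref{ex:Rasmussen_of_two_n_torus_knots}, yields
\[
s_c(P^K_t(U))=s_c\big(T_{2,2(t-m)+1}\big)+6m=2t+4m+\begin{cases*}2&if \(t\le m-1\),\\0&if \(t\ge m\).\end{cases*}
\]
Thus \(s_c(P^K_t(U))-s_c(P^K_{t-1}(U))\) equals \(2\) for \(t\neq m\) and \(0\) for \(t=m\), so \(\theta_c(P^K)=m=\sigma/2=\theta_c(K)\); in particular \(\theta_c(P^K)\neq\infty\).

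\emph{The case \(\sigma=\infty\).} Then \cref{prop:theta-non-rationals} applies to \(T=Q_{-2t-1}\) and \(T'=T_K\), since the slope of \(\gp\coloneqq\BNr_a(T_K;\F_c)\) near the lower left tangle end \(p\) is \(\infty\). Taking \(n=t-1\), and using \(Q_{-2t-1}(-2(t-1))=T_{2,3}\) together with \(T_K(2t-1)=Q_{-2t+1}\cup T_K=P^K_{t-1}(U)\), the formula reads
\[
s_c(P^K_t(U))=2+s_c(P^K_{t-1}(U))-2\delta_t,\qquad\delta_t\coloneqq\begin{cases*}1&if \([\arc{2t+1},\arc{2t-2},\gp]\) is \(p\)-ordered,\\0&otherwise.\end{cases*}
\]
The integer \(\delta_t\) does not depend on \(t\): the curves \(\arc{2t+1}\) and \(\arc{2t-2}\) are straight lines whose slopes differ by \(3\), none of the finitely many integer slopes lying strictly between them is \(\infty\), and \(\gp\) has slope \(\infty\) at \(p\), so the cyclic order of \([\arc{2t+1},\arc{2t-2},\gp]\) around \(p\) is the same for every \(t\). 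If this common value were \(1\), then \(s_c(P^K_t(U))\) would be constant in \(t\), contradicting \cref{prop:theta:winding2}; hence \(\delta_t=0\) for all \(t\), so \(s_c(P^K_t(U))=s_c(P^K_{t-1}(U))+2\) for all \(t\), and \(\theta_c(P^K)=\infty\).

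Combining the two cases gives the equivalence \(\sigma=\infty\iff\theta_c(P^K)=\infty\) and, in the finite case, the ``moreover'' statement. I expect the main obstacle to be the claim in the second case that \(\delta_t\) is independent of \(t\): this is the one place where one must actually inspect the configuration of the three curves in the winding region around \(p\) and keep careful track of the cyclic order of their slopes. By contrast the grading-shift bookkeeping is harmless, being absorbed by \cref{prop:theta-rationals} in the first case and irrelevant to \cref{prop:theta-non-rationals} (which only sees slopes) in the second.
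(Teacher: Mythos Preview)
Your proof is correct and reaches the same conclusion, but it takes a genuinely different route from the paper's own argument. The paper's proof is short and structural: since \(T_{P^K}=T_K+Q_{-1}\), \cref{thm:Kh:Twisting:arcs} (twisting naturality for non-compact components) says that the slope of \(\BNr_a(T_{P^K};\F_c)\) near \(p\) equals \(\sigma-1\) (or \(\infty\)), and then the equivalence and the value of \(\theta_c(P^K)\) follow immediately from \cref{prop:eta-slopes}; the finite case is completed using \cref{lem:slopes-for-cap-trivial} and \cref{cor:theta-from-slopes}. By contrast, you avoid both \cref{thm:Kh:Twisting:arcs} and \cref{prop:eta-slopes} entirely and instead compute \(s_c(P^K_t(U))\) explicitly for all \(t\) via the pairing formulae \cref{prop:theta-rationals,prop:theta-non-rationals}, reading \(\theta_c(P^K)\) off as the stationary point. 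The paper's approach is slicker and better exhibits the role of twisting naturality; your approach is more hands-on and has the minor advantage of not relying on \cref{thm:Kh:Twisting:arcs}.

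One remark on the step you flag yourself: your claim that \(\delta_t\) is independent of \(t\) is correct, but ``none of the integer slopes between them is \(\infty\)'' is, strictly speaking, a statement about slopes in \(\mathbb{QP}^1\), whereas the \(p\)-order is determined by tangent \emph{directions} in \(S^1\). The argument goes through because for all integers \(n\) the singular peg-board representative of \(\arc{n}\) leaves a given lift of \(p\) toward the same half-plane, so the directions of \(\arc{2t+1}\) and \(\arc{2t-2}\) stay in a fixed open half-circle of \(S^1\) whose endpoints are the two vertical directions; \(\gp\) sits at one of those endpoints and is therefore never crossed. This is the same level of geometric reasoning the paper uses elsewhere (e.g.\ in the proof of \cref{prop:eta-slopes}), so your argument is adequate as written; your subsequent appeal to \cref{prop:theta:winding2} to exclude \(\delta_t\equiv1\) is a nice way to avoid having to pin down which cyclic order actually occurs.
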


\begin{proof}
	Let us choose \(T_{P^K}=T_K+Q_{-1}\) as the pattern tangle for the pattern \(P^K\). 
	Then  by \cref{thm:Kh:Twisting:arcs}, the slope of \(\BNr_a(T_{P^K};\F_c)\) near \(p\) is equal to \(\sigma-1\) if \(\sigma<\infty\) and \(\infty\) if \(\sigma=\infty\). 
	So the first statement follows from \cref{prop:eta-slopes}. 
	
	Now suppose \(\sigma<\infty\). 
	\Cref{lem:slopes-for-cap-trivial} implies that \(\sigma\) is an even integer and that \(\BNr_a(T_K;\F_c)\) is equal to \(\BNr(Q_{\sigma};\F_c)\) up to some grading shift.
	So by \cref{cor:theta-from-slopes}, \(\theta_c(K)=\nicefrac{\sigma}{2}\). 
	The identity \(\theta_c(P^K)=\theta_c(K)\) now follows from \cref{prop:eta-slopes} and our observation from the beginning of the proof that the slope of \(\BNr_a(T_{P^K};\F_c)\) near \(p\) is equal to \(\sigma-1=2\theta_c(K)-1\). 
\end{proof}

\begin{proposition}\label{prop:theta-rational}
	A knot \(K\) is \(\theta_c\)-rational if and only if \(\BNr_a(T_K;\F_c)\) is equal to \(\BNr(Q_{\nicefrac{p}{q}};\F_c)\) for some slope \(\nicefrac{p}{q}\in\QPI\) up to some grading shift. 
	Moreover, if \(K\) is \(\theta_c\)-rational, then \(\nicefrac{p}{q}=2\theta_c(K)=2\thetap_c(K)\).
\end{proposition}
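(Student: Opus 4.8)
The plan is to deduce the proposition from \cref{lem:theta-rational} --- which already converts the slope $\sigma$ of $\BNr_a(T_K;\F_c)$ near the lower left tangle end into the (non-)finiteness of $\theta_c(P^K)$, and, when $\sigma<\infty$, identifies $\BNr_a(T_K;\F_c)$ with $\BNr(Q_{2\theta_c(K)};\F_c)$ up to a grading shift --- once we know that the invariant $\thetap_c(K)=\theta_c(C_{2,1},K)$ governing $\theta_c$-rationality (\cref{def:theta-rational}) coincides with $\theta_c(P^K)$.

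I would establish this identity first. Since $C_{2,1}=P^{Q_{-1}}$ and $(C_{2,1})_t=C_{2,2t+1}$, \cref{lem:cables-as-patterns} gives $(C_{2,1})_t(K)=C_{2,2t+1}(K)=T_K(2t+1)=(P^{K^r})_t(U)$ for every $t\in\Z$. Hence the two functions $t\mapsto\nicefrac{s_c}{2}((C_{2,1})_t(K))$ and $t\mapsto\nicefrac{s_c}{2}((P^{K^r})_t(U))$ are literally equal, so $\thetap_c(K)=\theta_c(P^{K^r})$. On the other hand, $\BNr_a(T_{K^r};\F_c)=\BNr_a(T_K;\F_c)$ by \cref{cor:BNra:mutation_invariance}, and therefore $\BNr_a(T_{K^r}+Q_{-1};\F_c)=\BNr_a(T_K+Q_{-1};\F_c)$ by \cref{thm:Kh:Twisting:arcs}; since $\theta_c$ of a wrapping number two pattern depends only on the non-compact curve of one of its pattern tangles (\cref{prop:eta-slopes}), this yields $\theta_c(P^{K^r})=\theta_c(P^K)$. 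Combining, $\thetap_c(K)=\theta_c(P^K)$.

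With this identity in hand, I would then argue as follows. By definition $K$ is $\theta_c$-rational iff $\thetap_c(K)\neq\infty$, iff $\theta_c(P^K)\neq\infty$, iff $\sigma\neq\infty$ (\cref{lem:theta-rational}); and when $\sigma<\infty$, \cref{lem:theta-rational} gives $\BNr_a(T_K;\F_c)=\BNr(Q_{2\theta_c(K)};\F_c)$ up to a grading shift, which is of the required form with $\nicefrac{p}{q}=2\theta_c(K)$. Conversely, assume $\BNr_a(T_K;\F_c)=\BNr(Q_{\nicefrac{p}{q}};\F_c)$ up to a grading shift for some $\nicefrac{p}{q}\in\QPI$. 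As a grading shift does not change connectivity, $\BNr(Q_{\nicefrac{p}{q}};\F_c)$ connects the same pair of non-special punctures as $\BNr_a(T_K;\F_c)$, which by \cref{thm:BNr-contains-a-single-arc} together with $\conn{T_K}=\No$ are the lower left and lower right tangle ends; this forces $\nicefrac{p}{q}\neq\infty$, and since a peg-board representative of $\BNr(Q_{\nicefrac{p}{q}};\F_c)$ is a straight line of slope $\nicefrac{p}{q}$, we conclude $\sigma=\nicefrac{p}{q}<\infty$, so $\theta_c(P^K)\neq\infty$ and $K$ is $\theta_c$-rational. Finally, for the ``moreover'' part, when $K$ is $\theta_c$-rational \cref{lem:theta-rational} also yields $\theta_c(P^K)=\theta_c(K)=\nicefrac{\sigma}{2}$, so with $\thetap_c(K)=\theta_c(P^K)$ and $\nicefrac{p}{q}=\sigma$ we obtain $\nicefrac{p}{q}=2\theta_c(K)=2\thetap_c(K)$.

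I expect the one genuine difficulty to be the identity $\thetap_c(K)=\theta_c(P^K)$ from the first step: it requires keeping careful track of the orientation reversal $K\mapsto K^r$ (which enters when the cables $C_{2,2t+1}(K)$ are reinterpreted as twisted satellites of $P^{K^r}$) and of which tangle end the relevant slope is read off at, before invoking mutation invariance (\cref{cor:BNra:mutation_invariance}) and \cref{thm:Kh:Twisting:arcs} to remove the reversal. Everything after that is routine bookkeeping with \cref{lem:theta-rational,lem:slopes-for-cap-trivial,thm:BNr-contains-a-single-arc}; the only mild subtlety is the connectivity check in the converse direction, which is what guarantees that $\BNr(Q_{\nicefrac{p}{q}};\F_c)$ actually has an endpoint at the lower left tangle end and hence slope $\nicefrac{p}{q}$ there.
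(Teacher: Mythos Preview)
Your proposal is correct and follows essentially the same route as the paper: establish $\thetap_c(K)=\theta_c(P^K)$ via \cref{lem:cables-as-patterns} (and mutation invariance for the passage from $K^r$ to $K$), then reduce everything to \cref{lem:theta-rational} together with the connectivity constraint on $\BNr_a(T_K;\F_c)$. The paper is terser --- it cites only \cref{lem:cables-as-patterns} for $\thetap_c(K)=\theta_c(P^{K^r})=\theta_c(P^K)$ and leaves the mutation step implicit --- so your more explicit justification of that identity via \cref{cor:BNra:mutation_invariance}, \cref{thm:Kh:Twisting:arcs}, and \cref{prop:eta-slopes} is a welcome clarification rather than a different argument.
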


\begin{proof}
	By \cref{lem:cables-as-patterns}, \(\thetap_c(K)=\theta_c(P^{K^r})=\theta_c(P^K)\), so \(K\) is \(\theta_c\)-rational if and only if \(\theta_c(P^K)<\infty\). 
	By the first half of \cref{lem:theta-rational}, this is equivalent to the slope \(\sigma\) of \(\BNr_a(T_K;\F_c)\) near the lower left tangle end being finite. 

	So to prove the first statement of the proposition, it remains to see that \(\sigma<\infty\) if and only if \(\BNr_a(T_K;\F_c)\) is equal to \(\BNr(Q_{\nicefrac{p}{q}};\F_c)\) for some slope \(\nicefrac{p}{q}\in\QPI\) up to some grading shift. 
	The if-direction is easy to see, since the connectivity of \(T_K\) is \(\No\) and the curve \(\BNr(Q_{\nicefrac{p}{q}};\F_c)\) has constant slope. 
	The converse follows from the second half of \cref{lem:theta-rational}. 
	
	We have just seen that if \(K\) is \(\theta_c\)-rational, the slope \(\sigma\) is finite. 
	So, by the second half of \cref{lem:theta-rational}, \(\sigma=2\theta_c(K)=2\theta_c(P^K)\).  We now conclude with the observation that \(\thetap_c(K)=\theta_c(P^K)\) from the beginning of the proof. 
\end{proof}


The above gives rise to the following simple test to determine if \(K\) is \(\theta_c\)-rational without computing the entire curve \(\BNr_a(T_K)\).

\begin{corollary}\label{cor:theta-rational-test:practical}
	A knot \(K\) is \(\theta_c\)-rational if and only if 
	\(
	s_c(P^K_{\theta_c(K)-1}(U))
	\!=
	s_c(P^K_{\theta_c(K)}(U))
	\).
\end{corollary}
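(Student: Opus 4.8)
The plan is to translate the claimed numerical criterion into a statement about the slope of the curve $\BNr_a(T_K;\F_c)$ near the lower left tangle end $p$, and then invoke the characterisation of $\theta_c$-rationality established in \cref{prop:theta-rational} and the slope-detection results of \cref{lem:theta-rational,lem:slopes-for-cap-trivial}. First I would recall that, by \cref{prop:theta-rational} together with \cref{lem:theta-rational}, the knot $K$ is $\theta_c$-rational precisely when the slope $\sigma$ of $\BNr_a(T_K;\F_c)$ at $p$ is finite; and in that case $\sigma$ is an even integer and $\BNr_a(T_K;\F_c)$ agrees with $\BNr(Q_\sigma;\F_c)=\arc{\sigma}$ up to a grading shift, with $\theta_c(K)=\nicefrac{\sigma}{2}$. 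So the task reduces to detecting finiteness of $\sigma$ from the two values $s_c(P^K_{\theta_c(K)-1}(U))$ and $s_c(P^K_{\theta_c(K)}(U))$.

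The key computational input is \cref{prop:main:Ni} (for winding number $0$) applied to the pattern $P^K$, exactly as in the proof of \cref{cor:theta-from-slopes}: writing $P^K_t(U)=T_{P^K}(2t)=T\cup T'$ with $T'=T_K+Q_{-1}$ a cap-trivial tangle and $T$ a rational tangle, \cref{prop:main:Ni} expresses $s_c(P^K_t(U))$ in terms of $s_c(T(-2t))$ (which here is $s_c(U)=0$) plus a correction term that is $+2$, $-2$, or $0$ according to whether the triple of curves $[\arcb{2t},\g,\gp,\arcr{\infty}]$ or $[\gp,\g,\arcb{2t},\arcr{\infty}]$ is $p$-ordered. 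Feeding in $t=\theta_c(K)$ and $t=\theta_c(K)-1$, I would compare the two correction terms. If $\sigma$ is finite — so $\gp=\BNr_a(T';\F_c)$ has finite (indeed, by \cref{lem:slopes-for-cap-trivial} and \cref{thm:Kh:Twisting:arcs}, odd-integer) slope at $p$ equal to $2\theta_c(K)-1$ — then for $t=\theta_c(K)$ the curve $\arcb{2t}$ lies on one side of the straight segment of $\gp$ and for $t=\theta_c(K)-1$ on the other, but in fact, as in the proof of \cref{cor:theta-from-slopes}, both corrections turn out to equal the \emph{same} value (recall $Q_{-2t-\nicefrac12}(-2t)$ is always the unknot, and the genuine jump in $s_c(\Wh_t(K))$ happens at the half-integer threshold $2t+\nicefrac12$, not at an integer), giving $s_c(P^K_{\theta_c(K)-1}(U))=s_c(P^K_{\theta_c(K)}(U))$. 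Conversely, if $\sigma=\infty$, then $\gp$ wraps nontrivially around $p$, and I would show that the curves $\arcb{2\theta_c(K)}$ and $\arcb{2\theta_c(K)-2}$ sit on opposite sides of this winding, so exactly one of the two evaluations picks up a nonzero correction and the two values differ by $2$.

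The main obstacle I anticipate is the bookkeeping of the cyclic orders $[\arcb{2t},\g,\gp,\arcr{\infty}]$ near $p$: one must pin down precisely where the straight lift $\arcb{2t}$ falls relative to the peg-board representative of $\gp$ in both the finite-slope and infinite-slope cases, and verify that in the finite case the $\pm2$ corrections for consecutive values of $t$ genuinely coincide rather than straddle a jump. I would handle this by the same lift-to-$\PuncturedPlane$ picture used in \cref{fig:lifts} and the proof of \cref{lem:no-trapped-curves}: draw the singular peg-board representative of $\BNr_a(T_K;\F_c)$ near $p$, note that $\BNr_a(T_{P^K};\F_c)$ is obtained from it by one vertical half-twist via \cref{thm:Kh:Twisting:arcs}, and read off the $p$-orders directly. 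An alternative, cleaner route — which I would mention as a remark — is to deduce the corollary formally from \cref{prop:eta-slopes} applied to $P^K$ (which already packages the slope of $\gb=\BNr_a(T_{P^K};\F_c)$ into $\theta_c(P^K)$) combined with $\thetap_c(K)=\theta_c(P^K)$ from \cref{lem:cables-as-patterns}, observing that $\theta_c(P^K)$ being finite is equivalent to the two displayed $s_c$-values agreeing, since an infinite slope forces $s_c(P^K_t(U))$ to be constant in $t$ while a finite slope forces a jump, and the jump point has already been identified with $\theta_c(K)$.
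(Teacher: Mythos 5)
Your \emph{main} argument misapplies the machinery.  You write $P^K_t(U)=T_{P^K}(2t)=Q_{-2t}\cup T_{P^K}$ and try to invoke \cref{prop:main:Ni} ``exactly as in the proof of \cref{cor:theta-from-slopes}''.  But \cref{prop:main:Ni} requires $\conn{T}=\Ni$, which is the winding number $0$ situation; here $T=Q_{-2t}$ has connectivity $\No$ and $T_{P^K}=T_K+Q_{-1}$ has connectivity $\ConnectivityX$, because $P^K$ is a \emph{winding number} $\pm 2$ pattern.  So \cref{prop:main:Ni} simply does not apply (the pair $(Q_{-2t},T_{P^K})$ would have to be fed into \cref{prop:theta-non-rationals} or \cref{prop:eta-slopes} instead), and the rest of your case analysis with the $p$-orders $[\arcb{2t},\g,\gp,\arcr{\infty}]$ is built on this wrong foundation.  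Your framing of the expected behaviour (``the genuine jump in $s_c(\Wh_t(K))$ happens at the half-integer threshold'') is also borrowed from the $\Wh$-pattern, which has winding number~$0$; it is not the situation under discussion.

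Your ``alternative cleaner route'' in the last sentences is essentially the argument the paper actually uses, but it is garbled in one place.  You say ``an infinite slope forces $s_c(P^K_t(U))$ to be constant in $t$ while a finite slope forces a jump''; that is the winding-number-$0$ dichotomy of \cref{prop:theta:winding0}.  For $P^K$, which has winding number $\pm2$, the correct dichotomy is the one of \cref{prop:theta:winding2}: when $\theta_c(P^K)=\infty$ the function $t\mapsto s_c(P^K_t(U))$ is affine of slope $2$ (hence \emph{no} two consecutive values agree), and when $\theta_c(P^K)<\infty$ it has a single stationary point, at which the two consecutive values \emph{do} agree.  Granting that correction, your outline still needs the explicit input that, for $\theta_c$-rational $K$, the stationary point sits precisely at $\theta_c(K)$; this is not ``already identified'' via \cref{lem:cables-as-patterns} alone but requires the nontrivial equality $\theta_c(K)=\thetap_c(K)$ from \cref{prop:theta-rational} (equivalently \cref{thm:theta-rationals}).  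The paper's proof is exactly this two-line deduction: if $K$ is not $\theta_c$-rational there is no stationary point, so the two values disagree; if $K$ is $\theta_c$-rational then $\theta_c(K)=\thetap_c(K)=\theta_c(P^K)$ locates the stationary point, giving the desired equality.
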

\begin{proof}
	If \(K\) is not \(\theta_c\)-rational, then the identity clearly does not hold. 
	If \(K\) is \(\theta_c\)-rational, then \(\theta_c(K)=\thetap_c(K)\) by \cref{prop:theta-rational}, which implies the desired identity.
\end{proof}

We can now prove \cref{thm:theta-rationals}, which we restate here for convenience.

\thetarationals*

\begin{proof}
	The first statement was already shown in \cref{prop:theta-rational}. 
	The formula for \(s_c(P(K))\) is obtained by applying \cref{prop:theta-rationals} to \(T=T_P\) some pattern tangle for \(P\), \(T'=T_K\), and \(n=\theta_c(K)\) and using the identity \(P_{-\theta_c(K)}(U)=T(-2\theta_c(K))\) (\cref{lem:basic:satellites}\ref{lem:basic:satellites:pattern_of_unknot}). 
\end{proof}

\begin{corollary}
	Let \(K\) be a \(\theta_c\)-rational knot. Then for all \(t\in\Z\),
	\[
	s_c(C_{2,2t+1}(K))
	=
	4\theta_c(K)+2t+
	\begin{cases*}
	0
	&
	if \(\theta_c(K)\leq t\)
	\\
	2
	&
	 otherwise.
	\end{cases*}
	\]
\end{corollary}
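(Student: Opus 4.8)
The plan is to apply \cref{thm:theta-rationals} directly to the pattern \(P=C_{2,2t+1}\), which has wrapping number~\(2\) and winding number~\(+2\), so that the theorem's hypotheses are met for the \(\theta_c\)-rational companion \(K\). This immediately gives
\[
s_c(C_{2,2t+1}(K)) = s_c\big((C_{2,2t+1})_{-\theta_c(K)}(U)\big) + 6\theta_c(K),
\]
and the remaining task is just to evaluate the first summand on the right-hand side.

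To do this, I would unwind the twisting using the definition \(C_{2,2t+1}=P^{Q_{-1}}_t=(C_{2,1})_t\): since adding \(s\) further right-handed full twists is additive in the twist parameter, \((C_{2,2t+1})_s=(C_{2,1})_{t+s}=C_{2,2(t+s)+1}\). Taking \(s=-\theta_c(K)\) and inserting the unknot as companion, and recalling that \(C_{2,n}(U)=T_{2,n}\), yields
\[
(C_{2,2t+1})_{-\theta_c(K)}(U)=C_{2,2(t-\theta_c(K))+1}(U)=T_{2,2(t-\theta_c(K))+1}.
\]
Writing \(m=t-\theta_c(K)\), it then remains to substitute the value of \(s_c(T_{2,2m+1})\) supplied by \cref{ex:Rasmussen_of_two_n_torus_knots}: this equals \(2m\) when \(2m+1>0\), i.e.\ when \(m\geq 0\), i.e.\ when \(\theta_c(K)\leq t\); and it equals \(2m+2\) when \(2m+1<0\), i.e.\ when \(m\leq -1\), i.e.\ when \(\theta_c(K)>t\). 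Combining this with the formula above and simplifying, \(2m+6\theta_c(K)=2t+4\theta_c(K)\) in the first case and \(2m+2+6\theta_c(K)=2t+4\theta_c(K)+2\) in the second, which is exactly the asserted expression \(4\theta_c(K)+2t+\{0 \text{ if } \theta_c(K)\leq t,\ 2 \text{ otherwise}\}\).

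I do not anticipate a genuine obstacle here, as the corollary is a direct specialisation of \cref{thm:theta-rationals}. The only points that need a little care are the bookkeeping of the twisting conventions — that applying \(-\theta_c(K)\) extra twists to the pattern \(C_{2,2t+1}\) produces the cable pattern \(C_{2,2(t-\theta_c(K))+1}\) — and verifying at the outset that \(C_{2,2t+1}\) has wrapping number \(2\) and winding number \(\pm 2\) so that the hypotheses of \cref{thm:theta-rationals} are satisfied.
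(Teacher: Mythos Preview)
Your proposal is correct and follows essentially the same approach as the paper: apply \cref{thm:theta-rationals} to \(P=C_{2,2t+1}\), identify \(P_{-\theta_c(K)}(U)\) with the torus knot \(T_{2,2(t-\theta_c(K))+1}\), and evaluate its Rasmussen invariant via \cref{ex:Rasmussen_of_two_n_torus_knots}. The bookkeeping and case split match the paper's proof.
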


\begin{proof}
	This follows from \cref{thm:theta-rationals} applied to \(P=C_{2,2t+1}\), noting that 
	\[
	P_{-\theta_c(K)}(U)=C_{2,2t+1-2\theta_c(K)}(U)=T_{2,2(t-\theta_c(K))+1}
	\]
	by definition and
	\[
	s_c(P_{-\theta_c(K)}(U))
	=
	2(t-\theta_c(K))+1+
	\begin{cases*}
	-1 
	&
	if \(2(t-\theta_c(K))+1>0\)
	\\
	+1
	&
	if \(2(t-\theta_c(K))+1<0\)
	\end{cases*}
	\]
	by \cref{ex:Rasmussen_of_two_n_torus_knots}.
\end{proof}

\thetanonrationals*

\begin{proof}
	Let \(P(K)=T_P\cup T_K\) for some pattern tangle \(T_P\). 
	We apply \cref{prop:theta-non-rationals} to \(T=T_P\), \(T'=T_K\), and \(n=0\). 
	We obtain
	\[
	s_c(P(K))
	=
	s_c(T_P(0))
	+
	s_c(T_K(1))
	-
	\begin{cases*}
	2 &
	if \([\g,\arc{0},\gp]\) is \(p\)-ordered
	\\
	0 & otherwise
	\end{cases*}
	\]
	where \(\g=\BNr_a(-T_P;\F_c)\) and \(\gp=\BNr_a(T_K;\F_c)\) and \(p\) is the lower left puncture. 
	Since 
	\(
	T_P(0)=P(U)
	\)
	and
	\(
	T_K(1)=C_{2,1}(K)
	\),
	it suffices to show that \([\g,\arc{0},\gp]\) is \(p\)-ordered if and only if \(\theta_c(P)>0\). 
	To see this, let \(\sigma\) be the slope of \(\BNr_a(T_P;\F_c)=-\BNr_a(-T_P;\F_c)=-\g\) near \(p\). 
	So by \cref{prop:eta-slopes}, \(\theta_c(P)>0\) if and only if \(\sigma>0\) or (\(\sigma=0\) and \(-\g>_p\arc{0}\)). 
	This is indeed equivalent to \([\g,\arc{0},\gp]\) being \(p\)-ordered, since the slope of \(\gp\) at \(p\) is \(\infty\).
\end{proof}

\begin{theorem}\label{thm:theta-non-rationals:infty}
	Let \(K\) be a knot that is not \(\theta_c\)-rational. 
	Let \(P\) be a pattern with wrapping number 2 and winding number $\pm 2$ such that \(\theta_c(P)=\infty\). 
	Then
	\[
	s_c(P(K))
	=
	s_c(P(U))+s_c(C_{2,1}(K))
	-
	\begin{cases*}
	2 &
	if \(\gamma_K<_p \gamma_P\)
	\\
	0 & otherwise
	\end{cases*}
	\] 
	where \(p\) is the lower left tangle end, 
	\(\gamma_K\coloneqq \BNr_a(T_K;\F_c)\), and 
	\(\gamma_P\coloneqq \BNr_a(-T_P;\F_c)\). 
\end{theorem}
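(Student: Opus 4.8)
The plan is to deduce the formula from \cref{prop:theta-non-rationals}, after converting both hypotheses into statements about slopes at the lower left tangle end $p$ via \cref{prop:eta-slopes} and \cref{lem:slopes-for-cap-trivial}. First I would pin down those slopes. Since $\theta_c(P)=\infty$ and $P$ has winding number $\pm2$, so $\conn{T_P}=\ConnectivityX$, \cref{prop:eta-slopes} gives that $\BNr_a(T_P;\F_c)$ has slope $\infty$ at $p$; as the mirror involution of $\FourPuncturedSphereKh$ fixes every puncture and negates slopes (it carries $Q_{\nicefrac{p}{q}}$ to $Q_{-\nicefrac{p}{q}}$), the curve $\gamma_P=\BNr_a(-T_P;\F_c)=-\BNr_a(T_P;\F_c)$ also has slope $\infty$ at $p$. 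Since $K$ is not $\theta_c$-rational, \cref{prop:theta-rational} says that $\BNr_a(T_K;\F_c)$ is not equal to $\BNr(Q_n;\F_c)$ up to grading shift for any $n\in 2\Z$, so by \cref{lem:slopes-for-cap-trivial}, applied to the cap-trivial tangle $T_K$ with $\conn{T_K}=\No$, the curve $\gamma_K=\BNr_a(T_K;\F_c)$ also has slope $\infty$ at $p$ (it cannot be an even integer).

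Next I would invoke \cref{prop:theta-non-rationals} with $T=T_P$ a pattern tangle of $P$, $T'=T_K$, and $n=0$; its hypotheses ($\conn{T_P}=\ConnectivityX$, $T_K$ cap-trivial, and $\BNr_a(T_K;\F_c)$ of slope $\infty$ at $p$) are exactly what the previous paragraph supplies. Using $P(K)=T_P\cup T_K$ (\cref{lem:basic:satellites}\ref{lem:basic:satellites:tangle_decomposition}), this yields
\[
s_c(P(K))=s_c(T_P(0))+s_c(T_K(1))-\begin{cases*}2 & if \([\gamma_P,\arc{0},\gamma_K]\) is \(p\)-ordered\\0 & otherwise.\end{cases*}
\]
Since $T_P(0)=P_0(U)=P(U)$ by \cref{lem:basic:satellites}\ref{lem:basic:satellites:pattern_of_unknot} and $T_K(1)=C_{2,1}(K)$ by \cref{lem:cables-as-patterns}, the theorem will follow once I show that $[\gamma_P,\arc{0},\gamma_K]$ is $p$-ordered if and only if $\gamma_K<_p\gamma_P$.

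For this last equivalence I would mimic the end of the proof of \cref{thm:theta-non-rationals:non-infty}, now with all the slope data at hand. By \cref{thm:BNr-contains-a-single-arc} the curve $\gamma_P$ runs from $p$ to the non-special puncture matched to $p$ by $\conn{T_P}=\ConnectivityX$, whereas $\arc{0}$ and $\gamma_K$ both run from $p$ to the remaining non-special puncture; together with the fact that $\gamma_K$ has slope $\infty\neq0$ at $p$, this makes $\gamma_P,\arc{0},\gamma_K$ pairwise distinct as ungraded curves, so the cyclic order of \cref{def:i_p} is defined. Passing to singular peg-board representatives and a small circle about $p$, the ends of $\gamma_P$ and $\gamma_K$ meet this circle in a tiny sub-arc near the vertical direction, where by definition their mutual order is $<_p$, while the end of $\arc{0}$ meets it near the horizontal direction, on the complementary arc between them that contains no other slope-$\infty$ curve. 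Hence the counterclockwise triple $[\gamma_P,\arc{0},\gamma_K]$ occurs exactly when $\gamma_K$ precedes $\gamma_P$ in $<_p$, and matching the orientation conventions (for the counterclockwise order of \cref{def:i_p}, for $\FourPuncturedSphereKh$, and for the mirror involution) against an explicit example from \cref{fig:lifts} confirms that this reads $\gamma_K<_p\gamma_P$. The only real obstacle I anticipate is this final convention-chasing for the cyclic order; the arithmetic content of the identity is already packaged in \cref{prop:theta-non-rationals}.
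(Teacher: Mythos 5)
Your proposal is correct and follows the same route as the paper's own (one-line) proof, which simply defers to \cref{prop:eta-slopes} and \cref{prop:theta-non-rationals} with the remark "similar to \cref{thm:theta-non-rationals:non-infty}": apply \cref{prop:theta-non-rationals} with $T=T_P$, $T'=T_K$, $n=0$ after verifying the slope-$\infty$ hypotheses, then translate the $p$-ordering condition. You make explicit the slope checks (via \cref{lem:slopes-for-cap-trivial} and \cref{prop:theta-rational} for $\gamma_K$, and via \cref{prop:eta-slopes} plus the mirror's action on slopes for $\gamma_P$) and the final cyclic-order-to-$<_p$ translation that the paper leaves to the reader, correctly flagging the latter as the one place requiring careful convention-matching.
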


\begin{proof}
	Similar to \cref{thm:theta-non-rationals:non-infty}, this follows from \cref{prop:eta-slopes,prop:theta-non-rationals}.
\end{proof}

\begin{example}\label{ex:theta-non-rationals:infty:after_thm}
	Let \(K\) be the positive trefoil knot and the patterns \(P^+\) and \(P^-\) defined by the pattern tangles \(T_{P^+}=T'_e=T_{-K}+Q_{+1}\) and \(T_{P^-}=T'_d=T_{-K}+Q_{-1}\), respectively.
	Then the knots \(P^\pm(K)=C_{2,\mp1}(K\# -\!K)\) are obtained by plugging \(T_{P^\pm}\) into the grey disc in \cref{fig:frontispiece:1}.
	The knot \(K\#-\!K\) is concordant to the unknot \(U\), so \(P^\pm(K)\) is concordant to  \(C_{2,\mp1}(U)=U\), and hence \(s_2(P^{\pm}(K))=0\).
	Moreover, \(P^\pm(U)=C_{2,\mp1}(-K)=-C_{2,\pm1}(K)\). 
	Since \(K\) is not \(\theta_2\)-rational \(s_2(C_{2,-1}(K))=s_2(C_{2,1}(K))-2\), so we obtain the following identities:
	\begin{align*}
	s_2(P^+(K))
	&=
	s_2(P^+(U))+s_2(C_{2,1}(K)),
	\\
	s_2(P^-(K))
	&=
	s_2(P^-(U))+s_2(C_{2,1}(K))-2.
	\end{align*}
	\Cref{thm:theta-non-rationals:infty} for \(c=2\) gives an alternative proof of these identities. 
	Indeed, with the same notation as in \cref{thm:theta-non-rationals:infty}, 
	\[
	\gamma_{P^{\pm}}
	=
	\BNr_a(-T_{P^{\pm}};\F_2)
	=
	\BNr_a(T_{K}+Q_{\mp1};\F_2),
	\]
	so consequently 
	\(
	\gamma^+<_p\gamma_K<_p\gamma^-
	\).
\end{example}

\section{Computations, observations, and varying characteristics}
\label{sec:differentfields}

The invariant \(\theta_c(K)\) can be read off directly from the curve \(\BNr_a(T_K;\F_c)\) using \cref{cor:theta-from-slopes}. 
Here is an alternative way of computing \(\theta_c(K)\). 

\begin{proposition}\label{prop:thetaalgo}
The following algorithm computes \(\theta_c(K)\) for a given knot \(K\) and a given characteristic~\(c\) that is \(0\) or a prime:\medskip

\begin{tabular}{rp{.8\textwidth}}
\emph{Step 1}  & Calculate \(s_c(\Wh(K))\). \\
\emph{Step 2a} & \rule{0pt}{1.05\baselineskip}If \(s_c(\Wh(K)) = 0\), then calculate \(s_c(\Wh_t(K))\) for \(t = -1, -2, \ldots\) \newline until
\(s_c(\Wh_t(K)) = 2\). Return \(\theta_c(K) = t + 1\). \\
\emph{Step 2b} & \rule{0pt}{1.05\baselineskip}If \(s_c(\Wh(K)) \neq 0\), then calculate \(s_c(\Wh_t(K))\) for \(t = 1, 2, \ldots\) \newline until \(s_c(\Wh_t(K)) = 0\). Return \(\theta_c(K) = t\).
\end{tabular}
\end{proposition}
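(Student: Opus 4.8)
The plan is to read the algorithm as a search for the jump point of the step function \(f\colon\Z\to\Z\), \(f(t)\coloneqq s_c(\Wh_t(K))\), so that the substance of the proof is to determine the shape of \(f\). Write \(\nu\coloneqq\nicefrac{s_c}{2}\), a slice-torus invariant, and \(\theta\coloneqq\theta_c(K)=\theta_\nu(\Wh,K)\). The first step is to record that \(\theta\) is a finite integer by \cref{prop:theta_nu}, and that by \cref{prop:theta:winding0} applied to the pattern \(\Wh\), together with \cref{def:theta}, the function \(t\mapsto\nu(\Wh_t(K))\) is constant for \(t<\theta\), drops by exactly \(1\) at \(t=\theta\), and is constant again for \(t\ge\theta\); hence \(f\), which equals \(2\nu(\Wh_t(K))\), is constant on \(\{t<\theta\}\) and on \(\{t\ge\theta\}\) with \(f(\theta)=f(\theta-1)-2\). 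To pin down the two constant values I would combine two facts: first, \(\Wh_t(K)\) bounds an evident genus-one Seifert surface (built from the genus-one Seifert surface of the winding-number-zero pattern \(\Wh\)), so \(\nu(\Wh_t(K))\le g_4(\Wh_t(K))\le 1\); second, the positive Whitehead double satisfies \(\nu(\Wh_t(K))\ge 0\). The step structure then forces \(f(t)=2\) for \(t<\theta\) and \(f(t)=0\) for \(t\ge\theta\).

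Granting this description of \(f\), the verification of the algorithm is routine and amounts to a case distinction on the sign of \(\theta\); I would present it as follows. Every value the algorithm queries is a Rasmussen invariant of an explicit knot, hence algorithmically computable from Bar-Natan homology, so the only issue is correctness and termination. If \(s_c(\Wh(K))=f(0)\ne 0\), then \(f(0)=2\) (using \(f\ge 0\)), so \(\theta\ge 1\); Step 2b computes \(f(1),f(2),\dots\) and first meets the value \(0\) at \(t=\theta\), returning \(\theta_c(K)=\theta\), and terminates because \(\theta<\infty\). If \(f(0)=0\), then \(\theta\le 0\); Step 2a computes \(f(-1),f(-2),\dots\) and first meets the value \(2\) at \(t=\theta-1\) (note \(\theta-1\le -1\), and \(f\equiv 0\) on the range \(\theta\le t\le -1\)), returning \((\theta-1)+1=\theta\), and again terminates because \(\theta<\infty\). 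In both cases the output is \(\theta_c(K)\), as claimed.

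The one ingredient above that is not already established in the preceding material is the non-negativity \(\nu(\Wh_t(K))\ge 0\) for the positive Whitehead double, and this is exactly the point that needs care: without it, \(f\) could a priori have the shape ``\(0\) then \(-2\)'', and then the downward search in Step 2a would never encounter the value \(2\), so the algorithm would fail to terminate. I would supply this by citing Livingston--Naik: the sharper statement that \(\nu(\Wh_t(K))=1\) for \(t<\theta_\nu(K)\) and \(\nu(\Wh_t(K))=0\) for \(t\ge\theta_\nu(K)\) is a reformulation of \cite[Theorem~2]{doubled}, under the identification \(\theta_\nu(K)=t_\nu(K)+1\) already used in the proof of \cref{prop:theta_nu}; this subsumes both the genus bound and the step structure, so in fact the entire shape of \(f\) can be quoted in one stroke. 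Alternatively, \(\nu(\Wh_t(K))\ge 0\) follows from the positive clasp of \(\Wh\) via a crossing-change/slice--Bennequin argument in the spirit of \cref{lemma:slicetorusprop}, and I would include whichever of these two routes is shortest to state cleanly.
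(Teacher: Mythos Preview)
Your proof is correct and follows essentially the same approach as the paper's, which simply cites \cref{prop:theta_nu} for termination and says correctness is immediate from \cref{def:theta}. You are in fact more careful than the paper: you explicitly isolate the point that the two values of \(f\) must be \(\{0,2\}\) rather than \(\{-2,0\}\) (without which Step~2a or~2b could fail to terminate), and you correctly source this either to \cite[Theorem~2]{doubled} or to the positive-clasp crossing-change argument via \cref{lemma:slicetorusprop}(b); the paper's terse proof implicitly relies on the same fact through the reference to \cite{doubled} in \cref{prop:theta_nu}.
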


\begin{proof}
That this algorithm always stops follows from \cref{prop:theta_nu}.
That it returns the correct result is immediate from the definition of \(\theta_c(K)\), see \cref{def:theta}.
\end{proof}

Note that \cref{prop:thetaalgo} builds on the well-known computability of $s_c$.
In practice, there exist a number of computer programs that calculate the Rasmussen invariants \(s_c(K)\) for a given knot \(K\) and characteristic \(c\), such as \texttt{khoca} \cite{khoca}, \texttt{SKnotJob} \cite{SKnotJob} and \texttt{kht++}~\cite{khtpp}.
With the aid of these programs, the above algorithms allow us to compute \(\theta_c(K)\) for sufficiently small knots \(K\).
For example, we have computed \(\theta_c(K)\) for various \(c\) for all prime knots up to 8~crossings, roughly half of all prime knots up to 10~crossings, and various individual knots with up to 16~crossings.
For these computations, we use the na\"ive way to construct a diagram $D'$ of $W^+_t(K)$ from a diagram $D$ of $K$, which entails that $D'$ has twice the width of $D$ (in the sense of \cite{knotinfo}), and at least four times as many crossings as $D$. So computer calculations for more complicated knots, such as iterated Whitehead doubles, seem in general unfeasible with current technology. 
For a complete list of the values of \(\theta_c(K)\) that we have computed,
we refer the reader to an online table \cite{thetatable}, which we will continue to update.

We now make a number of observations and conjectures based on our calculations.

\subsection{Linear independence}

We find that \(\theta_2(K)\) often differs from \(\theta_c(K)\) for \(c\neq 2\), even for many small knots. For example we have
\begin{equation}\label{eq:theta_for_trefoil}
\theta_2(T_{2,3}) = 4, \qquad \theta_c(T_{2,3}) = 3
\end{equation}
for all \(c\in \{0, 3, 5, \ldots, 97\}\).
Note that one direct consequence  is that
\[
s_2(\Wh_3(T_{2,3})) = 2, \qquad s_0(\Wh_3(T_{2,3})) = 0.
\]
This provides a knot with \(s_2 \neq s_0\)
(different from the first such knot, which was found by Seed \cite[Remark~6.1]{LipshitzSarkarRefinementS}),
similar to the example of a knot with \(\tau\neq s_0\) given in \cref{eq:heddenording}.
Let us consider another computational result:
\[
\theta_3(T_{3,4}) = 9, \qquad \theta_c(T_{3,4}) = 8,
\]
for all \(c\in \{0, 2, 5, \ldots, 97\}\).
This lead to the first known example \(K = \Wh_8(T_{3,4})\)
of a knot \(K\) with \(s_3(K) \neq s_0(K)\), and thus to the following theorem.
\begin{theorem}[\cite{postcard}]\label{thm:postcard}
The concordance homomorphisms \(s_0, s_2, s_3\) are linearly independent.
\end{theorem}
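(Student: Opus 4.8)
The plan is to reduce linear independence of the three concordance homomorphisms $s_0,s_2,s_3\colon\mathcal{C}\to\Z$ to an explicit rank computation on a well-chosen finite collection of knots, and then to obtain the necessary values of $s_0,s_2,s_3$ on those knots from the machinery developed in this paper. A linear relation $\alpha s_0+\beta s_2+\gamma s_3 = 0$ (as homomorphisms $\mathcal{C}\to\Q$) would force, for every knot $K$, the equation $\alpha s_0(K)+\beta s_2(K)+\gamma s_3(K)=0$. So it suffices to exhibit knots $K_1,K_2,K_3$ such that the $3\times 3$ matrix $\big(s_c(K_j)\big)_{c\in\{0,2,3\},\,j\in\{1,2,3\}}$ has nonzero determinant; then $\alpha=\beta=\gamma=0$.

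First I would assemble the numerical inputs. From \cref{ex:Rasmussen_of_two_n_torus_knots} we have $s_c(T_{2,3})=2$ for every $c$, so $T_{2,3}$ alone cannot separate the three invariants; we need twisted Whitehead doubles. Here is where \cref{thm:main} and the companion computations in this section do the work: by \cref{cor:theta-from-slopes} together with the computer calculations recorded in \eqref{eq:theta_for_trefoil} and the displayed equation for $T_{3,4}$, we have $\theta_2(T_{2,3})=4$ while $\theta_c(T_{2,3})=3$ for all $c\ne 2$, and $\theta_3(T_{3,4})=9$ while $\theta_c(T_{3,4})=8$ for all $c\ne 3$. Feeding these into \cref{thm:main} (more precisely the formula $s_c(\Wh_t(K))=s_c(\Wh_{t-\theta_c(K)}(U))$, valid for $s_2$ by \cref{thm:main} and for $s_0,s_3$ by direct computation, and using $\Wh_0(U)=U$, $\Wh_{-1}(U)=T_{2,3}$ from \cref{ex:nu_of_twist_knots}), one reads off
\begin{align*}
s_0(\Wh_3(T_{2,3})) &= 0, & s_2(\Wh_3(T_{2,3})) &= 2, & s_3(\Wh_3(T_{2,3})) &= 0,\\
s_0(\Wh_8(T_{3,4})) &= 0, & s_2(\Wh_8(T_{3,4})) &= 0, & s_3(\Wh_8(T_{3,4})) &= 2.
\end{align*}
Taking $K_1=T_{2,3}$, $K_2=\Wh_3(T_{2,3})$, $K_3=\Wh_8(T_{3,4})$, the matrix of values is lower triangular (after reordering) with diagonal entries $2,2,2$, hence has determinant $8\ne 0$, and linear independence follows.

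The main obstacle is not conceptual but computational: one must actually certify the Rasmussen invariant values above, in particular $s_3(\Wh_8(T_{3,4}))=2$ and $s_0(\Wh_8(T_{3,4}))=0$, which requires computing Bar-Natan/Khovanov homology over $\F_3$ and $\Q$ of a fairly large knot diagram. As noted in the text, the na\"ive diagram of $\Wh_t(K)$ built from a diagram of $K$ roughly doubles the width and quadruples the crossing number, so these computations sit near the edge of what the programs \texttt{khoca}, \texttt{SKnotJob}, and \texttt{kht++} can handle; the verification that $\theta_3(T_{3,4})=9$ (rather than $8$) is the delicate point on which the whole separation hinges. I would therefore present the proof as above, citing \cite{postcard} for the certified values of $s_c(\Wh_8(T_{3,4}))$ and $s_c(\Wh_3(T_{2,3}))$, and emphasise that the role of \cref{thm:main} is precisely to make the $s_2$-entries computable in terms of $\theta_2$, which is cheap to compute, rather than requiring a Khovanov homology computation on the huge satellite diagram directly.
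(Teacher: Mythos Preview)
Your proposal is correct and matches the approach implicit in the paper's discussion and in Table~\ref{table:indep}: exhibit three knots on which the matrix of $s_0,s_2,s_3$ values is nonsingular, using exactly $T_{2,3}$ (or $\Wh_2(T_{2,3})$), $\Wh_3(T_{2,3})$, and $\Wh_8(T_{3,4})$. The paper itself does not supply a proof, only cites \cite{postcard}, but the surrounding text makes clear this is the argument.

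One small logical clarification is worth making. You invoke \cref{thm:main} to justify the formula $s_c(\Wh_t(K))=s_c(\Wh_{t-\theta_c(K)}(U))$, but for the specific pattern $P=\Wh$ this identity holds for \emph{every} slice-torus invariant, not just $s_2$, and does not require \cref{thm:main}: undoing the clasp of $\Wh_t(K)$ is a single positive-to-negative crossing change yielding the unknot, so $s_c(\Wh_t(K))\in\{0,2\}$; combined with \cref{prop:theta:winding0} and the definition of $\theta_c(K)$, the formula follows. Thus the $s_c$-entries you need are not ``computed via \cref{thm:main}'' but are, in fact, the very data from which $\theta_c(K)$ is extracted (cf.\ \cref{prop:thetaalgo}). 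This removes the slight circularity in your last paragraph, where you suggest \cref{thm:main} is doing work that is really done by the direct computer computations of $s_c(\Wh_t(K))$.
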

This result provided the authors with the initial impetus for the paper at hand.
Can \cref{thm:postcard} be extended to an independence result for all the $s_c$?
As shown by Schütz \cite{schuetz}, for every knot $K$,
\begin{equation}\label{eq:p0}
s_p(K) = s_0(K) \qquad \text{holds for all but finitely many primes $p$}.
\end{equation}
One may conjecture that this is the only restriction on the values of the $s_c$,
i.e.~that the homomorphism \((\tfrac{s_p - s_0}2)_p\colon \mathcal{C} \to \bigoplus_p \mathbb{Z} \), where $p$ ranges over all primes, is surjective \cite[Question~6.2]{LipshitzSarkarRefinementS};
maybe even its restriction to the subgroup of $\mathcal{C}$ spanned by \(\{T_{p,p+1} \mid p\text{ prime}\}\) is surjective (compare \cite[Conjecture~6.2]{schuetz}).
Unfortunately, the computation of $\theta_c(T_{5,6})$ seems to be just out of reach of the software and hardware currently available to us. Let us postpone further discussion of torus knots to \cref{subsec:torus}.

We can show the following independence results that include $\tau$ and the invariants \(\theta_c\).
\begin{proposition}\label{thm:indep}
        \begin{enumerate}[(a)]
        \item The image of the homomorphism \[(\tau, \tfrac{s_0}{2}, \tfrac{s_2}{2}, \tfrac{s_3}{2}, \theta_2)\] from the smooth concordance group \(\mathcal{C}\)
        to \(\Z^5\) contains \(\Z^4 \times4\Z\).
        \item Under the assumption that \(\theta_0\) and \(\theta_3\) are homomorphisms \(\mathcal{C}\to\Z\),
        the homomorphism \[(\tau, \tfrac{s_0}{2}, \tfrac{s_2}{2}, \tfrac{s_3}{2}, \theta_0, \theta_2, \theta_3)\] from \(\mathcal{C}\) to \(\Z^7\)
        contains \(\Z^5 \times 4\Z \times \Z\) in its image.
        \end{enumerate}
\end{proposition}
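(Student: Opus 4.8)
The plan is to prove both parts by exhibiting finitely many explicit knots whose value vectors generate a subgroup containing the stated sublattice, and then verifying this by an integer row reduction. Every coordinate function here is a concordance homomorphism --- $\tau$ and the $s_c$ by construction, $\theta_2$ by \cref{thm:main2}, and $\theta_0,\theta_3$ by the hypothesis of part~(b) --- so the image is a subgroup of $\Z^5$ (resp.\ $\Z^7$), and it suffices to list the knots, record the value of every invariant on each of them, assemble these into a matrix, and compute its Smith normal form to see that the lattice spanned is exactly $\Z^4\times4\Z$, resp.\ $\Z^5\times4\Z\times\Z$.

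I would build the list from two families. The first consists of the torus knots $T_{2,3}$ and $T_{3,4}$ together with their connected sums and mirrors: on these every slice-torus invariant equals the smooth four-genus, and the $\theta_c$ are the values computed in \cref{sec:differentfields}, namely $\theta_2(T_{2,3})=4$, $\theta_c(T_{2,3})=3$ for $c\neq2$, $\theta_0(T_{3,4})=\theta_2(T_{3,4})=8$, and $\theta_3(T_{3,4})=9$. Already $T_{2,3}$ and $T_{3,4}$ span a rank-two sublattice, and, for instance, $T_{2,3}^{\# 3}\#-T_{3,4}$ has all slice-torus invariants equal to $0$ with $\theta_2=4$ (and $\theta_0=1$, $\theta_3=0$), so it realises $(0,0,0,0,4)$ in part~(a) and $(0,0,0,0,1,4,0)$ in part~(b). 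The second family consists of twisted Whitehead doubles of torus knots. By Hedden's identity $\theta_\tau(K)=2\tau(K)$ one has $\tau(\Wh_t(K))=1$ exactly when $t<2\tau(K)$, while by the Livingston--Naik twisting formula --- which for the Whitehead pattern is immediate from \cref{prop:theta:winding0,ex:nu_of_twist_knots} and the definition $\theta_c=\theta_{\nicefrac{s_c}{2}}$ --- one has $s_c(\Wh_t(K))=2$ exactly when $t<\theta_c(K)$. Plugging in the torus-knot thresholds, the doubles $\Wh_2(T_{2,3})$, $\Wh_3(T_{2,3})$ and $\Wh_8(T_{3,4})$ produce the rows $(0,1,1,1)$, $(0,0,1,0)$ and $(0,0,0,1)$ in the coordinates $(\tau,\nicefrac{s_0}{2},\nicefrac{s_2}{2},\nicefrac{s_3}{2})$, so together with $T_{2,3}\mapsto(1,1,1,1)$ the corresponding $4\times4$ block is unimodular and the projection to those four coordinates is onto $\Z^4$. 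For part~(b) one adjoins further knots --- either more twisted Whitehead doubles of $T_{2,3}$ (whose $\tau$ and $s_c$ are again given by the formulas above) or suitable small knots from the online table \cite{thetatable} --- whose $\theta_c$-values violate the linear relations that hold on the torus-knot subgroup, thereby supplying the remaining independent directions in the $(\theta_0,\theta_2,\theta_3)$-coordinates. Assembling all rows and reducing yields the claimed sublattices.

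The subtle point --- and where I expect the real effort to go --- is ensuring the $\theta_2$-coordinate behaves correctly on the twisted Whitehead doubles. To pin the span down as $\Z^4\times4\Z$ (rather than as a proper sub- or overlattice), one needs $\theta_2(\Wh_2(T_{2,3}))$, $\theta_2(\Wh_3(T_{2,3}))$ and $\theta_2(\Wh_8(T_{3,4}))$ --- and, in part~(b), the relevant $\theta_0$- and $\theta_3$-values --- all divisible by $4$. For the doubles of $T_{2,3}$ this is read off the computations of \cref{sec:differentfields} and is consistent with \cref{conj:divby4}. The knot $\Wh_8(T_{3,4})$, however, lies well beyond direct computation, and since it is, up to connected sums, the only known knot with $s_0\neq s_3$ it cannot be replaced; the divisibility of $\theta_2(\Wh_8(T_{3,4}))$ by $4$ must therefore be argued by other means, for instance through the multicurve description of \cref{cor:theta-from-slopes} applied to the companion tangle of $\Wh_8(T_{3,4})$, which is strongly restricted because $\Wh_8(T_{3,4})$ has three-genus one and trivial Alexander polynomial. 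Granted this input, the row reduction outlined above proves both~(a) and~(b).
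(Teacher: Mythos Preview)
Your overall strategy—name explicit knots, tabulate their invariants, and row-reduce—is exactly the paper's. The differences lie in the choice of knots and in how the unknown $\theta$-values of the Whitehead doubles are handled.

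The paper does not rely on $T_{2,3}$, $T_{3,4}$, and their connected sums alone. It adds two further small knots, the pretzels $10_{125}=P(-5,3,-2)$ and $14n24552=P(5,5,-4)$, whose $\theta_0,\theta_2,\theta_3$ are all directly computable. From the four rows $T_{2,3},T_{3,4},10_{125},P(5,5,-4)$ one already obtains
\[
(0,0,0,0,1,0,0),\qquad (0,0,0,0,0,4,0),\qquad (0,0,0,0,0,0,1)
\]
in $\Z^7$; that is, these four knots alone span $\{0\}^4\times\Z\times4\Z\times\Z$. This is the key move you are missing: it makes the $\theta_0$- and $\theta_3$-entries of the three Whitehead doubles entirely irrelevant, because they can be cleared using the vectors above. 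Your plan of ``adjoining further knots'' is on the right track, but ``more twisted Whitehead doubles of $T_{2,3}$'' will not help, since their $\theta_c$ are just as inaccessible as those of $\Wh_8(T_{3,4})$; one needs knots small enough that the companion-tangle computation of $\theta_c$ is feasible, and the two pretzel knots are precisely such knots.

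Your observation that the containment $\supseteq\Z^4\times4\Z$ (resp.\ $\Z^5\times4\Z\times\Z$) requires $\theta_2$ of the three Whitehead doubles to be divisible by $4$ is correct, and it applies equally to the paper's seven-knot table (where these entries are listed as ``unknown''). The paper's one-line proof simply asserts the check goes through. Two small corrections to your discussion of this point: first, the ``overlattice'' worry is misplaced—the proposition only asserts containment, so a span strictly larger than $\Z^4\times4\Z$ would be fine. Second, once the pretzel knots are included, there is no condition whatsoever on $\theta_0$ or $\theta_3$ of the Whitehead doubles (they can be any integers); only the residues of the three $\theta_2$-values modulo $4$ matter.
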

\begin{proof}
One checks that the seven row vectors of \cref{table:indep}
span a subgroup of \(\Z^7\) containing \(\Z^5\times 4\Z\times \Z\).
This shows part (b), and thus also part (a), of the statement.
\end{proof}
\begin{table}[t]
\centering
\begin{tabular}{r@{ }l|*{7}{>{\centering\arraybackslash}p{8mm}}}
\multicolumn{2}{c|}{\hspace{3em}knot} & $\tau$ & $\nicefrac{s_0}2$ & $\nicefrac{s_2}2$ & $\nicefrac{s_3}2$ & $\theta_0$ & $\theta_2$ & $\theta_3$ \\[.3ex]\hline
$T_{2,3}$  & = $3_1$\rule{0pt}{2.3ex}
& 1     & 1     & 1     & 1     & 3             & 4             & 3             \\[.3ex]
$P(3,3,-2) = T_{3,4}$ & = $8_{19}$
& 3     & 3     & 3     & 3     & 8             & 8             & 9             \\[.3ex]
$P(-5,3,-2)$ & = $10_{125}$
& 1     & 1     & 1     & 1     & 2             & 4             & 2             \\[.3ex]
$P(5,5,-4)$ & = $14n24552$
& 5     & 5     & 5     & 5     & 15            & 16            & 15            \\[.3ex]\cline{7-9}
$\Wh_2(T_{2,3})$ & = $15n115646$ 
& 0     & 1     & 1     & 1     & \multicolumn{3}{|c}{}              \\[.3ex]
$\Wh_3(T_{2,3})$ & = $14n22180$
& 0     & 0     & 1     & 0     & \multicolumn{3}{|c}{\textcolor{gray}{\footnotesize unknown}} \\[.3ex]
$\Wh_8(T_{3,4})$ & \quad \textcolor{gray}{\footnotesize (34 crossings)}
& 0     & 0     & 0     & 1     & \multicolumn{3}{|c}{}               \\[.3ex]
\end{tabular}
\medskip
\caption{A table of $\tau, s_c$ and $\theta_c$ for $c\in\{0,2,3\}$ for various knots.
The values of $\tau$ may be computed using Hedden's result that $\theta_\tau = 2\tau$
and the computer program \cite{hfkcalc}; the values of $s_c$ may be computed using any
of the programs \cite{khoca,SKnotJob,khtpp}; and the values of $\theta_c$
may be computed as described in \cref{cor:theta-from-slopes} and \cref{prop:thetaalgo}.}
\label{table:indep}
\end{table}

As a direct consequence of \cref{eq:p0}, we have the following.
\begin{corollary}\label{cor:theta_identical_for_almost_all_fields}
    For each knot \(K\), \(\theta_0(K) = \theta_c(K)\) holds for all but finitely many primes \(c\).\qed
\end{corollary}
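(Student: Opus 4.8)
The plan is to deduce \cref{cor:theta_identical_for_almost_all_fields} from Schütz's theorem \eqref{eq:p0} together with the observation that $\theta_0(K)$ is already detected by the values of $s_0$ on just two knots, namely two consecutive twisted Whitehead doubles of $K$. First I would fix $\theta\coloneqq\theta_0(K)$, which is a finite integer by \cref{prop:theta_nu} since $\tfrac{s_0}{2}$ is a slice-torus invariant. Because $\theta<\infty$, the function $t\mapsto\tfrac{s_0}{2}(\Wh_t(K))$ has, by \cref{prop:theta:winding0} applied with pattern $\Wh$ and companion $K$, a unique jump by $-1$, located at $t=\theta$; hence by \eqref{eq:theta:winding0},
\[
s_0(\Wh_{\theta-1}(K)) = s_0(\Wh_{\theta}(K)) + 2 .
\]

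Next I would apply \eqref{eq:p0} to each of the two knots $\Wh_{\theta-1}(K)$ and $\Wh_{\theta}(K)$: this produces a finite set $S$ of primes such that $s_c(\Wh_{\theta-1}(K)) = s_0(\Wh_{\theta-1}(K))$ and $s_c(\Wh_{\theta}(K)) = s_0(\Wh_{\theta}(K))$ for every prime $c\notin S$. For such a $c$ one then gets $s_c(\Wh_{\theta-1}(K)) = s_c(\Wh_{\theta}(K)) + 2 \neq s_c(\Wh_{\theta}(K))$. Since $\tfrac{s_c}{2}$ is again a slice-torus invariant, \cref{prop:theta:winding0} tells us that $t\mapsto\tfrac{s_c}{2}(\Wh_t(K))$ is either constant or has a single jump by $-1$; the inequality just obtained rules out the constant case, so it has a unique jump point, and since its value changes precisely between $t=\theta-1$ and $t=\theta$, that jump point must be $\theta$. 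Therefore $\theta_c(K)=\theta=\theta_0(K)$ for all primes $c\notin S$, which is the assertion.

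I do not expect any real obstacle here; the only point to get right is that a single jump of $t\mapsto\tfrac{s_0}{2}(\Wh_t(K))$ at $\theta$ pins down $\theta_0(K)$, after which \eqref{eq:p0} does all the work. As an alternative route I would note that one could instead first bound all $\theta_c(K)$ uniformly via the inequalities $TB(K)+1\leq\theta_c(K)\leq-TB(-K)$ of \cref{thm:tbln} (valid for every slice-torus invariant) and then apply \eqref{eq:p0} to the finitely many doubles $\Wh_t(K)$ with $TB(K)\leq t\leq-TB(-K)$; but this detour is not needed.
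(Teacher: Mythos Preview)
Your argument is correct and is precisely the natural way to unpack what the paper leaves implicit: the paper states the corollary as a direct consequence of \eqref{eq:p0} with no further details, and your proof via the two consecutive Whitehead doubles $\Wh_{\theta-1}(K)$ and $\Wh_{\theta}(K)$ is exactly how one fills this in. The alternative route via the Thurston--Bennequin bounds is also valid but, as you note, unnecessary.
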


One may now ask whether the conjectured divisibility of $\theta_2$ by 4 (see \Cref{conj:divby4}), \eqref{eq:p0}, and \cref{cor:theta_identical_for_almost_all_fields} are the only restrictions on the values of \(s_c\) and \(\theta_c\).
\begin{question}
Let the infinite vectors \(\underline{s}, \underline{\theta} \in \Z^{\infty}\) be given such that for each of the two vectors, all but finitely many entries equal the first entry.
Assume moreover that the second entry of \(\underline{\theta}\) is divisible by four.
Does there exist a knot \(K\) such that
\begin{align*}
(s_0(K), s_2(K), s_3(K), s_5(K), \ldots) &= 2\underline{s}\qquad\text{and} \\
(\theta_0(K), \theta_2(K), \theta_3(K), \theta_5(K), \ldots) &= \underline{\theta}?
\end{align*}
\end{question}
Let us note that all values of \(\theta_2\) that we could compute are divisible by 4, in accordance with \cref{conj:divby4}. In contrast, for $c\in \{0,3,5,\ldots, 97\}$, and conjecturally for all $c\neq 2$, there does not exist $n\geq 2$ such that $n$ divides $\theta_c(K)$ for all $K$.

\begin{table}[t]
	\centering
	\begin{tabular}{r|*{2}{>{\centering\arraybackslash}p{28mm}}cc}
		knot                            & \(s_c\) for \(c\leq 7\) & \(\theta_0 = \theta_5 = \theta_7\)  & \(\theta_2\)  & \(\theta_3\) \\\hline
		\(10_{125}\)                      & $-2$                     & $\mathbf{-2}$                                         & $-4$               &    $\mathbf{-2}$ \\
		\(\mathit{8_{19}}, 10_{154}, 10_{161}\)    & 6                     & \textbf{8}                                         & \textbf{8}               &    9 \\
		\(\mathit{10_{124}},  \mathit{10_{139}}, \mathit{10_{152}}\) & 8                     & \textbf{11}                                        & \textbf{12}              &    12 \\
	\end{tabular}
	\medskip
	\caption{The seven prime knots with crossing number 10 or less that do not satisfy \cref{eq:2pattern} or \cref{eq:cpattern}. The offending values are printed in boldface. Positive braid knots are printed in italics.}
	\label{table:specialknots}
\end{table}

\subsection{Alternating knots}
Recall that for all alternating knots~\(K\) (in fact, even for all quasi-alternating~\(K\)), \(s_c(K) = -\sigma(K)\) holds for all characteristics~\(c\) \cite[Theorem~3]{RasmussenSlice}.
\begin{conjecture}\label{conj:alt}
For all alternating knots \(K\), we have\smallskip
\begin{align} 
\label{eq:2pattern}\theta_2(K) & = 2s_2(K), \\[.5ex]
\label{eq:cpattern}\theta_c(K) & = \tfrac32 s_c(K)\quad \text{for all \(c\neq 2\).}
\end{align}
\end{conjecture}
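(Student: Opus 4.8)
The plan is to prove both identities in \cref{conj:alt} uniformly by reducing them to a statement about the multicurve invariant \(\BNr_a(T_K;\F_c)\) of the double tangle of an alternating knot \(K\), and then to invoke the thinness of Bar-Natan homology for alternating knots. The starting point is \cref{cor:theta-from-slopes}: it suffices to compute the slope \(\sigma_c\) of \(\BNr_a(T_K;\F_c)\) near the lower right tangle end, since \(\theta_c(K) = \lceil \sigma_c/2\rceil\), with equality \(\theta_2(K) = \sigma_2/2\) over \(\F_2\). So the two identities \eqref{eq:2pattern} and \eqref{eq:cpattern} translate into the claim that, for alternating \(K\),
\[
\sigma_2 = 4 s_2(K) = -4\sigma(K)
\qquad\text{and}\qquad
\lceil \sigma_c/2\rceil = \tfrac32 s_c(K) = -\tfrac32\sigma(K)\quad(c\neq 2).
\]
Since \(s_c(K) = -\sigma(K)\) for all \(c\) by \cite[Theorem~3]{RasmussenSlice}, this is a purely geometric statement about a single curve, independent of the characteristic except for the parity subtlety in \cref{cor:theta-from-slopes}.

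First I would reduce to understanding \(\BNr(T_K;\F_c)\) as a thin multicurve. Recall that \(T_K\) is the quotient tangle of \(K\#K\) under the strong inversion interchanging the two summands (\cref{def:double_tangle}), and that \(K\#K\) is alternating when \(K\) is. The multicurve \(\BNr(T;\F_c)\) of a tangle whose closure (here \(T_K(0) = K \sqcup (\text{push-off})\), or rather \(T_K(\infty) = U\) and the relevant closures) is thin should itself be ``thin'', meaning its components are straight line segments of a single slope in the lift to \(\PuncturedPlane\); this is the content of the thin-tangle results of \cite{KWZ_thin} and \cite[Theorem~4.1]{KWZ_linear}. In particular the non-compact component \(\BNr_a(T_K;\F_c)\) is then a rational curve \(\arc{\sigma_c}\) up to grading shift, with \(\sigma_c\) an \emph{even} integer by \cref{lem:slopes-for-cap-trivial} when finite; and in fact for alternating \(K\) the curve is always of finite slope (the figure-eight behaviour \(\theta(4_1)=0\) generalises), so \(K\) is \(\theta_c\)-rational. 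The key input that needs to be assembled here is: for an alternating knot \(K\), \(\BNr_a(T_K;\F_c) = \arc{\sigma_c}\) up to shift, and the precise value of the integer \(\sigma_c\). This is where the computation genuinely lives.

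Second, I would pin down \(\sigma_c\) using a pairing/closure computation. The idea is to pair \(\BNr_a(T_K;\F_c)\) against the rational curves \(\arc{p/q}\) and read off \(s_c\) of the resulting rational fillings \(T_K(p/q)\) via \cref{thm:s2s}. Since these fillings \(T_K(2t+1) = C_{2,2t+1}(K)\) are \((2,2t+1)\)-cables of the alternating knot \(K\) — and 2-cables of alternating knots have Bar-Natan homology controlled by \(\sigma(K)\) and the cabling parameter — one obtains enough linear equations in the ``position'' of the line \(\arc{\sigma_c}\) to force \(\sigma_c = -3\sigma(K)\) for \(c\neq 2\), hence \(\lceil\sigma_c/2\rceil = \lceil -3\sigma(K)/2\rceil = -\tfrac32\sigma(K)\) once one checks that \(\sigma(K)\) is even for alternating knots with \(s_c\) even — but \(\sigma\) of an alternating knot need not be even in general, so one must instead argue directly that the relevant slope \(\sigma_c\) is even (true for \(c=2\) by \cref{lem:slopes-for-cap-trivial}; for \(c\neq 2\) this is part of the thinness package, cf.\ the discussion after \cref{lem:no-trapped-curves}), giving \(\sigma_c/2 = -\tfrac32\sigma(K)\) with no rounding needed. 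The factor \(3\) versus \(4\) discrepancy between \(c=2\) and \(c\neq 2\) is exactly the \(-6\theta_c(K)\)-type grading shift phenomenon visible in \cref{prop:theta-rationals} and the remark about \(\sigma_3 \neq \sigma_c\) near \cref{prop:tangle_sum_replacement}: over \(\F_2\) the curve \(\BNr_a(T_K;\F_2)\) does not wrap the special puncture (\cite[Theorem~4.1]{KWZ_linear}), which forces the slope into \(2\Z\) and shifts its value, whereas over other characteristics it does, producing the odd-looking \(\tfrac32\).

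The main obstacle I anticipate is the second step: establishing that \(\BNr_a(T_K;\F_c)\) is a \emph{straight} rational curve for every alternating \(K\), and computing its exact slope. Thinness of \(\BNr(K;\F_c)\) for alternating \(K\) is classical, but transferring thinness to the \emph{tangle} invariant \(\BNr(T_K;\F_c)\) requires knowing that \(T_K\) (equivalently \(K\#K\) with its strong inversion) is a ``thin tangle'' in the technical sense — this presumably follows from \cite{KWZ_thin}, but the precise statement and the bookkeeping of the \(\delta\)-grading to extract the slope is the delicate part, and the characteristic-dependence of the wrapping behaviour (the source of the \(3\)-vs-\(4\)) must be handled with care. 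A possible cleaner route, avoiding tangle-thinness altogether, is to compute \(s_c(C_{2,2t+1}(K))\) for alternating \(K\) directly (via the alternating cabling formula / the skein long exact sequence for the twist region) for enough values of \(t\), and feed these into \cref{prop:eta-slopes} together with \cref{cor:theta-rational-test:practical} to deduce both \(\theta_c\)-rationality and the value of \(\theta_c(K)\); this trades the geometric input for an explicit homological computation with alternating 2-cables, which may in the end be the more tractable obstacle. I would pursue both routes and present whichever yields the shorter argument.
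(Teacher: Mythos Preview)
The statement you are trying to prove is labelled \emph{Conjecture} in the paper, and the paper does not offer a proof: the authors explicitly say it is ``backed up by computations for around 70 prime alternating knots'' and leave it open. So there is no ``paper's own proof'' to compare against; your proposal is an attempt at an open problem.

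That said, your strategy has a concrete and fatal gap. Your central claim is that for alternating \(K\) the curve \(\BNr_a(T_K;\F_c)\) is a straight rational curve \(\arc{\sigma_c}\), i.e.\ that alternating knots are \(\theta_c\)-rational. This is false already for the simplest nontrivial alternating knot: by \cref{ex:rationality} the trefoil \(3_1\) is \emph{not} \(\theta_2\)-rational, and \cref{fig:lifts} shows \(\BNr_a(T_{3_1})\) visibly bending rather than being a line. The error upstream is the thinness transfer: the relevant closures of \(T_K\) are the cables \(T_K(2t+1)=C_{2,2t+1}(K)\), and \(2\)-cables of alternating knots are in general \emph{not} thin (their Khovanov homology is thick), so the ``thin tangle'' machinery of \cite{KWZ_thin} does not apply to \(T_K\). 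Your fallback route --- compute \(s_c(C_{2,2t+1}(K))\) for alternating \(K\) directly --- is exactly the hard part: there is no known closed formula for Rasmussen invariants of \(2\)-cables of alternating knots, and if one existed the conjecture would follow immediately from the definition of \(\theta_c\) without any multicurves. (A minor side point: the knot signature is always even, so your worry that ``\(\sigma(K)\) of an alternating knot need not be even'' is unfounded.)

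In short, the slope \(\sigma_c\) near the lower right puncture is the correct object to compute (\cref{cor:theta-from-slopes}), but nothing currently known forces that slope to take the conjectured value for alternating \(K\), and your proposed mechanism for doing so rests on a false premise.
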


\cref{conj:alt} is backed up by computations for around 70 prime alternating knots for \({c \in \{0,2,3,5,7\}}\).
Interestingly, there exist quasi-alternating knots, such as the \((5,-3,2)\)-pretzel
knot (\(10_{125}\) in the tables) that do not satisfy \cref{eq:cpattern}, see \cref{table:specialknots}.

\begin{wraptable}[11]{r}{0.52\textwidth}
        \vspace{2mm}
	\centering
	\begin{tabular}{r|ccc}
		knot $J = \Wh_t(K)$                 & \(\tau(J)\) & \(s_0(J)\) & \(s_2(J)\) \\\hline
		\rule{0pt}{2.5ex}\(t = -\sigma(K)-1\)         & 1        & 2       & 2 \\
		\rule{0pt}{2.5ex}\(t = -\tfrac32\sigma(K)-1\) & 0        & 2       & 2 \\
		\rule{0pt}{2.5ex}\(t = -2\sigma(K)-1\)        & 0        & 0       & 2 \\
	\end{tabular}
	\medskip
	\caption{An alternating knot $K$ with $\sigma(K)< 0$ and satisfying \cref{conj:alt} has three twisted positive Whitehead doubles that are linearly independent in~$\mathcal{C}$.}
	\label{table:taus0s2}
\end{wraptable}
One notes the similarity of \eqref{eq:2pattern} and \eqref{eq:cpattern} to the equation
\cref{eq:hedden}, \(\theta_\tau(K) = 2\tau(K)\), proven by Hedden for all knots \(K\). For alternating (even quasi-alternating) \(K\),
\cref{eq:hedden} implies \(\theta_\tau(K) = -\sigma(K)\).
So, if \cref{conj:alt} holds, then every alternating knot \(K\) with \(\sigma(K)\neq 0\) has three twisted Whitehead doubles
generating a $\mathbb{Z}^3$ summand of the smooth concordance group, restricted to which
\(\tau, s_0, s_2\) are linearly independent, as \cref{table:taus0s2} shows in the case $\sigma(K)<0$; the argument for $\sigma(K)>0$ is similar.

Equation \cref{eq:cpattern} for $c = 0$ was also mentioned in \cite{HeddenOrding} as a reasonable guess for two-stranded torus knots (cf.~also \cite{MR3577888}).
It does not hold for all knots, however, see \cref{table:specialknots}.
This table demonstrates that the behaviour of $\theta_c$ on non-alternating positive braid knots is rather different than its behaviour on alternating knots. 
We make the following conjecture, which we have checked on a total of 14 prime positive braid knots and $c\in\{2,3,5\}$.
\begin{conjecture}\label{conj:posbraids}
If \(K\) is a non-alternating knot that is the closure of a positive braid, then $\theta_2(K) < 2s_2(K)$ and $\theta_0(K) < \tfrac32 s_0(K)$.
In particular, \eqref{eq:2pattern} and \eqref{eq:cpattern} do not hold.
\end{conjecture}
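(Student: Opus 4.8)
\medskip
\noindent\textbf{Proposed approach.} The plan is to reduce both inequalities to statements about the Seifert genus, clear away a ``rational'' case using the cabling results already at hand, and then isolate the one geometric input that is genuinely missing. For the reduction, recall that a positive braid closure $K$ realises the slice--Bennequin inequality, so $g_4(K)=g(K)$ and $\tfrac{s_c(K)}2=\tau(K)=g(K)$ for every characteristic $c$. Hence the two claims become $\theta_2(K)<4g(K)$ and $\theta_0(K)<3g(K)$. By \cref{cor:theta-from-slopes}, $\theta_c(K)=\lceil\sigma_c/2\rceil$ where $\sigma_c$ is the slope of $\gamma_K\coloneqq\BNr_a(T_K;\F_c)$ at the lower right tangle end, and $\sigma_2$ is even by \cref{lem:slopes-for-cap-trivial}; so it is enough to show $\sigma_2\le 8g(K)-4$ and $\sigma_0\le 6g(K)-2$. (For alternating positive braid closures one expects equality rather than strict inequality, cf.\ \cref{conj:alt}; the content of the conjecture is that dropping the alternating hypothesis costs at least one unit on $\theta_c$.)

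Next I would dispose of $\theta_c$-rational companions. If $K$ is $\theta_c$-rational, then for $t\gg 0$ the cable $C_{2,2t+1}(K)$ is again a positive braid closure (cable the braid word and insert positive full twists), so the cabling genus formula for fibred knots combined with the reduction above gives $s_c(C_{2,2t+1}(K))=4g(K)+2t$ for $t\gg 0$. On the other hand, the corollary to \cref{thm:theta-rationals} gives $s_c(C_{2,2t+1}(K))=4\theta_c(K)+2t$ for $t\ge\theta_c(K)$. Comparing the two for large $t$ yields $\theta_c(K)=g(K)$, and since $g(K)\ge 1$ we conclude $\theta_2(K)=g(K)<4g(K)$ and $\theta_0(K)=g(K)<3g(K)$. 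So from now on one may assume $K$ is \emph{not} $\theta_c$-rational; by \cref{lem:theta-rational} this means $\gamma_K$ has slope $\infty$ at the lower left tangle end, and by \cref{prop:theta:winding2} the identity $s_c(C_{2,2t+1}(K))=4g(K)+2t$ then holds for \emph{all} $t\in\mathbb{Z}$.

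The crux is to bound the lower right slope $\sigma_c$ of $\gamma_K$ for a non-$\theta_c$-rational positive braid knot. The cable data just obtained does not suffice: one checks, via the Four Curves Lemma (\cref{lem:4curves}) applied to the straight curves $\arc{2n}$, $\arc{2t+1}$ and $\gamma_K$, that \emph{every} value of $\sigma_c$ is consistent with $s_c(C_{2,2t+1}(K))=4g(K)+2t$, precisely because $\gamma_K$ already avoids the lower left puncture. What is required is a new structural constraint on $\gamma_K$ coming from the fact that $T_K$ is the strong--inversion quotient of the fibred positive braid knot $K\#K$, which has genus $2g(K)$. Concretely, I would try to prove that for a positive braid closure $K'$ the multicurve $\BNr(T_{K'};\F_c)$ inherits the ``positivity'' of $\BNr(K';\F_c)$ --- whose $H$-torsion all lies in quantum degrees above the tower generator at degree $2g(K')$ --- in the shape of an upper bound on how far $\BNr_a(T_{K'};\F_c)$ can wind around the lower right puncture; for the double tangle this should read $\sigma_c\le 4g(K)$, with the alternating (equivalently $(2,n)$-torus knot and connected-sum) case being the unique extremal one, detected by a Seifert-circle/crossing count. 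Two possible implementations: either develop a $\mathbb{Z}/2$-equivariant Bar--Natan homology for the strong inversion on $K\#K$ and extract $\gamma_K$ directly from an invariant positive-braid diagram; or induct on the braid word, tracking the change of $\sigma_c$ under a positive Markov (de)stabilisation and under the deletion of a crossing, using \cref{prop:thetaalgo} and the classical crossing-change inequality for $s_c$ to control the step at which a $\sigma_i\sigma_{i+1}$ subword (which is exactly what is absent from the $(2,n)$-torus knots) is simplified.

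I expect this last step to be the main obstacle, and its difficulty is presumably why the statement remains conjectural: the machinery of \cite{KWZ} computes $\BNr_a(T_K)$ from a diagram of $K$, but there is at present no theorem bounding the slopes of $\BNr_a(T_K;\F_c)$ in terms of classical data such as $g(K)$ even for a restricted class of companions like positive braid closures. Proving such a bound --- ideally one sharp enough to recognise the ``defect'' $g(K)-\tfrac12\theta_2(K)$ and $g(K)-\tfrac13\cdot 2\theta_0(K)$ as a measure of the failure of alternation --- would settle \cref{conj:posbraids} and would very likely also bear on \cref{q:boundg4} and \cref{conj:divby4}.
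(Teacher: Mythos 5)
This statement is a conjecture in the paper, not a theorem: the authors offer no proof, only computational evidence (``checked on a total of 14 prime positive braid knots and $c\in\{2,3,5\}$'', together with \cref{table:specialknots}). So there is no proof in the paper to compare your attempt against, and you yourself correctly conclude that your approach does not close the argument. The preparatory reductions you do carry out are sound. Using that a positive braid closure realises the slice--torus bound, so that $\tfrac{s_c}{2}=g_4=g$ for all $c$, correctly reduces the claim to $\theta_2(K)<4g(K)$ and $\theta_0(K)<3g(K)$, equivalently (via \cref{cor:theta-from-slopes} and \cref{lem:slopes-for-cap-trivial}) to bounds on the slope $\sigma_c$ of $\BNr_a(T_K;\F_c)$ at the lower right tangle end. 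Your disposal of the $\theta_c$-rational case is also correct: for $t\geq 0$ the cable $C_{2,2t+1}(K)$ is again a positive braid closure with $s_c=4g(K)+2t$, and matching this against the cabling corollary of \cref{thm:theta-rationals} forces $\theta_c(K)=g(K)$, which satisfies both strict inequalities since $g(K)\geq 1$. (It is worth noting that $\theta_c(K)=g(K)\geq 1$ would also contradict the paper's \cref{conj:theta-rational-implies-theta=0}, so conditionally on that conjecture no non-alternating positive braid knot can be $\theta_c$-rational at all.)

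The genuine gap is exactly where you place it, and it is the whole content of the conjecture: for a non-$\theta_c$-rational positive braid knot $K$, the known cabling behaviour $s_c(C_{2,2t+1}(K))=4g(K)+2t$ for all $t$ (via \cref{prop:theta:winding2}) constrains only the lower-left-puncture geometry of $\gamma_K$ and leaves the lower-right slope $\sigma_c$ free, so no inequality of the form $\sigma_c\leq 8g(K)-4$ (for $c=2$) or $\sigma_c\leq 6g(K)-2$ (for $c=0$) follows from the machinery in the paper. Neither of the two routes you sketch --- a $\mathbb{Z}/2$-equivariant Bar--Natan theory for the strong inversion on $K\#K$, or an induction on the positive braid word tracking $\sigma_c$ under Markov moves and crossing deletions --- is developed in the paper or in the cited literature, and the paper explicitly records this as open. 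Your proposal is therefore a reasonable and well-informed roadmap rather than a proof, and it correctly identifies that any solution would need a new structural theorem bounding the slopes of $\BNr_a(T_K;\F_c)$ in terms of $g(K)$; such a bound would indeed be relevant to \cref{q:boundg4} and \cref{conj:divby4} as you observe.
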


Note that the only knots that are both alternating and closure of a positive braid are the $T_{2,2n+1}$ torus knots with $n\geq 0$ \cite{zbMATH01773643,zbMATH06479738}.
Beyond \cref{conj:posbraids}, no clear pattern seems to emerge from our computations
for the values of $\theta_c$ on positive braid knots.

\begin{wraptable}[12]{r}{0.5\textwidth}
	\vspace{1ex}
	\centering
	\begin{tabular}{r|*{5}{>{\centering\arraybackslash}p{7mm}}}
		knot         & \(s_c\)        & \(\theta_2\)    & \(\theta_3\)  & \(\theta_5\) & \(\theta_0\) \\\hline
		$T_{2,2n+1}$ &  $2n $         &    $4n$         &  $3n$         &  $3n$        &  $3n$        \\
		\scriptsize{for} $\scriptstyle 0\leq n\leq 4$ & \\\hline
		$T_{3,4}$    &  6             &    8            &  9            &  8           &  8           \\
		$T_{3,5}$    &  8             &    12           &  12           &  11          &  11          \\
		$T_{3,7}$    &  12            &    16           &  18           &  16          &              \\
		$T_{3,8}$    &  14            &    20           &  21           &  19          &              \\
		$T_{3,10}$   &  18            &    24           &  27           &              &              \\
		$T_{3,11}$   &  20            &    28           &               &              &              \\\hline
		$T_{4,5}$    &  12            &    16           &  15           &  15          & 
	\end{tabular}
	\medskip
	\caption{Computed values for $\theta_c(T_{p,q})$}
	\label{table:torusknots}
\end{wraptable}

\subsection{Torus knots}\label{subsec:torus}
Positive torus knots \(T_{p,q}\) are arguably the simplest positive braid knots.
\cref{table:torusknots} contains the values of $\theta_c(T_{p,q})$ that we could compute.
Those values are compatible with the following conjectures.

\begin{conjecture}\label{conj:thetactpq}
For all characteristics $c$, there exists a function \(r_c\colon \Z_{\geq 1} \to \Z\)
such that
\(
\theta_c(T_{p + q, q}) =  \theta_c(T_{p, q}) + r_c(q)
\)
for all coprime positive integers \(p\) and \(q\).
\end{conjecture}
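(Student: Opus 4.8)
The natural approach is to pass through the geometric description of $\theta_c$ furnished by \cref{cor:theta-from-slopes}: writing $\sigma_c(K)$ for the slope of the non-compact curve $\BNr_a(T_K;\F_c)$ near the lower right tangle end, we have $\theta_c(K)=\lceil \sigma_c(K)/2\rceil$. Since passing from $T_{p,q}$ to $T_{p+q,q}$ only ever adds a positive full twist on $q$ coherently-oriented strands — present $T_{p,q}$ as the braid closure of $(\sigma_1\cdots\sigma_{q-1})^{p}$ and note $(\sigma_1\cdots\sigma_{q-1})^{p+q}=(\sigma_1\cdots\sigma_{q-1})^{p}\cdot\Delta_q^2$ with $\Delta_q^2=(\sigma_1\cdots\sigma_{q-1})^{q}$ the positive full twist — the conjecture would follow from the single statement that the \emph{increment} $\sigma_c(T_{p+q,q})-\sigma_c(T_{p,q})$ is independent of $p$ for fixed $q$. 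One should therefore aim to describe this increment intrinsically in terms of $q$ and $c$ alone.

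The first step is to translate the full twist on the companion into an operation on the double tangle. Choosing the small ball in \cref{def:double_tangle} disjoint from the twisting region, a positive full twist on $q$ strands of $K$ becomes a positive full twist on the $2q$ parallel strands (the $q$ strands of $K$ together with the $q$ strands of its longitudinal push-off) inside a ball in the interior of $B^3$ disjoint from $\partial T_K$, followed by a framing correction $+Q_{n(q)}$ restoring $\lk=0$, where $n(q)$ depends only on $q$. By \cref{thm:Kh:Twisting:arcs} the $+Q_{n(q)}$ contributes only a fixed shift to the slope at the lower right end, so it is harmless. On the level of Bar-Natan's cobordism complexes, the full-twist insertion amounts to gluing in the complex of the positive full twist on $2q$ strands. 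Alternatively, one may record the left-handed $q$-homologous twist on $q$ strands relating the two companions and invoke \cref{prop:satellitetwist}, which, applied to $P=W^+$, shows that this twist on the companion becomes a generalised crossing change on $2q$ strands of the doubled knot; this already yields, via \cref{prop:generaltwists,prop:crossingchange}, the a priori bounds $\theta_c(T_{p,q})\le\theta_c(T_{p+q,q})\le\theta_c(T_{p,q})+q^2$ (the upper bound for $c=2$ being conditional on $s_2$ satisfying the relevant generalised crossing change inequality). Thus the difference already lies in a finite range depending only on $q$; the content of the conjecture is that there are no jumps within this range as $p$ varies.

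The heart of the proof — and the step I expect to be the main obstacle — is to show that the germ of $\BNr_a(T_{T_{p,q}};\F_c)$ near the lower right puncture, hence its slope there, is eventually independent of $p$ and, moreover, constant for \emph{all} $p$ coprime to $q$. The ``eventually'' part should follow from stabilisation of Khovanov-type invariants under iterated full twists (in the spirit of Stošić and Rozansky, now applied to $\BNr$ and to the $2q$-strand twist region inside the double tangle): adding one more full twist on the $2q$ strands eventually changes $\BNr_a$ near the relevant puncture in the same way each time. Upgrading this to all $p$ requires a rigidity input at the small values of $p$, where the braids are far from the stable range; the natural candidate is a monotonicity-plus-cobordism argument in the style of \cref{subsec:geom:slicetorus}, using that $T_{p+q,q}$ and $T_{p,q}$ cobound a cobordism of genus $\tfrac{q(q-1)}{2}$ realised entirely inside the twisting region, combined with the crossing-change monotonicity of $\theta_c$, to force the slope increment to be locally constant in $p$ and hence, being integer-valued and bounded, globally constant. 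Making the stabilisation statement for $\BNr_a$ precise — in particular controlling it near a single puncture rather than globally — is the crux; without it, only boundedness of the increment, and perhaps its eventual constancy, seem within reach.
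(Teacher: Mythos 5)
The statement in question is \cref{conj:thetactpq}, which the paper states as a \emph{conjecture} supported only by the computations in \cref{table:torusknots}; the authors do not prove it, and they explicitly remark at the end of \cref{subsec:torus} that a proof ``would have to gain an understanding that goes beyond the inequalities in \cref{prop:generaltwists} of how $\theta_c$ changes under twists of the companion''. There is therefore no paper argument to compare your attempt against.

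Your sketch is consistent with the paper's discussion: you correctly recast the problem via \cref{cor:theta-from-slopes} in terms of the slope of $\BNr_a(T_K;\F_c)$ near the lower right tangle end, and you correctly recover, via \cref{prop:satellitetwist}, \cref{prop:generaltwists}, and \cref{prop:crossingchange}, the a priori bounds $\theta_c(T_{p,q})\leq\theta_c(T_{p+q,q})\leq\theta_c(T_{p,q})+q^2$ (conditionally on the generalised crossing change inequality when $c=2$). But, as you yourself flag, the proposal is not a proof: the crux --- that the germ of $\BNr_a(T_{T_{p,q}};\F_c)$ near the relevant puncture is constant in $p$ for \emph{all} coprime $p$, not merely eventually --- is not established, and neither the Sto\v{s}i\'c/Rozansky-type stabilisation you invoke nor the ``monotonicity-plus-cobordism'' argument forces the increment to be constant for small $p$. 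This is precisely the obstacle the paper names. A secondary gap you do not address: for $c\neq 2$ the slope is an integer but not necessarily even (\cref{lem:slopes-for-cap-trivial}), so even a $p$-independent increment in $\sigma_c$ need not produce a $p$-independent increment in $\theta_c=\lceil\sigma_c/2\rceil$ unless the increment is even or the parity of $\sigma_c(T_{p,q})$ is itself constant in $p$; a complete argument would have to settle this as well.
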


For $c = 0$ and $c = 2$, we make the following guess for $r_c$.

\begin{conjecture}\label{conj:theta2tpq}
For all coprime positive integers \(p\) and \(q\),
\[
\theta_0(T_{p + q, q})         =  \theta_0(T_{p, q}) + q^2 - 1
\quad
\text{and}
\quad
\theta_2(T_{p + q, q})         =  \theta_2(T_{p, q}) +
	\begin{cases*}
	q^2 & if $q$ even \\
	q^2 -1 & if $q$ odd.
	\end{cases*}
\]
\end{conjecture}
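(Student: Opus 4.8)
The geometric input is that $T_{p+q,q}$ is obtained from $T_{p,q}$ by inserting a positive full twist into $q$ parallel, consistently oriented strands: writing both torus knots as braid closures on $q$ strands, $T_{p+q,q}$ is the closure of $(\sigma_1\cdots\sigma_{q-1})^{p+q}=(\sigma_1\cdots\sigma_{q-1})^{p}\cdot\Delta_q^{2}$, where $\Delta_q^{2}$ is the positive full twist on $q$ strands. In the language of \cref{def:companiontwists}, $T_{p,q}$ arises from $T_{p+q,q}$ by a left-handed $q$-homologous twist on $q$ strands (and, conversely, $T_{p+q,q}$ from $T_{p,q}$ by a right-handed one). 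The first step is to feed this into the twisting machinery of \cref{subsec:fulltwists}. Applying \cref{prop:generaltwists} with $P=\Wh$ (whose value $\theta_\nu(\Wh,K)=\theta_\nu(K)$ is always finite by \cref{prop:theta_nu}), $h=n=q$, $K=T_{p+q,q}$ and $J=T_{p,q}$, yields
\[
\theta_\nu(T_{p+q,q})\leq\theta_\nu(T_{p,q})+q^{2}
\]
for every slice-torus invariant $\nu$ satisfying the $2q$-stranded generalised crossing change inequality; by \cite{MMSW} this holds for $\nu=\nicefrac{s_c}{2}$ with $c\neq 2$, while for $c=2$ it is the (expected) statement that $s_2$ satisfies the generalised crossing change inequality.

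This already produces the correct shape of the recursion, with constant $q^{2}$; the content of the conjecture is to improve this by one (for $c=0$, and for $c=2$ with $q$ odd) and to match it with a lower bound. I would proceed in two parts. First, prove \cref{conj:thetactpq}, i.e.\ that the jump $\theta_c(T_{p+q,q})-\theta_c(T_{p,q})$ does not depend on $p$. Via \cref{cor:theta-from-slopes} this is a statement about the slope of $\BNr_a(T_{T_{p,q}};\F_c)$ near the bottom right tangle end, and I would try to extract it from a sufficiently explicit (or at least partial) description of these double-tangle invariants; note that by \cref{prop:crossingchange} the sequence $p\mapsto\theta_c(T_{p,q})$ is non-decreasing with successive jumps in $[0,q^{2}]$, so it would suffice to show the jumps are eventually constant and then to settle the finitely many small values of $p$ by direct computation. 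Second, identify the value of the jump: since $T_{1,q}$ is the unknot, \cref{conj:thetactpq} reduces the constant to $r_c(q)=\theta_c(T_{q+1,q})$, so one must show $\theta_0(T_{q+1,q})=q^{2}-1$ and $\theta_2(T_{q+1,q})=q^{2}$ or $q^{2}-1$ according to the parity of $q$. One route is a closed-form description of $\BNr_a(T_{T_{q+1,q}};\F_c)$ (the curve should have a single finite-slope segment whose slope is twice the answer, cf.\ \cref{fig:lifts}); another is to run the algorithm of \cref{prop:thetaalgo}, which reduces the question to the $s_c$-invariants of twisted Whitehead doubles of $T_{q+1,q}$, for which one may hope for a cabling-type formula.

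The main obstacle is the lower bound. The twist machinery is one-sided — negative twists can only decrease $\theta_\nu$ — so it yields no lower bound on the jump, and the Livingston--Naik inequality $TB(K)+1\leq\theta_\nu(K)$ of \cref{thm:tbln} is provably too weak: already for the trefoil (the case $q=2$, $T_{p,q}=U$) it only gives $\theta_\nu(T_{2,3})\geq TB(T_{2,3})+1=2$, whereas the asserted value is $3$. Thus a genuinely new lower bound for $\theta_c$ of torus knots is needed. The most promising source is again the multicurve invariant: a lower bound on the slope $\sigma_c$ of $\BNr_a(T_{T_{p,q}};\F_c)$, coming either from an explicit partial computation of this curve (identifying enough of the reduced type~D structure of the double tangle to pin down its slope at the relevant puncture) or from combining the pairing theorem with lower bounds on $s_c$ of the relevant cables $C_{2,2t+1}(T_{p,q})$ and twisted Whitehead doubles, whose Khovanov-type invariants are in principle accessible by iterated satellite techniques. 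Pushing any of these through uniformly in $p$ and $q$ is, I expect, the crux of a full proof; and for $c=2$ everything remains contingent on the conjectured generalised crossing change inequality for $s_2$.
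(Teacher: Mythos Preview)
The statement you are attempting is a \emph{conjecture}; the paper does not prove it. There is therefore no ``paper's own proof'' to compare against. What the paper does provide, in the discussion immediately following \cref{conj:theta2tpq}, is essentially the same analysis you give: it notes that $T_{p+q,q}$ arises from $T_{p,q}$ by a $q$-homologous right-handed twist on $q$ strands, that \cref{prop:generaltwists} therefore yields $0\leq r_c(q)\leq q^2$ (for $c\neq 2$, and for $c=2$ conditionally on the generalised crossing change inequality for $s_2$), and that ``in order to prove \cref{conj:thetactpq} and \cref{conj:theta2tpq}, one would have to gain an understanding that goes beyond the inequalities in \cref{prop:generaltwists} of how $\theta_c$ changes under twists of the companion.''

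Your proposal is thus not a proof but a correct diagnosis of where the difficulty lies, and it matches the paper's own assessment. You rightly identify the lower bound as the crux and correctly observe that neither the one-sided twist inequalities nor the Thurston--Bennequin bound of \cref{thm:tbln} is sharp enough. Your suggested routes---computing $\BNr_a(T_{T_{q+1,q}};\F_c)$ explicitly, or finding closed formulas for $s_c$ of the relevant cables and Whitehead doubles---are reasonable, but neither you nor the paper carries them out; the conjecture remains open.
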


Our conjectures are in accordance with conjectures by Schütz \cite[Conjectures~6.2 and 6.3]{schuetz}
on the values of $s_c$ of certain Whitehead doubles of torus knots.
Note that if \cref{conj:thetactpq} holds, then $\theta_c(T_{p,q})$ is for all $p, q$ determined by the function $r_c$,
using Euclid's algorithm and the fact that $\theta_c(T_{0,1}) = 0$ since $T_{0,1}$ is the unknot.
The knot $T_{p+q,q}$ arises from $T_{p,q}$ by a $q$-homologous right-handed twist on $q$ strands (see \cref{def:companiontwists}).
So it follows from \cref{prop:generaltwists} that, if \cref{conj:thetactpq} holds, then
$0 \leq r_c(q) \leq q^2$ holds for $c \neq 2$ (and for $c = 2$ under the assumption that $s_2$ satisfies the generalised crossing change inequality).
In order to prove \cref{conj:thetactpq} and \cref{conj:theta2tpq}, one would have to gain an understanding that goes beyond the inequalities in \cref{prop:generaltwists} of how $\theta_c$ changes under twists of the companion.

Interestingly, the classical knot signature $\sigma$ satisfies the following recursive formula for torus knots \cite[Theorem~5.2]{zbMATH03736629}:
\[
\sigma(T_{p + 2q, q})         =  \sigma(T_{p, q}) +
	\begin{cases*}
	q^2 & if $q$ even \\
	q^2 -1 & if $q$ odd.
	\end{cases*}
\]
The difference to the formula given for $\theta_2$ in \cref{conj:theta2tpq} is
that the left-hand side pertains to $T_{p + 2q, q}$ instead of $T_{p + q, q}$.

\bibliographystyle{myamsalpha}
\bibliography{main}
\end{document}